\documentclass[12pt]{amsart}
\usepackage{pgf,tikz,pgfplots} 
\pgfplotsset{compat=1.14}
\usepackage{mathrsfs}
\usetikzlibrary{arrows}
\usetikzlibrary{patterns}
\usepackage{pgfplots}
\usetikzlibrary{intersections, pgfplots.fillbetween}
\usepackage[colorlinks=true,urlcolor=blue, citecolor=red,linkcolor=blue,linktocpage,pdfpagelabels, bookmarksnumbered,bookmarksopen]{hyperref}
\usepackage[hyperpageref]{backref}
\usepackage{cleveref}
\usepackage{xcolor}
\usepackage{amsthm} 
\usepackage{latexsym,amsmath,amssymb}
\usepackage{accents}
\usepackage[colorinlistoftodos,prependcaption,textsize=tiny]{todonotes}
\usepackage{a4wide}
\usepackage{soul}
\usepackage{mathtools} 
\usepackage{xparse} 

\pgfdeclarelayer{ft}
\pgfdeclarelayer{bg}
\pgfsetlayers{bg,main,ft}

\title[Minimizing degree one]{Minimizing and non-minimizing degree one $W^{s,1/s}$-harmonic maps between spheres}

\date{\today}
\author{Dorian Martino}
\address[Dorian Martino]{
	ETH Zürich,%
	Department of Mathematics,
	Rämistrasse 101,
	8092 Zürich, Switzerland}
\email{dorian.martino@math.ethz.ch}
\author{Katarzyna Mazowiecka}
 \address[Katarzyna Mazowiecka]{
	 Institute of Mathematics,%
	 University of Warsaw,
	 Banacha 2,
	 02-097 Warszawa, Poland}
 \email{k.mazowiecka@mimuw.edu.pl}
\author{Armin Schikorra}
 \address[Armin Schikorra]{Department of Mathematics,
	 University of Pittsburgh,
	 301 Thackeray Hall,
	 Pittsburgh, PA 15260, USA}
 \email{armin@pitt.edu}

\definecolor{indigo}{rgb}{0.29, 0.0, 0.51}
\definecolor{p1}{gray}{0.4}
\definecolor{p2}{gray}{0.6}
\definecolor{p3}{gray}{0.98}
\definecolor{p4}{gray}{0.8}
\definecolor{p5}{gray}{0.9}

plus 6pt minus 12pt
	\renewcommand{\i}{{\rm \bf i}}
	\def\eps{\varepsilon}

	\def\id{{\rm id }}

	\def\C{{\mathbb C}}

	\newcommand{\dif}{\,\mathrm{d}}
	
	\def\N{{\mathbb N}}

	\def\S{{\mathbb S}}

	\newtheorem{theorem}{Theorem}
	\newtheorem{lemma}[theorem]{Lemma}
	\newtheorem{corollary}[theorem]{Corollary}

	\newtheorem{remark}[theorem]{Remark}

	\def\lip{{\rm Lip\,}}
	
	\def\supp{{\rm supp\,}}
	
	\newcommand{\dd}{\,\mathrm{d}}
	\newcommand{\dx}{\dif x}
	\newcommand{\dy}{\dif y}
	\newcommand{\dz}{\dif z}

	\newcommand{\R}{\mathbb{R}}

	\newcommand{\Z}{\mathbb{Z}}

	\newcommand{\brac}[1]{\left (#1 \right )}
	\newcommand{\abs}[1]{\left |#1 \right |}

	\newcommand{\barint}{
		\rule[.036in]{.12in}{.009in}\kern-.16in \displaystyle\int }
	
	\newcommand{\barcal}{\mbox{$ \rule[.036in]{.11in}{.007in}\kern-.128in\int $}}
	
	\def\mvint_#1{\mathchoice
		{\mathop{\vrule width 6pt height 3 pt depth -2.5pt
				\kern -8pt \intop}\nolimits_{\kern -3pt #1}}%
	{\mathop{\vrule width 5pt height 3 pt depth -2.6pt
			\kern -6pt \intop}\nolimits_{#1}}%
	{\mathop{\vrule width 5pt height 3 pt depth -2.6pt
			\kern -6pt \intop}\nolimits_{#1}}%
	{\mathop{\vrule width 5pt height 3 pt depth -2.6pt
			\kern -6pt \intop}\nolimits_{#1}}}

\numberwithin{theorem}{section} \numberwithin{equation}{section}

\newcommand{\aleq}{\precsim}
\newcommand{\ageq}{\succsim}
\newcommand{\aeq}{\approx}

\let\latexchi\chi
\makeatletter
\renewcommand\chi{\@ifnextchar_\sub@chi\latexchi}
\newcommand{\sub@chi}[2]{
	\@ifnextchar^{\subsup@chi{#2}}{\latexchi^{}_{#2}}%
}
\newcommand{\subsup@chi}[3]{
	\latexchi_{#1}^{#3}%
}
\makeatother

\makeatletter
\def\tikz@arc@opt[#1]{
	{%
		\tikzset{every arc/.try,#1}%
		\pgfkeysgetvalue{/tikz/start angle}\tikz@s
		\pgfkeysgetvalue{/tikz/end angle}\tikz@e
		\pgfkeysgetvalue{/tikz/delta angle}\tikz@d
		\ifx\tikz@s\pgfutil@empty%
		\pgfmathsetmacro\tikz@s{\tikz@e-\tikz@d}
		\else
		\ifx\tikz@e\pgfutil@empty%
		\pgfmathsetmacro\tikz@e{\tikz@s+\tikz@d}
		\fi%
		\fi
		\tikz@arc@moveto
		\xdef\pgf@marshal{\noexpand%
			\tikz@do@arc{\tikz@s}{\tikz@e}
			{\pgfkeysvalueof{/tikz/x radius}}
			{\pgfkeysvalueof{/tikz/y radius}}}%
	}%
	\pgf@marshal%
	\tikz@arcfinal%
}
\let\tikz@arc@moveto\relax
\def\tikz@arc@movetolineto#1{%
	\def\tikz@arc@moveto{\tikz@@@parse@polar{\tikz@arc@@movetolineto#1}(\tikz@s:\pgfkeysvalueof{/tikz/x radius} and \pgfkeysvalueof{/tikz/y radius})}}
\def\tikz@arc@@movetolineto#1#2{#1{\pgfpointadd{#2}{\tikz@last@position@saved}}}
\tikzset{%
	move to start/.code=\tikz@arc@movetolineto\pgfpathmoveto,%
	line to start/.code=\tikz@arc@movetolineto\pgfpathlineto}
\makeatother

\begin{document}
	
	\begin{abstract}
	We show that $\id:\mathbb{S}^1 \to \mathbb{S}^1$ is {\it not} a minimizing $W^{s,\frac{1}{s}}$-harmonic map for $s  \in (0,\frac{1}{8}$). On the other hand, for $s \in (\frac{1}{3},1)$ it is a local minimizing map, and for $s\in [\frac{1}{2}-\eps,\frac{1}{2}+\eps]$ it is a global minimizer. The usual extension or Fourier techniques being unavailable, our argument relies instead of stability analysis in $s$.
	\end{abstract}
%
%
	\maketitle
	\tableofcontents
	\sloppy

	\section{Introduction}
	For $s \in (0,1)$ and $\psi: \S^1 \to \S^1 \subset \R^2$ set 
	\[   
	 E_s(\psi) \coloneqq \iint_{\S^1\times \S^1} \frac{|\psi(x)-\psi(y)|^{\frac{1}{s}}}{|x-y|^{2}} \dx \dy.
	\]
	
	Throughout, if $x,y\in \mathbb S^1\subset\mathbb C\simeq\mathbb R^2$, then $|x-y|$ denotes the Euclidean distance in $\mathbb C$, the measures $\dx,\dy$ denote arclength measure on $\mathbb S^1$. This ensures that for any $s \in (0,1)$ the energy is invariant under domain M\"obius transforms and target rotations, \Cref{la:mobius-invariance}.

We define $\mathcal{M}$ to be the set of M\"obius transforms $e^{\i \lambda} m_b\colon \S^1 \to \S^1$, $b \in \C$, $|b| \leq 1$ and $\lambda \in \R$
\[
 m_b(z) \coloneqq \frac{z+b}{1+\overline{b} z}.
\]
We are interested in minimizing $E_s$ among all maps of a given degree
	\[
	\#_s d \coloneqq  \inf_{u \in W^{s,\frac{1}{s}}(\S^1,\S^1): \text{ } \deg u = d} E_s(u).
	\]
	For $s= \frac{1}{2}$, i.e.\ the Hilbert-space case, it is known that 
	\[   
	 \#_{\frac{1}{2}} d = 4\pi^2 |d|
	\]
	and the minimum value is attained by the maps $z^d$, see for instance \cite{BMRS14,MP04}. For the case $s \neq \frac{1}{2}$ the picture gets murkier, since the energy is conformally invariant, even the existence of minimizers in a given homotopy class becomes unclear due to  bubbling effects. Sacks-Uhlenbeck theory has been obtained by the last two authors in \cite{Sucks23}, and in \cite{MS23} it is shown that the $s \mapsto \#_s d$ is continuous, which in particular implies that there are degree 1 maps $u\colon \S^1 \to \S^1$ such that $E_s(u) = \#_s 1$ for $s \approx \frac{1}{2}$.

In this work we are interested not in showing the existence of minimizers, but instead the identification of the minimizer and (as corollary) a basic stability analysis of degree one maps. This is challenging, since the usual methods to show that M\"obius transforms minimize degree one maps of an energy --- showing that minimizers are conformal maps either by an harmonic extension argument \cite{MS15,LS18}, Fourier analysis \cite{MP04,BMRS14,DSW23}, or realization of sharp geometric inequalities \cite{GLZ25} --- are unavailable for $W^{s,\frac{1}{s}}$ unless $s =\frac{1}{2}$. Instead our argument builds on what amounts to a precise Morse index analysis combined with the stability analysis \emph{in $s$} initiated in \cite{MS23}.

It is natural to hope that this holds for any $s \in (0,1)$, and that $u=\id\colon \S^1 \to \S^1$ is a minimizing degree $1$ map. Quite surprisingly, but actually not that difficult to prove, the latter is actually false for $s<\frac{1}{8}$, indeed we have our first result
	
\begin{theorem}\label{th:strictinequality}
		For any  $s \in (0,1/8)$ the identity map $\id\colon \S^1 \to \S^1$ is not a minimizer of $E_s$ among degree $1$ maps from $\S^1 \to \S^1$, i.e.\ we have $\#_s 1 < E_s(\id)$. Indeed, it is not even stable in the $W^{s,\frac{1}{s}}$-topology.
\end{theorem}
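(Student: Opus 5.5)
The plan is to exhibit an explicit tangential variation of $\id$ along which the second variation of $E_s$ is strictly negative whenever $p\coloneqq \frac1s>8$. This shows at once that $\id$ is not a local minimizer in the $W^{s,\frac1s}$-topology, and hence that $\#_s 1<E_s(\id)$. Write $x=e^{\i\theta}$, $y=e^{\i\sigma}$. For a smooth real $f$ consider the degree-one maps $\psi_\eps(e^{\i\theta})=e^{\i(\theta+\eps f(\theta))}$. Then
\[
E_s(\psi_\eps)=\int_0^{2\pi}\!\!\int_0^{2\pi}\frac{g\big(\theta-\sigma+\eps(f(\theta)-f(\sigma))\big)}{4\sin^2\frac{\theta-\sigma}{2}}\dif\theta\dif\sigma,\qquad g(a)\coloneqq\Big|2\sin\tfrac a2\Big|^{p}.
\]
Since $p>8$, $g$ is $C^2$ and the integrand is smooth in $\eps$. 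The first variation vanishes by symmetry, since $\id$ is a critical point. I will then compute
\[
\frac{d^2}{d\eps^2}\Big|_{\eps=0}E_s(\psi_\eps)=\iint g''(t)\,\frac{(f(\theta)-f(\sigma))^2}{4\sin^2\frac t2}\dif\theta\dif\sigma,\qquad t=\theta-\sigma,
\]
where
\[
g''(t)=2^{p-2}p\Big[(p-1)|\sin\tfrac t2|^{p-2}\cos^2\tfrac t2-|\sin\tfrac t2|^{p}\Big].
\]
Justifying differentiation under the integral is routine, because $g''(t)\sim |t|^{p-2}$ kills the kernel singularity.

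Next, take $f(\theta)=\cos(k\theta)$ and average in $\theta$ at fixed $t$: $\frac1{2\pi}\int_0^{2\pi}(f(\theta)-f(\theta-t))^2\dif\theta=1-\cos(kt)=2\sin^2\frac{kt}2$. With $u=t/2$ the second variation becomes a positive constant times
\[
Q_k\coloneqq\int_0^{\pi}\Big[(p-1)\sin^{p-2}u\cos^2u-\sin^pu\Big]\frac{\sin^2(ku)}{\sin^2u}\dif u .
\]
The case $k=1$ gives $0$, as it must: it is a Möbius direction. The case $k=2$ turns out to be positive. The right choice is $k=3$, because then the weight is maximal exactly at the antipodal configuration $u=\pi/2$. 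This is where $g''<0$ for large $p$: a large exponent penalizes antipodal pairs.

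For $k=3$ we have $\frac{\sin^2 3u}{\sin^2u}=(4\cos^2u-1)^2=16\cos^4u-8\cos^2u+1$. Set $I(a,b)=\int_0^\pi\sin^au\cos^bu\dif u$, which is a Beta function. The identity $(p-1)I(p-2,2j+2)=(2j+1)I(p,2j)$, obtained by integration by parts, collapses each monomial $\cos^{2j}u$ to the contribution $2j\,I(p,2j)$. Combined with $I(p,4)=\frac{3}{p+4}I(p,2)$, this gives
\[
Q_3=-16\,I(p,2)+64\,I(p,4)=I(p,2)\Big(-16+\frac{192}{p+4}\Big),
\]
which is negative exactly when $p>8$, i.e.\ $s<\frac18$.

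Hence for small $\eps\neq0$ we get $E_s(\psi_\eps)<E_s(\id)$, while $\psi_\eps\to\id$ in $W^{s,\frac1s}$ and $\deg\psi_\eps=1$. This gives both claims of \Cref{th:strictinequality}. The main obstacle is finding the right direction: the naive low modes are nonnegative, and the computation only closes once one sees that the Beta-function identities make the bracket telescope. I would double-check the constants in the $k=3$ computation, since the threshold $p=8$ depends on them exactly.
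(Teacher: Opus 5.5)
Your proposal is correct and follows essentially the same route as the paper. You perturb $\id$ tangentially in the direction $\cos(3\theta)\,x^\perp$, show that the second variation has the sign of $8-p$, and conclude by Taylor expansion along a degree-one curve converging to $\id$. The differences are cosmetic: you perturb the phase directly instead of normalizing $x+\eps\varphi$, which is harmless because the tangent vectors agree and $\id$ is critical. Your Beta-function integration by parts also evaluates in closed form the integral the paper only checks with Mathematica, giving $Q_3=\frac{16(8-p)}{p+4}I(p,2)$, which after normalization agrees exactly with the paper's $(8-p)\,2^{p+1}\pi^{3/2}\,\Gamma\bigl(\frac{1+p}{2}\bigr)/\Gamma\bigl(3+\frac p2\bigr)$.
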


\Cref{th:strictinequality} follows from a precise computation of the second variation of $E_s$ at the identity, and the observation that it simply fails to be nonnegative\footnote{We note in passing that it seems that this is the case for all degrees $z^n: \S^1\to \S^1$, according to (AI generated, unverified, but if need be easily checkable trig integrals) computations, all of them loose positive definiteness at $s<\frac{1}{8}$, and the same seems to happen for slightly different thresholds in higher dimensions for $W^{s,\frac{d}{s}}(\S^d,\S^d)$} definite when $s<\frac{1}{8}$.

As we shall see in \Cref{th:stable-picture-multiplier}, when $s \in (\frac{1}{8},1)$, the second variation of the identity map is indeed positive definite, and when $s \in (\frac{1}{3},1)$ it controls a Sobolev norm. From this we conclude that in contrast to \Cref{th:strictinequality}, for $s \in (\frac{1}{3},1)$ the identity map $\id$ is at least a local minimizer:

\begin{theorem}[Identity is a local minimizer for $s \in (\frac{1}{3},1)$]\label{th:localminisid}
	Let $\frac{1}{3} < s_-  < s_+ < 1$. Then there exists $\eps= \eps(s_-,s_+)$ such that any degree one local minimizer $u \in W^{s,\frac{1}{s}}(\S^1,\S^1)$, $s \in [s_-,s_+]$ that satisfies $[u-\id]_{W^{s,\frac{1}{s}}} \leq \eps$ is actually a M\"obius transform, i.e. $u \in \mathcal{M}$.
\end{theorem}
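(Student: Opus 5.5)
We argue by a Lyapunov--Schmidt type reduction modulo the Möbius group, using the coercivity of the second variation at $\id$ announced in \Cref{th:stable-picture-multiplier}: for $s\in(\frac13,1)$ the quadratic form $\delta^2E_s(\id)[\varphi,\varphi]$, acting on tangent fields $\varphi=\i z\,\phi$ with $\phi\colon\S^1\to\R$, satisfies
\[
\delta^2E_s(\id)[\varphi,\varphi]\geq c(s)\,[\phi]_{W^{s,\frac1s}}^{2},
\]
at least on the orthogonal complement of the kernel. That kernel is the three-dimensional tangent space of $\mathcal M$ at $\id$, spanned by the rotation generator and the two generators of $m_b$; in Fourier terms these are the modes $e^{0\i\theta}$ and $e^{\pm 2\i\theta}$ of $\phi$ (up to the usual identification). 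We take $c(s)$ bounded below uniformly for $s\in[s_-,s_+]$; this follows from the explicit multiplier, which depends continuously on $s$.

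\textbf{Step 1 (gauge fixing).} Given $u$ with $[u-\id]_{W^{s,\frac1s}}\le\eps$, apply the implicit function theorem to the finite-dimensional map $(\lambda,b)\mapsto$ (projection of $(e^{\i\lambda}m_b)^{-1}\circ u$ onto the kernel modes). This produces $g\in\mathcal M$ close to $\id$ such that $v:=g^{-1}\circ u$ satisfies orthogonality conditions against the kernel. Since $E_s$ is invariant under domain Möbius transforms and target rotations (\Cref{la:mobius-invariance}), $v$ is again a degree one local minimizer with $[v-\id]_{W^{s,\frac1s}}\lesssim\eps$; the composition estimates here are routine, since $g$ is smooth and close to $\id$. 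Write $v=e^{\i\phi}\,z$, using degree one and smallness to lift the phase.

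\textbf{Step 2 (Taylor expansion).} Expand
\[
E_s(v)=E_s(\id)+\tfrac12\delta^2E_s(\id)[\phi,\phi]+R(\phi),
\]
where the first variation vanishes because $\id$ is critical. We must show $|R(\phi)|\le o(1)\,[\phi]_{W^{s,\frac1s}}^2$ as $[\phi]\to0$, uniformly in $s\in[s_-,s_+]$. The main obstacle is that for $p=\frac1s\in(1,3)$ the map $t\mapsto|t|^p$ is not $C^3$, and for $p<2$ it is not even $C^2$ at the origin. We would therefore expand around the nonvanishing vector $\id(x)-\id(y)$: write
\[
|e^{\i\phi(x)}x-e^{\i\phi(y)}y|^2=|x-y|^2+Q,
\]
and expand $(|x-y|^2+Q)^{p/2}$ to second order in $Q/|x-y|^2$. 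Where $|Q|\ll|x-y|^2$ the remainder is cubic; on the complementary bad set we use the crude bound $|a|^p$, which is controlled by $|\phi(x)-\phi(y)|^p$ and hence by a power of $[\phi]_{W^{s,p}}$ strictly larger than $2$ on that region. The restriction $s>\frac13$ (that is, $p<3$) should enter precisely here, and also in matching the $W^{s,p}$ seminorm with the quadratic form. Fractional Gagliardo--Nirenberg and Sobolev embeddings then give $R=o([\phi]^2)$.

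\textbf{Step 3 (conclusion).} Combining, $E_s(v)\ge E_s(\id)+\tfrac c4[\phi]^2$ for $\eps$ small. Since $v$ is a local minimizer, we test against competitors $e^{\i t\phi}z$ for $t\in[0,1]$. Strict convexity along this path (the second variation dominates on the orthogonal complement) and stationarity of $v$ force $\phi\equiv0$. More precisely, the Euler--Lagrange equation tested with $\phi$, together with coercivity, gives $c[\phi]^2\le o([\phi]^2)$. Hence $v=\id$ and $u=g\in\mathcal M$.
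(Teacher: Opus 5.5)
Your skeleton (fix the rotation/M\"obius gauge, lift the phase, and use that both endpoints of $\tau\mapsto e^{\i\tau\phi}z$ are critical) matches the paper's. However, Step 2, on which Step 3 rests entirely, does not hold as stated.

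\textbf{The coercivity is in the wrong norm.} By \Cref{th:stable-picture-multiplier} the form is $4\pi\sum_n M_{|n|}(p)|c_n|^2$ with $M_n(p)\asymp n^{3-p}$. So it is comparable to $[\phi]^2_{H^{\frac{3-p}{2}}}$ on the modes $|n|\geq2$, not to $[\phi]^2_{W^{s,\frac1s}}$. Indeed $[\cos(n\theta)]^2_{W^{s,\frac1s}}\asymp n^{2/p}$, and $3-p<\frac2p$ for $p\in(2,3)$, so your inequality fails for every $s\in(\frac13,\frac12)$. Also, the degenerate modes of $\phi$ are $0,\pm1$ ($M_0=M_1=0$), while the modes $\pm2$ are non-degenerate. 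Orthogonalizing against $e^{\pm2\i\theta}$ would leave the actual kernel uncontrolled.

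\textbf{The two-sided bound $|R(\phi)|=o(\text{quadratic part})$ is not available.} Set $d=\phi(x)-\phi(y)$ and fix $p\neq2$. Where $|d|\ll|x-y|$, the singular part of the Taylor remainder of $|\cdot|^p$ about the chord $x-y$ has relative size $\approx|p-2|\,|d|/|x-y|$ compared with the quadratic density $|d|^2|x-y|^{p-4}$. On $\{|d|\ageq|x-y|\}$ it is not small compared with that density. For $p<2$ the subtracted term $\sim|x-y|^{p-2}|d|^2$ even exceeds your bound $|d|^p$ there, and $[\phi]^p_{W^{s,\frac1s}}$ is then a power below $2$, hence larger than $[\phi]^2$. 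So $R=o(\cdot)$ requires control of the quotients $|d|/|x-y|$, i.e.\ a Lipschitz-type bound. No Gagliardo--Nirenberg argument extracts this from $W^{s,\frac1s}$-smallness: that space is critical, and its smallness does not even give $\|\phi\|_{L^\infty}\ll1$.

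\textbf{What the paper does instead.} First, it uses the regularity of local minimizers, which your proposal never invokes. $\eps$-regularity (\Cref{th:epsregularity}) gives a uniform $C^\alpha$ bound, hence $\|u-\id\|_{L^\infty}\ll1$ by interpolation and \Cref{la:poincareonS1}. For $p<2$, \Cref{th:Lipregularity} gives $[u]_{\lip}\aleq\Lambda/|p-2|$. This reduces the claim to \Cref{th:localminisidv2}.

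Second, it never estimates a remainder from both sides. After normalizing the phase $a$ so that its modes $0,\pm1$ vanish, it writes $0=\int_0^1\frac{\dd^2}{\dd\tau^2}\frac1pE_s(H_\tau)\dd\tau$ with the exact integrand
\[
|H_\tau(x)-H_\tau(y)|^{p-2}\bigl[p-1-\tfrac p4|H_\tau(x)-H_\tau(y)|^2\bigr]|x-y|^{-2}|a(x)-a(y)|^2 .
\]
On $\{|x-y|\leq\rho\}$ only a lower bound is used. The bracket is $\geq\frac{p-1}{2}$, and $\int_0^1|H_\tau(x)-H_\tau(y)|^{p-2}\dd\tau\geq c_\Lambda|x-y|^{p-2}$. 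For $p\geq2$ this follows from \Cref{la:eq:averaged-chord-power}, with no Lipschitz bound needed. For $p<2$ it follows from \Cref{la:p-less-2-lip-chord-term}, and this is where the rate $|p-2|^{-1}$ matters. On $\{|x-y|\geq\rho\}$ the deviation from the kernel at $\id$ is at most $C_\rho\|a\|_{L^\infty}[a]^2_{H^{\frac{3-p}{2}}}$.

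Finally, \Cref{co:la:truncated-coercivity-one-kernel} shows that any positive multiple of the near-diagonal part, plus half of the far part of the form at $\id$, still dominates $[a]^2_{H^{\frac{3-p}{2}}}$. This, together with $M_n\asymp n^{3-p}$, is where $s>\frac13$ enters. The result is $0\geq\bar c[a]^2_{H^{\frac{3-p}{2}}}$, hence $a\equiv0$.
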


Of course, it remains the question if $\id\colon \S^1 \to \S^1 $ is a \emph{global} degree minimizer, As we see from \Cref{th:strictinequality} it is not obvious at all why the identity should be the minimizer for all $s$. However, we obtain the following which is the main theorem of this work.

	\begin{theorem}\label{th:minisId}
		There exists $\delta > 0$ such that for $\S^1 \to \S^1$ maps and $s \in (1/2-\delta,1/2+\delta)$ then $\#_s 1 = E_s(\id)$. Moreover, any other degree one minimizer $u$ is a M\"obius transform.
	\end{theorem}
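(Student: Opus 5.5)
We argue by contradiction and compactness in $s$, combining the stability theory of \cite{MS23} with the local minimality of \Cref{th:localminisid}. Suppose there are $s_k \to \frac12$ and degree one maps that beat the identity, so $\#_{s_k} 1 < E_{s_k}(\id)$. By \cite{MS23}, $s\mapsto \#_s 1$ is continuous, and for $s$ close to $\frac12$ minimizers $u_k$ exist: $E_{s_k}(u_k)=\#_{s_k}1$ and $\deg u_k=1$. We may first compose with a M\"obius transform, using \Cref{la:mobius-invariance}, to normalize each $u_k$. A natural choice is a balancing condition such as $\int_{\S^1} u_k = 0$, or a fixed distribution of energy on half circles. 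This removes the noncompact M\"obius group.

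The next step is to show $[u_k-\id]_{W^{s_k,1/s_k}}\to 0$. The energies $E_{s_k}(u_k)$ converge to $\#_{1/2}1=4\pi^2$. The Sacks--Uhlenbeck theory of \cite{Sucks23} and \cite{MS23} gives uniform regularity, and hence compactness, away from finitely many bubbling points. Bubbling is excluded by an energy count. Any bubble or the weak limit carrying nonzero degree costs at least $4\pi^2$ in the limit. Degree one can only be carried by a single piece, and the normalization prevents that piece from concentrating into a bubble. So $u_k \to u_\infty$ strongly, and $u_\infty$ is a degree one $W^{1/2,2}$ minimizer. By the $s=\frac12$ theory \cite{BMRS14,MP04}, $u_\infty\in\mathcal M$, and after normalization $u_\infty=\id$ up to rotation, which we quotient out. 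The convergence must be upgraded to the varying spaces $W^{s_k,1/s_k}$. For this we plan to use the uniform $\epsilon$-regularity estimates from the Euler--Lagrange equation (in \cite{MS23}), which give $C^{\alpha}$ or higher convergence uniform in $k$. That in turn yields $[u_k-\id]_{W^{s_k,1/s_k}}\to0$.

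Now the conclusion follows. For large $k$ we have $s_k\in[s_-,s_+]$ with, say, $s_\pm=\frac12\pm\frac1{10}$, which lies inside $(\frac13,1)$. The map $u_k$ is a global, hence local, degree one minimizer with $[u_k-\id]_{W^{s_k,1/s_k}}\le \eps(s_-,s_+)$. So \Cref{th:localminisid} gives $u_k\in\mathcal M$, and therefore $E_{s_k}(u_k)=E_{s_k}(\id)$ by M\"obius invariance. This contradicts the strict inequality. Running the same argument on an arbitrary sequence of minimizers, with no strict inequality assumed, shows that every minimizer for $s$ near $\frac12$ is M\"obius. A quantitative version might require a uniform $\delta$, but the contradiction argument gives existence of $\delta$ directly.

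The main obstacle is the compactness step uniformly in the varying exponent. We must pass from weak or energy convergence to smallness in the $s_k$-dependent seminorm, while the M\"obius degeneracy prevents concentration only after a careful choice of normalization. We would first try the balancing normalization $\int u_k=0$, which is attainable by a degree argument on the disc of parameters $b$. Then we would use the uniform $\epsilon$-regularity to rule out energy concentration at points.
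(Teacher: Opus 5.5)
Your proof is correct in outline and has the same two-stage structure as the paper's proof. Stage one shows that a degree-one minimizer for $s$ near $\frac12$ is $W^{s,\frac1s}$-close to $\id$ after a rotation and a M\"obius reparametrization. Stage two applies local rigidity near $\id$. The paper applies \Cref{th:Lipregularity} and \Cref{th:localminisidv2} directly, and that combination is exactly what \Cref{th:localminisid} packages. Since $s_k\to\frac12$, you can shrink $[s_-,s_+]$ as needed for the Lipschitz input.

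The real difference is stage one. The paper gets it quantitatively in \Cref{th:stable}. Continuity of $s\mapsto \#_s 1$, together with the M\"obius-invariant bound $[u]_{W^{s_0,1/s_0}}\le C$ of \Cref{th:regularity}, makes $E_{\frac12}(u)-\#_{\frac12}1$ small. The stability inequality of \cite{DSW23} (\Cref{th:H12stable}) then hands you the M\"obius map $\phi$ with $[u-\phi]_{W^{1/2,2}}$ small. No normalization and no bubble analysis are needed; interpolation and $\eps$-regularity then upgrade the closeness.

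You argue by contradiction instead:
\begin{itemize}
\item you balance by $\int_{\S^1}u_k=0$ (your degree argument on the closed parameter disc works, since $m_b(z)\to\omega$ for $z\neq-\omega$ as $b\to\omega$);
\item you pass to a limit using uniform $\eps$-regularity;
\item you exclude bubbles by an energy count plus the normalization;
\item you identify the limit by the equality case at $s=\frac12$, together with $\int_{\S^1}m_b=2\pi b$.
\end{itemize}
This needs only rigidity at $s=\frac12$, not stability, but it gives no rate.

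Its crux is also heavier than you present it. The step ``degree one can only be carried by a single piece'' is a degree (no-neck) identity for the bubble decomposition of minimizers with varying exponent $s_k\to\frac12$. It does not follow from $\eps$-regularity and Fatou, and would have to be imported from the energy-identity theory of \cite{MS23}; you should check that it is available there for varying $s$, not just fixed $s$.

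A cheaper fix is to use \Cref{th:regularity}. It gives $E_{\frac12}(\widetilde u_k)-E_{s_k}(\widetilde u_k)\to0$, so $(\widetilde u_k)$ is a degree-one minimizing sequence for the fixed energy $E_{\frac12}$. Then only compactness modulo M\"obius at $s=\frac12$ is needed, which is the qualitative version of the paper's \Cref{th:stable}.
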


Interestingly, from the above we also obtain the \emph{existence} of degree $2$ minimizers for $s \in (1/2-\delta,1/2+\delta)$ --- providing progress towards \cite[Open Problem 24]{BM-book}.
	\begin{corollary}
	There exists $\delta > 0$ such that the infimum $\#_s 2$ is attained for all $s \in (\frac{1}{2}-\delta,\frac{1}{2}+\delta)$.
	\end{corollary}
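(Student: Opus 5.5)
The strategy is to use the strict subadditivity criterion from Sacks--Uhlenbeck/concentration-compactness theory for $W^{s,\frac1s}$-maps developed in \cite{Sucks23}. This criterion says the infimum $\#_s d$ is attained whenever
\[
\#_s 2 < \#_s 1 + \#_s 1 ,
\]
more generally whenever $\#_s d < \#_s d_1 + \#_s (d-d_1)$ with $\#_s 0 = 0$. The reason is that a minimizing sequence either converges weakly to a map of the same degree, or it splits energy into bubbles of lower degree. I will assume this criterion is available from \cite{Sucks23}.

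By \Cref{th:minisId}, $\#_s 1 = E_s(\id)$ for $s$ near $\frac12$. Only two splittings of degree $2$ are possible.

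\emph{Splitting into two degree one bubbles.} The available energy is $2E_s(\id)$. We need a competitor with strictly less energy, and the natural choice is $z^2$. At $s=\frac12$ the energies coincide, $E_{1/2}(z^2) = 8\pi^2 = 2E_{1/2}(\id)$, so equality holds and there is no strict inequality there. I therefore do not expect $z^2$ itself to beat $2E_s(\id)$ for $s\neq\frac12$ on both sides.

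Instead I will use a glued competitor $u_\eps$. It consists of a copy of $\id$ together with a Möbius-concentrated bubble $m_b$, $|b|\to1$, glued in. The interaction term in the nonlocal energy is expected to be strictly negative at leading order. This is standard for fractional energies: the energy of two far-apart bubbles is less than the sum because of the cross term $\iint |\psi(x)-\psi(y)|^{1/s}|x-y|^{-2}$ with the $\frac1s$-power. Note the non-additivity of $|a+b|^{p}$: for $p=\frac1s$ near $2$ the expansion involves $2\langle a,b\rangle$ terms. I would compute this expansion at $s=\frac12$ exactly via Fourier, and show the cross term is strictly negative with a quantitative gap $c>0$ uniform for $s$ near $\frac12$.

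\emph{Splitting into degree $3$ plus degree $-1$.} This case is handled by trivial estimates: $\#_s 3 + \#_s(-1) \ge \#_s 2 + c$ near $s=\frac12$, by continuity of $s\mapsto\#_s d$ \cite{MS23} and the exact values $4\pi^2|d|$ at $s=\frac12$.

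The main obstacle is the first splitting, because at $s=\frac12$ it is only an equality ($\#_{1/2}2 = 2\#_{1/2}1$). If the bubble-interaction term is genuinely negative, then for fixed bubble scale and $s$ near $\frac12$ the energy $E_s(u_\eps)$ depends continuously on $s$. Hence
\[
\#_s 2 \le E_s(u_\eps) < 2E_s(\id) = 2\#_s 1
\]
for $|s-\frac12|<\delta$, with $\delta$ possibly shrunk. If instead the interaction vanishes at $s=\frac12$, which the Hilbert case suggests since all the maps $z^2\circ$Möbius are minimizers, I would instead compute $\frac{d}{ds}$ at $s=\frac12$ of $E_s(z^2)-2E_s(\id)$. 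Using $\partial_s |t|^{1/s} = -s^{-2}|t|^{1/s}\log|t|$, this derivative is an explicit trig integral. The point is to exploit that $\#_s 2 \le \min$ over the whole family of degree two Blaschke products, which is a richer family than $\{z^2\}$. One then shows the minimum over this family stays strictly below $2E_s(\id)$ for $s\neq \frac12$ small, and handles $s=\frac12$ by the known attainment.
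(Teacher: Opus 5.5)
There is a genuine gap. Your argument hinges on proving the strict inequality $\#_s 2<2\#_s 1$ for $s\neq\frac12$ near $\frac12$, and none of the routes you sketch establishes it.

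The glued-competitor route cannot work as stated. At $s=\frac12$ one has $\#_{1/2}2=8\pi^2=2\#_{1/2}1$, so no degree-two map satisfies $E_{1/2}<2E_{1/2}(\id)$. Hence an interaction gap $c>0$ uniform for $s$ near $\frac12$ is impossible. For $s\neq\frac12$ the sign of the interaction is not ``standard'', and you give no computation for it.

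Your fallback is vacuous. By \Cref{la:energy-zk}, $E_s(z^k)=C_s|k|$ for \emph{every} $s\in(0,1)$. This follows from splitting $[0,\pi]$ into $k$ pieces and using $\sum_{j=0}^{k-1}\sin^{-2}(x+j\pi/k)=k^2\sin^{-2}(kx)$. So $E_s(z^2)-2E_s(\id)\equiv 0$, and its $s$-derivative carries no information.

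The remaining claim, that the minimum over degree-two Blaschke products drops strictly below $2E_s(\id)$ on both sides of $\frac12$, is unproven and may well be false. The paper explicitly does not know whether $z^2$ is the degree-two minimizer.

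The missing idea is that you do not need strict subadditivity at all. By \Cref{la:energy-zk} and \Cref{th:minisId}, $\#_s2\le E_s(z^2)=2E_s(z)=2\#_s1$ for all $s$ near $\frac12$. Suppose $\#_s2$ were not attained. After ruling out bubbles of large degree, Sacks--Uhlenbeck theory \cite{Sucks23} and the energy identity \cite{MS23} force $\#_s2\ge 2\#_s1$. Hence $\#_s2=2\#_s1=E_s(z^2)$, so $z^2$ attains the infimum, which is a contradiction. The equality case you were trying to exclude is exactly the case in which $z^2$ is a minimizer.

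A secondary issue: your claim that only two splittings are possible is false. A bubble decomposition can contain arbitrarily many pieces with degrees summing to $2$, e.g.\ $4+(-2)$, $1+1+1+(-1)$, or a single degree-two bubble over a constant weak limit (which is then itself a minimizer). Continuity of $s\mapsto\#_s d$ handles only finitely many $d$. For large $|d|$ you need the uniform bound $|d|\le C\#_s d$ near $s=\frac12$ from \cite{MS23}, as the paper uses.
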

	\begin{proof}
	By \cite{MS23}, $s \mapsto \#_s d$ is locally uniformly continuous, moreover around $s \approx \frac{1}{2}$ there exists a uniform constant so that $|d| \leq C \#_s d$. Thus we can choose $s$ sufficiently close to $\frac{1}{2}$ so that for $\#_s d > \#_s 2$ for all $s \approx \frac{1}{2}$ and all $|d| \geq 3$. If for some $s \approx \frac{1}{2}$ the degree $2$-minimizer was not attained, then by Sacks--Uhlenbeck theory, see \cite{Sucks23}, and the energy identity, see \cite{MS23}, we must have $\#_s 2 = 2\#_s 1$. However, by \Cref{th:minisId} and \Cref{la:energy-zk} $E_s(z^2) = 2E_s(z) = 2\#_s 1=\#_s 2$. Thus $z^2$ attains the minimum, contradiction. Let us stress that this does \emph{not} imply that $z^2$ is the minimizer!.
	\end{proof}

{\bf Outline}
We begin with preliminaries and basic computations in \Cref{s:preliminaries}, in particular we compute the first and second variation at the identity. In \Cref{s:idnotmin} we prove that $\id$ is not minimizing, which follows from the fact that the second variation has negative directions. The arguments are of elementary but tedious computational nature.

In \Cref{s:approxstability} we develop a crucial ingredient, which we call approximate stability, \Cref{th:stable}. It shows that when $s \approx \frac{1}{2}$ and $u$ is a minimizer of degree $1$ for $E_s$ then $u$ is close to a M\"obius transform in a very strong sense. It uses a combination of the $L^2$-stability result in \cite{DSW23} and $s$-stability arguments in \cite{MS23}.

In \Cref{s:positivity} we show that the second variation is nonnegative, with precise estimates. Again, the argument is mostly computational, elementary yet tedious.

When establishing stability, for $p<2$, we need a precise bound on the Lipschitz regularity for minimizing $E_s$-harmonic maps, and somewhat more than predicted in the classical regularity theory for the fractional $p$-Laplacian. This estimate is provided in \Cref{s:lipschitzreg}.

In \Cref{s:idmini} we then conclude the main result, that $\id$ is indeed a minimizer by a careful analysis of the second variation in all regimes of $p \in (1,3)$. In \Cref{s:localstable} we then discuss corollaries for the stability.

\subsection*{Acknowledgement}
Part of this work was carried out while K.M.\ and A.S.\ were visiting University of Bielefeld. Further mutual visits between the author's institutions are gratefully acknowledged. A.S.\ was an Alexander-von-Humboldt Fellow. A.S. is funded by NSF Career DMS-2044898. D.M. is funded by Swiss National Science Foundation, project SNF $200020\textunderscore 219429$.

The project is co-financed by the Polish National Agency for Academic Exchange within Polish Returns Programme - BPN/PPO/2021/1/00019/U/00001 (K.M.). The project is co-financed by National Science Centre grant 2022/01/1/ST1/00021 (K.M.).

Part of the work leading to this article is assisted by chatgpt. All mathematical validation, final proof decisions, and final wording remain the sole responsibility of the human authors.

\section{Preliminaries}\label{s:preliminaries}
We begin with some elementary computations and estimates, all well known or easy to obtain.

\begin{lemma}[Energy of the maps $z \mapsto z^k$]\label{la:energy-zk}
Let $0<s<1$, and let
\[
E_s(u)
\coloneqq 
\int_{\S^1}\int_{\S^1}
\frac{\abs{u(z)-u(w)}^{1/s}}{\abs{z-w}^2}
\,\dd z\,\dd w,
\]
where $\dd z,\dd w$ denote arclength measure on $\S^1$, and where $\abs{z-w}$
is the Euclidean distance in $\C$. For $k\in\Z$, set $u_k(z)\coloneqq z^k$.
Then it holds
\[
E_s(u_k)=C_s\abs{k},
\]
where
\[
C_s
=
2^{1/s}\pi^{3/2}
\frac{
\Gamma\!\left(\frac{1-s}{2s}\right)
}{
\Gamma\!\left(\frac{1}{2s}\right)
}.
\]
\end{lemma}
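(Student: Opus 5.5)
The plan is to reduce everything to a one-dimensional integral and then use the partial fraction expansion of $1/\sin^2$. This expansion turns the periodic integral into an integral over $\R$, where the dependence on $k$ comes out by scaling. Throughout write $p \coloneqq \frac{1}{s} > 1$. The case $k=0$ is trivial, since then $u_k$ is constant and both sides vanish. Since $|z^{-k}-w^{-k}| = |\overline{z^k}-\overline{w^k}| = |z^k-w^k|$ on $\S^1$, I may assume $k \geq 1$.

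\textbf{Step 1 (reduction to one variable).} Parametrize $z=e^{\i\theta}$, $w=e^{\i\varphi}$. Then $|z-w| = 2|\sin(\frac{\theta-\varphi}{2})|$ and $|z^k-w^k| = 2|\sin(\frac{k(\theta-\varphi)}{2})|$. The integrand depends only on $t=\theta-\varphi$ modulo $2\pi$, so one integration gives a factor $2\pi$. Substituting $u=t/2$ then gives
\[
E_s(u_k) = 2\pi \int_0^{2\pi} \frac{2^p |\sin(kt/2)|^p}{4\sin^2(t/2)}\dd t = \pi\, 2^{p} \int_0^{\pi} \frac{|\sin(ku)|^p}{\sin^2 u}\dd u .
\]
The integral converges because near $u=0,\pi$ the integrand behaves like $|u|^{p-2}$ (respectively $|u-\pi|^{p-2}$), and $p>1$.

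\textbf{Step 2 (unfolding, the key step).} I use the classical identity $\frac{1}{\sin^2 u} = \sum_{n\in\Z} \frac{1}{(u-n\pi)^2}$. The function $f(u)=|\sin(ku)|^p$ is $\pi$-periodic and nonnegative, so by monotone convergence
\[
\int_0^\pi \frac{f(u)}{\sin^2 u}\dd u = \sum_{n\in\Z}\int_0^\pi \frac{f(u-n\pi)}{(u-n\pi)^2}\dd u = \int_\R \frac{|\sin(ku)|^p}{u^2}\dd u .
\]
Substituting $v=ku$ gives $\int_\R \frac{|\sin(ku)|^p}{u^2}\dd u = k\int_\R \frac{|\sin v|^p}{v^2}\dd v$. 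This proves linearity in $|k|$.

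\textbf{Step 3 (the constant).} Running the unfolding of Step 2 backwards with $k=1$ gives $\int_\R \frac{|\sin v|^p}{v^2}\dd v = \int_0^\pi \sin^{p-2}v \dd v$. By the Beta function this equals $\sqrt{\pi}\,\Gamma(\frac{p-1}{2})/\Gamma(\frac p2)$. With $p=1/s$ we have $\frac{p-1}{2} = \frac{1-s}{2s}$, which yields
\[
E_s(u_k) = |k|\, 2^{1/s}\pi^{3/2}\frac{\Gamma(\frac{1-s}{2s})}{\Gamma(\frac{1}{2s})},
\]
as claimed.

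The only genuinely non-routine point is the unfolding in Step 2. Without it, the integral $\int_0^\pi |\sin ku|^p/\sin^2u\,\dd u$ does not visibly scale linearly in $k$ for $p\neq 2$. The remaining care is bookkeeping of the powers of $2$ and checking the convergence condition $p>1$.
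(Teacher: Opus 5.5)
Your proof is correct. The paper states this lemma without any proof, grouping it with computations described as ``well known or easy to obtain,'' so your argument supplies what the paper omits rather than offering an alternative to an existing route.

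The individual steps all check out:
\begin{itemize}
\item The integrand is $2\pi$-periodic in $t=\theta-\varphi$ precisely because $k\in\Z$.
\item The constant bookkeeping $2\pi\cdot 2\cdot 2^{p}/4=\pi 2^p$ is right.
\item Exchanging sum and integral in the unfolding is justified by Tonelli, since all terms are nonnegative.
\item The $\pi$-periodicity of $|\sin(ku)|^p$ is exactly what makes the shifted pieces tile $\R$.
\item The identity $\int_0^\pi\sin^{p-2}v\,\dd v=B(\tfrac{p-1}{2},\tfrac12)$ gives the stated constant.
\end{itemize}
As a consistency check, $s=\frac12$ yields $C_{1/2}=4\pi^2$, which matches the paper's $\#_{\frac12}d=4\pi^2|d|$.

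If you want to avoid the infinite series, the linearity also follows from its finite analogue. For $z,w\in\S^1$ and $\omega=e^{2\pi\i/k}$ one has $\sum_{j=0}^{k-1}|z-\omega^jw|^{-2}=k^2|z^k-w^k|^{-2}$. Average the $w$-variable over the rotations $w\mapsto\omega^jw$, which leave $w^k$ unchanged. This gives $E_s(u_k)=k\iint|z^k-w^k|^{p-2}\,\dd z\,\dd w$. Since $z\mapsto z^k$ pushes arclength forward to arclength, this equals $k\iint|z-w|^{p-2}\,\dd z\,\dd w=k\,E_s(u_1)$. The constant is then computed exactly as in your Step 3.
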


\begin{lemma}[Invariance under rotations and boundary Möbius maps]
\label{la:mobius-invariance}
Let $0<s<1$, set $p=\frac1s$, and let
\[
E_s(u)
\coloneqq 
\iint_{\S^1\times \S^1}
\frac{|u(x)-u(y)|^p}{|x-y|^2}\,\dx\,\dy .
\]
Let $b\in\mathbb C$ with $|b|<1$, and define
\[
m_b(z)\coloneqq \frac{z+b}{1+\overline b z}.
\]
Let $\lambda\in\mathbb R$, and for
$u\in W^{s,\frac1s}(\mathbb S^1,\R^2)$ define
\[
\widetilde u(z)\coloneqq e^{-\i\lambda}\, u(m_b(z)).
\]
Then, it holds
\begin{equation}\label{eq:compmoebiustransform}
E_s(\widetilde u)=E_s(u).
\end{equation}

Moreover, $u$ is a critical point of $E_s$ as a map into $\S^1 \to \S^1$ if and
only if $\widetilde u$ is a critical point of $E_s$ as a map into $\mathbb S^1$.
\end{lemma}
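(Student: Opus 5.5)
The plan is to use one identity for boundary Möbius maps on $\S^1$ and then change variables. For $|b|<1$ and $z,w\in\S^1$ we have
\[
 m_b(z)-m_b(w) = \frac{(1-|b|^2)(z-w)}{(1+\overline b z)(1+\overline b w)},
 \qquad |m_b'(z)| = \frac{1-|b|^2}{|1+\overline b z|^2}.
\]
Combining the two gives
\begin{equation}\label{eq:mobiusdistance}
 |m_b(z)-m_b(w)|^2 = |m_b'(z)|\,|m_b'(w)|\,|z-w|^2 .
\end{equation}
Since $m_b$ maps $\S^1$ diffeomorphically onto itself, arclength transforms as $\dd m_b(z) = |m_b'(z)|\dd z$.

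\emph{Invariance of the energy.} Set $x=m_b(z)$ and $y=m_b(w)$. Because $|e^{-\i\lambda}|=1$, the target rotation drops out of $|\widetilde u(z)-\widetilde u(w)|$. By \eqref{eq:mobiusdistance},
\[
 \frac{|u(m_b z)-u(m_b w)|^p}{|z-w|^2}\,\dd z\,\dd w
 = \frac{|u(x)-u(y)|^p}{|x-y|^2}\, |m_b'(z)||m_b'(w)|\,\dd z\,\dd w
 = \frac{|u(x)-u(y)|^p}{|x-y|^2}\,\dx\,\dy .
\]
Integrating over $\S^1\times\S^1$ yields \eqref{eq:compmoebiustransform}. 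This is an equality of nonnegative measurable integrands under a bi-Lipschitz change of variables (Tonelli), so it holds whether or not the integrals are finite. It also shows that $u\mapsto \widetilde u$ maps $W^{s,\frac1s}$ to itself: the $L^p$ part is preserved up to the bounded Jacobian, and the seminorm is exactly preserved.

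\emph{Criticality.} Let $T(u)\coloneqq e^{-\i\lambda}u\circ m_b$. This map is linear, bijective, and its inverse $T^{-1}(v)=e^{\i\lambda}v\circ m_b^{-1}$ is of the same form, since $m_b^{-1}=m_{-b}$. It preserves the constraint $|u|=1$ and maps admissible variations to admissible variations. Concretely, if $u_t$ is a smooth variation of $u$ in $W^{s,\frac1s}(\S^1,\S^1)$, for instance $u_t=\frac{u+t\varphi}{|u+t\varphi|}$ with $\varphi\in C^\infty$, then $T(u_t)$ is a variation of $\widetilde u$ of the same type. The test function becomes $e^{-\i\lambda}\varphi\circ m_b$, which is again smooth because $m_b$ is a smooth diffeomorphism of $\S^1$ when $|b|<1$.

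We have $E_s(T u_t)=E_s(u_t)$ for every $t$, so $\frac{\dd}{\dd t}E_s(Tu_t)|_{t=0}=\frac{\dd}{\dd t}E_s(u_t)|_{t=0}$. Since $T$ is a bijection between the two classes of variations, one first variation vanishes for all admissible variations exactly when the other does. Applying the same argument to $T^{-1}$ gives the converse direction.

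The only step needing care is the algebraic identity \eqref{eq:mobiusdistance}, which is a direct computation using $\bar z=1/z$ on $\S^1$. Checking that variations correspond under $T$ is routine.
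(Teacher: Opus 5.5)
Your proof is correct and is essentially the argument the paper has in mind: the paper only cites the standard change-of-variables computation (Brezis--Mironescu's ``Basic Example''). That computation rests on your chord identity $|m_b(z)-m_b(w)|^2=|m_b'(z)|\,|m_b'(w)|\,|z-w|^2$ together with $\dd(m_b(z))=|m_b'(z)|\dd z$ on $\S^1$. The paper gives no argument for the criticality equivalence, and your observation that $T(u_t)=\frac{\widetilde u+t\,e^{-\i\lambda}\varphi\circ m_b}{|\widetilde u+t\,e^{-\i\lambda}\varphi\circ m_b|}$ with $E_s(Tu_t)=E_s(u_t)$ for all $t$ fills that gap correctly.
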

\begin{proof}
The proof of \eqref{eq:compmoebiustransform} is elementary, we refer, e.g., to \cite[Basic Example, p. 367]{BM-book}.
\end{proof}

The following is also well known.
\begin{lemma}\label{la:RtoS1}
Let
\[
\tau\colon \mathbb R\to \mathbb S^1\setminus\{(0,-1)\}
\]
be given by
\[
\tau(x)
=
\left(
\frac{2x}{1+x^2},
\frac{1-x^2}{1+x^2}
\right).
\]
Let $u\colon \mathbb S^1\to \mathbb S^1$ be a measurable map such that
\[
\iint_{\S^1\times \S^1}
\frac{|u(\xi)-u(\eta)|^2}{|\xi-\eta|^2}
\,\dd\sigma(\xi)\,\dd\sigma(\eta)
<\infty,
\]
where $\dd\sigma$ denotes arclength measure on $\mathbb S^1$, and where
$|\xi-\eta|$ denotes the Euclidean distance in $\mathbb R^2$.

We define $v\colon \mathbb R\to \mathbb S^1$ by $v(x)=u(\tau(x))$.
Then $v\in \dot H^{1/2}(\mathbb R;\mathbb R^2)$ and we have
\[
\int_{\mathbb R}\int_{\mathbb R}
\frac{|v(x)-v(y)|^2}{|x-y|^2}
\,\dx\,\dy
=
\iint_{\S^1\times \S^1}
\frac{|u(\xi)-u(\eta)|^2}{|\xi-\eta|^2}
\,\dd\sigma(\xi)\,\dd\sigma(\eta).
\]
\end{lemma}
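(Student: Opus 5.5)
The plan is a change of variables $\xi=\tau(x)$, $\eta=\tau(y)$ in the double integral on $\S^1\times\S^1$, combined with the observation that the kernel $|\xi-\eta|^{-2}\dd\sigma(\xi)\dd\sigma(\eta)$ is conformally invariant under the stereographic projection $\tau$.

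\textbf{Step 1: the Jacobian of $\tau$.} Differentiating,
\[
\tau'(x)=\Big(\tfrac{2(1-x^2)}{(1+x^2)^2},\,\tfrac{-4x}{(1+x^2)^2}\Big),
\]
so $|\tau'(x)|=\frac{2}{1+x^2}$. Hence $\dd\sigma(\tau(x))=\frac{2}{1+x^2}\dx$. Moreover, $\tau$ is a smooth bijection of $\R$ onto $\S^1\setminus\{(0,-1)\}$, and the removed point has measure zero.

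\textbf{Step 2: the distance identity.} I claim that
\[
|\tau(x)-\tau(y)|^2=\frac{4|x-y|^2}{(1+x^2)(1+y^2)}.
\]
To check it, expand $|\tau(x)-\tau(y)|^2=2-2\,\tau(x)\cdot\tau(y)$. Then
\[
(1+x^2)(1+y^2)\,\big(1-\tau(x)\cdot\tau(y)\big)=(1+x^2)(1+y^2)-4xy-(1-x^2)(1-y^2)=2(x-y)^2 .
\]
This step is the only real computation in the argument, and it is routine. Alternatively, one can view $\tau$ as the restriction of a Möbius map of $\hat{\C}$ sending $\R$ to $\S^1$, for which such an identity is standard.

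\textbf{Step 3: combining.} Putting Steps 1 and 2 together gives
\[
\frac{\dd\sigma(\xi)\,\dd\sigma(\eta)}{|\xi-\eta|^2}
=\frac{\frac{4}{(1+x^2)(1+y^2)}\dx\dy}{\frac{4|x-y|^2}{(1+x^2)(1+y^2)}}
=\frac{\dx\dy}{|x-y|^2}.
\]
Since the integrands are nonnegative, Tonelli's theorem and the change of variables formula yield the asserted equality of the two double integrals, valid in $[0,\infty]$. Finiteness of the right-hand side then gives $[v]_{\dot H^{1/2}(\R)}<\infty$. Finally, $v$ is measurable and bounded (it takes values in $\S^1$), so $v\in\dot H^{1/2}(\R;\R^2)$ in the sense of a finite Gagliardo seminorm.

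There is no genuine obstacle. The only care needed is to justify the change of variables for measurable nonnegative integrands and to note that the missing point $(0,-1)$ is null. If $\dot H^{1/2}$ is defined as a completion rather than via the seminorm, one additionally observes that bounded functions with finite seminorm belong to it, by a standard density argument.
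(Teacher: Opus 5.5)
Your proof is correct: the identities $|\tau'(x)|=\frac{2}{1+x^2}$ and $|\tau(x)-\tau(y)|^2=\frac{4|x-y|^2}{(1+x^2)(1+y^2)}$ show that $|\xi-\eta|^{-2}\dd\sigma(\xi)\dd\sigma(\eta)$ pulls back exactly to $|x-y|^{-2}\dx\dy$, and the change of variables for nonnegative integrands then gives the equality in $[0,\infty]$, since the point $(0,-1)$ and the diagonal are null sets. The paper gives no proof of this lemma and records it as well known; your computation, which amounts to the conformal invariance of this kernel under stereographic projection, is the standard argument behind that claim.
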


\subsection{First and second variation of the identity map}

	For $\psi\colon \S^1 \to \S^1 \subset \R^2$ set 
	\[   
	 E_s(\psi) := \iint_{\S^1\times \S^1} \frac{|\psi(x)-\psi(y)|^{\frac{1}{s}}}{|x-y|^{2}}\, \dx\, \dy.
	\]
	We remind ourselves once more that $|\cdot|$ denotes the Euclidean distance in $\R^2$, so that this energy becomes conformally invariant.

\begin{lemma}[First and second variation at the identity]\label{la:firstsecondvariation}
Let $p=\frac1s>1$.
Let $\id\colon \mathbb S^1\to\mathbb S^1$ be the identity map. Let $\varphi\colon\mathbb S^1\to\mathbb R^2$ be smooth, and define
\[
F_\varepsilon(x)
\coloneqq 
\frac{x+\varepsilon\varphi(x)}
{|x+\varepsilon\varphi(x)|}.
\]
We denote $\eta(x)\coloneqq \langle \varphi(x),x^\perp\rangle$. Then we have
\[
\left.\frac{\dd}{\dd\varepsilon}\right|_{\varepsilon=0}
E_s(F_\varepsilon)\equiv\delta E_s(\id)[x^\perp \eta]=0,
\]
and
\[
\left.
\frac{\dd^2}{\dd\varepsilon^2}
\right|_{\varepsilon=0}
\frac1p E_s(F_\varepsilon)
\equiv \frac{1}{p} \delta^2 E_s(\id)[x^\perp \eta,x^\perp \eta] \eqqcolon
\mathcal{Q}_p(\eta),
\]
where for $\eta\colon \S^1 \to \R$
\begin{equation}\label{eq:Qpeta}
\begin{split}
\mathcal{Q}_p(\eta)
=
\iint_{\S^1\times \S^1}
\frac{
\left[
(p-2)|\langle x,y^\perp\rangle|^2 |x-y|^{p-4}
+
|x-y|^{p-2}\langle x,y\rangle
\right]
|\eta(x)-\eta(y)|^2
}{|x-y|^2}
\dx\dy .
\end{split}
\end{equation}
With a slight abuse of notation we denote the polarized version, for $\eta,\zeta\colon\mathbb S^1\to\mathbb R$, as
\[
\begin{split}
\mathcal{Q}_p(\eta,\zeta)
\coloneqq 
\iint_{\S^1\times \S^1}
\frac{
\left[
(p-2)|\langle x,y^\perp\rangle|^2 |x-y|^{p-4}
+
|x-y|^{p-2}\langle x,y\rangle
\right]
(\eta(x)-\eta(y))(\zeta(x)-\zeta(y))
}{|x-y|^2}
\dx \dy .
\end{split}
\]
\end{lemma}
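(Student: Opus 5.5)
We will expand $E_s(F_\varepsilon)$ in $\varepsilon$ to second order, pointwise in the integrand, and then symmetrize. Write $F_\varepsilon(x) = x+\varepsilon\varphi(x)$ normalized. Since $|x|=1$, we have $|x+\varepsilon\varphi|^{-1} = 1-\varepsilon\langle \varphi,x\rangle + O(\varepsilon^2)$. A direct expansion gives
\[
F_\varepsilon(x) = x + \varepsilon \eta(x) x^\perp + \varepsilon^2\Big(-\tfrac12 \eta(x)^2 x + \text{(terms involving } \langle\varphi,x\rangle \text{ times } x^\perp)\Big)+O(\varepsilon^3).
\]
The tangential second-order terms are awkward. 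To handle them, we first observe that $F_\varepsilon$ depends on $\varphi$ only through its direction modulo $x$. More precisely, replacing $\varphi$ by its tangential part changes $F_\varepsilon$ only by a reparametrization in $\varepsilon$ of the form $\varepsilon\mapsto \varepsilon+O(\varepsilon^2)$ in angle. At a critical point this does not affect the second derivative. Cleaner still, we will write $F_\varepsilon(x)=x e^{\i\theta_\varepsilon(x)}$ with $\theta_\varepsilon = \varepsilon\eta + \varepsilon^2 \beta + O(\varepsilon^3)$, where $\beta$ is some smooth function.

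Then
\[
|F_\varepsilon(x)-F_\varepsilon(y)|^2 = 2-2\langle x,y\rangle\cos(\theta(x)-\theta(y)) + 2\langle x, y^\perp\rangle\sin(\theta(x)-\theta(y))
\]
up to the sign convention for $\perp$. Set $a=|x-y|^2 = 2-2\langle x,y\rangle$ and $\Delta=\theta(x)-\theta(y)$. Then
\[
|F(x)-F(y)|^2 = a + 2\langle x,y^\perp\rangle \Delta + \langle x,y\rangle \Delta^2 + O(\Delta^3).
\]
Raising this to the power $p/2$ gives
\[
a^{p/2} + \tfrac p2 a^{p/2-1}\big(2\langle x,y^\perp\rangle\Delta + \langle x,y\rangle\Delta^2\big) + \tfrac p2\big(\tfrac p2-1\big)a^{p/2-2}\, 4\langle x,y^\perp\rangle^2\Delta^2 + O(\Delta^3).
\]

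The first variation is the integral of $p\,a^{p/2-1}\langle x,y^\perp\rangle(\eta(x)-\eta(y))/|x-y|^2$. The kernel $k(x,y)=a^{p/2-2}\langle x,y^\perp\rangle$ is antisymmetric under $x\leftrightarrow y$, so the integral reduces to $2p\int \eta(x)\int k(x,y)\,dy\,dx$. By rotation invariance, $\int k(x,y)\,dy$ is independent of $x$. Writing $y=xe^{\i t}$, the integrand is odd in $t$, so the integral is $0$. The same argument kills the linear term in $\Delta$ at order $\varepsilon^2$, which involves $\beta$. Hence $\beta$ drops out, which justifies ignoring the tangential part of $\varphi$.

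Collecting the $\varepsilon^2$ terms, the second derivative equals $2\times$ the coefficient of $\varepsilon^2$:
\[
\frac{d^2}{d\varepsilon^2}E_s = \iint \frac{p\,a^{p/2-1}\langle x,y\rangle + p(p-2)a^{p/2-2}\langle x,y^\perp\rangle^2}{|x-y|^2}\,(\eta(x)-\eta(y))^2.
\]
Dividing by $p$ yields $\mathcal Q_p(\eta)$ as in \eqref{eq:Qpeta}.

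The main obstacle is justifying the differentiation under the integral near the diagonal. There $a\sim t^2$, $\langle x,y^\perp\rangle\sim t$ and $\Delta\sim t$, so each expansion term is $O(t^{p}) / t^2$. The remainder in the binomial expansion should be bounded using $|\Delta|\lesssim |\varepsilon|\,|x-y|\,\|\eta\|_{C^1}$. This gives a remainder $\lesssim \varepsilon^3 |x-y|^{p-2}$, which is integrable since $p>1$. Care is needed because $a^{p/2-2}$ is singular when $p<4$. However, $|F(x)-F(y)|^2\geq c\,a$ uniformly for small $\varepsilon$, so Taylor's theorem with a uniform bound on the third derivative of $r\mapsto r^{p/2}$ on $[ca,Ca]$ gives the needed $O(a^{p/2-3}\Delta^3 a^{3/2})$ control.
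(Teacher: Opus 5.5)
Your strategy is sound and essentially matches the paper's: expand the integrand pointwise to second order, and observe that the $x$-dependent second-order correction of the variation enters only through a first-variation integral, which vanishes. Your $\beta$ equals $-\rho\eta$ with $\rho=\langle\varphi,x\rangle$. Its contribution $2p|x-y|^{p-4}\langle x,y^\perp\rangle(\beta(x)-\beta(y))$ is exactly the radial term the paper isolates.

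The phase parametrization $F_\eps(x)=xe^{\i\theta_\eps(x)}$ is a tidy substitute for the paper's general formula for $\frac{\dd^2}{\dd\eps^2}|f-g|^p$. Your remainder bound $O(\eps^3|x-y|^{p-2})$, obtained from $|F_\eps(x)-F_\eps(y)|\ge c|x-y|$ and $|\Delta|\lesssim|\eps||x-y|$, supplies a justification near the diagonal that the paper leaves implicit.

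Your first heuristic (a reparametrization in $\eps$) is not valid on its own, since $\eps\mapsto\eps/(1+\eps\rho(x))$ depends on $x$. What actually removes $\rho$ is the phase computation, i.e.\ criticality of $\id$ in the direction $\beta x^\perp$.

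There is, however, a step that fails as written for $p\in(1,2]$, i.e.\ $s\in[\frac12,1)$, the range that matters most in the paper. Near the diagonal the kernel $k(x,y)=|x-y|^{p-4}\langle x,y^\perp\rangle$ behaves like $\mathrm{sgn}(t)|t|^{p-3}$. So $\int k(x,y)\,\dy$ is not absolutely convergent when $p\le 2$. You therefore cannot split $\iint k(x,y)(\eta(x)-\eta(y))$ into $\iint k\,\eta(x)-\iint k\,\eta(y)$ and invoke antisymmetry; only the combination $k(x,y)(\eta(x)-\eta(y))=O(|x-y|^{p-2})$ is integrable.

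The repair is easy, in either of two ways:
\begin{itemize}
\item Run your antisymmetry and oddness argument on $\{|x-y|>\delta\}$, where $k$ is bounded and the integral is exactly zero. Then let $\delta\to0$ by dominated convergence.
\item Do as the paper does: pass to variables $(t,\sigma)$ with $t=\theta-\sigma$ and apply Fubini to the jointly integrable integrand. For each fixed $t$, the $\sigma$-integral of $\eta(e^{\i(\sigma+t)})-\eta(e^{\i\sigma})$ vanishes.
\end{itemize}
The same fix is needed when you kill the $\beta$-term.

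There is also a slip in your binomial expansion. The quadratic term should be $\frac12\cdot\frac p2(\frac p2-1)a^{p/2-2}\cdot 4\langle x,y^\perp\rangle^2\Delta^2=\frac{p(p-2)}{2}a^{p/2-2}\langle x,y^\perp\rangle^2\Delta^2$. With the coefficient you wrote, doubling would give $2p(p-2)$ rather than the correct $p(p-2)$ appearing in your final formula. This is worth fixing, since the relative weight of the two terms is what governs the sign of $\mathcal{Q}_p$ (e.g.\ the threshold $p=8$).
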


\begin{proof}
Set
\[
\rho(x)\coloneqq \langle \varphi(x),x\rangle .
\]
We first compute the derivatives of the normalized perturbation. Since
\[
\varphi(x)=\rho(x)x+\eta(x)x^\perp,
\]
we have
\[
x+\varepsilon\varphi(x)
=
(1+\varepsilon\rho(x))x+\varepsilon\eta(x)x^\perp.
\]
Thus
\[
|x+\varepsilon\varphi(x)|
=
\sqrt{(1+\varepsilon\rho(x))^2+\varepsilon^2\eta(x)^2}.
\]
Expanding at $\varepsilon=0$,
\[
|x+\varepsilon\varphi(x)|
=
1+\varepsilon\rho(x)
+
\frac{\varepsilon^2}{2}\eta(x)^2
+
O(\varepsilon^3).
\]
Therefore
\[
\frac1{|x+\varepsilon\varphi(x)|}
=
1-\varepsilon\rho(x)
+
\varepsilon^2
\left(
\rho(x)^2-\frac12\eta(x)^2
\right)
+
O(\varepsilon^3).
\]
Multiplying,
\[
\begin{split}
F_\varepsilon(x)
&=
\left((1+\varepsilon\rho(x))x+\varepsilon\eta(x)x^\perp\right)
\left[
1-\varepsilon\rho(x)
+
\varepsilon^2
\left(
\rho(x)^2-\frac12\eta(x)^2
\right)
\right]
+
O(\varepsilon^3)
\\
&=
x
+
\varepsilon\eta(x)x^\perp
-
\varepsilon^2
\left(
\rho(x)\eta(x)x^\perp
+
\frac12\eta(x)^2x
\right)
+
O(\varepsilon^3).
\end{split}
\]
Hence
\[
\left.\frac{\dd}{\dd\eps} \right|_{\eps =0}F_\eps(x)=\eta(x)x^\perp,
\]
and
\begin{equation}\label{eq:secondder}
\left.\frac{\dd^2}{\dd\eps^2} \right|_{\eps =0}F_\eps(x)
=
-2\rho(x)\eta(x)x^\perp-\eta(x)^2x.
\end{equation}

We now compute the first variation.
\[
\left.
\frac{\dd}{\dd\varepsilon}
\right|_{\varepsilon=0}
|F_\varepsilon(x)-F_\varepsilon(y)|^p
=
p|x-y|^{p-2}
\left\langle
x-y,
\eta(x)x^\perp-\eta(y)y^\perp
\right\rangle.
\]
Since
\[
\langle x-y,x^\perp\rangle
=
\langle x,y^\perp\rangle,
\qquad
\langle x-y,y^\perp\rangle
=
\langle x,y^\perp\rangle,
\]
we get
\[
\left\langle
x-y,
\eta(x)x^\perp-\eta(y)y^\perp
\right\rangle
=
\langle x,y^\perp\rangle
\bigl(\eta(x)-\eta(y)\bigr).
\]
Therefore
\[
\left.
\frac{\dd}{\dd\varepsilon}
\right|_{\varepsilon=0}
E_s(F_\varepsilon)
=
p
\iint_{\S^1\times \S^1}
|x-y|^{p-4}
\langle x,y^\perp\rangle
\bigl(\eta(x)-\eta(y)\bigr)
\dx\dy.
\]

We show that this integral is zero. Write
\[
x=e^{\i\theta},\qquad y=e^{\i\sigma}.
\]
Then
\[
|x-y|=2\left|\sin\frac{\theta-\sigma}{2}\right|,
\qquad
\langle x,y^\perp\rangle=\sin(\theta-\sigma).
\]
Thus we compute
\[
\begin{split}
\frac{\dd}{\dd\varepsilon}
\Big |_{\varepsilon=0} E_s(F_\varepsilon)
&=p\, 2^{p-4}\int_0^{2\pi}\int_0^{2\pi}
\left|\sin\brac{\frac{\theta-\sigma}{2}}\right|^{p-4}
\sin (\theta-\sigma)
\bigl(\eta(e^{\i\theta})-\eta(e^{\i\sigma})\bigr)
\,\dd\theta\,\dd\sigma\\
&=p\, 2^{p-4}\int_0^{2\pi}\int_{0}^{2\pi}
\left|\sin\brac{\frac{\tau}{2}}\right|^{p-4}
\sin (\tau)
\bigl(\eta(e^{\i(\tau+\sigma)})-\eta(e^{\i\sigma})\bigr)
\,\dd\tau\,\dd\sigma.\\
\end{split}
\]
Observe this integral converges for $p \in (1,\infty)$ since $\eta$ is Lipschitz, and then since the singularity at $\tau =0$ is of order $p-2$ which for $p>1$ is integrable.

Observe that
\[
\int_0^{2\pi}\eta(e^{\i(\sigma+\tau)})\,\dd\sigma
=
\int_0^{2\pi}\eta(e^{\i\sigma})\,\dd\sigma.
\]
Thus, we obtain
\[
\left.
\frac{\dd}{\dd\varepsilon}
\right|_{\varepsilon=0}
E_s(F_\varepsilon)=0.
\]
That is to say, the identity map is indeed a critical point of $E_s$ for all $s \in (0,1)$.

We now compute the second variation. We use the following formula for $f,g\colon \R \to \mathbb R^2$,
\[
\begin{split}
\left.\frac{\dd^2}{\dd\eps^2} \right|_{\eps =0} |f(\varepsilon)-g(\varepsilon)|^p
&=
p(p-2)|f-g|^{p-4}
\left|
\left\langle f-g,f'-g'\right\rangle
\right|^2
\\
&\quad
+
p|f-g|^{p-2}|f'-g'|^2
\\
&\quad
+
p|f-g|^{p-2}
\left\langle f-g,f''-g''\right\rangle .
\end{split}
\]
Apply this with
\[
f(\varepsilon)=F_\varepsilon(x),
\qquad
g(\varepsilon)=F_\varepsilon(y).
\]
Recall
\[
\left. \frac{\dd}{\dd\eps}\right|_{\eps =0}F_\eps(x)-\left.\frac{\dd}{\dd\eps}\right|_{\eps =0}F_\eps(y)
=
\eta(x)x^\perp-\eta(y)y^\perp,
\]
and \eqref{eq:secondder}
\[
\left.\frac{\dd^2}{\dd\eps^2}\right|_{\eps =0}F_\eps(x)-\left.\frac{\dd^2}{\dd\eps^2}\right|_{\eps =0}F_\eps(y)
=
-2\rho(x) \eta(x)x^\perp+2\rho(y) \eta(y)y^\perp-\eta(x)^2x+\eta(y)^2y.
\]
First, we have
\[
\begin{split}
\left\langle x-y, \eta(x)x^\perp-\eta(y)y^\perp\right\rangle
&=
\eta(x)\langle x-y,x^\perp\rangle
-
b\langle x-y,y^\perp\rangle
\\
&=
\langle x,y^\perp\rangle(\eta(x)-b).
\end{split}
\]
Second, it holds
\begin{align*}
|\eta(x)x^\perp-\eta(y)y^\perp|^2
&=
\eta(x)^2+\eta(y)^2-2\langle x^\perp,y^\perp\rangle \eta(x)\eta(y) \\
&=
\eta(x)^2+\eta(y)^2-2\langle x,y\rangle \eta(x)\eta(y).
\end{align*}
Third, we obtain
\[
\begin{split}
\left\langle x-y,-\eta(x)^2x+\eta(y)^2y\right\rangle
&=
-\eta(x)^2\langle x-y,x\rangle
+
\eta(y)^2\langle x-y,y\rangle
\\
&=
-\eta(x)^2(1-\langle x,y\rangle)
+
\eta(y)^2(\langle x,y\rangle-1)
\\
&=
-(1-\langle x,y\rangle)(\eta(x)^2+\eta(y)^2).
\end{split}
\]
Therefore, we deduce
\[
\begin{split}
&|\eta(x)x^\perp-\eta(y)y^\perp|^2
+
\left\langle x-y,-\eta(x)^2x+\eta(y)^2y\right\rangle
\\
&\qquad
=
\eta(x)^2+\eta(y)^2-2\langle x,y\rangle \eta(x)\eta(y)
-
(1-\langle x,y\rangle)(\eta(x)^2+\eta(y)^2)
\\
&\qquad
=
\langle x,y\rangle(\eta(x)^2+\eta(y)^2-2\eta(x)\eta(y))
\\
&\qquad
=
\langle x,y\rangle(a-b)^2.
\end{split}
\]
The remaining part of $F''_0(x)-F''_0(y)$ contains the radial coefficient $\rho$:
\[
-2\rho(x) ax^\perp+2\rho(y) \eta(y)y^\perp.
\]
Its contribution is
\[
\begin{split}
\left\langle x-y,-2\rho(x) ax^\perp+2\rho(y) \eta(y)y^\perp\right\rangle
&=
-2\rho(x) a\langle x-y,x^\perp\rangle
+
2\rho(y) b\langle x-y,y^\perp\rangle
\\
&=
2\langle x,y^\perp\rangle(\rho(y) b-\rho(x) a).
\end{split}
\]
Hence, for $x\neq y$,
\[
\begin{split}
\left.
\frac{\dd^2}{\dd\varepsilon^2}
\right|_{\varepsilon=0}
|F_\varepsilon(x)-F_\varepsilon(y)|^p
&=
p(p-2)|x-y|^{p-4}
|\langle x,y^\perp\rangle|^2
|\eta(x)-\eta(y)|^2
\\
&\quad
+
p|x-y|^{p-2}
\langle x,y\rangle
|\eta(x)-\eta(y)|^2
\\
&\quad
+
2p|x-y|^{p-2}
\langle x,y^\perp\rangle
\bigl(\rho(y)\eta(y)-\rho(x)\eta(x)\bigr).
\end{split}
\]
Dividing by $|x-y|^2$, we obtain
\[
\begin{split}
\left.
\frac{d^2}{d\varepsilon^2}
\right|_{\varepsilon=0}
\frac{|F_\varepsilon(x)-F_\varepsilon(y)|^p}{|x-y|^2}
&=
p
\frac{
\left[
(p-2)|\langle x,y^\perp\rangle|^2|x-y|^{p-4}
+
|x-y|^{p-2}\langle x,y\rangle
\right]
|\eta(x)-\eta(y)|^2
}{|x-y|^2}
\\
&\quad
+
2p|x-y|^{p-4}
\langle x,y^\perp\rangle
\bigl(\rho(y)\eta(y)-\rho(x)\eta(x)\bigr).
\end{split}
\]

It remains to show that the last term integrates to zero. Let
\[
h(x)\coloneqq \rho(x)\eta(x).
\]
In angular variables,
\[
x=e^{\i\theta},
\qquad
y=e^{\i\sigma},
\]
the last term is
\[
2p
\left(2\left|\sin\frac{\theta-\sigma}{2}\right|\right)^{p-4}
\sin(\theta-\sigma)
\bigl(h(e^{\i\sigma})-h(e^{\i\theta})\bigr).
\]
As above, define
\[
A(\tau)\coloneqq 
\left(2\left|\sin\frac{\tau}{2}\right|\right)^{p-4}
\sin\tau.
\]
The radial contribution equals
\[
2p
\int_0^{2\pi}\int_0^{2\pi}
A(\theta-\sigma)
\bigl(h(e^{i\sigma})-h(e^{i\theta})\bigr)
\,\dd\theta\,\dd\sigma.
\]
Substituting $\tau = \theta-\sigma$ we see again that this integral vanishes.

This proves the lemma.
\end{proof}

\begin{lemma}\label{la:Qprep}
With the notations of \Cref{la:firstsecondvariation}, we have for any $\eta\colon \S^1 \mapsto \R$
\[
\mathcal{Q}_p(\eta) = \int_0^{2\pi} \int_{0}^{2\pi}
K_p(t)
\abs{\eta(e^{\i (\sigma+t)})-\eta(e^{\i \sigma})}^2
\,\dd t \,\dd\sigma.
\]
with 
\begin{equation}\label{eq:Kp}
K_p(t) = 2^{p-4} \left|\sin\brac{t/2}\right|^{p-4} \left[
p  \cos^2(t/2) -  1
\right].
\end{equation}
\end{lemma}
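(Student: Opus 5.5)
The plan is to substitute angular coordinates directly into \eqref{eq:Qpeta} and simplify the bracket using half-angle identities. We write $x=e^{\i\theta}$, $y=e^{\i\sigma}$, so that $\dx\,\dy=\dd\theta\,\dd\sigma$. As already used in the proof of \Cref{la:firstsecondvariation}, we have
\[
|x-y|=2\abs{\sin\tfrac{\theta-\sigma}{2}},\qquad \langle x,y^\perp\rangle=\sin(\theta-\sigma),\qquad \langle x,y\rangle=\cos(\theta-\sigma).
\]
Then we change variables $\theta=\sigma+t$, with $t\in[0,2\pi)$ after using $2\pi$-periodicity in $\theta$. This is harmless because the integrand depends on $\theta-\sigma$ only through the $2\pi$-periodic functions $|\sin((\theta-\sigma)/2)|$, $\sin(\theta-\sigma)$, $\cos(\theta-\sigma)$, together with $\eta(e^{\i\theta})$.

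Next we simplify the kernel. Dividing by $|x-y|^2$, the kernel becomes
\[
(p-2)\sin^2 t\,\bigl(2|\sin(t/2)|\bigr)^{p-6}+\cos t\,\bigl(2|\sin(t/2)|\bigr)^{p-4}.
\]
Using $\sin^2 t=4\sin^2(t/2)\cos^2(t/2)$, the first term equals $(p-2)\cos^2(t/2)\,\bigl(2|\sin(t/2)|\bigr)^{p-4}$. Using $\cos t=2\cos^2(t/2)-1$, the bracket then collapses to
\[
(p-2)\cos^2(t/2)+2\cos^2(t/2)-1=p\cos^2(t/2)-1.
\]
Factoring out $(2|\sin(t/2)|)^{p-4}=2^{p-4}|\sin(t/2)|^{p-4}$ gives exactly $K_p(t)$ in \eqref{eq:Kp}.

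The only point needing care is justifying the change of variables and Fubini. For $\eta$ smooth or Lipschitz, the integrand is bounded by $C|t|^{p-2}$ near $t=0$ (and near $t=2\pi$), which is integrable since $p>1$. Hence everything is absolutely convergent. For general $\eta$ the identity should be read in $[0,\infty]$ or by density. Note, however, that the kernel is not nonnegative, since $p\cos^2(t/2)-1<0$ near $t=\pi$; there it is bounded, so absolute convergence still holds whenever $\eta\in L^2$ and the singular part converges. I expect no real obstacle; the computation is purely trigonometric.
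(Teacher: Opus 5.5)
Your proposal is correct and follows the paper's proof. It uses the same angular substitution $x=e^{\i\theta}$, $y=e^{\i\sigma}$, the same half-angle reduction of the bracket to $p\cos^2(t/2)-1$, and the same periodic shift $\theta=\sigma+t$; you use $\cos t=2\cos^2(t/2)-1$ where the paper uses $\cos^2(t/2)-\sin^2(t/2)$, which makes no difference. One small point: since the kernel changes sign, reading the identity ``in $[0,\infty]$'' is not the right framing. The shift is a measure-preserving bijection of the torus under which the integrands coincide, so both sides are defined in exactly the same sense whenever either one is.
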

\begin{proof}
We first rewrite the integral \eqref{eq:Qpeta} in angular variables. Set
\begin{equation}
x=e^{\i\theta},
\qquad
y=e^{\i\sigma},
\qquad
t=\theta-\sigma.
\end{equation}
Then\footnote{$\langle x,y\rangle = \Re (x\bar{y}) = \Re (e^{\i (\theta - \sigma)}) = \cos(\theta-\sigma)$. $y^\perp = \i y$, so $|\langle x, y^\perp\rangle | = |\Re (x \i \bar{y})|=|\Re (\i e^{\i t})|=|\sin (t)|$. And $|x-y|^2 = |1-e^{\i t}|^2 = 2 - 2 \cos(t)=4 \sin^2(t/2)$}
\begin{equation}\label{eq:scalar_products}
\langle x,y\rangle_{\R^2}=\cos t,
\qquad
|\langle x,y^\perp\rangle|^2=\sin^2 t,
\qquad
|x-y|=2\left|\sin\brac{\frac t2}\right|.
\end{equation}
Then we have  
\[
\begin{split}
 &\left[
(p-2)|\langle x,y^\perp\rangle|^2 |x-y|^{p-4}
+
|x-y|^{p-2}\langle x,y\rangle
\right]\\
=&|x-y|^{p-4} \left[
(p-2)|\langle x,y^\perp\rangle|^2 
+
|x-y|^{2}\langle x,y\rangle
\right]\\
=&2^{p-4} \left|\sin\brac{\frac t2}\right|^{p-4} \left[
(p-2)\sin^2 t
+
4 \abs{\sin \frac{t}{2}}^2\cos(t)
\right].
\end{split}
\]
Observe that
\[
 \sin(t) =2 \sin(t/2) \cos(t/2), \quad \cos(t) = \cos^2(t/2) - \sin^2(t/2).
\]
Hence, we have
\[
\begin{split}
 &\left[
(p-2)|\langle x,y^\perp\rangle|^2 |x-y|^{p-4}
+
|x-y|^{p-2}\langle x,y\rangle
\right]\\
& =2^{p-4} \left|\sin\brac{\frac t2}\right|^{p-4} \left[
(p-2) 4\sin^2(t/2) \cos^2(t/2)
+
4 \abs{\sin \frac{t}{2}}^2 \cos^2(t/2) - 4 \abs{\sin \frac{t}{2}}^2\sin^2(t/2)
\right]\\
& =2^{p-2} \left|\sin\brac{\frac t2}\right|^{p-2} \left[
(p-2)  \cos^2(t/2)
+
  \cos^2(t/2) -  \sin^2(t/2)
\right]\\
& = 2^{p-2} \left|\sin\brac{\frac t2}\right|^{p-2} \left[
(p-1)  \cos^2(t/2) -  \sin^2(t/2)
\right]\\
& = 2^{p-2} \left|\sin\brac{\frac t2}\right|^{p-2} \left[
p  \cos^2(t/2) -  1
\right].
\end{split}
\]
We set 
\[
K_p(t) \coloneqq \frac{
(p-2)|\langle x,y^\perp\rangle|^2 |x-y|^{p-4}
+
|x-y|^{p-2}\langle x,y\rangle
}{|x-y|^2}
=
2^{p-4} \left|\sin\brac{\frac t2}\right|^{p-4} \left[
p  \cos^2(t/2) -  1
\right].
\]
Observe that $K_p(t+k2\pi) = K_p(t)$ for all $k \in \Z$.
Then, we have
\[
\begin{split}
\mathcal{Q}_p(\eta)
=&\int_0^{2\pi}\int_0^{2\pi}
K_p(\theta-\sigma)
|\eta(e^{\i \theta})-\eta(e^{\i \sigma})|^2
\,\dd\theta\,\dd\sigma\\
=&\int_0^{2\pi}\int_{-\sigma}^{2\pi-\sigma}
K_p(t)
|\eta(e^{\i \sigma+t})-\eta(e^{\i \sigma})|^2
\,\dd t \,\dd\sigma\\
=&\int_0^{2\pi} \brac{\int_{0}^{2\pi-\sigma}
K_p(t)
|\eta(e^{\i \sigma+t})-\eta(e^{\i \sigma})|^2
\,\dd t + \int_{2\pi-\sigma}^{2\pi}
K_p(t-2\pi)
|\eta(e^{\i \sigma+t-2\pi})-\eta(e^{\i \sigma})|^2
\,\dd t}\,\dd\sigma.
\end{split}
\]
By $2\pi$-periodicity, we obtain 
\[
\begin{split}
\mathcal{Q}_p(\eta)
=&\int_0^{2\pi} \brac{\int_{0}^{2\pi}
K_p(t)
|\eta(e^{\i (\sigma+t)})-\eta(e^{\i \sigma})|^2
\dd t }\dd\sigma.
\end{split}
\]
\end{proof}

\subsection{Estimates about lifting}

\begin{lemma}
\label{la:lipschitz-phase}
There exists $\varepsilon_0>0$ such that the following holds. Let $u\colon \mathbb S^1\to\mathbb S^1$ be such that
\[
        \|u-\id\|_{L^\infty(\mathbb S^1)}\le\varepsilon_0.
\]
Let $a\colon \mathbb S^1\to[-\pi/4,\pi/4]$ be the unique phase such that
\[
        u(x)=\cos(a(x))x+\sin(a(x))x^\perp .
\]
Then we have
 \begin{equation}\label{eq:axmayest}
\begin{split}
|a(x)-a(y)| 
\aleq&\abs{\cos(a(x))-\cos(a(y))}+
\abs{\sin(a(x))-\sin(a(y))}\\
\aleq&|u_1(x)-u_1(y)-(x-y)| + |a(y)||x-y|
\end{split}
\end{equation}
In particular, for any $t \in (0,1)$, $q \in (1,\infty)$
\[
 [a]_{W^{t,q}(\S^1)} \aleq [u_1-\id]_{W^{t,q}(\S^1)} + \|a\|_{L^\infty} \aleq [u_1-\id]_{W^{t,q}(\S^1)} + \|u_1-\id\|_{L^\infty}
\]
and for $\gamma \in (0,1]$,
\[
 [a]_{C^\gamma} \aleq [u_1-\id]_{C^\gamma} + \|u_1-\id\|_{L^\infty}.
\]
\end{lemma}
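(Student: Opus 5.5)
Throughout I read $u_1$ as the map $u$ itself, viewed in $\C$ (if $u_1$ denotes a fixed component the same argument works componentwise). The plan is to use complex notation $u(x)=e^{\i a(x)}x$, which is equivalent to $u(x)=\cos(a(x))x+\sin(a(x))x^\perp$ because $x^\perp=\i x$.

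\textbf{The phase $a$.} Since $\|u-\id\|_{L^\infty}\le \varepsilon_0$, the quotient $u(x)\bar x=u(x)/x$ lies in $\S^1$ within distance $\varepsilon_0$ of $1$. Choosing $\varepsilon_0<2\sin(\pi/8)$, it has a unique argument $a(x)\in(-\pi/4,\pi/4)$. Moreover, $|e^{\i a}-1|=2|\sin(a/2)|\ge \frac{2\sqrt2}{\pi}|a|$ for $|a|\le\pi/4$. Hence
\[
\|a\|_{L^\infty}\aleq \|u-\id\|_{L^\infty},
\]
which gives the second inequality in each of the two ``in particular'' chains.

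\textbf{First inequality in \eqref{eq:axmayest}.} For $a(x),a(y)\in[-\pi/4,\pi/4]$ we have $|a(x)-a(y)|\le\pi/2$. Concavity of $\sin$ on $[0,\pi/4]$ then gives
\[
|e^{\i a(x)}-e^{\i a(y)}|=2\Bigl|\sin\tfrac{a(x)-a(y)}{2}\Bigr|\ageq |a(x)-a(y)|.
\]
The left-hand side is at most $|\cos a(x)-\cos a(y)|+|\sin a(x)-\sin a(y)|$.

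\textbf{Second inequality in \eqref{eq:axmayest}.} This is the key algebraic identity:
\[
u(x)-u(y)-(x-y)=(e^{\i a(x)}-1)x-(e^{\i a(y)}-1)y=(e^{\i a(x)}-e^{\i a(y)})\,x+(e^{\i a(y)}-1)(x-y).
\]
Since $|x|=1$, it follows that
\[
|e^{\i a(x)}-e^{\i a(y)}|\le |u(x)-u(y)-(x-y)|+|e^{\i a(y)}-1|\,|x-y|.
\]
Using $|e^{\i a(y)}-1|\le|a(y)|$, and noting that $|\cos a(x)-\cos a(y)|+|\sin a(x)-\sin a(y)|\le 2|e^{\i a(x)}-e^{\i a(y)}|$, we obtain the second line of \eqref{eq:axmayest}.

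\textbf{Seminorm bounds.} Raise the pointwise bound to the power $q$, divide by $|x-y|^{1+tq}$ and integrate over $\S^1\times\S^1$. The first term gives $[u-\id]_{W^{t,q}}^q$. The second term is bounded by
\[
\|a\|_{L^\infty}^q\iint_{\S^1\times\S^1}|x-y|^{q-1-tq}\,\dx\,\dy.
\]
This integral is finite because $q(1-t)-1>-1$ and $\S^1$ is bounded.

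For the Hölder bound, divide the pointwise bound by $|x-y|^\gamma$ and use $|x-y|^{1-\gamma}\le 2^{1-\gamma}$.

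\textbf{Main difficulty.} No step is a serious obstacle. The only real content is the algebraic identity in the second step, together with the choice of $\varepsilon_0$ that keeps the phase in $[-\pi/4,\pi/4]$, so that the angle-to-chord comparisons are uniform.
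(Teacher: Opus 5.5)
Your proof is correct and is essentially the paper's argument: the complex identity $u(x)-u(y)-(x-y)=(e^{\i a(x)}-e^{\i a(y)})x+(e^{\i a(y)}-1)(x-y)$ is exactly the paper's four-term decomposition rewritten with $x^\perp=\i x$, followed by the same chord--angle comparisons $|e^{\i\theta}-1|=2|\sin(\theta/2)|$, and you also write out the integration giving the $W^{t,q}$ and $C^\gamma$ bounds, which the paper leaves implicit. Reading $u_1$ as $u$ matches the paper's proof, but you should drop the parenthetical about a single component: one component alone cannot control the phase, since for instance at $x=1$ the first component only sees $\cos a(x)-1\approx -a(x)^2/2$.
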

\begin{proof}
We have 
\[
 |u_1(x)-x|^2 = |\cos(a(x))-1|^2 + |\sin(a(x))|^2 =2-2\cos(a(x)) = 4 \sin^2(a(x)/2).
\]
Since $|\sin(t)| \geq \frac{2}{\pi} |t|$ for $|t| \leq \frac{\pi}{2}$, thus we find in particular
\[
 |a(x)| \aleq |u_1(x)-x|.
\]
Moreover, we have
\[
\begin{split}
u_1(x)-u_1(y)-(x-y) 
&= (\cos(a(x))-1)x-(\cos(a(y))-1)y
   +\sin(a(x))x^\perp-\sin(a(y))y^\perp\\
   &=
(\cos(a(x))-\cos(a(y)))x
+(\sin(a(x))-\sin(a(y)))x^\perp \\
&\quad
+(\cos(a(y))-1)(x-y)
+\sin(a(y))(x^\perp-y^\perp).
\end{split}
\]
In particular, it holds
\[
\begin{split}
 & \abs{\cos(a(x))-\cos(a(y))}+
\abs{\sin(a(x))-\sin(a(y))}\\
&\leq
|u_1(x)-u_1(y)-(x-y)|  
+
|\cos(a(y))-1|\,|x-y|
+
|\sin(a(y))|\,|x^\perp-y^\perp|\\
& \aleq |u_1(x)-u_1(y)-(x-y)| + |a(y)||x-y|.
\end{split}
\]
The left-hand side verifies
\[
\begin{split}
&|\cos(a(x))-\cos(a(y))|^2+|\sin(a(x))-\sin(a(y))|^2\\
& =2-2\brac{\cos(a(x))\cos(a(y))+\sin(a(x))\sin(a(y))}\\
& = 2-2\cos(a(x)-a(y))\\
& = 4 \sin^2\brac{\frac{a(x)-a(y)}{2}}\\
& \ageq \abs{a(x)-a(y)}^2,
\end{split}
\]
again using that $|a| \leq \frac{\pi}{4}$.

\end{proof}

\begin{lemma}[Lifting]
\label{lem:uniform-small-arc-lifting}
Let $\alpha>0$ and $\lambda>0$.
There exist constants $r_{\mathrm{lift}}>0$ and $C_{{lift}}>0$ depending only on $\alpha$ and $\lambda$ such that the following holds.

Let $u\in C^\alpha(\S^1,\S^1)$ such that 
\[
        [u]_{C^\alpha(\S^1)}\le \lambda.
\]
If $I\subset\S^1$ is an arc of length
\[
        0<|I|\le r_{\mathrm{lift}}.
\]
Then there exists a unique lift $\varphi\colon  I \to [-\frac{\pi}{4},\frac{\pi}{4}]$ such that $u=e^{i\varphi}$ on $I$ and
\[
        [\varphi]_{C^\alpha(I)}
        \le
        C_{{lift}}(1+\lambda).
\]
\end{lemma}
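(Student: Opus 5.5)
Since $u$ takes values in all of $\S^1$, the normalization ``$\varphi\colon I\to[-\frac\pi4,\frac\pi4]$'' cannot hold literally for every $u$. We therefore read it modulo a target rotation, which leaves the Hölder seminorm unchanged. Fix a base point $x_0\in I$, rotate so that $u(x_0)=1$ (i.e.\ replace $u$ by $\overline{u(x_0)}\,u$), and construct a lift with values in $[-\frac\pi4,\frac\pi4]$. Undoing the rotation gives a lift with values in $[\varphi_0-\frac\pi4,\varphi_0+\frac\pi4]$, where $e^{\i\varphi_0}=u(x_0)$. The construction itself is explicit: once the arc is short, $u(I)$ lies in a small cap around $u(x_0)$, so the principal branch of the argument provides the lift.

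\textbf{Step 1 (choice of $r_{\mathrm{lift}}$).} For $x\in I$ the chord satisfies $|x-x_0|\le |I|$. Hence
\[
|u(x)-1|=|u(x)-u(x_0)|\le \lambda |I|^\alpha\le \lambda r_{\mathrm{lift}}^\alpha .
\]
Choose $r_{\mathrm{lift}}=r_{\mathrm{lift}}(\alpha,\lambda)$ so that $\lambda r_{\mathrm{lift}}^\alpha\le \delta:=2\sin(\pi/8)$. Writing $u(x)=e^{\i\theta}$ with $\theta\in(-\pi,\pi]$, we have $|u(x)-1|=2|\sin(\theta/2)|$. Therefore $|u(x)-1|\le\delta$ forces $|\theta|\le\pi/4$.

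\textbf{Step 2 (definition and uniqueness).} Set $\varphi(x):=\operatorname{Arg}u(x)\in[-\frac\pi4,\frac\pi4]$ with the principal branch; then $u=e^{\i\varphi}$ on $I$ by construction. For uniqueness, suppose $\tilde\varphi$ is another lift with values in $[-\frac\pi4,\frac\pi4]$. Pointwise $e^{\i\varphi(x)}=e^{\i\tilde\varphi(x)}$, while $|\varphi(x)-\tilde\varphi(x)|\le\pi/2<2\pi$. Since $t\mapsto e^{\i t}$ is injective on intervals of length less than $2\pi$, it follows that $\tilde\varphi=\varphi$. No continuity assumption is needed for this.

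\textbf{Step 3 (Hölder bound).} For $x,y\in I$ we have $|\varphi(x)-\varphi(y)|\le\pi/2$. Using $|\sin t|\ge\frac2\pi|t|$ for $|t|\le\frac\pi2$, exactly as in \Cref{la:lipschitz-phase}, we get
\[
|u(x)-u(y)|=2\Bigl|\sin\tfrac{\varphi(x)-\varphi(y)}{2}\Bigr|\ge \tfrac2\pi|\varphi(x)-\varphi(y)|.
\]
Hence $|\varphi(x)-\varphi(y)|\le\frac\pi2\lambda|x-y|^\alpha$. This gives $[\varphi]_{C^\alpha(I)}\le \frac\pi2\lambda\le C_{lift}(1+\lambda)$ with $C_{lift}=\pi/2$.

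The only delicate point is Step 1: the smallness of $r_{\mathrm{lift}}$ must be quantified through the Hölder seminorm alone. This is exactly why $r_{\mathrm{lift}}$ depends on $\alpha$ and $\lambda$, concretely $r_{\mathrm{lift}}=(\delta/\lambda)^{1/\alpha}$. The remaining estimates are elementary trigonometry.
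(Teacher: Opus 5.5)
Your proof is correct and follows essentially the same route as the paper. Choose $r_{\mathrm{lift}}$ with $\lambda r_{\mathrm{lift}}^\alpha$ small, so that $u(I)$ lies in a short target arc on which a single-valued phase exists and chordal and phase distances are comparable, which gives $|\varphi(x)-\varphi(y)|\le C\lambda|x-y|^\alpha$. Your remark that the range $[-\frac\pi4,\frac\pi4]$ can only be arranged after a target rotation (for instance $u\equiv-1$ admits no such lift) is accurate, and it makes explicit a normalization that the paper's proof passes over silently.
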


\begin{proof}
Choose $r_{\mathrm{lift}}>0$ so small that
\[
        \lambda r_{\mathrm{lift}}^\alpha\le \frac1{100}.
\]
If $I$ is an arc of length at most $r_{\mathrm{lift}}$, then
\[
        \operatorname{diam} u(I)\le [u]_{C^\alpha(\S^1)} |I|^\alpha
        \le \lambda r_{\mathrm{lift}}^\alpha
        \le \frac1{100}.
\]
Thus $u(I)$ is contained in an open semicircle of the target. Hence there is a single-valued phase $\varphi$ on $I$ such that
\[
        u=e^{i\varphi}
        \qquad\text{on }I.
\]
On a target arc of length less than $\frac1{100}$, chordal distance and phase distance are comparable. Therefore, it holds
\[
        |\varphi(x)-\varphi(y)|
        \le
        C|u(x)-u(y)|
        \le
        C\lambda |x-y|^\alpha.
\]
This gives
\[
        [\varphi]_{C^\alpha(I)}
        \le
        C_{{lift}}(1+\lambda).
\]
\end{proof}

\subsection{Vector Calculus}
\begin{lemma}
Let $U,V\in\mathbb R^2$ and take any real numbers $A,B,C,D \in \R$, then
\begin{equation}\label{eq:identity1}
\begin{split}
& (p-2)|U-V|^{p-4}
\left\langle U-V,AU^\perp-BV^\perp\right\rangle
\left\langle U-V,CU^\perp-DV^\perp\right\rangle
\\
&\quad
+
|U-V|^{p-2}
\left\langle AU^\perp-BV^\perp,CU^\perp-DV^\perp\right\rangle
\\
&\quad
+
|U-V|^{p-2}
\left\langle U-V,-ACU+BDV\right\rangle
\\
&=
\left[
(p-2)|\langle U,V^\perp\rangle|^2|U-V|^{p-4}
+
|U-V|^{p-2}\langle U,V\rangle
\right]
(A-B)(C-D).
\end{split}
\end{equation}
If moreover $|U| = |V| =1$, then it holds
\begin{equation}\label{eq:identity2}
\begin{split}
& (p-2)|\langle U,V^\perp\rangle|^2 |U-V|^{p-4}
+
|U-V|^{p-2}\langle U,V\rangle
\\
&=|U-V|^{p-2}
\left[
p-1-\frac p4|U-V|^2
\right].
\end{split}
\end{equation}
\end{lemma}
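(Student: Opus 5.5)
The plan is to verify both identities by direct expansion, organised around the scalar products $\langle U-V,U^\perp\rangle$, $\langle U-V,V^\perp\rangle$ and $\langle U,V\rangle$. This mirrors the computation already done in the proof of \Cref{la:firstsecondvariation}, but now with independent coefficients $A,B,C,D$ and without assuming $|U|=|V|=1$ in \eqref{eq:identity1}. Throughout we take $U\neq V$, so that the negative powers of $|U-V|$ make sense.

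For \eqref{eq:identity1}, I will first use $\langle U,U^\perp\rangle=\langle V,V^\perp\rangle=0$ together with the antisymmetry $\langle V,U^\perp\rangle=-\langle U,V^\perp\rangle$. These give
\[
\langle U-V,U^\perp\rangle=\langle U,V^\perp\rangle=\langle U-V,V^\perp\rangle ,
\]
and hence
\[
\langle U-V,AU^\perp-BV^\perp\rangle=(A-B)\langle U,V^\perp\rangle ,
\]
and similarly with $(C,D)$ in place of $(A,B)$. So the first line of \eqref{eq:identity1} equals $(p-2)|\langle U,V^\perp\rangle|^2|U-V|^{p-4}(A-B)(C-D)$.

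Next, since $\langle U^\perp,V^\perp\rangle=\langle U,V\rangle$ and $|U^\perp|=|U|$, the second inner product expands as
\[
AC|U|^2+BD|V|^2-(AD+BC)\langle U,V\rangle .
\]
The third inner product expands as
\[
-AC|U|^2+AC\langle U,V\rangle+BD\langle U,V\rangle-BD|V|^2 .
\]
When these two are added, the $|U|^2$ and $|V|^2$ terms cancel. What remains is $(AC+BD-AD-BC)\langle U,V\rangle=(A-B)(C-D)\langle U,V\rangle$, multiplied by $|U-V|^{p-2}$. Combining this with the first line gives exactly the right-hand side of \eqref{eq:identity1}.

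For \eqref{eq:identity2}, with $|U|=|V|=1$, I set $d\coloneqq|U-V|$ and use $\langle U,V\rangle=1-\frac{d^2}{2}$. Since $\{V,V^\perp\}$ is an orthonormal basis, we have
\[
|\langle U,V^\perp\rangle|^2=1-\langle U,V\rangle^2=(1-\langle U,V\rangle)(1+\langle U,V\rangle)=d^2\Big(1-\tfrac{d^2}{4}\Big).
\]
Factoring out $d^{p-2}$, the left-hand side becomes
\[
d^{p-2}\Big[(p-2)\Big(1-\tfrac{d^2}{4}\Big)+1-\tfrac{d^2}{2}\Big]=d^{p-2}\Big[p-1-\tfrac p4 d^2\Big].
\]
As a consistency check, this agrees with \Cref{la:Qprep} when $d=2|\sin(t/2)|$.

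No step is a genuine obstacle; the only care needed is sign bookkeeping in the antisymmetry $\langle V,U^\perp\rangle=-\langle U,V^\perp\rangle$ and in tracking the cross terms $AD,BC$.
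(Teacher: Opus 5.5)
Your proposal is correct and follows essentially the same route as the paper: the same direct expansion for \eqref{eq:identity1}, and for \eqref{eq:identity2} the same use of $\langle U,V\rangle^2+\langle U,V^\perp\rangle^2=1$ combined with $|U-V|^2=2-2\langle U,V\rangle$. Writing everything in terms of $d=|U-V|$ just organizes the paper's longer algebra more compactly.
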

\begin{proof}
As for \eqref{eq:identity1}, we have
\[
\begin{split}
& (p-2)|U-V|^{p-4}
\left\langle U-V,AU^\perp-BV^\perp\right\rangle
\left\langle U-V,CU^\perp-DV^\perp\right\rangle
\\
&\quad
+
|U-V|^{p-2}
\left\langle AU^\perp-BV^\perp,CU^\perp-DV^\perp\right\rangle
\\
&\quad
+
|U-V|^{p-2}
\left\langle U-V,-ACU+BDV\right\rangle
\\
=& (p-2)|U-V|^{p-4}
(A-B)(C-D)\brac{\left\langle U,V^\perp\right\rangle}^2\\
&\quad
+
|U-V|^{p-2}
\brac{
AC |U|^2
-AD\left\langle U,V\right\rangle
-BC\left\langle U,V \right\rangle
+BD|V|^2
}
\\
&\quad
+
|U-V|^{p-2}
\brac{
-AC|U|^2
+BD\left\langle U,V\right\rangle
+AC\left\langle U,V\right\rangle
-BD|V|^2
}
\\
=& (p-2)|U-V|^{p-4}
(A-B)(C-D)\brac{\left\langle U,V^\perp\right\rangle}^2\\
&\quad
+
|U-V|^{p-2} (A-B)(C-D)
 \left\langle U,V\right\rangle.
\end{split}
\]
As for \eqref{eq:identity2}, now we can assume that $|U| = |V| =1$. Using $\sin^2 + \cos^2 =1$, we have
\[
 |\langle U,V\rangle|^2 +|\langle U,V^\perp\rangle|^2 =|U|^2|V|^2.
\]
Indeed, it holds
\[
\begin{split}
 &(U_1 V_1 + U_2 V_2)^2 + (-U_1 V_2 + U_2 V_1)^2 \\
 =&U_1^2 V_1^2 + U_2^2 V_2^2 + 2 U_1 U_2 V_1  V_2 + U_1^2 V_2^2 + U_2^2 V_1^2 - 2 U_1 U_2  V_2 V_1\\
 =&U_1^2 V_1^2 + U_2^2 V_2^2 + U_1^2 V_2^2 + U_2^2 V_1^2 \\
 =&(U_1^2 + U_2^2) (V_1^2 + V_2^2).
 \end{split}
\]
Coming back to \eqref{eq:identity2}, we obtain
\[
\begin{split}
& (p-2)|\langle U,V^\perp\rangle|^2 |U-V|^{p-4}
+
|U-V|^{p-2}\langle U,V\rangle
\\
=& (p-2) \brac{|U|^2|V|^2 -  |\langle U,V\rangle|^2}|U-V|^{p-4}
+
|U-V|^{p-2}\langle U,V\rangle
\\
=& |U-V|^{p-4} \brac{(p-2) \brac{|U|^2|V|^2 -  |\langle U,V\rangle|^2}
+\brac{|U|^2 + |V|^2 - 2 \langle U,V\rangle}
\langle U,V\rangle}
\\
=& |U-V|^{p-4} \brac{(p-2) |U|^2|V|^2 -  (p-2) |\langle U,V\rangle|^2
+|U|^2 \langle U,V\rangle + |V|^2 \langle U,V\rangle- 2 |\langle U,V\rangle|^2
}
\\
=& |U-V|^{p-4} \brac{(p-2) |U|^2|V|^2 -  p |\langle U,V\rangle|^2
+|U|^2 \langle U,V\rangle + |V|^2 \langle U,V\rangle
}
\\
\overset{|U|=|V|=1}{=}& |U-V|^{p-4} \brac{(p-2)  -  p |\langle U,V\rangle|^2
+ 2\langle U,V\rangle }
\\
=& |U-V|^{p-4} \Big( (p-2)  -  \frac{p}{4} \brac{|U|^2 + |V|^2 -2\langle U,V\rangle - |U|^2-|V|^2}^2 \\
& \qquad 
-|U|^2-|V|^2 +2\langle U,V\rangle  + |U|^2 + |V|^2 \Big)\\
=& |U-V|^{p-4} \brac{(p-2)  -  \frac{p}{4} \brac{|U-V|^2 - 2}^2
-|U-V|^2  + 2}\\
=& |U-V|^{p-4} \brac{p  -  \frac{p}{4} \brac{|U-V|^4 + 4 - 4|U-V|^2}
-|U-V|^2  }\\
=& |U-V|^{p-4} \brac{  -  \frac{p}{4} |U-V|^4 +p|U-V|^2-|U-V|^2  }.
\end{split}
\]

\end{proof}

\section{Identity is not minimizing for small \texorpdfstring{$s$}{s}: Proof of Theorem~\ref{th:strictinequality}}\label{s:idnotmin}

In this section we show that for $s<\frac{1}{8}$ there exist a negative direction for the identity map, which implies \Cref{th:strictinequality}. We recall the second variation formula from \eqref{la:firstsecondvariation}, and $\mathcal{Q}_p(\eta)$ from \eqref{eq:Qpeta}.

We will compute $\mathcal{Q}_p$ on the single Fourier mode
\[
 \eta_3(z) \coloneqq  \frac{1}{2} \brac{z^3+z^{-3}},
\]
i.e.
\[
\eta_3(e^{\i \theta})=\cos(3\theta).
\]

\begin{lemma}[The mode $\cos(3\theta)$]\label{la:cos3theta}
For every $p>1$,
\begin{equation}
\mathcal{Q}_p(\eta_3)
=(8-p)\, 2^{p+1}\, \pi^{\frac{3}{2}} \frac{\Gamma\!\left(\frac{1+p}{2}\right)}
{\Gamma\!\left(3+\frac{p}{2}\right)}
\end{equation}
In particular, it holds
\[
\begin{cases} 
 \mathcal{Q}_p(\eta_3) > 0 & \text{if $p \in (1,8)$}, \\
 \mathcal{Q}_p(\eta_3) < 0 & \text{if $p > 8$}.
 \end{cases} 
\]
\end{lemma}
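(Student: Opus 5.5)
Starting from \Cref{la:Qprep}, the first step is to do the $\sigma$-integration explicitly. For $\eta_3(e^{\i\theta})=\cos(3\theta)$,
\[
|\cos(3(\sigma+t))-\cos(3\sigma)|^2 = 4\sin^2\!\brac{\tfrac{3t}{2}}\sin^2\!\brac{3\sigma+\tfrac{3t}{2}} .
\]
Integrating in $\sigma$ over $[0,2\pi]$ gives $4\pi\sin^2(3t/2)$. This reduces the claim to a one-dimensional integral:
\[
\mathcal{Q}_p(\eta_3)=4\pi\, 2^{p-4}\int_0^{2\pi}\left|\sin\brac{t/2}\right|^{p-4}\brac{p\cos^2(t/2)-1}\sin^2(3t/2)\,\dd t .
\]
Next, substitute $u=t/2\in[0,\pi]$. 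Since $\sin^2(3u)\le 9u^2$ near $u=0$ (and similarly near $u=\pi$), the integrand is $O(u^{p-2})$ there and the integral converges for $p>1$. We obtain
\[
\mathcal{Q}_p(\eta_3)=2^{p-1}\pi\int_0^{\pi}\sin^{p-4}u\,\brac{p\cos^2u-1}\sin^2(3u)\,\dd u .
\]
Now use $\sin(3u)=\sin u\,(3-4\sin^2u)$ and $\cos^2u=1-\sin^2u$. With $x=\sin^2 u$, the integrand becomes $\sin^{p-2}u$ times the polynomial $(p-1-px)(3-4x)^2$, which has degree $3$ in $x$. Everything therefore reduces to the Wallis-type moments
\[
W_q\coloneqq\int_0^\pi \sin^{q}u\,\dd u=\sqrt\pi\,\frac{\Gamma\!\brac{\frac{q+1}{2}}}{\Gamma\!\brac{\frac q2+1}},\qquad q=p-2,\,p,\,p+2,\,p+4,
\]
all of which converge since $p-2>-1$.

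Then I would express every moment in terms of $W_{p+4}$ using $W_{q+2}=\frac{q+1}{q+2}W_q$, that is, $W_q=\frac{q+2}{q+1}W_{q+2}$. The target formula contains $\Gamma\!\brac{\frac{1+p}{2}}/\Gamma\!\brac{3+\frac p2}$, so the cleaner normalization is $W_{p-2}=\sqrt\pi\,\Gamma\!\brac{\frac{p-1}{2}}/\Gamma\!\brac{\frac p2}$, or equivalently writing everything as $\sqrt\pi\,\Gamma\!\brac{\frac{p+1}{2}}/\Gamma\!\brac{\frac p2+3}$ times a rational function of $p$. Concretely, $W_{p-2}=\sqrt\pi\,\frac{\Gamma(\frac{p+1}{2})}{\Gamma(\frac p2+3)}\cdot\frac{2}{p-1}\cdot\frac{p}{2}\brac{\frac p2+1}\brac{\frac p2+2}$, and the other moments follow similarly. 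Expanding $(p-1-px)(9-24x+16x^2)$ and collecting terms yields a rational function of $p$. The claim is that it simplifies to a constant multiple of $(8-p)$, and matching constants should give exactly $(8-p)\,2^{p+1}\pi^{3/2}\,\Gamma(\frac{1+p}{2})/\Gamma(3+\frac p2)$.

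The sign statement then follows immediately, since the Gamma quotient is positive for $p>1$. The main obstacle is purely the bookkeeping in this final simplification: the $(p-1)$ denominator coming from $W_{p-2}$ has to cancel, and the cubic in $p$ has to collapse to a linear factor. As a check, I would first verify the case $p=2$, where $\sin^{p-4}$ has no fractional power and the integral $\int_0^\pi\sin^{-2}u\,(2\cos^2u-1)\sin^2(3u)\,\dd u$ is elementary, and compare it with the closed form.
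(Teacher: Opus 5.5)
Your proposal is correct and follows the paper's route up to the final integral: the same $\sigma$-integration giving $4\pi\sin^2(3t/2)$, the same reduction to $2^{p-1}\pi\int_0^\pi \sin^{p-4}u\,(p\cos^2u-1)\sin^2(3u)\,du$, and the same extraction of a factor $\sin^2 u$ from $\sin^2(3u)$. The only difference is that the paper obtains the closed form from Mathematica, while your Wallis-moment computation does it by hand, and that computation does go through: with $W=W_{p-2}$, clearing the denominator $p(p+2)(p+4)$ makes the $p^3$ and $p^2$ terms cancel and leaves the numerator $128-16p$, so the integral equals $(p-1)W\cdot\frac{16(8-p)}{p(p+2)(p+4)}$, which gives exactly the stated constant $(8-p)\,2^{p+1}\pi^{3/2}\,\Gamma(\frac{1+p}{2})/\Gamma(3+\frac p2)$.
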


\begin{proof}
From \Cref{la:Qprep}, we have
\begin{equation}
\mathcal{Q}_p(\eta_3)
=
\int_0^{2\pi}
K_p(t)
\left[
\int_0^{2\pi}
\left(\cos(3(\sigma+t))-\cos(3\sigma)\right)^2 \dd\sigma
\right] \dd t.
\end{equation}
The inner integral is
\begin{equation*}
\begin{split}
& \int_0^{2\pi}
\left(\cos(3(\sigma+t))-\cos(3\sigma)\right)^2
\dd\sigma \\
& = \frac{1}{4}\int_0^{2\pi}
\left(e^{3\i (\sigma+t)} + e^{-3\i (\sigma+t)} - e^{3\i \sigma} - e^{-3\i \sigma}\right)^2
\dd\sigma\\
& = \frac{1}{4}\int_0^{2\pi} \left( e^{6\i (\sigma+t)} + 2 + e^{-6\i (\sigma+t)} - 2e^{3\i (2\sigma + t)} - 2(e^{3\i t}+e^{-3\i t}) + e^{6\i \sigma} + e^{-6\i \sigma} +2 -2e^{-3\i (2\sigma+t)}\right) \dd \sigma \\
& = 2\pi(1-\cos(3t))\\
&=
4\pi\sin^2\brac{\frac{3t}{2}}.
\end{split}
\end{equation*}
Hence, we have by \eqref{eq:Kp}
\[
\begin{split}
\mathcal{Q}_p(\eta_3)
&=
4\pi
\int_0^{2\pi}
K_p(t) \sin^2\brac{\frac{3t}{2}}\,\dd t\\
& = 4\pi
\int_0^{2\pi}
 2^{p-4} \left|\sin\brac{\frac t2}\right|^{p-4} \left[
p  \cos^2(t/2) -  1
\right] \sin^2\brac{\frac{3t}{2}}\,\dd t\\
& = 2^{p-1} \pi
\int_0^{\pi}
  \sin^{p-4}\brac{r} (
p  \cos^2(r) -  1)
 \sin^2(3r)\,\dd r.
\end{split}
\]
We have 
\[
\sin(3r)
=
\sin (r)(4\cos^2( r)-1).
\]
Hence, it holds
\[
\begin{split}
\mathcal{Q}_p(\eta_3)
& = 2^{p-1} \pi
\int_0^{\pi}
  \sin^{p-2}\brac{r} (
p  \cos^2(r) -  1)
 (4\cos^2 r-1)^2\,\dd r\\
 &= 2^{p-1} \pi
\int_0^{\pi}
  \sin^{p-2}\brac{r} (
p  \cos^2(r) -  1)
 (16\cos^4 r+1-8 \cos^2 r)\,\dd r\\
 & = 2^{p-1} \pi
\int_0^{\pi}
  \sin^{p-2}\brac{r} (
p  \cos^2(r) -  1)
 (16\cos^4 r+1-8 \cos^2 r)\,\dd r.
\end{split}
\]
Mathematica confirms
\[
\mathcal{Q}_p(\eta_3)
=(8-p)\, 2^{p+1}\, \pi^{\frac{3}{2}}\, \frac{\Gamma\!\left(\frac{1+p}{2}\right)}
{\Gamma\!\left(3+\frac{p}{2}\right)}.
\]
\end{proof}

We now complete the proof of \Cref{th:strictinequality}.

\begin{proof}[Proof of Theorem~\ref{th:strictinequality}]
Let 
\[
 \eta(z) \coloneqq  \frac{1}{2} \brac{z^3+z^{-3}}
\]
Observe that $\eta\colon \S^1 \to \R$ is a smooth map. Set $\varphi\colon \S^1 \to \R^2$ be given by
\[
 \varphi(z) \coloneqq \eta(z) z^\perp  = \i z \eta(z).
\]
Consider 
\[
 u_\eps \coloneqq  \frac{\id + \eps \varphi}{|\id + \eps \varphi|}
\]
Then for $0<\eps \ll 1$ we have $u_\eps\colon \S^1 \to \S^1$ is a smooth map of degree $1$, it also smoothly depends on $\eps$, and thus $(\eps \mapsto E_s(u_\eps))$ is at least $C^3$. By Peano's theorem we have for $\eps \to 0$
\[
 E_s(u_\eps) = E_s(u_0) + \eps \left.\frac{\dd}{\dd\delta} \right|_{\delta= 0} E_s(u_\delta) + \frac{1}{2}\eps^2 \brac{ \left. \frac{\dd^2}{\dd\delta^2} \right|_{\delta= 0} E_s(u_\delta) + o(1)}.
\]
In view of \Cref{la:firstsecondvariation}, it holds
\[
 E_s(u_\eps) = E_s(u_0) + \frac{1}{2}\eps^2 \brac{p \mathcal{Q}_p(\eta) + o(1)}.
\]
By \Cref{la:cos3theta}, for some constant $C_p > 0$, we have $\mathcal{Q}_p(\eta) = C_p (8-p)$. Hence, we have for $p = \frac{1}{s}$,
\[
 E_s(u_\eps) = E_s(u_0) + \frac{1}{2}\eps^2 \brac{ C_p (8-p) + o(1)}.
\]
Thus, if $p > 8$, i.e. $s < \frac{1}{8}$ we have for all suitably small $\eps$
\[
 C_p (8-p) + o(1) \leq -\frac{1}{2} C_p \abs{8-p}.
\]
This implies 
\[
 E_s(u_\eps) \leq E_s(\id) -\frac{1}{2} C_p \abs{8-p} \eps^2 < E_s(\id).
\]
Since $u_\eps$ is of degree $1$, we conclude that for $s < \frac{1}{8}$, it holds $\#_s 1 < E_s(\id)$.
\end{proof}

	\section{Approximate Stability}\label{s:approxstability}

One of the most crucial ingredients is the following result that relies on the stability theory developed in \cite{MS23} (this is stability in $s$) combined with the ``lateral'' stability  for $s= \frac{1}{2}$ in \cite[Theorem 1.3]{DSW23}.

	\begin{theorem}\label{th:stable}
	There exists $s_0 > \frac{1}{2}$, $p_0 > \frac{1}{s_0}$, $\gamma_0 > 0$ with the following properties.

	For any $\eps > 0$ there exists a $\delta > 0$ such that the following holds for any $s \in (\frac{1}{2}-\delta,\frac{1}{2}+\delta)$.

	If $u\colon  \S^1 \to \S^1$ is a degree one minimizer for $E_s$, i.e.\ $E_s(u) = \#_s 1$ then there exists a Möbius transform $m_b(z) \coloneqq  \frac{z+b}{1+\overline b z}$ for some $|b| < 1$ and a rotation $e^{-\i \lambda}$ for some $\lambda\in \R$ such that for $\widetilde{u} \coloneqq  e^{-\i \lambda} u \circ m_b $ we have
		\[
		\|\widetilde{u} - \id\|_{C^{\gamma_0}(\S^1)} + \|\widetilde{u}-\id\|_{W^{s_0,p_0}(\S^1)} \leq \eps
		\]
	\end{theorem}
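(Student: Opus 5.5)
I will argue by contradiction combined with compactness, using the continuity of $s\mapsto \#_s 1$ from \cite{MS23} and the quantitative stability at $s=\frac12$ from \cite[Theorem 1.3]{DSW23}. Suppose the statement fails for some $\eps>0$. Then there are $s_k\to\frac12$ and degree one minimizers $u_k$ with $E_{s_k}(u_k)=\#_{s_k}1$. For every choice of $b$ and $\lambda$, the normalized map $e^{-\i\lambda}u_k\circ m_b$ has distance at least $\eps$ from $\id$ in the norm $C^{\gamma_0}+W^{s_0,p_0}$.

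\textbf{Step 1 (energy convergence and normalization).} By \cite{MS23}, $\#_{s_k}1\to\#_{1/2}1=4\pi^2$. Using \Cref{la:mobius-invariance}, I compose $u_k$ with a Möbius map and a rotation. The aim is to fix the conformal gauge, for instance by requiring that the energy density be balanced on three fixed arcs, or that the harmonic extension have barycenter zero. Call the resulting maps $\widetilde u_k$; they are still minimizers. I then compare energies across exponents. By Hölder's inequality and the fact that $|u(x)-u(y)|\le 2$, $E_{1/2}(\widetilde u_k)$ is bounded by $E_{s_k}(\widetilde u_k)$ up to a factor tending to $1$, plus small errors. One must be careful here, because $|x-y|^{-2}$ is not integrable. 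The substitute is the interpolation and stability-in-$s$ arguments of \cite{MS23}: minimizers at $s_k$ have uniformly bounded $W^{s_k,1/s_k}$ energy, and after normalization they are uniformly bounded in a slightly better space.

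\textbf{Step 2 (uniform regularity).} I will use the $\eps$-regularity and Sacks--Uhlenbeck theory for minimizers \cite{Sucks23,MS23}. After normalization the energy cannot concentrate. Concentration would produce a bubble carrying energy at least $\#_{s_k}1$, and then the remaining part would have degree zero and energy about $0$, which contradicts the balancing normalization. With no concentration, the regularity theory gives uniform bounds $[\widetilde u_k]_{C^{\gamma}}\le C$ and $\|\widetilde u_k\|_{W^{s_1,p_1}}\le C$ for some $\gamma>\gamma_0$ and some $(s_1,p_1)$ strictly better than $(s_0,p_0)$. These exponents are chosen independently of $k$, which determines $s_0>\frac12$, $p_0>1/s_0$ and $\gamma_0$.

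\textbf{Step 3 (limit and DSW stability).} By compactness, $\widetilde u_k\to u_\infty$ in $C^{\gamma_0}\cap W^{s_0,p_0}$, and the limit has degree one. Lower semicontinuity together with Step 1 gives $E_{1/2}(u_\infty)\le 4\pi^2$, so $u_\infty$ is a degree one $H^{1/2}$-minimizer. By \cite{DSW23}, or by the rigidity case of \cite{MP04,BMRS14}, $u_\infty$ is a Möbius map. The normalization forces $u_\infty=e^{\i\lambda}\id$. After a further rotation, $\widetilde u_k$ is $\eps$-close to $\id$ for large $k$, which is a contradiction. Alternatively, one can apply \cite[Theorem 1.3]{DSW23} quantitatively: it shows that $E_{1/2}(\widetilde u_k)-4\pi^2\to0$ forces $H^{1/2}$-closeness to some $\mathcal M$ element, and interpolating this with the uniform higher bounds from Step 2 upgrades it to the $C^{\gamma_0}$ and $W^{s_0,p_0}$ norms.

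The main obstacle is Step 2: obtaining regularity bounds for minimizers that are uniform as $s\to\frac12$ and that rule out loss of compactness under the conformal group. I would first try the argument of \cite{MS23}. It gives uniform $\eps$-regularity in $s$ near $\frac12$, and I would combine it with gauge fixing by Möbius transforms so that no arc of length $r$ carries more than half of the energy.
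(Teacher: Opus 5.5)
Your outline is sound, but it takes a genuinely different route from the paper. The paper does no blow-up analysis and never fixes a gauge by hand. It goes as follows.

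\begin{itemize}
\item It uses the M\"obius-invariant uniform bound $[u]_{W^{s_0,1/s_0}}\le C$ for degree-one minimizers from \cite[Theorem 4.1]{MS23}. This is exactly the ``slightly better space'' you allude to in Step 1. Together with continuity of $s\mapsto\#_s1$, it gives $E_{1/2}(u)\le\#_{1/2}1+2\eps$.
\item \cite[Theorem 1.3]{DSW23} then \emph{supplies} the M\"obius map $\phi$, with $[u-\phi]_{H^{1/2}}^2\aleq\eps$.
\item Gagliardo--Nirenberg between this and the invariant $W^{s_0,1/s_0}$ bound makes $[\widetilde u-\id]_{W^{s_1,1/s_1}}$ small for some $s_1>\frac12$. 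So $\widetilde u$ inherits from $\id$ small energy on all arcs of a fixed length.
\item $\eps$-regularity, a second interpolation and Morrey finish.
\end{itemize}

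In your version the three ingredients come from different places:
\begin{itemize}
\item The gauge comes from balancing the energy density. Three arcs of equal energy is the cleanest choice. It is realizable because $\dx\,\dy/|x-y|^2$ is M\"obius invariant and orientation-preserving M\"obius maps act simply transitively on oriented triples.
\item Non-concentration comes from bubbling plus quantization.
\item The limit is identified by Fatou and the qualitative rigidity of degree-one $H^{1/2}$-minimizers \cite{MP04,BMRS14}.
\end{itemize}

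What your route buys is independence from \cite{DSW23} and from continuity of $s\mapsto\#_s1$. Your Step 1 comparison can be dropped entirely; as you suspected, it does not follow from H\"older when $s_k<\frac12$. Fatou together with $\#_{s_k}1\le E_{s_k}(\id)=C_{s_k}\to4\pi^2$ (\Cref{la:energy-zk}) already gives $E_{1/2}(u_\infty)\le4\pi^2$.

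The price is a Sacks--Uhlenbeck analysis along $s_k\to\frac12$. Its key claim, that a bubble carries energy at least $4\pi^2$, is asserted without proof. The blow-up limit is a nonconstant finite-energy $\frac12$-harmonic map into $\S^1$, and you must rule out degree-zero ones. This follows from the Millot--Sire classification (such maps are Blaschke products, with energy $4\pi^2|d|$) or from the energy identity of \cite{MS23}, and you should cite it.

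Two smaller points:
\begin{itemize}
\item Your normalization need not survive the limit. Any M\"obius limit $u_\infty=e^{\i\mu}m_c$ already gives $e^{-\i\mu}\widetilde u_k\circ m_c^{-1}\to\id$ in $C^{\gamma_0}\cap W^{s_0,p_0}$, which contradicts your assumption. Domain rotations can be moved to the target because $\id$ is equivariant.
\item Your argument is purely qualitative. The paper's is more direct and gives the closeness as a power of the $H^{1/2}$ energy deficit.
\end{itemize}
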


The starting point are the following two results. Firstly, \cite[Theorem 1.3]{DSW23} using also \Cref{la:RtoS1}, we have the following result.
\begin{theorem}\label{th:H12stable}
			There exists $C>0$ such that the following holds. Assume $u \in W^{\frac{1}{2},2}(\S^1,\S^1)$ with $\deg u = 1$. Then there  exists $\phi = e^{\i \lambda} \frac{z+b}{1+\overline b z}$ with $|b| <1$ and $\lambda \in \R$ such that
			\[
			[u-\phi]_{W^{1/2,2}(\S^1)}^2 \leq C \brac{[u]_{W^{\frac{1}{2},2}(\S^1)}^2 - \#_{\frac{1}{2}} 1 }.
			\]
\end{theorem}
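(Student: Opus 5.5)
The plan is to reduce the statement to the half-line (real-line) version in \cite[Theorem 1.3]{DSW23} by the stereographic change of variables $\tau$ of \Cref{la:RtoS1}, and then push the result back to $\S^1$.

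First we record that the identity in \Cref{la:RtoS1} holds verbatim for any measurable $w\colon\S^1\to\R^2$ with finite $W^{1/2,2}$ seminorm, not only for $\S^1$-valued maps. The reason is that $\tau$ is conformal. Its derivative satisfies $|\tau'(x)|=\frac{2}{1+x^2}$, and it obeys the chordal identity
\[
|\tau(x)-\tau(y)|^2=|\tau'(x)||\tau'(y)|\,|x-y|^2 .
\]
Changing variables $\xi=\tau(x)$, $\eta=\tau(y)$ therefore gives $\dd\sigma(\xi)\dd\sigma(\eta)=|\tau'(x)||\tau'(y)|\dx\dy$, and the Jacobians cancel against the kernel. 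Applying this with $w=u$ and $w=u-\phi$ gives two facts: $[u]_{W^{1/2,2}(\S^1)}=[v]_{\dot H^{1/2}(\R)}$, and $[u-\phi]_{W^{1/2,2}(\S^1)}=[v-\phi\circ\tau]_{\dot H^{1/2}(\R)}$, where $v=u\circ\tau$.

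Next we match degrees and minimal energies. The map $v$ is a finite-energy $\S^1$-valued map on $\R$, and its degree, defined via the closed loop obtained by compactifying through $\tau$, equals $\deg u=1$. Applying the first fact to $u=\id$ shows that the minimal degree one energy on $\R$ coincides with $\#_{\frac12}1=4\pi^2$. The $\R$-minimizers, which are rotations of Blaschke-type factors $\frac{x-a-\i c}{x-a+\i c}$ with $c>0$, then correspond under conjugation by $\tau$ exactly to maps $e^{\i\lambda}m_b$ with $|b|<1$. This holds because conjugation by the Cayley-type map $\tau$ carries orientation-preserving Möbius maps of the upper half-plane to Möbius maps of the disc. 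The sign convention for the orientation of $\tau$ has to be checked so that degree $+1$ corresponds to $c>0$; if it is reversed, we compose with a reflection, which does not change the energy.

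Finally, \cite[Theorem 1.3]{DSW23} applied to $v$ yields a degree one optimizer $\Phi$ on $\R$ with
\[
[v-\Phi]_{\dot H^{1/2}(\R)}^2\le C\bigl([v]^2_{\dot H^{1/2}(\R)}-4\pi^2\bigr).
\]
We set $\phi\coloneqq\Phi\circ\tau^{-1}$, which lies in $\mathcal M$ by the previous step. Transporting both sides of the inequality back to $\S^1$ with the two equalities from the first step gives the claim. Any difference in normalization constants between the seminorm used in \cite{DSW23} and ours is absorbed into $C$.

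The main obstacle is bookkeeping rather than analysis. We must make sure that the deficit in \cite{DSW23} is exactly $[v]^2-4\pi^2$ in our normalization, or else a fixed multiple of it. We must also confirm that their class of optimizers for degree one is precisely the image of $\mathcal M$ under $\tau$, including the degree and orientation convention. These are the points I would verify first against the precise statement in \cite{DSW23}.
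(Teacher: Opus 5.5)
Your proposal is correct and takes the same route as the paper, which invokes \cite[Theorem 1.3]{DSW23} and transports it to $\S^1$ through the conformal change of variables of \Cref{la:RtoS1}. The details you supply are exactly the ones the paper leaves implicit: the extension of that identity to $\R^2$-valued maps (needed for $u-\phi$), the identification of the optimizers with $\mathcal{M}$, and the normalization bookkeeping.

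Two small remarks. First, the equality of the minimal degree-one energies on $\R$ and $\S^1$ comes from the energy- and degree-preserving bijection $u\leftrightarrow u\circ\tau$, not from testing with $\id$ alone. Second, $\tau$ does run clockwise around $\S^1$, so the orientation check you flag is genuinely needed; it is resolved by precomposing with $x\mapsto -x$, which preserves the energy.
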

	
We also recall the usual $\eps$-regularity theorem (which follows easily from Cacciopoli's inequality and iteration, cf.\ \cite[Theorem 3.1]{Sucks23})
\begin{theorem}[Conformal regularity for minimizers]\label{th:epsregularity}
			There exists $\eps_0 > 0$, $\gamma >0$ such that the following holds.
			
			For any $R > 0$, $\Lambda  >0$, $0 < s_0 < s_1 < 1$ there exists a constant $\Gamma$ with the following properties.
			
			If for any $s \in [s_0,s_1]$ a map $u\in W^{s,\frac{1}{s}}(\S^1,\S^1)$ is local minimizer of $E_s$ with $E_s(u) \leq \Gamma$ and $R>0$ is such that 
			\begin{equation}\label{eq:uBxorsmall}         
			 \sup_{x_0 \in \S^1} [u]_{W^{s,\frac{1}{s}}(B(x_0,R)\cap \S^1)} < \eps_0,
			\end{equation}
			then it holds
			\[         
			 \|u\|_{C^\gamma(B(x_0,R/2)\cap \S^1)} + \|u\|_{W^{s+\gamma,\frac{1}{s} + \gamma}(B(x_0,R/2)\cap \S^1)} \leq \Gamma.
			\]
		\end{theorem}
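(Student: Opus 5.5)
The statement in question is the $\eps$-regularity \Cref{th:epsregularity}. The plan is the standard Caccioppoli-plus-iteration scheme for fractional $p$-harmonic minimizers with $p=\frac1s$. The key point is that every constant must depend only on $s_0,s_1$ and the local smallness, never on $s$ itself. Throughout, fix $x_0\in\S^1$ and write $B_r=B(x_0,r)\cap\S^1$.

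\emph{Step 1: local comparison on small balls.}
For $r\le R$, I compare $u$ with a competitor $v$ that equals $u$ outside $B_r$. Inside $B_r$ I take $v=\Pi(\bar u_{B_r}+\chi(u-\bar u_{B_r}))$, where $\chi$ is a cutoff and $\Pi$ is the nearest point projection onto $\S^1$. This is admissible because \eqref{eq:uBxorsmall} and the fractional Poincaré--Sobolev embedding $W^{s,1/s}\hookrightarrow VMO$ (critical scaling) keep $u(B_r)$ inside a small arc, away from the singular set of $\Pi$.

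Local minimality then gives a fractional Caccioppoli inequality:
\[
[u]^{p}_{W^{s,p}(B_{r/2})}\le \theta\,[u]^{p}_{W^{s,p}(B_{r})}+C\sum_{k\ge1}2^{-k\sigma}[u]^p_{W^{s,p}(B_{2^kr})}.
\]
Here $\theta<1$ is small once $\eps_0$ is small, and the tail sum collects the nonlocal interaction terms. The weight $\sigma>0$ comes from the kernel $|x-y|^{-2}$ together with the decay of $u(y)-\bar u$ at distance. I also need to use the Möbius/scaling invariance so that, by rescaling, $r$ can be taken as $1$.

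\emph{Step 2: Morrey--Campanato decay and Hölder continuity.}
Iterating the inequality of Step 1 with the standard iteration lemma for nonlocal tails gives
\[
[u]^p_{W^{s,p}(B_r)}\le C (r/R)^{\beta}\bigl(\eps_0^p + E_s(u)R^{0}\bigr),
\]
uniformly for $x_0\in\S^1$. Campanato's characterization (via Poincaré, $\frac{1}{r}\int_{B_r}|u-\bar u|^p$ controlled by the seminorm) then yields $u\in C^{\gamma}(B_{R/2})$. The exponent is $\gamma=\beta/p$, and I shrink it to be uniform in $s\in[s_0,s_1]$.

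\emph{Step 3: higher differentiability.}
I apply a difference-quotient (Nirenberg) argument to the Euler--Lagrange system. The test functions are $\tau_{-h}(\chi^p\tau_h u)$ projected tangentially; the projection onto $T_u\S^1$ is harmless because $u$ is already Hölder continuous. Combined with the Hölder bound, this gives $u\in W^{s+\gamma,p+\gamma}(B_{R/2})$, following the fractional $p$-Laplacian regularity of Brasco--Lindgren type, with the quadratic target term absorbed using $|u(x)-u(y)|\le C|x-y|^\gamma$. Since the resulting bounds depend only on $p\in[1/s_1,1/s_0]$, on $R$, and on $\Lambda$ (through $E_s(u)$), this gives the constant $\Gamma$.

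\emph{Main obstacle.}
The hardest point is controlling the nonlocal tails uniformly in $s$ in the Caccioppoli inequality. For $p<2$ in particular, the degenerate or singular nature of $|\cdot|^{p-2}$ makes the absorption delicate. My first attempt would be to split the energy into near and far interaction regions. The far region would be bounded by the total energy times $r^{\sigma}$, and the near region would be absorbed using the smallness $\eps_0$.
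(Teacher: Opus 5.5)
The paper gives no argument for this statement beyond ``Caccioppoli inequality and iteration, cf.\ \cite[Theorem 3.1]{Sucks23}''. Your skeleton is therefore the intended one, but two of your steps fail as written.

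\emph{First, your justification for the Step 1 competitor is false.} The embedding $W^{s,\frac1s}\hookrightarrow VMO$ makes only the mean oscillation of $u$ on $B_r$ small. It does not confine $u(B_r)$ to a small arc. A map in the critical space can wind around $\S^1$ infinitely often near a point while having arbitrarily small energy on small arcs there. An example is $e^{\i\varphi}$ with $\varphi(x)=\log\log(1/|x|)$: its $W^{s,\frac1s}$-seminorm on $B(0,r)$ is finite for every $s\in(0,1)$ and tends to $0$ with $r$.

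Where $u(x)$ is nearly antipodal to $\bar u_{B_r}$ and $\chi(x)\approx\frac12$, the vector $\bar u_{B_r}+\chi(u-\bar u_{B_r})$ is close to $0$. There the energy of its projection is not controlled. This is exactly the difficulty a Caccioppoli inequality for sphere-valued maps in the critical regime must overcome. Averaging over projection centres does not help when $p\ge2$, i.e.\ $s\le\frac12$.

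For the target $\S^1$ the natural fix is to cut off the phase instead of $u$. On an arc with energy below $\eps_0$ one can write $u=e^{\i\varphi}$ with $[\varphi]_{W^{s,\frac1s}}\lesssim[u]_{W^{s,\frac1s}}$. One way to see this is via the averaging extension of $u$, whose modulus stays $\ge\frac12$ by the smallness, combined with the trace characterization of $W^{s,\frac1s}$. Then compare with $e^{\i(\bar\varphi+\chi(\varphi-\bar\varphi))}$.

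\emph{Second, the Step 1 inequality cannot be iterated to decay.} The $k=1$ tail term contains $\frac1r\int_{B_{2r}\setminus B_r}|u-\bar u_{B_r}|^p$. This can only be bounded by $C[u]^p_{W^{s,p}(B_{2r})}$ with $C$ of order one. Nothing makes $\theta$ small as $\eps_0\to0$: the comparison terms are of the same order as the energy, smallness of $\eps_0$ only helps with the constraint, and hole filling gives only some fixed $\theta<1$.

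Once $C\sum_{k\ge1}2^{-k\sigma}\ge1-\theta$, the inequality $a(r/2)\le\theta a(r)+C\sum_{k\ge1}2^{-k\sigma}a(2^kr)$ holds for \emph{every} nondecreasing $a$. It is consistent, for instance, with $a(\rho)=1/\log(1/\rho)$. To extract $(r/R)^\beta$ you would need $\theta+C\sum_{k\ge1}2^{-k(\sigma-\beta)}\le2^{-\beta}$, i.e.\ a small tail coefficient. Your near/far splitting does not provide this: the near part at scale $2r$ is linear in the energy and cannot be absorbed by $\eps_0$.

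What does tolerate order-one tails is a Gehring-type argument. Caccioppoli together with the fractional Sobolev--Poincar\'e inequality gives a reverse H\"older inequality. The nonlocal self-improvement of Kuusi--Mingione--Sire then gives $u\in W^{s+\gamma,\frac1s+\gamma}(B(x_0,R/2)\cap\S^1)$, with $\gamma$ uniform for $s\in[s_0,s_1]$.

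H\"older continuity follows as a corollary. Indeed $(s+\gamma)-\frac{1}{\frac1s+\gamma}=\gamma+\frac{s^2\gamma}{1+s\gamma}\ge\gamma$, so $W^{s+\gamma,\frac1s+\gamma}\hookrightarrow C^\gamma$ in one dimension. This is exactly the shape of the stated conclusion. So reverse the order of your Steps 2 and 3; the difference-quotient step then becomes unnecessary.
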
	
Observe that \eqref{eq:uBxorsmall} does not play well with reparametrizations of M\"obius transforms. That is to say, if $u$ satisfies \eqref{eq:uBxorsmall} in general $u\circ m_b$ will not, and there is no hope of getting a uniform $C^\gamma$-estimate as above. For this reason, we also require the conformal regularity theorem in \cite[Theorem 4.1]{MS23}.
 	\begin{theorem}[Conformal regularity for minimizers]\label{th:regularity}
			There exists $s_0 > \frac{1}{2}$ and constants $C_0 > 0$ and $\delta > 0$ such that for any $s \in (\frac{1}{2}-\delta,\frac{1}{2}+\delta)$ we have for a given $\deg 1$-minimizer of $E_s$ denoted $u$, i.e.\ if $E_s(u) = \#_s 1$
			\[         
			 [u]_{W^{s_0,\frac{1}{s_0}}(\S^1,\R^2)} \leq C.
			\]

		\end{theorem}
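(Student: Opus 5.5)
The plan is to argue by compactness and contradiction, combining the Möbius invariance of all quantities involved with the $\eps$-regularity \Cref{th:epsregularity}. First observe that $[u]_{W^{s_0,\frac{1}{s_0}}}$ is, like $E_s$, invariant under domain Möbius transforms and target rotations (the exponent pair satisfies $s_0\cdot \frac{1}{s_0}=1$), by the same computation as in \Cref{la:mobius-invariance}. Hence it suffices to prove the bound for a suitably normalized representative $\widetilde u = e^{-\i\lambda}u\circ m_b$. Also, by continuity of $s\mapsto \#_s 1$ from \cite{MS23}, we have $E_s(u)=\#_s 1\leq \Lambda$ uniformly for $s\approx\frac12$.

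\textbf{Step 1: normalization.} For a degree one minimizer $u$, choose $b$ so that the energy density of $u\circ m_b$ is ``balanced''. Concretely, we ask that no arc of length $R_*$ (a small fixed number) carries more than, say, half of the total energy, or alternatively that the center of mass of the energy measure is at the origin. This is the standard conformal centering: the energy measure is a finite measure on $\S^1$ without atoms, and the Möbius group acts transitively enough to push it into a balanced position.

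\textbf{Step 2: uniform non-concentration.} This is the step I expect to be the main obstacle. Suppose that $s_k\to\frac12$ and that the $u_k$ are normalized degree one minimizers for which $\sup_{x_0}[u_k]_{W^{s_k,1/s_k}(B(x_0,R)\cap\S^1)}\geq\eps_0$ holds for every $R>0$ along the sequence. By the Sacks--Uhlenbeck theory of \cite{Sucks23} and the energy identity of \cite{MS23}, a subsequence converges weakly away from finitely many points to a limit $u_\infty$, with bubbles forming at those points. The total degree is $1$, and $\#_{\frac12}1 = E_{\frac12}(u_\infty)+\sum E_{\frac12}(\text{bubbles})$. Because of the normalization, concentration cannot swallow all of the energy, so $u_\infty$ is nonconstant and at least one bubble is nontrivial. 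Each piece is a minimizing (or locally minimizing) $W^{\frac12,2}$-harmonic map.
\begin{itemize}
\item Pieces of degree $d\neq0$ cost at least $4\pi^2|d|$.
\item Pieces of degree zero are minimizing in their class, hence constant.
\end{itemize}
Two nontrivial pieces with total degree $1$ therefore cost strictly more than $4\pi^2=\#_{\frac12}1$, which is a contradiction. Consequently there exist $R>0$ and $\delta>0$ such that \eqref{eq:uBxorsmall} holds uniformly for all normalized minimizers with $s\in(\frac12-\delta,\frac12+\delta)$. The delicate point is making the normalization compatible with the bubble-tree limit, i.e.\ ruling out that all the energy escapes into a single bubble. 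This is exactly what the centering prevents, since a single bubble would have to concentrate more than half of the energy on a tiny arc.

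\textbf{Step 3: conclusion.} With $R$ fixed, apply \Cref{th:epsregularity} on $[s_-,s_+]=[\frac12-\delta,\frac12+\delta]$. Covering $\S^1$ by finitely many balls $B(x_0,R/2)$ gives the uniform bound $\|\widetilde u\|_{W^{s+\gamma,\frac1s+\gamma}(\S^1)}\leq \Gamma$. Now choose $s_0\in(\frac12,\frac12+\frac{\gamma}{4})$ and shrink $\delta$ so that $s+\gamma>s_0$ and $s+\gamma-\frac{1}{1/s+\gamma}\geq s_0-s_0$ for all $s\approx\frac12$. Then $W^{s+\gamma,\frac1s+\gamma}(\S^1)\hookrightarrow W^{s_0,\frac{1}{s_0}}(\S^1)$ by the fractional Sobolev embedding, since the differentiability gap compensates the change of integrability on the compact one-dimensional domain. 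This yields $[\widetilde u]_{W^{s_0,1/s_0}}\leq C$, and by Möbius invariance the same bound holds for $u$.
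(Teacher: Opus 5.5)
The paper gives no proof of this statement. It is quoted from \cite[Theorem 4.1]{MS23}, so there is no internal argument to match, and your proposal is an independent route. In outline it is sound.

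Your compactness argument isolates what is actually needed. Besides the uniform $\eps$-regularity of \Cref{th:epsregularity}, the only input is the rigidity at $s=\frac12$: the quantization $\#_{\frac12}d=4\pi^2|d|$ forbids a degree one minimizer near $s=\frac12$ from splitting into two nontrivial pieces. Conformal centering is exactly what repairs the incompatibility of \eqref{eq:uBxorsmall} with M\"obius reparametrizations, which the paper points out after \Cref{th:epsregularity}. The conformal invariance of $[\cdot]_{W^{s_0,1/s_0}}$ then transports the bound back to $u$.

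The price is that $\delta$, $s_0$ and $C$ are not effective. All the analytic weight also sits in a bubble-compactness theory for minimizers of $E_{s_k}$ with varying $s_k\to\frac12$, which you use as a black box. Since the statement itself comes from \cite{MS23}, you should draw on that paper as little as possible. Continuity of $s\mapsto\#_s1$, and possibly the energy identity, may be proved there with the help of this very theorem, which would make your argument circular.

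This is easy to arrange.
\begin{itemize}
\item You do not need continuity of $s\mapsto\#_s1$. The bound $\#_s1\le E_s(\id)=C_s\to4\pi^2$ from \Cref{la:energy-zk} suffices.
\item You do not need the full energy identity. The Fatou inequality $\liminf_k E_{s_k}(u_k)\ge E_{\frac12}(u_\infty)+\sum_i E_{\frac12}(\omega_i)$, taken on disjoint regions of $\S^1\times\S^1$ after M\"obius rescaling, suffices, provided every nonconstant piece costs at least $4\pi^2$.
\item Your reason for that lower bound is not established. You claim degree zero pieces are ``minimizing in their class'', but this is not known for weak limits and bubbles of minimizers with varying exponent. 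It is also unnecessary: the pieces are $\frac12$-harmonic (pass to the limit in the Euler--Lagrange equation away from the finitely many concentration points, then remove the point singularities). Nonconstant $\frac12$-harmonic circle maps are Blaschke products of nonzero degree, cf.\ \cite{MS15}.
\end{itemize}

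For Step 1, record the covariance that makes centering possible. Setting $e_u(x)\coloneqq\int_{\S^1}|u(x)-u(y)|^{1/s}|x-y|^{-2}\dy$, one has $e_{u\circ m}(x)=|m'(x)|\,e_u(m(x))$ for $m\in\mathcal M$, so the energy measure is M\"obius covariant and can be conformally centered. If the body is constant, energy counting leaves a single concentration point $x_0$. The $\eps$-regularity estimates away from $x_0$ then give $e_{u_k}\dx\rightharpoonup m\,\delta_{x_0}$ with $m>0$, whose barycenter $m\,x_0$ is nonzero, contradicting the centering. Use the barycenter directly rather than the ``half the energy'' formulation. A zero barycenter only bounds the energy on an arc of length $R_*$ by the fraction $(1+\cos\frac{R_*}{2})^{-1}$, which is slightly more than one half.

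In Step 3 the embedding condition is garbled. What you need is $s+\gamma>s_0$ and $\frac1s+\gamma\ge\frac1{s_0}$. These give $W^{s+\gamma,\frac1s+\gamma}\hookrightarrow W^{s_0,\frac1{s_0}}$ on arcs by H\"older; the region $\{|x-y|\ge R/4\}$ is handled by the trivial bound.
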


We will also use the following ``Poincar\`{e}''-type result.
\begin{lemma}\label{la:poincareonS1}
Let $0<s<1$ and $1\le q<\infty$. Then there exists
$C=C(s,q)>0$ such that for every map
$g\colon \mathbb S^1\to\mathbb S^1$ with $\id-g\in W^{s,q}(\mathbb S^1)$, one has
\[
\|\id-g\|_{L^q(\mathbb S^1)}
\le
C[\id-g]_{W^{s,q}(\mathbb S^1)}.
\]
\end{lemma}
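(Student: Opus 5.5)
The statement to prove is \Cref{la:poincareonS1}: a Poincar\'e inequality for $\id-g$ with no mean-value subtraction. It can hold only because $g$ takes values in $\S^1$. I plan to argue by the standard Poincar\'e inequality plus a pointwise geometric fact, with no compactness argument.

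First, by Jensen (or H\"older) the usual fractional Poincar\'e inequality holds on $\S^1$: for $f=\id-g$ and $\bar f=\frac{1}{2\pi}\int_{\S^1} f$,
\[
\|f-\bar f\|_{L^q(\S^1)}^q \le \frac{1}{2\pi}\iint_{\S^1\times\S^1}|f(x)-f(y)|^q\,\dx\,\dy \le C\,[f]_{W^{s,q}(\S^1)}^q ,
\]
using $|x-y|\le 2$, so that $|f(x)-f(y)|^q\le 2^{1+sq}\frac{|f(x)-f(y)|^q}{|x-y|^{1+sq}}$. It therefore suffices to show that $|\bar f|\aleq \|f-\bar f\|_{L^q}$, since then $\|f\|_{L^q}\le \|f-\bar f\|_{L^q}+(2\pi)^{1/q}|\bar f|$.

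The key step is the bound on the mean, and it uses the constraint $|g|=1$. Write $c=\bar f\in\R^2$, so that $g(x)=x-c-h(x)$ with $h\coloneqq f-\bar f$. For each $x$, the triangle inequality and $|g(x)|=1$ give
\[
|x-c|\le 1+|h(x)| \quad\text{and}\quad |x-c|\ge 1-|h(x)|,
\]
that is, $\bigl||x-c|-1\bigr|\le |h(x)|$. Hence $\int_{\S^1}\bigl||x-c|-1\bigr|^q\dx\le\|h\|_{L^q}^q$. It remains to prove the elementary lower bound
\[
\int_{\S^1}\bigl||x-c|-1\bigr|^q\,\dx\ \ageq\ \min(|c|,1)^q \qquad \text{for } c\in\R^2 .
\]
For $|c|\ge 2$ the integrand is at least $1$. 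For $|c|\le 2$, note that $|x-c|^2-1=|c|^2-2\langle x,c\rangle$, and $|x-c|+1\le 4$, so
\[
\bigl||x-c|-1\bigr|\ge \tfrac14\bigl||c|^2-2\langle x,c\rangle\bigr|.
\]
On the arc where $\langle x,c/|c|\rangle\le -\tfrac12$ this quantity is at least $\tfrac14|c|$. That arc has length $\frac{2\pi}{3}$, which gives $\ageq|c|^q$.

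Combining: if $|c|\le1$ we get $|c|\aleq\|h\|_{L^q}$ directly. If $|c|>1$ we get $1\aleq\|h\|_{L^q}$. In that case we also use $|c|\le \frac{1}{2\pi}\int|f|\le 2$, since $|f|\le|x|+|g|=2$, so $|c|\le 2\aleq\|h\|_{L^q}$. In every case $\|f\|_{L^q}\aleq\|h\|_{L^q}\aleq[f]_{W^{s,q}}$.

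The only real obstacle is this mean bound, since the statement is false without the constraint $|g|=1$ (take $g$ the identity plus a small constant, which leaves $[\id-g]_{W^{s,q}}=0$ but $\id-g\neq 0$). The pointwise inequality $\bigl||x-c|-1\bigr|\le|h(x)|$ handles it cleanly. Note also that the constant depends only on $s$ and $q$, and no degree assumption on $g$ is needed.
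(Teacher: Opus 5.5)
Your proof is correct and follows the paper's argument essentially step for step. You subtract the mean $c=\frac{1}{2\pi}\int_{\S^1}(\id-g)$ (the paper's $A$), apply the fractional Poincar\'e inequality, and bound $|c|$ via the pointwise estimate $\bigl||x-c|-1\bigr|\le|h(x)|$ together with an explicit lower bound on an arc of fixed length. The differences are only cosmetic: the paper uses $|A|\le 1$ (since $\int_{\S^1}x\,\dx=0$) and the arc $\{x\cdot e\ge \frac34\}$, while you use the cruder bound $|c|\le 2$ and the opposite arc $\{\langle x,c/|c|\rangle\le-\frac12\}$, which makes your case $|c|\ge 2$ vacuous but harmless.
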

\begin{proof}
We define
\[
A\coloneqq 
\frac1{2\pi}
\int_{\S^1}
\bigl(x-g(x)\bigr)\,\dx =
-\frac1{2\pi}\int_{\S^1}g(x)\,dx \in \R^2.
\]
Because $|g(x)|=1$, we have $|A|\le1$.
By the fractional Poincare inequality, it holds
\[
\left\|
\id-g-A
\right\|_{L^q(\mathbb S^1)}
\le
C[\id-g]_{W^{s,q}(\mathbb S^1)}.
\]
In order to conclude we will prove 
\begin{equation}\label{eq:pinconspheregoal}
|A|
\aleq \left\|
x-g(x)-A
\right\|_{L^q(\mathbb S^1)}
\end{equation}
If $A = 0$ there is nothing to show, so assume $A \neq 0$. Since $|g(x)|=1$, for every $x\in\mathbb S^1$, we have
\[
\left|
|x-A|-1
\right|
=
\left|
|x-A|-|g(x)|
\right|
\le
|x-g(x)-A|.
\]
Hence, it holds
\[
\left\|
|x-A|-1
\right\|_{L^q(\mathbb S^1)}
\le
\left\|
x-g(x)-A
\right\|_{L^q(\mathbb S^1)}.
\]
We now show that
\begin{equation}\label{eq:pinconspheregoalv2}
|A| \aleq_{q} \left\|
|x-A|-1
\right\|_{L^q(\mathbb S^1)}.
\end{equation}
Observe that for every $x \in \S^1$,
\begin{equation}\label{eq:poinc1mxma}
\begin{split}
        1-|x-A|
        &=
        \frac{1-|x-A|^2}{1+|x-A|}
        \\
        &=
        \frac{1-\bigl(|x|^2-2x\cdot A+|A|^2\bigr)}
        {1+|x-A|}
        \\
        &=
        \frac{2x\cdot A-|A|^2}
        {1+|x-A|}.
\end{split}
\end{equation}
We define 
\[
        e\coloneqq \frac{A}{|A|}\in\mathbb S^1 .
\]
Consider the arc
\[
        \mathcal{S} \coloneqq  \{x\in\mathbb S^1: x\cdot e\ge 3/4\}.
\]
For any $x \in \mathcal{S}$, it holds
\[
        2x\cdot A-|A|^2
        =
        |A|\bigl(2x\cdot e-|A|\bigr)
        \ge
        |A|\left(\frac32-1\right)
        =
        \frac{|A|}{2},
\]
where we used $|A|\le1$. Moreover, we have
\[
        1+|x-A|
        \le
        1+|x|+|A|
        \le
        3 .
\]
Therefore, it holds
\[
        1-|x-A|
        \ge
        \frac{|A|}{6} \qquad \forall x\in\mathcal{S}.
\]
We deduce that
\[
 |\mathcal{S}|^{\frac{1}{q}} \frac{|A|}{6} \leq \||x-A|-1\|_{L^q(\S^1)}
\]
Since the length $|\mathcal{S}|$ is independent of $A$, we have established \eqref{eq:pinconspheregoalv2} and this concludes the proof.
\end{proof}

	\begin{proof}[Proof of Theorem~\ref{th:stable}]
	Let $\eps>0$. By continuity of $s \mapsto \#_s 1$ there exists some $\delta>0$ such that 
	\[   
	 \abs{\#_s 1-\#_{\frac{1}{2}} 1} \leq \eps \quad \forall s \in \left( \frac{1}{2}-\delta,\frac{1}{2} +\delta \right).
	\]
    Taking $\delta$ potentially smaller, namely smaller than the $\delta_0$ from \Cref{th:regularity}, if $u$ is any degree 1 minimizer of $E_s$ for some $s \in [\frac{1}{2}-\delta,\frac{1}{2}+\delta]$ then $u \in W^{s_0,\frac{1}{s_0}}$ (with a uniform bound), and up to reducing $\delta$ depending only on the $W^{s_0,\frac{1}{s_0}}$-bound, $u$ satisfies
    \[
     \sup_{t \in [\frac{1}{2}-\delta,\frac{1}{2}+\delta]} \abs{E_s(u)-E_t(u)} < \eps.
    \]
In particular $u \in H^{\frac{1}{2}}$, and thus by \Cref{th:H12stable} there exists some M\"obius transform $\phi=e^{-\i \lambda} \frac{z+b}{1+\overline b z}$ such that
	\[   
	\begin{split}
	 [u-\phi]_{W^{1/2,2}}^2 \leq& C \brac{E_{\frac{1}{2}}(u) - \#_{\frac{1}{2}} 1 }\\
	 \leq& C \brac{E_{\frac{1}{2}}(u) - E_{s}(u)+ \#_s 1-\#_{\frac{1}{2}} 1 }\\
	 \leq& 2C\eps.
	 \end{split}
	\]
We now set $\tilde{u}\coloneqq e^{-\i \lambda}\, u \circ m_b$. By conformal invariance of the $W^{s,\frac{1}{s}}$-seminorm, \Cref{la:mobius-invariance}, it holds
\begin{equation}\label{eq:smallness_u_id}
 [\tilde{u}-\id]_{W^{1/2,2} (\S^1)}^2 \leq C\eps.
\end{equation}
Now by Gagliardo--Nirenberg, for $s  \in (\frac{1}{2},s_0)$ there exists a constant $C_2$, so that 
\[
[\tilde{u}-\id]_{W^{s,\frac1s}(\mathbb S^1)}
\le
[\tilde{u}-\id]_{W^{s_0,\frac1{s_0}}(\mathbb S^1)}^{\frac{s-\frac12}{s_0-\frac12}}
[\tilde{u}-\id]_{W^{\frac12,2}(\mathbb S^1)}^{\frac{s_0-s}{s_0-\frac12}}.
\]
By taking $s_1\coloneqq \frac{s_0 + \frac{1}{2}}{2}>\frac{1}{2}$, we find that for some positive $\sigma > 0$, it holds
\begin{equation}\label{eq:difference_u_id}
[\tilde{u}-\id]_{W^{s_1,\frac{1}{s_1}}(\mathbb S^1)}
\le C \eps^\sigma.
\end{equation}
On the other hand, for $\eps_0 > 0$ from \Cref{th:epsregularity} there exists some $R > 0$  (uniform for $s \in [\frac{1}{2}-\delta,\frac{1}{2}+\delta]$, since we stay away from $1$ and $0$) such that 
\begin{equation}\label{eq:small_energy_id}
 \sup_{x \in \S^1} [\id]_{W^{s_1,\frac{1}{s_1}}(\S^1 \cap B(x,R))} < \frac{\eps_1}{2}.
\end{equation}
By choosing $\eps$ possibly smaller, we may also assume that $ C \eps^\sigma < \frac{\eps_0}{2}$.
Combining \eqref{eq:difference_u_id} and \eqref{eq:small_energy_id}, we obtain
\[
 \sup_{x \in \S^1} [\tilde{u}]_{W^{s_1,\frac{1}{s_1}}(\S^1 \cap B(x,R))} < \eps_1
\]
From \Cref{th:epsregularity} we find, for any $x\in \S^1$,
\begin{equation}\label{eq:tildeu}
	\|\tilde{u}\|_{C^\gamma(\S^1 \cap B(x,R/2))} + \|\tilde{u}\|_{W^{s_1+\gamma,\frac{1}{s_1} + \gamma}(\S^1 \cap B(x,R/2))} \leq \Gamma.
\end{equation}
Up to reducing $\delta$ and $s_1>1/2$, we can assume that for any $s \in (\frac{1}{2}-\delta,\frac{1}{2}+\delta)$, it holds $s+\frac{\gamma}{3} > s_1$ and $\frac{1}{s} + \gamma > 2$. Again by Gagliardo--Nirenberg, for some powers $\theta \in (0,1)$, it holds for $s \in (\frac{1}{2}-\delta,\frac{1}{2}+\delta)$,
\[
 [\id-\tilde{u}]_{W^{s+\frac{\gamma}{3},2}(\S^1)} \aleq \|\id-\tilde{u}\|_{W^{s+\frac{\gamma}{2},2}(\S^1)}^{\theta} [\id-\tilde{u}]_{\dot{W}^{\frac{1}{2},2}(\S^1)}^{1-\theta}.
\]
By Sobolev embedding it holds $\|\id-\tilde{u}\|_{W^{s+\frac{\gamma}{2},2}(\S^1)} \aleq \|\id-\tilde{u}\|_{W^{s+\gamma,\frac{1}{s}+\gamma}(\S^1)}$, so that we have by \eqref{eq:tildeu} and \eqref{eq:smallness_u_id}
\[
 [\id-\tilde{u}]_{W^{s+\frac{\gamma}{3},2}(\S^1)} \aleq_{\Gamma} \eps^{\tilde{\sigma}} 
\]
By \Cref{la:poincareonS1}, potentially increasing the constant,
\[
 \|\id-\tilde{u}\|_{W^{s+\frac{\gamma}{3},2}(\S^1)} \aleq_{\Gamma} \eps^{\tilde{\sigma}}
\]
We obtain \Cref{th:stable} by Sobolev--Morrey embedding.
	\end{proof}

\section{Quantitative positivity of second variation in the \texorpdfstring{$s \in (1/8,1)$}{s>=1/8}-regime}\label{s:positivity}

In this section, we prove in \Cref{th:stable-picture-multiplier} below an estimate of $\mathcal{Q}_p(\eta)$ introduced in \Cref{la:Qprep}, with the identification $p=\frac{1}{s}$. We observe that $K_p$ is integrable if $p>3$ only (but it is integrable against $1-\cos(nt) =\frac{\sin^2(nt)}{1+\cos(nt)}$ for any $n \in \N$, which is what we will use for $p>1$).

\begin{theorem}[The case $s\in(\frac18,1)$]\label{th:stable-picture-multiplier}
Let $s\in(\frac18,1)$, and set $p=\frac1s\in(1,8)$. Let
$\eta\colon \S^1\to\R$ be such that
\begin{equation}\label{eq:decompo_eta}
\eta(e^{\i\theta})
=
\sum_{n\in\mathbb Z}c_n e^{\i n\theta}
\end{equation}
is a trigonometric polynomial, with $c_n\in\C$. Then we can write
\[
\mathcal{Q}_p(\eta)
=
4\pi
\sum_{n\in\mathbb Z}
M_{|n|}(p)|c_n|^2,
\]
and
\[
M_0(p)=M_1(p)=0.
\]
For $s \in [\frac{1}{8},1)$, equivalently for $p = \frac{1}{s} \in (1,8)$, we have $M_{n}(p) \geq 0$ for all $n \in \N$.
Moreover, if $1<p<3$, equivalently $s\in(\frac13,1)$, then there exist
constants $0<a_p\le b_p<\infty$ such that
\[
a_p\, n^{3-p}
\le
M_n(p)
\le
b_p\, n^{3-p}
\qquad
\text{for every }n\ge2.
\]
Consequently, if
\[
\widetilde\eta(e^{\i\theta})
\coloneqq 
\sum_{\substack{n\in\mathbb Z\\ n\neq0,\pm1}}
c_n e^{\i n\theta},
\]
then it holds
\[
a_p
\sum_{|n|\ge2}|n|^{3-p}|c_n|^2
\le
\mathcal{Q}_p(\eta)
\le
b_p
\sum_{|n|\ge2}|n|^{3-p}|c_n|^2.
\]
The constants $b_p,a_p$ can be chosen uniform as long as $p$ keeps a positive distance from $1$ and $3$. Equivalently, we have $\mathcal{Q}_p(\eta)
\asymp
\|\eta\|_{\dot H^{\frac{3-p}{2}}(\mathbb S^1)}^2$ whenever
\begin{equation}\label{eq:Fouriermodesvanishv1}
        \int_0^{2\pi}\eta (e^{\i\theta})\,\dd\theta=0,
        \qquad
        \int_0^{2\pi}\eta (e^{\i\theta})e^{\i\theta}\,\dd\theta=0.
\end{equation}

\end{theorem}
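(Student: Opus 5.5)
The plan is to diagonalize $\mathcal{Q}_p$ in Fourier modes using \Cref{la:Qprep}, compute the resulting multipliers $M_n(p)$ in closed form through Beta/Gamma integrals, and then read off both the signs and the asymptotics from that closed form.

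\textbf{Step 1: diagonalization.} Insert \eqref{eq:decompo_eta} into the representation of \Cref{la:Qprep}. Since $\eta$ is real we have $c_{-n}=\overline{c_n}$. Integrating in $\sigma$ kills all cross terms, and we get
\[
\int_0^{2\pi}\abs{\eta(e^{\i(\sigma+t)})-\eta(e^{\i\sigma})}^2\dd\sigma
=2\pi\sum_{n}|c_n|^2\abs{e^{\i nt}-1}^2
=8\pi\sum_n |c_n|^2\sin^2\brac{\tfrac{nt}{2}} .
\]
This gives $\mathcal{Q}_p(\eta)=4\pi\sum_n M_{|n|}(p)|c_n|^2$. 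The multipliers are
\[
M_n(p)=2\int_0^{2\pi}K_p(t)\sin^2\brac{\tfrac{nt}{2}}\dd t
=2^{p-2}\int_0^\pi \sin^{p-4}(r)\,\bigl(p\cos^2 r-1\bigr)\sin^2(nr)\,\dd r .
\]
This integral converges for every $p>1$, because $\sin^2(nr)\aleq r^2$ near $r=0$ and $r=\pi$. The exchange of sum and integral is harmless since $\eta$ is a trigonometric polynomial.

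\textbf{Step 2: the modes $n=0,1$.} Clearly $M_0=0$. For $n=1$ we have $M_1=2^{p-2}\int_0^\pi\sin^{p-2}r\,(p\cos^2r-1)\,\dd r$. Write $\cos^2=1-\sin^2$ and use the Wallis recursion $\int_0^\pi\sin^p=\frac{p-1}{p}\int_0^\pi\sin^{p-2}$. Then
\[
p\int_0^\pi\sin^{p-2}r\cos^2 r\,\dd r=\int_0^\pi\sin^{p-2}r\,\dd r,
\]
so $M_1=0$. This is consistent with the Möbius and rotation invariance from \Cref{la:mobius-invariance}.

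\textbf{Step 3: closed form for general $n$.} Write $\sin^2(nr)=\frac{1}{2}(1-\cos 2nr)$ and $p\cos^2r-1=(p-1)-p\sin^2 r$. Then $M_n$ becomes a combination of integrals $\int_0^\pi \sin^{a}(r)\,(1-\cos 2nr)\,\dd r$ with $a=p-4$ and $a=p-2$. Use the classical formula
\[
\int_0^\pi\sin^{a}r\cos(2nr)\,\dd r=\frac{\pi(-1)^n\Gamma(a+1)}{2^{a}\Gamma\brac{\tfrac a2+n+1}\Gamma\brac{\tfrac a2-n+1}} .
\]
For $a=p-4\le -1$ the individual integrals diverge, so I apply this formula only to the convergent combination with $1-\cos 2nr$. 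I justify it by analytic continuation in $p$ from the region $p>3$, where everything converges; both sides are analytic in $p$ on $\{p>1\}$.

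Next, the reflection formula converts $\frac{1}{\Gamma(\frac a2-n+1)}$ into $\frac{(-1)^n\,\Gamma(n-\frac a2)}{\dots}$. I expect this to produce
\[
M_n(p)=c_p\,P_p(n)\,\frac{\Gamma\brac{n+\frac{p-3}{2}}}{\Gamma\brac{n+\frac{5-p}{2}}},
\]
where $c_p$ is an explicit constant and $P_p$ is a low-degree polynomial in $n$. This $P_p$ must vanish at $n=1$. It must also reproduce \Cref{la:cos3theta} at $n=3$, which gives a factor proportional to $8-p$. The $n=1$ and $n=3$ cases serve as consistency checks on the computation.

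\textbf{Step 4: positivity and growth.} From the closed form:
\begin{itemize}
\item Nonnegativity for $p\in(1,8)$ should follow from checking the sign of $c_p P_p(n)$ together with the Gamma quotient, for all $n\ge2$.
\item For $1<p<3$ the Gamma arguments are positive, and Stirling gives $\Gamma(n+\alpha)/\Gamma(n+\beta)\asymp n^{\alpha-\beta}=n^{p-4}$. Combined with $P_p(n)\asymp n^2$ (or the appropriate degree), this yields $M_n\asymp n^{3-p}$.
\item The constants stay uniform for $p$ away from $1$ and $3$. Near $p=3$ the prefactor $c_p$ degenerates and must be tracked; near $p=1$ the quantity $\Gamma(\frac{p-1}{2})$ blows up.
\end{itemize}
The final two-sided estimate and the $\dot H^{\frac{3-p}{2}}$ equivalence under \eqref{eq:Fouriermodesvanishv1} then follow immediately from Parseval.

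\textbf{Main obstacle.} The hard part is Step 3 and the sign analysis in Step 4: producing the exact polynomial factor $P_p(n)$ and verifying its sign for every $n\ge2$ over the whole range $p<8$. The $n=2$ mode and the regime $3\le p<8$ are especially delicate, since there the Gamma factors can change sign and the sign must be compensated. If the closed form turns out too messy, my fallback is a monotonicity argument. I would first show that $M_n(p)$ has the sign of its value at $n=3$ via a recursion in $n$, obtained by integrating by parts in $r$. I would then verify the finitely many small $n$ directly.
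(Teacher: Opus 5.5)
\textbf{Verdict: there is a genuine gap.} Your Steps~1 and~2 agree with the paper: the diagonalization of \Cref{la:QvsMn}, and the Wallis identity \eqref{eq:integral_sin-cos} giving $M_1=0$. The real content of the theorem, however, is $M_n(p)\ge 0$ for every $n\ge2$ all the way up to $p=8$, together with $M_n\asymp n^{3-p}$ for $p<3$. You defer this to a closed form that you never derive, and the shape you predict cannot be right.

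\begin{itemize}
\item Your own bookkeeping gives $n^{p-4}\cdot n^2=n^{p-2}$, not $n^{3-p}$.
\item At $p=4$ one has $M_n(4)=2\pi$ for every $n\ge2$. Your Gamma quotient is $\equiv1$ there, so $M_n(4)=c_4P_4(n)$ with $P_4(1)=0$. This is impossible for a polynomial that is constant on $\{2,3,\dots\}$.
\end{itemize}

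Done correctly, your analytic continuation produces a \emph{difference}, not a product. For $p\neq3$,
\[
M_n(p)=\frac{2^{p-2}S_p}{3-p}\bigl[G_p(n)-1\bigr],\qquad
G_p(n)\coloneqq\Bigl((p-1)n^2+\tfrac{2-p}{2}\Bigr)\frac{\Gamma(\frac p2)}{\Gamma(n+\frac p2)}\prod_{m=2}^{n}\Bigl(m-\tfrac p2\Bigr),
\]
with $S_p=\int_0^\pi\sin^{p-2}r\,\dd r$. Here $M_1=0$ comes from $G_p(1)=1$ cancelling the constant, not from a root of a polynomial. For $p>3$ it is the constant that survives: $G_p(n)\to0$ and $M_n(p)\to\frac{2^{p-2}S_p}{p-3}=\int_0^{2\pi}K_p$.

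Stirling applied to this formula gives the large-$n$ behaviour, but not the sign for every $n\ge2$, and that is not visible from the closed form. You would need:
\begin{itemize}
\item $G_p(n)<1$ for all $n\ge2$ when $3<p<8$;
\item $G_p(n)>1$ for all $n\ge2$ when $p<3$, which you also need to turn the asymptotics into the lower bound $a_pn^{3-p}$ for all $n\ge2$.
\end{itemize}
What decides this are the increments $G_p(n+1)-G_p(n)=(3-p)\alpha_n(p)$. That is exactly the paper's partial-sum formula $M_n=2^{p-2}S_p\sum_{j=1}^{n-1}\alpha_j(p)$ from \Cref{la:partial-sum-formula}. The paper obtains it through $1-\cos(2nr)=2\sin^2r\,U_{n-1}(r)^2$ and $U_{n-1}^2=\sum_{j=0}^{n-1}U_{2j}$, and then splits into cases:
\begin{itemize}
\item $p<4$, where all $\alpha_j>0$;
\item $p=4$;
\item $4<p<6$, where the partial sums decrease and converge to $\int_0^{2\pi}K_p>0$;
\item $6\le p<8$, where $\alpha_2<0\le\alpha_j$ for $j\ge3$ and the threshold $8$ enters through $\alpha_1+\alpha_2=\frac{16(p-1)(8-p)}{p(p+2)(p+4)}$.
\end{itemize}
Your fallback (``$M_n$ has the sign of $M_3$ via a recursion in $n$'') points in this direction. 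It supplies neither the recursion nor the signs of its terms, and that is precisely where the proof lives.
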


For later use we record \Cref{th:stable-picture-multiplier} combined with \Cref{la:firstsecondvariation} implies the following
\begin{corollary}\label{co:th:stable-picture-multiplier}
 Let $s\in(\frac13,1)$, and set $p=\frac1s\in(1,3)$. 
Then there exists constants $0<a_p < 1 < b_p$ satisfying the following property. Let $\eta\colon \mathbb S^1\to\mathbb R$ be such that for $n \in \{0,-1,1\}$,
\[
\frac{1}{2\pi}
\int_0^{2\pi}
\eta(e^{\i\theta}) e^{-\i n\theta}\,\dd\theta = 0.
\]
Then we have 
\[
 \|\eta\|_{\dot H^{\frac{3-p}{2}}(\mathbb S^1)}^2 \leq \delta^2 E_s(\id)[x^\perp \eta,x^\perp \eta] \leq b_p \|\eta\|_{\dot H^{\frac{3-p}{2}}(\mathbb S^1)}^2
\]
\end{corollary}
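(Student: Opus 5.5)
The corollary is a rescaling of \Cref{th:stable-picture-multiplier}, combined with the identification $\delta^2 E_s(\id)[x^\perp\eta,x^\perp\eta] = p\,\mathcal{Q}_p(\eta)$ from \Cref{la:firstsecondvariation}.

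\emph{Step 1 (trigonometric polynomials).} Let $\eta$ be a real trigonometric polynomial with $c_0=c_{\pm1}=0$. \Cref{th:stable-picture-multiplier} gives
\[
a_p'\sum_{|n|\ge2}|n|^{3-p}|c_n|^2\le \mathcal{Q}_p(\eta)\le b_p'\sum_{|n|\ge2}|n|^{3-p}|c_n|^2 .
\]
The homogeneous seminorm is normalized so that $\|\eta\|_{\dot H^{\frac{3-p}{2}}}^2 = 2\pi\sum_n |n|^{3-p}|c_n|^2$; any other fixed normalization only changes constants. Multiplying by $p$ yields two-sided bounds for $\delta^2E_s(\id)$ with constants $p a_p'/(2\pi)$ and $p b_p'/(2\pi)$.

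\emph{Step 2 (constants).} The statement's lower bound has constant $1$, whereas the constants $0<a_p<1<b_p$ are merely "there exist" quantities. I will read the lower bound as $a_p\|\eta\|^2_{\dot H^{\frac{3-p}{2}}}\le \delta^2 E_s(\id)[x^\perp\eta,x^\perp\eta]$, which is what the proof delivers. If the constant $1$ is really intended, this would have to be rescued by a choice of norm normalization, which is not available in general. So I take $a_p:=\min\{pa_p'/(2\pi),\tfrac12\}$ and $b_p:=\max\{pb_p'/(2\pi),2\}$, which enforces $a_p<1<b_p$. By the uniformity clause of \Cref{th:stable-picture-multiplier}, these constants stay uniform on compact subsets of $p\in(1,3)$.

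\emph{Step 3 (density).} For general $\eta\in \dot H^{\frac{3-p}{2}}$ with vanishing modes $0,\pm1$, approximate $\eta$ by its Fourier truncations $\eta_N$, which also have vanishing modes $0,\pm1$. The main obstacle is continuity of $\eta\mapsto\mathcal{Q}_p(\eta)$ in the $\dot H^{\frac{3-p}{2}}$ norm, because $K_p$ is not integrable for $p\le3$ and the kernel changes sign. I will handle this through the Fourier representation $\mathcal{Q}_p(\eta)=4\pi\sum_n M_{|n|}(p)|c_n|^2$, which holds for every $\eta$ whose $\dot H^{\frac{3-p}{2}}$ norm is finite. To prove it, write
\[
\mathcal{Q}_p(\eta)=\int_0^{2\pi}K_p(t)\,\bigl\|\eta(\cdot+t)-\eta\bigr\|_{L^2}^2\,\dd t ,
\]
and apply Parseval to get $\|\eta(\cdot+t)-\eta\|_{L^2}^2 = 2\pi\sum_n |c_n|^2\, 2(1-\cos nt)$. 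Then exchange sum and integral. To justify the exchange, note that $|K_p(t)|(1-\cos nt)\lesssim |t|^{p-4}\min(1,n^2t^2)$, and integrating gives $\int_0^{2\pi}|K_p(t)|(1-\cos nt)\,\dd t\lesssim n^{3-p}$. Hence the double sum/integral converges absolutely whenever $\sum_n|n|^{3-p}|c_n|^2<\infty$, and Fubini applies. With the identity in hand, the two-sided bound from Step 1 passes directly to every such $\eta$, which completes the proof.
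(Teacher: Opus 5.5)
Your proposal is correct and is essentially the paper's argument: the paper records this corollary simply as \Cref{th:stable-picture-multiplier} combined with the identity $\delta^2E_s(\id)[x^\perp\eta,x^\perp\eta]=p\,\mathcal{Q}_p(\eta)$ from \Cref{la:firstsecondvariation}. Reading the lower constant as $a_p$ is consistent with the otherwise unused $a_p$ in the statement and with what those results give. Your Step 3 extends the Fourier identity from trigonometric polynomials to every finite-energy $\eta$, using Tonelli and the bound $\int|K_p|(1-\cos nt)\,\dd t\lesssim n^{3-p}$; this supplies a justification the paper leaves implicit.
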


We start by proving the following lemma.
\begin{lemma}\label{la:QvsMn}
With the notations of \Cref{th:stable-picture-multiplier}, we have for any $p > 1$ 
\[
\mathcal{Q}_p(\eta)
=
4\pi
\sum_{n\in\mathbb Z}
M_{|n|}(p)|c_n|^2,
\]
where
\begin{equation}\label{eq:Mndef}
\begin{split}
M_n(p)
& \coloneqq \int_0^{2\pi}K_p(t)(1-\cos(nt))\,\dd t\\
& =2^{p-4} \int_0^{2\pi}  \left|\sin\brac{t/2}\right|^{p-4} \left[
p  \cos^2(t/2) -  1
\right] (1-\cos(nt))\,\dd t\\
& =2^{p-3} \int_0^{\pi}  \sin^{p-4}\brac{r} \brac{
p  \cos^2(r) -  1} (1-\cos(2nr))\,\dd r.
\end{split}
\end{equation}
\end{lemma}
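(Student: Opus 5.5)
Starting from the representation of \Cref{la:Qprep},
\[
\mathcal{Q}_p(\eta)=\int_0^{2\pi}K_p(t)\left[\int_0^{2\pi}\abs{\eta(e^{\i(\sigma+t)})-\eta(e^{\i\sigma})}^2\,\dd\sigma\right]\dd t,
\]
we compute the inner integral for fixed $t$ by Parseval. Since $\eta$ is a trigonometric polynomial $\sum_n c_n e^{\i n\theta}$, the function $\sigma\mapsto\eta(e^{\i(\sigma+t)})-\eta(e^{\i\sigma})$ has Fourier coefficients $c_n(e^{\i nt}-1)$. Hence
\[
\int_0^{2\pi}\abs{\eta(e^{\i(\sigma+t)})-\eta(e^{\i\sigma})}^2\,\dd\sigma
=2\pi\sum_{n\in\Z}|c_n|^2|e^{\i nt}-1|^2
=4\pi\sum_{n\in\Z}|c_n|^2(1-\cos(nt)).
\]
This is the same computation done for $\cos(3\theta)$ in \Cref{la:cos3theta}. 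Substituting gives
\[
\mathcal{Q}_p(\eta)=\int_0^{2\pi}K_p(t)\,4\pi\sum_n|c_n|^2(1-\cos(nt))\,\dd t.
\]

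Since the sum is finite, we may exchange sum and integral provided each integral $\int_0^{2\pi}K_p(t)(1-\cos(nt))\,\dd t$ converges absolutely. This integrability is the only point needing care, because $K_p$ itself is not integrable at $t\in\{0,2\pi\}$ when $p\le 3$. Near $t=0$ we have $|K_p(t)|\aleq |t|^{p-4}$ and $0\le 1-\cos(nt)\aleq n^2t^2$, so the integrand is $O(|t|^{p-2})$, which is integrable for $p>1$. The same bound holds near $2\pi$ by periodicity, and on compact subsets away from these points everything is bounded. The integrals therefore converge, we obtain $\mathcal{Q}_p(\eta)=4\pi\sum_n M_{|n|}(p)|c_n|^2$, and $M_n=M_{-n}$ because cosine is even.

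It remains to obtain the two further expressions for $M_n(p)$ in \eqref{eq:Mndef}. The second line is just \eqref{eq:Kp} inserted. For the third, we substitute $t=2r$ with $\dd t=2\,\dd r$ and $r\in(0,\pi)$, where $\sin(r)\ge 0$ so the absolute value can be dropped. This turns $2^{p-4}\cdot 2=2^{p-3}$ and $1-\cos(nt)$ into $1-\cos(2nr)$, as claimed. None of the steps is a real obstacle; only the integrability check, which justifies Fubini and the exchange of sum and integral, requires attention.
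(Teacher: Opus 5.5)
Your proposal is correct and follows essentially the same route as the paper: start from \Cref{la:Qprep}, use orthogonality/Parseval in $\sigma$ to get $2\pi\sum_n |c_n|^2|e^{\i nt}-1|^2 = 4\pi\sum_n|c_n|^2(1-\cos(nt))$, and use that cosine is even. Your explicit integrability check near $t\in\{0,2\pi\}$ and the substitution $t=2r$ spell out what the paper leaves to a remark or does not write down.
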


Observe that the integral \eqref{eq:Mndef} makes sense for any $p>1$, since $1-\cos(nt) =\frac{\sin^2(nt)}{1+\cos(nt)}$.

\begin{proof}
By \eqref{eq:decompo_eta}, we have
\[
 \begin{split}
 \abs{\eta(e^{\i (\sigma+t)})-\eta(e^{\i \sigma})}^2
 =&\abs{\sum_{n \in \Z} c_n (e^{\i nt}-1)e^{\i n \sigma}}^2.
 \end{split}
\]
Thanks to \Cref{la:Qprep}, we obtain
\[
\mathcal{Q}_p(\eta) = \int_0^{2\pi} K_p(t)
\int_{0}^{2\pi} \abs{\sum_{n \in \Z} c_n (e^{\i nt}-1)e^{\i n \sigma}}^2\, \dd\sigma
\,\dd t .
\]
By orthogonality, for every $t \in [0,2\pi]$ we have 
\[
\begin{split}
&\int_{0}^{2\pi} \abs{\sum_{n \in \Z} c_n (e^{\i nt}-1)e^{\i n \sigma}}^2\, \dd\sigma
=2\pi \sum_{n \in \Z} \abs{ c_n}^2 \abs{e^{\i nt}-1}^2.
\end{split}
\]
Thus, we obtain
\begin{equation}\label{eq:Qp_v1}
\mathcal{Q}_p(\eta) =2\pi \sum_{n \in \Z}  \abs{ c_n}^2 \int_0^{2\pi} K_p(t)
\abs{e^{\i nt}-1}^2
\,\dd t. 
\end{equation}
Now, we have
\[
 \abs{e^{\i nt}-1}^2 = 2 - 2 \Re e^{\i nt} = 2-2 \cos(nt).
\]
Plugging this into \eqref{eq:Qp_v1}, we arrive at 
\[
\mathcal{Q}_p(\eta) =4\pi \sum_{n \in \Z}  \abs{ c_n}^2 \int_0^{2\pi} K_p(t)
\brac{1- \cos(nt)}
\,\dd t. 
\]
We conclude by observing that $\cos(nt) = \cos(|n|t)$, so that $M_{-n}(p)=M_{+n}(p)$.
\end{proof}

\begin{lemma}\label{la:m0-m1} 
For $M_n$ as in \eqref{eq:Mndef} we have $M_0(p)=0$ and $M_1(p)=0$.
\end{lemma}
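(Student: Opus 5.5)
For $M_0(p)$ there is nothing to compute: the factor $1-\cos(0\cdot t)$ in \eqref{eq:Mndef} vanishes identically. This matches the fact that $\eta\equiv\mathrm{const}$ corresponds to $\varphi = c\,x^\perp$, an infinitesimal rotation of the target, along which $E_s$ is constant.

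For $M_1(p)$ I would compute directly from the last line of \eqref{eq:Mndef}. With $n=1$ we have $1-\cos(2r)=2\sin^2 r$, so
\[
M_1(p) = 2^{p-2}\int_0^\pi \sin^{p-2}(r)\,\bigl(p\cos^2 r-1\bigr)\,\dd r .
\]
This integral converges absolutely for every $p>1$, since $\sin^{p-2}$ is integrable near $0$ and $\pi$ exactly when $p-2>-1$. The key step is to recognize the integrand as an exact derivative. Using $\sin^2 r = 1-\cos^2 r$,
\[
\frac{\dd}{\dd r}\bigl(\sin^{p-1}(r)\cos(r)\bigr) = (p-1)\sin^{p-2}(r)\cos^2(r) - \sin^{p}(r)
= \sin^{p-2}(r)\bigl(p\cos^2 r - 1\bigr).
\]
Hence $M_1(p) = 2^{p-2}\bigl[\sin^{p-1}(r)\cos(r)\bigr]_{0}^{\pi}$. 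Both boundary terms vanish because $p-1>0$ forces $\sin^{p-1}(r)\to 0$ as $r\to 0$ or $r\to\pi$. To be safe with the endpoint singularity when $p<2$, I would integrate over $[\epsilon,\pi-\epsilon]$ first and then let $\epsilon\to0$, using the absolute integrability noted above. This gives $M_1(p)=0$.

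There is no real obstacle here; the only subtle point is the justification of the fundamental theorem of calculus near the endpoints for $p\in(1,2)$, which the truncation argument handles. As a consistency check, the vanishing of $M_1$ is what conformal invariance predicts. The modes $\cos\theta$ and $\sin\theta$ correspond to the infinitesimal generators of the M\"obius transforms $m_b$, and by \Cref{la:mobius-invariance} $E_s$ is constant along these directions. The second variation must therefore vanish on them.
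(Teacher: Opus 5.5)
Your proof is correct and takes essentially the paper's route. The paper's proof of this lemma only asserts that the $n=1$ integral vanishes. The underlying identity $\int_0^\pi \sin^{p-2}(r)(p\cos^2 r-1)\,\dd r=0$ is established later, as \eqref{eq:integral_sin-cos} in the proof of \Cref{la:partial-sum-formula}, by the same integration by parts that your antiderivative $\sin^{p-1}(r)\cos(r)$ packages in one step. Your explicit reduction via $1-\cos(2r)=2\sin^2 r$ and your endpoint justification for $p\in(1,2)$ supply details the paper leaves implicit at this point.
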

\begin{proof}
$M_0(p) = 0$ is obvious since $1-\cos(0t) = 0$.

We have 
\[
\begin{split}
M_1(p)
&=
2^{p-3}
\int_0^{\pi}  \sin^{p-4}\brac{r} \brac{
p  \cos^2(r) -  1} (1-\cos(2nr))\,\dd r =0.
\end{split}
\]
\end{proof}

In order to go further, we will need some trigonometric identities.

\begin{lemma}[Some trigonometric identities]\label{la:trig-identities}
For $k\ge0$, $r \in [0,\pi]$ set
\[
U_k(r)\coloneqq \frac{\sin((k+1)r)}{\sin r},
\]
Then the following
identities hold. For every $n\ge1$ and $j\geq 0$, we have
\begin{equation}\label{eq:Sn-first-identity}
1-\cos(2nr)
=
2\sin^2r\,U_{n-1}(r)^2,
\end{equation}
\begin{equation}\label{eq:Sn-square-decomposition}
U_{n-1}(r)^2
=
\sum_{j=0}^{n-1}U_{2j}(r),
\end{equation}
\begin{equation}\label{eq:S2j-cos-expansion}
U_{2j}(r)
=
1+2\sum_{\ell=1}^{j}\cos(2\ell r).
\end{equation}
\end{lemma}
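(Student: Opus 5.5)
We prove the three identities of \Cref{la:trig-identities} separately, each by an elementary trigonometric manipulation, working on $r\in(0,\pi)$ (so that $\sin r\neq0$) and extending to the endpoints by continuity. The functions $U_k$ are the Chebyshev polynomials of the second kind evaluated at $\cos r$, so they extend continuously to $r\in\{0,\pi\}$.

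\emph{Identity \eqref{eq:Sn-first-identity}.} We use the double angle formula $1-\cos(2nr)=2\sin^2(nr)$. By definition $\sin(nr)=\sin r\,U_{n-1}(r)$. Squaring gives the claim immediately.

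\emph{Identity \eqref{eq:S2j-cos-expansion}.} We multiply the right-hand side by $\sin r$ and use the product-to-sum formula
\[
2\sin r\cos(2\ell r)=\sin((2\ell+1)r)-\sin((2\ell-1)r).
\]
The resulting sum telescopes:
\[
\sin r\Bigl(1+2\sum_{\ell=1}^{j}\cos(2\ell r)\Bigr)=\sin r+\sin((2j+1)r)-\sin r=\sin((2j+1)r).
\]
Dividing by $\sin r$ gives $U_{2j}(r)$, as claimed. For $j=0$ the sum is empty and both sides equal $1$.

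\emph{Identity \eqref{eq:Sn-square-decomposition}.} Multiplying by $\sin^2 r$, it suffices to show
\[
\sin^2(nr)=\sum_{j=0}^{n-1}\sin r\,\sin((2j+1)r).
\]
Again by product-to-sum,
\[
\sin r\sin((2j+1)r)=\tfrac12\bigl(\cos(2jr)-\cos((2j+2)r)\bigr).
\]
Summing over $j=0,\dots,n-1$ telescopes to $\tfrac12(1-\cos(2nr))=\sin^2(nr)$, which is the claim. Alternatively, one can argue by induction on $n$, using $U_n^2-U_{n-1}^2=U_{2n}$, which follows from the same telescoping.

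None of the steps is a genuine obstacle. The only point needing care is the behaviour at $r=0,\pi$, where the quotient defining $U_k$ is singular. This is handled by noting that each $U_k$ is a polynomial in $\cos r$, so all three identities, valid on $(0,\pi)$, persist on $[0,\pi]$ by continuity.
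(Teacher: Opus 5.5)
Your proposal is correct and follows the paper's proof essentially step by step. You use the double-angle formula for the first identity, the telescoping $2\sin r\sin((2j+1)r)=\cos(2jr)-\cos((2j+2)r)$ for the second, and the telescoping $2\sin r\cos(2\ell r)=\sin((2\ell+1)r)-\sin((2\ell-1)r)$ for the third, with $r\in\{0,\pi\}$ handled by continuity — your observation that each $U_k$ is a polynomial in $\cos r$ just makes that extension slightly more explicit than in the paper.
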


\begin{proof}
First, by definition,
\[
U_{n-1}(r)=\frac{\sin(nr)}{\sin r}.
\]
Therefore it holds
\[
2\sin^2r\,U_{n-1}(r)^2
=
2\sin^2(nr).
\]
From the identity
\[
1-\cos(2nr)=2\sin^2(nr),
\]
we obtain
\[
1-\cos(2nr)
=
2\sin^2r\,U_{n-1}(r)^2.
\]
This proves \eqref{eq:Sn-first-identity}.
Next we prove \eqref{eq:Sn-square-decomposition}. Since
\[
U_{2j}(r)\sin r=\sin((2j+1)r),
\]
we have
\[
\sin r\sum_{j=0}^{n-1}U_{2j}(r)
=
\sum_{j=0}^{n-1}\sin((2j+1)r).
\]
Multiplying by $2\sin (r)$ gives
\[
2\sin^2 (r)\sum_{j=0}^{n-1}U_{2j}(r)
=
2\sin (r)\sum_{j=0}^{n-1}\sin((2j+1)r).
\]
Using
\[
2\sin( r)\sin((2j+1)r)
=
\cos(2jr)-\cos((2j+2)r),
\]
we get the telescoping sum
\[
\begin{split}
2\sin^2 r\sum_{j=0}^{n-1}U_{2j}(r)
&=
\sum_{j=0}^{n-1}
\left[
\cos(2jr)-\cos((2j+2)r)
\right]
=
1-\cos(2nr).
\end{split}
\]
By \eqref{eq:Sn-first-identity}, we obtain
\[
2\sin^2 r\sum_{j=0}^{n-1}U_{2j}(r)
=
2\sin^2r\,U_{n-1}(r)^2.
\]
This proves \eqref{eq:Sn-square-decomposition} for any $r \in (0,\pi)$, by continuity also for $r =0$ and $r=\pi$.
Finally, we prove \eqref{eq:S2j-cos-expansion}. For $j=0$, this is simply the definition $U_0(r)=1$.
Now let $j\ge1$. We use
\[
2\sin (r)\cos(2\ell r)
=
\sin((2\ell+1)r)-\sin((2\ell-1)r).
\]
Then, we have
\[
\begin{split}
\left(
1+2\sum_{\ell=1}^{j}\cos(2\ell r)
\right)\sin (r)
&=
\sin (r)
+
\sum_{\ell=1}^{j}
\left[
\sin((2\ell+1)r)-\sin((2\ell-1)r)
\right]
\\
&=
\sin((2j+1)r).
\end{split}
\]
Therefore
\[
1+2\sum_{\ell=1}^{j}\cos(2\ell r)
=
\frac{\sin((2j+1)r)}{\sin r}
=
U_{2j}(r).
\]
This proves
\eqref{eq:S2j-cos-expansion}.
\end{proof}

\begin{lemma}\label{la:coefficient-computation}
Set for $p >1$
\begin{equation}\label{eq:Spdef}
S_p\coloneqq \int_0^\pi\sin^{p-2}(r) \dd r > 0.
\end{equation}
For $\ell\ge1$, define
\[
B_\ell(p)
\coloneqq 
\int_0^\pi
\sin^{p-2}(r)
(p\cos^2(r)-1)
\cos(2\ell r)
\dd r.
\]
Then it holds
\begin{equation}\label{eq:Bellformula}
B_\ell(p)
=
4\ell^2(p-1)\, S_p\,
\frac{
\prod_{m=1}^{\ell-1}(2m-p)
}{
\prod_{m=0}^{\ell}(p+2m)
}.
\end{equation}
The empty product is interpreted as $1$.
\end{lemma}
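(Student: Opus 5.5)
We need to compute $B_\ell(p)=\int_0^\pi \sin^{p-2}(r)(p\cos^2 r-1)\cos(2\ell r)\,\dd r$. We reduce everything to the classical integrals
\[
J_\ell(q)\coloneqq \int_0^\pi \sin^{q}(r)\cos(2\ell r)\,\dd r ,
\]
for which there is a closed form. For $q>-1$ it reads
\[
J_\ell(q)=\frac{\pi\,\Gamma(q+1)\cos(\ell\pi)}{2^{q}\,\Gamma\!\left(\frac q2+\ell+1\right)\Gamma\!\left(\frac q2-\ell+1\right)}.
\]
In ratio form, which avoids poles of Gamma, it is
\[
\frac{J_\ell(q)}{J_0(q)}=\prod_{m=0}^{\ell-1}\frac{2m-q}{q+2m+2}.
\]
I would prove the ratio form directly by a recursion in $\ell$, obtained from two integrations by parts. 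Integrating by parts twice, or equivalently differentiating $\sin^q(r)\sin(2\ell r)$ and $\sin^q(r)\cos(2\ell r)$, gives the recursion
\[
\bigl(4\ell^2-q^2\bigr)J_\ell(q)=-q(q-1)\int_0^\pi\sin^{q-2}(r)\cos(2\ell r)\,\dd r .
\]
Boundary terms vanish for $q>1$. The general $q>-1$ case follows by analytic continuation in $q$, or by using instead the three-term relation coming from $\sin^2 r=\tfrac12(1-\cos 2r)$, namely
\[
J_\ell(q+2)=\tfrac12 J_\ell(q)-\tfrac14\bigl(J_{\ell+1}(q)+J_{\ell-1}(q)\bigr).
\]
From either relation one checks by induction that
\[
J_{\ell}(q)=\frac{2(\ell-1)-q}{q+2\ell}\,J_{\ell-1}(q).
\]
The cleanest route is to integrate $\frac{\dd}{\dd r}\bigl[\sin^{q}(r)\,(\dots)\bigr]$ against a well-chosen combination of $\sin((2\ell\pm1)r)$, which is the standard derivation.

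Next, we write $p\cos^2 r-1=(p-1)-p\sin^2 r$. With $q=p-2>-1$ this gives
\[
B_\ell(p)=(p-1)J_\ell(p-2)-p\,J_\ell(p).
\]
We express both terms in units of $S_p=J_0(p-2)$, using $J_0(p)=\frac{p-1}{p}J_0(p-2)$, which is the Wallis recursion. The ratio formula then gives
\[
J_\ell(p-2)=S_p\prod_{m=0}^{\ell-1}\frac{2m+2-p}{p+2m},
\qquad
J_\ell(p)=\frac{p-1}{p}S_p\prod_{m=0}^{\ell-1}\frac{2m-p}{p+2m+2}.
\]
Both share the factor $\prod_{m=1}^{\ell-1}(2m-p)$. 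The first product is
\[
\frac{\prod_{m=1}^{\ell}(2m-p)}{\prod_{m=0}^{\ell-1}(p+2m)},
\]
and the second is
\[
\frac{(-p)\prod_{m=1}^{\ell-1}(2m-p)}{\prod_{m=1}^{\ell}(p+2m)}.
\]
Factoring out $(p-1)S_p\prod_{m=1}^{\ell-1}(2m-p)/\prod_{m=0}^{\ell}(p+2m)$ leaves the bracket
\[
(2\ell-p)(p+2\ell)+p\cdot p = 4\ell^2 .
\]
This yields exactly \eqref{eq:Bellformula}.

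The main obstacle is establishing the one-step recursion
\[
J_\ell(q)=\frac{2\ell-2-q}{q+2\ell}\,J_{\ell-1}(q)
\]
rigorously for all $q>-1$, not only $q>1$. I would first try a single integration by parts of
\[
\int_0^\pi \frac{\dd}{\dd r}\bigl(\sin^{q+1}(r)\bigr)\,\frac{\sin((2\ell-1)r)}{\sin r}\ \text{-type expressions};
\]
more concretely, I would use the identity
\[
\frac{\dd}{\dd r}\bigl[\sin^{q+1}(r)\sin((2\ell-1)r)\bigr]
=\sin^{q}(r)\Bigl[\tfrac{q+2\ell}{2}\cos(2\ell r)\cdot\ldots\Bigr].
\]
Expanding $(q+1)\cos r\sin((2\ell-1)r)+(2\ell-1)\sin r\cos((2\ell-1)r)$ via product-to-sum formulas shows it equals
\[
\tfrac{q+2\ell}{2}\sin(2\ell r)\text{-free combination }\tfrac{(q+2\ell)}{2}\cos(2(\ell-1) r)\ldots .
\]
Precisely, the derivative is a combination $\alpha\cos(2\ell r)+\beta\cos(2(\ell-1)r)$ times $\sin^q r$, with $\alpha=\frac{q+2\ell}{2}$ and $\beta=\frac{q-2\ell+2}{2}$, up to sign. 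The boundary term vanishes since $q+1>0$, which gives the recursion for every $q>-1$. Iterating it yields the ratio form, and that suffices for the formula.
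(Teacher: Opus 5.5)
Your outline is correct, but you decompose $p\cos^2 r-1$ differently from the paper.

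\textbf{Comparison of routes.} The paper writes $p\cos^2 r-1=\frac{p-2}{2}+\frac p2\cos(2r)$. It therefore works with a single family $C_\ell(p)=\int_0^\pi\sin^{p-2}(r)\cos(2\ell r)\,\dd r$ at the indices $\ell-1,\ell,\ell+1$, at the cost of a somewhat longer simplification of the bracket to $4\ell^2(p-1)$.

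You write $p\cos^2 r-1=(p-1)-p\sin^2 r$ instead. You then need the ratio formula at the two exponents $q=p-2$ and $q=p$. You also need the Wallis relation $pJ_0(p)=(p-1)J_0(p-2)$, which is the same integration by parts the paper later uses to prove \eqref{eq:integral_sin-cos}. In exchange, the final bracket collapses to $(2\ell-p)(2\ell+p)+p^2=4\ell^2$. Your product bookkeeping is correct, and the Gamma closed form is never actually needed.

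\textbf{The recursion step fails as written.} With $\sin((2\ell-1)r)$ inside the bracket, product-to-sum gives
\[
\frac{\dd}{\dd r}\bigl[\sin^{q+1}(r)\sin((2\ell-1)r)\bigr]=\sin^q(r)\Bigl[\tfrac{q+2\ell}{2}\sin(2\ell r)+\tfrac{q-2\ell+2}{2}\sin(2(\ell-1)r)\Bigr].
\]
Integrating over $[0,\pi]$ yields only $0=0$, because $\int_0^\pi\sin^q(r)\sin(2kr)\,\dd r=0$ by the symmetry $r\mapsto\pi-r$. The correct test function is $\cos((2\ell-1)r)$:
\[
\frac{\dd}{\dd r}\bigl[\sin^{q+1}(r)\cos((2\ell-1)r)\bigr]=\sin^q(r)\Bigl[\tfrac{q+2\ell}{2}\cos(2\ell r)+\tfrac{q-2\ell+2}{2}\cos(2(\ell-1)r)\Bigr].
\]
This has exactly your $\alpha,\beta$, with no sign ambiguity. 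The boundary term vanishes since $q+1>0$, so the recursion $J_\ell(q)=\frac{2\ell-2-q}{q+2\ell}J_{\ell-1}(q)$ holds for all $q>-1$ at once, covering both $q=p-2$ and $q=p$. This is precisely the paper's integration by parts, which uses $\sin^{p-1}(r)\cos((2\ell+1)r)$, i.e.\ $q=p-2$ with $\ell$ shifted by one.

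\textbf{The fallback route is also misstated.} Neither $(4\ell^2-q^2)J_\ell(q)=-q(q-1)J_\ell(q-2)$ nor the three-term relation alone gives a recursion in $\ell$ at fixed $q$. Combining them, with the first taken at exponent $q+2$, gives a second-order recurrence. That recurrence needs the extra base case $J_1(q)=-\frac{q}{q+2}J_0(q)$ and degenerates when $(q+2)^2=4\ell^2$. With the cosine fix, none of this is needed.
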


\begin{proof}
For $\ell\ge0$, define
\[
C_\ell(p)\coloneqq 
\int_0^\pi\sin^{p-2}(r)\cos(2\ell r)\dd r.
\]
Thus $C_0(p)=S_p$. We now establish the following identity
\begin{equation}\label{eq:recursionformula}
C_\ell(p)
=
S_p
\prod_{m=0}^{\ell-1}
\frac{2m+2-p}{p+2m}.
\end{equation}
Indeed, we compute the following integral with $p>1$
\[
\begin{split}
	0 & = \int_0^{\pi} \frac{\dd}{\dd r}
	\left[
	\sin^{p-1}(r)\cos((2\ell+1)r)
	\right]\dd r \\
	&=
	(p-1)
	\int_0^\pi
	\sin^{p-2}(r)\cos (r)\cos((2\ell+1)r)\dd r
	-
	(2\ell+1)
	\int_0^\pi
	\sin^{p-1}(r)\sin((2\ell+1)r)\dd r.
\end{split}
\]
Using
\[
2\cos (r)\cos((2\ell+1)r)
=
\cos(2\ell r)+\cos((2\ell+2)r),
\]
and
\[
2\sin (r)\sin((2\ell+1)r)
=
\cos(2\ell r)-\cos((2\ell+2)r),
\]
we get
\[
\begin{split}
0
&=
(p-1)
\int_0^\pi
\sin^{p-2}(r)\, \brac{\cos(2\ell r)+\cos((2\ell+2)r)}\dd r
\\
&\quad
-
(2\ell+1)
\int_0^\pi
\sin^{p-2}(r) \brac{\cos(2\ell r)-\cos((2\ell+2)r)}\dd r.
\end{split}
\]
In other words, it holds
\[
0
=
(p-1)(C_\ell(p)+C_{\ell+1}(p))
-
(2\ell+1)(C_\ell(p)-C_{\ell+1}(p)).
\]
Therefore, we have
\[
(p+2\ell)C_{\ell+1}(p)
=
(2\ell+2-p)C_\ell(p).
\]
Thus we have
\[
C_{\ell+1}(p)
=
\frac{(2\ell+2-p)}{p+2\ell}C_\ell(p).
\]
and since $C_0(p) = S_p$ we have established \eqref{eq:recursionformula}. In order to compute $B_{\ell}(p)$, we start from the following identities
\[
p\cos^2(r)-1
=
\frac{p-2}{2}+\frac p2\cos(2r).
\]
and 
\[
 2\cos(2r) \cos(2\ell r) = \cos(2(\ell+1) r) + \cos(2(\ell-1) r).
\]
Thus for any $\ell \geq 1$, we have
\[
\begin{split}
B_\ell(p)
& =\int_0^\pi
\sin^{p-2}(r)
\brac{\frac{p-2}{2}+\frac p2\cos(2r)}
\cos(2\ell r)
\dd r\\
& = \frac{p-2}{2}\int_0^\pi
\sin^{p-2}(r)
\cos(2\ell r)
\dd r + \frac {p}{4} \int_0^\pi
\sin^{p-2}(r)
\brac{\cos(2(\ell+1) r) + \cos(2(\ell-1) r)}
\dd r\\
& =
\frac{p-2}{2}C_\ell(p)
+
\frac p4
\left(C_{\ell-1}(p)+C_{\ell+1}(p)\right).
\end{split}
\]
Using \eqref{eq:recursionformula}, for $\ell\ge1$ we have
\[
C_{\ell-1}(p)
=
S_p
\frac{
\prod_{m=1}^{\ell-1}(2m-p)
}{
\prod_{m=0}^{\ell-2}(p+2m)
},
\]
\[
C_\ell(p)
=
S_p
\frac{
\prod_{m=1}^{\ell}(2m-p)
}{
\prod_{m=0}^{\ell-1}(p+2m)
},
\]
and
\[
C_{\ell+1}(p)
=
S_p
\frac{
\prod_{m=1}^{\ell+1}(2m-p)
}{
\prod_{m=0}^{\ell}(p+2m)
}.
\]
Therefore, it holds
\[
\begin{split}
B_\ell(p)
&=
\frac{p-2}{2}C_\ell(p)
+
\frac p4
\left(C_{\ell-1}(p)+C_{\ell+1}(p)\right)
\\
&=
S_p
\frac{
\prod_{m=1}^{\ell-1}(2m-p)
}{
\prod_{m=0}^{\ell}(p+2m)
}
\Bigg[
\frac{p-2}{2}(2\ell-p)(p+2\ell)
+
\frac p4
\left(
(p+2\ell-2)(p+2\ell)
+
(2\ell-p)(2\ell+2-p)
\right)
\Bigg].
\end{split}
\]
It remains to simplify the bracket. Set $q\coloneqq 2\ell$. Then we have
\[
\begin{split}
&\frac{p-2}{2}(2\ell-p)(p+2\ell)
+
\frac p4
\left(
(p+2\ell-2)(p+2\ell)
+
(2\ell-p)(2\ell+2-p)
\right)
\\
&=
\frac{p-2}{2}(q-p)(p+q)
+
\frac p4
\left(
(p+q-2)(p+q)
+
(q-p)(q+2-p)
\right)
\\
&=
\frac{p-2}{2}(q^2-p^2)
+
\frac p4
\left(
2p^2+2q^2-4p
\right)
\\
&=
\frac{p-2}{2}q^2
-
\frac{p-2}{2}p^2
+
\frac p2p^2
+
\frac p2q^2
-
p^2
\\
&=
(p-1)q^2 \\
&=
4\ell^2(p-1).
\end{split}
\]
We end up with 
\[
B_\ell(p)
=
4\ell^2(p-1)\,S_p\,
\frac{
\prod_{m=1}^{\ell-1}(2m-p)
}{
\prod_{m=0}^{\ell}(p+2m)
}.
\]
\end{proof}
\begin{lemma}[Partial-sum formula]\label{la:partial-sum-formula}
For $j\ge1$, $p>1$ define
\begin{equation}\label{eq:alphaj}
\alpha_j(p)
\coloneqq 
4j(j+1)(p-1)
\frac{
\prod_{m=2}^{j}(2m-p)
}{
\prod_{m=0}^{j}(p+2m)
},
\end{equation}
with the empty product interpreted as $1$. Then for $M_{n}(p)$ from \eqref{eq:Mndef}, 
\begin{equation}\label{eq:Mnpinalphaj}
M_n(p)
=
2^{p-2}S_p
\sum_{j=1}^{n-1}\alpha_j(p)
\qquad
\text{for every }n\ge1.
\end{equation}
Where $S_p$ is defined in \eqref{eq:Spdef}.
\end{lemma}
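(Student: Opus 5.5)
We start from the last line of \eqref{eq:Mndef} and rewrite the factor $1-\cos(2nr)$ with \Cref{la:trig-identities}. By \eqref{eq:Sn-first-identity}, \eqref{eq:Sn-square-decomposition} and \eqref{eq:S2j-cos-expansion},
\[
1-\cos(2nr)=2\sin^2 r\sum_{j=0}^{n-1}U_{2j}(r)=2\sin^2 r\sum_{j=0}^{n-1}\Big(1+2\sum_{\ell=1}^{j}\cos(2\ell r)\Big).
\]
The factor $\sin^2 r$ absorbs the singular weight, giving $\sin^{p-4}(r)\cdot\sin^2(r)=\sin^{p-2}(r)$, which is integrable for $p>1$. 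Since the sums are finite, we may exchange them with the integral and get
\[
M_n(p)=2^{p-2}\sum_{j=0}^{n-1}\Big(B_0(p)+2\sum_{\ell=1}^{j}B_\ell(p)\Big),
\]
where $B_0(p)\coloneqq\int_0^\pi \sin^{p-2}(r)(p\cos^2 r-1)\,\dd r$.

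Next we compute $B_0(p)$ directly. Writing $p\cos^2 r-1=\frac{p-2}{2}+\frac p2\cos 2r$ and using the recursion \eqref{eq:recursionformula} for $C_1(p)=S_p\frac{2-p}{p}$, we get
\[
B_0(p)=\tfrac{p-2}{2}S_p+\tfrac p2 S_p\tfrac{2-p}{p}=0.
\]
This is consistent with $M_1(p)=0$ from \Cref{la:m0-m1}. Hence the $j=0$ term vanishes, and
\[
M_n(p)=2^{p-2}\sum_{j=1}^{n-1}\beta_j,\qquad \beta_j\coloneqq 2\sum_{\ell=1}^j B_\ell(p).
\]
It remains to prove $\beta_j=S_p\,\alpha_j(p)$, that is, a closed form for the partial sums of $B_\ell$.

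We prove $2\sum_{\ell=1}^j B_\ell(p)=S_p\alpha_j(p)$ by induction on $j$ using \eqref{eq:Bellformula}.

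For the base case $j=1$: $2B_1=2\cdot4(p-1)S_p/(p(p+2))=8(p-1)S_p/(p(p+2))$, while $\alpha_1=8(p-1)/(p(p+2))$. These agree.

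For the inductive step, we must check $\alpha_j-\alpha_{j-1}=2B_j/S_p$. Set $P_j\coloneqq\prod_{m=2}^{j-1}(2m-p)$ and $D_j\coloneqq\prod_{m=0}^{j}(p+2m)$. Then
\[
\alpha_j=4j(j+1)(p-1)\frac{P_j(2j-p)}{D_j},\qquad \alpha_{j-1}=4(j-1)j(p-1)\frac{P_j(p+2j)}{D_j}.
\]
Their difference is $4j(p-1)\frac{P_j}{D_j}\big[(j+1)(2j-p)-(j-1)(2j+p)\big]$, and the bracket equals $4j-2p=2(2j-p)\cdot\frac{2j-p}{2j-p}$. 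More precisely, it equals $2(2j-p)-2p+2p$; expanding gives $2j^2+2j-pj-p-2j^2-pj+2j+p=4j-2pj$. So the bracket is $2j(2-p)$.

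On the other side, $2B_j/S_p=8j^2(p-1)(2-p)\frac{P_j}{D_j}$, since $\prod_{m=1}^{j-1}(2m-p)=(2-p)P_j$. The difference above is $4j(p-1)\frac{P_j}{D_j}\cdot 2j(2-p)=8j^2(p-1)(2-p)\frac{P_j}{D_j}$, which matches. Summing the telescoping identity then gives \eqref{eq:Mnpinalphaj}.

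The main obstacle is only bookkeeping of index ranges in the products, in particular the shift from $\prod_{m=1}$ in $B_\ell$ to $\prod_{m=2}$ in $\alpha_j$, with the extracted factor $(2-p)$. The interchange of sums and integral is harmless because all sums are finite.
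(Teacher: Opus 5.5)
Your proof is correct and follows the paper's argument essentially step for step. It uses the same three trigonometric identities and the same vanishing of the $j=0$ term; you obtain $B_0=0$ from the recursion for $C_1$ instead of the paper's direct integration by parts, which amounts to the same thing. It ends with the same telescoping check $\alpha_j-\alpha_{j-1}=2B_j/S_p$ against \eqref{eq:Bellformula}. One cosmetic fix: delete the stray claims that the bracket equals $4j-2p$ or $2(2j-p)-2p+2p$, since only your final expansion $(j+1)(2j-p)-(j-1)(2j+p)=2j(2-p)$ is correct and needed.
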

\begin{proof}
By \eqref{eq:Sn-first-identity},
\[
1-\cos(2nr)
=
2\sin^2(r)\,U_{n-1}(r)^2.
\]
Then by \eqref{eq:Mndef} 
\[
M_n(p)
=
2^{p-2}
\int_0^\pi
\sin^{p-2}(r)
(p\cos^2(r)-1)
U_{n-1}(r)^2
\dd r.
\]
Using \eqref{eq:Sn-square-decomposition}, we obtain
\begin{equation}\label{eq:MnAj}
M_n(p)
=
2^{p-2}\sum_{j=0}^{n-1}A_j(p),
\end{equation}
where
\[
A_j(p)
\coloneqq 
\int_0^\pi
\sin^{p-2}(r)
(p\cos^2(r)-1)
U_{2j}(r)
\dd r.
\]
We now aim to prove
\begin{equation}\label{eq:AjpSpalphajp}
A_j(p)=S_p\, \alpha_j(p).
\end{equation}
Once we have \eqref{eq:AjpSpalphajp} from \eqref{eq:MnAj} we find \eqref{eq:Mnpinalphaj} and can conclude.

For $p>1$, we claim
\begin{equation}\label{eq:integral_sin-cos}
	\int_0^\pi \sin^{p-2}(r)(p\cos^2(r)-1)\dd r =0.
\end{equation}
Indeed, we have by integration by parts
	\begin{align*}
		\int_0^{\pi} \sin^p(t)\dd t & = -\int_0^{\pi} \frac{\dd \cos(t)}{\dd t}\, \sin^{p-1}(t)\dd t \\
		& = (p-1)\int_0^{\pi} \cos^2(t)\, \sin^{p-2}(t)\dd t \\
		& = (p-1)\int_0^{\pi}  \left( \sin^{p-2}(t)- \sin^p(t)\right) \dd t.
	\end{align*}
	Thus, we have 
	\begin{equation*}
		p\int_0^{\pi} \sin^p(t)\dd t = (p-1)\int_0^{\pi} \sin^{p-2}(t)\dd t.
	\end{equation*}
	On the other hand, it holds
\begin{align*}
	\int_0^\pi \sin^{p-2}(r)(p\cos^2(r)-1)\dd r
	& = \int_0^\pi \sin^{p-2}(r)(p[1-\sin^2(r)]-1)\dd r \\
	& = (p-1)\int_0^{\pi} \sin^{p-2}(r)\dd r - p\int_0^{\pi} \sin^p(r)\dd r \\
	 & = 0.
\end{align*}
This proves \eqref{eq:integral_sin-cos}

We arrive at the following, where $B_{\ell}(p)$ defined in \Cref{la:coefficient-computation} and using \eqref{eq:S2j-cos-expansion}:
\begin{equation}\label{eq:AjBell}
A_j(p)
=
2\sum_{\ell=1}^jB_\ell(p).
\end{equation}
To obtain \eqref{eq:AjpSpalphajp}, it is thus enough to show
\begin{equation}\label{eq:alphajrecursion}
\alpha_j(p)-\alpha_{j-1}(p)
=
\frac{2B_j(p)}{S_p},
\end{equation}
since $\alpha_0(p) = 0$.

To prove \eqref{eq:alphajrecursion} we start from the definition \eqref{eq:alphaj}.
For $j=1$, by \eqref{eq:Bellformula}
\[
\alpha_1(p)
=
\frac{8(p-1)}{p(p+2)}
=
\frac{2B_1(p)}{S_p}.
\]
Since $\alpha_0 = 0$ this implies \eqref{eq:alphajrecursion} for $j=1$.

For $j \geq 2$, we have
\[
\begin{split}
&\alpha_j(p) - \alpha_{j-1}(p)\\
& = 4j(j+1)(p-1)
\frac{
\prod_{m=2}^{j}(2m-p)
}{
\prod_{m=0}^{j}(p+2m)
} - 4(j-1)j(p-1)
\frac{
\prod_{m=2}^{j-1}(2m-p)
}{
\prod_{m=0}^{j-1}(p+2m)
}\\
& = 4j(j+1)(p-1)
(2j-p) \frac{
 \prod_{m=2}^{j-1}(2m-p)
}{
\prod_{m=0}^{j}(p+2m)
} - 4(j-1)j(p-1)
(p+2j)\frac{
\prod_{m=2}^{j-1}(2m-p)
}{
\prod_{m=0}^{j}(p+2m)
}\\
& = \Big( (j+1)
(2j-p)  - (j-1)
(p+2j) \Big) 4 j (p-1) \frac{
\prod_{m=2}^{j-1}(2m-p)
}{
\prod_{m=0}^{j}(p+2m)
}\\
& =\brac{2-p} 8 j^2 (p-1) \frac{
\prod_{m=2}^{j-1}(2m-p)
}{
\prod_{m=0}^{j}(p+2m)
}\\
&= 8 j^2 (p-1) \frac{
\prod_{m={1}}^{j-1}(2m-p)
}{
\prod_{m=0}^{j}(p+2m)
}.
\end{split}
\]
By \eqref{eq:Bellformula}, we end up with
\[
B_j(p)
=
4j^2(p-1)\, S_p\,
\frac{
\prod_{m=1}^{j-1}(2m-p)
}{
\prod_{m=0}^{j}(p+2m)
}.
\]
This implies \eqref{eq:alphajrecursion}

\end{proof}

As a consequence of \Cref{la:partial-sum-formula}, we obtain the following result.

\begin{lemma}[Sign of the multipliers for $1<p<8$]\label{la:sign-stable-range}
For $1<p<8$ and $n\geq 2$, it holds $M_n(p)>0$.
\end{lemma}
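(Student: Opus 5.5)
By \Cref{la:partial-sum-formula} we have $M_n(p)=2^{p-2}S_p\sum_{j=1}^{n-1}\alpha_j(p)$ with $S_p>0$. Positivity of $M_n(p)$ for $n\ge 2$ therefore follows once we show $\alpha_1(p)>0$ and $\alpha_j(p)\ge 0$ for every $j\ge 2$: the sum contains the term $j=1$ and all other terms are nonnegative.

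The plan is a sign count on the explicit product \eqref{eq:alphaj}. For $p>1$ the prefactor $4j(j+1)(p-1)$ is positive, and so is the denominator $\prod_{m=0}^{j}(p+2m)$. Hence the sign of $\alpha_j(p)$ is the sign of the numerator $\prod_{m=2}^{j}(2m-p)$.

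For $j=1$ the numerator is the empty product $1$, so $\alpha_1(p)=\frac{8(p-1)}{p(p+2)}>0$. For $j\ge2$ we split according to the factors.
\begin{itemize}
\item The factor $m=2$ is $4-p$.
\item The factor $m=3$ is $6-p$.
\item The factor $m=4$ is $8-p$, which is positive for $p<8$.
\item All factors with $m\ge5$ are positive, since $2m-p\ge 10-p>0$.
\end{itemize}
So the only factors that can be non-positive are $(4-p)$ and $(6-p)$.

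This gives three cases. When $1<p\le4$, every factor is $\ge0$, so $\alpha_j\ge0$. When $6\le p<8$, both $(4-p)$ and $(6-p)$ are $\le0$, so for $j\ge3$ their product is $\ge0$; for $j=2$ the numerator is just $(4-p)<0$. When $4<p<6$, we have $\alpha_2<0$, and $\alpha_j<0$ for all $j\ge3$ as well, since exactly one factor is negative. So the naive termwise argument breaks down for $p>4$, and this is the main obstacle.

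To handle $p>4$, I would argue with the partial sums directly and compute $M_n$ in closed form. The differences $\alpha_j-\alpha_{j-1}=2B_j/S_p$ telescope, but a cleaner route is to show that $\sum_{j=1}^{n-1}\alpha_j(p)$ itself equals a product of the form
\[
c\,\frac{(p-1)\prod_{m=2}^{n-1}(2m-p)}{\prod_{m=0}^{n-1}(p+2m)}\cdot P_n(p)
\]
with $P_n(p)$ explicit. I would first check that $M_2=2^{p-2}S_p\,\alpha_1>0$. Then I would guess the closed form of $M_n(p)$ from the first few terms, for instance
\[
\alpha_1+\alpha_2=\frac{8(p-1)}{p(p+2)}\Bigl(1+\frac{3(4-p)}{p+4}\Bigr)=\frac{8(p-1)(16-2p)}{p(p+2)(p+4)},
\]
which is positive exactly when $p<8$. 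This suggests
\[
\sum_{j=1}^{n-1}\alpha_j=\frac{c_n(p-1)(8-p)\prod(\cdots)}{\prod(\cdots)},
\]
and I would verify the guess by induction using \eqref{eq:alphaj}.

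With the closed form in hand, positivity for $n\ge3$ reduces to checking signs of linear factors, and the $(8-p)$ factor explains the threshold. The step I expect to be delicate is confirming the closed form: the induction only needs one algebraic identity, but the right factorization has to be guessed correctly. If the closed form resists, the fallback is to show that the partial sums stay positive, using that for $j\ge3$ the ratio satisfies $|\alpha_{j+1}/\alpha_j|=\frac{(j+2)(2j+2-p)}{j(p+2j+2)}$. I would then compare the tail $\sum_{j\ge3}\alpha_j$ against $\alpha_1+\alpha_2$ in each sign regime.
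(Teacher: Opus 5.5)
Your sign count for $\alpha_j(p)$ is correct and settles $1<p\le 4$. You also already have everything needed for $6\le p<8$: there $\alpha_j\ge 0$ for $j\ge 3$ and $\alpha_1+\alpha_2=\frac{16(p-1)(8-p)}{p(p+2)(p+4)}>0$, so every partial sum is $\ge \alpha_1+\alpha_2>0$. This is exactly the paper's argument for that range, and no closed form is needed.

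The genuine gap is the range $4<p<6$, and the closed form you intend to prove there is false. The partial sums do not carry a factor $(8-p)$, and they do not split into linear factors. Already
\[
\alpha_1+\alpha_2+\alpha_3=\frac{32(p-1)(p^2-14p+60)}{p(p+2)(p+4)(p+6)},
\]
whose quadratic factor has no real roots. At $p=8$ this equals $\alpha_3(8)=\frac15>0$, and in fact only $M_3$ vanishes at $p=8$. So your induction cannot go through. The fallback via the ratio $|\alpha_{j+1}/\alpha_j|$ is only a sketch. For $4<p<6$ every $\alpha_j$ with $j\ge 2$ is negative, the ratio tends to $1$ (the terms decay only like $j^{2-p}$), and you would need the quantitative statement that the whole negative tail is smaller than $\alpha_1$.

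The paper obtains this by analysis rather than algebra. For $4<p<6$ the partial sums decrease. Since $K_p\in L^1$ for $p>3$, the Riemann--Lebesgue lemma gives
\[
M_n(p)\to\int_0^{2\pi}K_p\dd t=\frac{2^{p-2}}{p-2}\int_0^\pi\sin^{p-4}(r)\dd r>0,
\]
and this limit is then a lower bound for every $M_n(p)$.

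If you prefer to stay algebraic, the right tool is a Gosper antidifference, not a factorization. Set
\[
Q_j\coloneqq\frac{\prod_{m=2}^{j}(2m-p)}{\prod_{m=0}^{j-1}(p+2m)},\qquad z_j\coloneqq\frac{2(p-1)j^2+2-p}{3-p}\,Q_j .
\]
One checks that $z_{j+1}-z_j=\alpha_j$ and $z_1=\frac{1}{3-p}$. Hence, for $p\neq 3$,
\[
\sum_{j=1}^{n-1}\alpha_j(p)=\frac{1}{p-3}\Bigl[1-\bigl(2(p-1)n^2+2-p\bigr)Q_n\Bigr].
\]
For $4<p<6$ the only negative factor in $Q_n$ is $4-p$, so $Q_n<0$ and the sum exceeds $\frac{1}{p-3}>0$. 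Either ingredient closes the gap; your guessed $(8-p)$-factorization does not.
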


\begin{proof}
Using \eqref{eq:Mnpinalphaj} and $2^{p-2}S_p>0$, we deduce that the sign of $M_n(p)$ is the sign of the partial sum
\[
\sum_{j=1}^{n-1}\alpha_j(p).
\]
The denominator of $\alpha_j(p)$ in \eqref{eq:alphaj} is positive for $p>1$. Therefore the sign of
$\alpha_j(p)$ is determined by
\[
\prod_{m=2}^{j}(2m-p).
\]

\underline{If $1<p<4$}, then $2m-p>0$ for every $m\ge2$. Hence
\[
\alpha_j(p)>0
\qquad
\text{for every }j\ge1 \quad \text{if $p \in (1,4)$}
\]
and therefore
\[
M_n(p)>0
\qquad
\text{for every }n\ge2 \quad \text{if $p \in (1,4)$}
\]

\underline{If $p=4$}, then
\[
\alpha_1(4)>0,
\qquad
\alpha_j(4)=0
\quad
\text{for every }j\ge2.
\]
Thus again
\[
M_n(4)>0
\qquad
\text{for every }n\ge2.
\]

Assume now that \underline{$4<p<6$}. Then
\[
\alpha_1(p)>0,
\]
and
\[
\alpha_j(p)<0
\qquad
\text{for every }j\ge2.
\]
Hence the following partial sums are decreasing
\[
N \mapsto P_N(p)\coloneqq \sum_{j=1}^N\alpha_j(p).
\]
Since $p>3$, the kernel $t \mapsto K_p(t)$ is integrable on $(0,2\pi)$. Thus, by
\cite[Proposition 3.2.1]{GrafakosCF}, \eqref{eq:Mnpinalphaj} and \eqref{eq:Mndef} we have
\[
M_n(p)
= 2^{p-2}S_p
P_{n-1} = 
\int_0^{2\pi}K_p(t)(1-\cos(nt))\dd t
\xrightarrow{n \to \infty}
\int_0^{2\pi}K_p(t)\dd t.
\]
Since the $P_N$ are decreasing in $N$ we obtain 
\begin{equation}\label{eq:Mnpgeqasd}
 M_n(p) \geq  \int_0^{2\pi}K_p(t)\dd t, \quad \forall n \geq 2.
\end{equation}
We compute this limit. With $t=2r$, recall the formula from \eqref{eq:Kp},
\[
\begin{split}
\int_0^{2\pi}K_p(t)\dd t
&=
2^{p-3}
\int_0^\pi
\sin^{p-4}(r)(p\cos^2(r)-1)\dd r.
\end{split}
\]
For $p>3$ we have by \eqref{eq:integral_sin-cos}
\[
\begin{split} 
\int_0^\pi\sin^{p-4}(r)\cos^2(r)\dd r  =
\frac1{p-2}
\int_0^\pi\sin^{p-4}(r)\dd r.
\end{split} 
\]
Therefore, it holds
\[
\begin{split}
\int_0^{2\pi}K_p(t)\dd t
&=
2^{p-3}
\left(
\frac p{p-2}-1
\right)
\int_0^\pi\sin^{p-4}(r)\dd r
=
\frac{2^{p-2}}{p-2}
\int_0^\pi\sin^{p-4}(r)\dd r
>0.
\end{split}
\]
By \eqref{eq:Mnpgeqasd}
\[
M_n(p)>0
\qquad
\text{for every }n\ge2 \quad \text{if $p \in (2,6)$}
\]

\underline{Now suppose $6\le p<8$}. 
Then it holds
\[
\alpha_1(p)>0,
\qquad
\alpha_2(p)<0,
\]
and
\[
\alpha_j(p)\ge0
\qquad
\text{for every }j\ge3.
\]
Moreover,
\[
\alpha_1(p)
=
\frac{8(p-1)}{p(p+2)},
\]
and
\[
\alpha_2(p)
=
24(p-1)\frac{4-p}{p(p+2)(p+4)}.
\]
Hence
\[
\begin{split}
\alpha_1(p)+\alpha_2(p)
&=
\frac{8(p-1)}{p(p+2)}
+
24(p-1)\frac{4-p}{p(p+2)(p+4)}
=
\frac{16(p-1)(8-p)}{p(p+2)(p+4)}.
\end{split}
\]
This is positive for $6\le p<8$. Therefore every partial sum in \eqref{eq:Mnpinalphaj} is positive, and we have shown 
\[
M_n(p)>0
\qquad
\text{for every }n\ge2 \quad \forall p \in [6,8).
\]
\end{proof}

In the case where $1<p<3$, we obtain the following asymptotic growth.

\begin{lemma}[Growth for $1<p<3$]\label{la:growth-1p3}
Let $1<p<3$. Then there exist constants $0<a_p\le b_p<\infty$ such that
\begin{equation}\label{eq:growthp13}
\forall n\ge2,\qquad a_p n^{3-p}
\le
M_n(p)
\le
b_p n^{3-p}.
\end{equation}
\end{lemma}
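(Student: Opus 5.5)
The plan is to read the growth directly off the partial-sum formula of \Cref{la:partial-sum-formula}, namely $M_n(p)=2^{p-2}S_p\sum_{j=1}^{n-1}\alpha_j(p)$. For $1<p<3$ every factor $2m-p$ with $m\ge 2$ is positive, so all $\alpha_j(p)>0$, as already observed in \Cref{la:sign-stable-range}. Since $2^{p-2}S_p$ is a positive constant, it suffices to show two-sided bounds
\[
c_p\, j^{2-p}\le \alpha_j(p)\le C_p\, j^{2-p}\qquad (j\ge 1).
\]
Summing these bounds over $j=1,\dots,n-1$ then gives $\sum_{j=1}^{n-1} j^{2-p}\asymp n^{3-p}$ for $n\ge2$. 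This uses that $2-p>-1$, i.e.\ $p<3$: the sum is comparable to $\int_1^n x^{2-p}\,\dd x\asymp n^{3-p}$, with constants depending only on a lower bound for $3-p$. This gives \eqref{eq:growthp13}.

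To estimate $\alpha_j(p)$ I will rewrite the products in \eqref{eq:alphaj} via Gamma functions:
\[
\prod_{m=2}^{j}(2m-p)=2^{j-1}\frac{\Gamma(j+1-\frac p2)}{\Gamma(2-\frac p2)},\qquad
\prod_{m=0}^{j}(p+2m)=2^{j+1}\frac{\Gamma(j+1+\frac p2)}{\Gamma(\frac p2)}.
\]
Substituting, I expect
\[
\alpha_j(p)=j(j+1)(p-1)\frac{\Gamma(\frac p2)}{\Gamma(2-\frac p2)}\cdot\frac{\Gamma(j+1-\frac p2)}{\Gamma(j+1+\frac p2)}.
\]
The prefactor $(p-1)\Gamma(p/2)/\Gamma(2-p/2)$ is positive and finite for $p\in(1,3)$, because $2-p/2>1/2$. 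It is bounded above and below uniformly on compact subsets of $(1,3)$; near $p=1$ it degenerates like $p-1$, which is why the constants are uniform only away from $1$.

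The main step is a uniform two-sided bound
\[
\frac{\Gamma(j+1-\frac p2)}{\Gamma(j+1+\frac p2)}\asymp j^{-p}\qquad\text{for all } j\ge1,
\]
with constants uniform for $p$ in compacts of $(1,3)$. I would first try Stirling's formula: the ratio equals $(j+1)^{-p}(1+O_p(1/j))$, with the $O$ uniform in $p$ on compacts. This handles all large $j$, and the finitely many small $j$ are controlled by continuity and positivity of the Gamma function, since $j+1-p/2>0$. Alternatively, one can split the ratio as $\frac{\Gamma(j+1-p/2)}{\Gamma(j+1)}\cdot\frac{\Gamma(j+1)}{\Gamma(j+1+p/2)}$ and apply Wendel/Gautschi-type inequalities to each factor, which give explicit uniform bounds.

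Combining the pieces gives $\alpha_j(p)\asymp j^2\cdot j^{-p}=j^{2-p}$, and summation completes the proof of \eqref{eq:growthp13}, including the claimed uniformity of $a_p,b_p$ when $p$ stays a positive distance away from $1$ and $3$.
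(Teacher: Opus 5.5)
Your argument is correct, and it takes a different route from the paper. The paper does not use the partial-sum formula for this lemma. It estimates the integral in \eqref{eq:Mndef} directly:
\begin{itemize}
\item the upper bound follows from $|\sin r|^{p-4}\lesssim \min\{r,\pi-r\}^{p-4}$ and $1-\cos(2nr)\lesssim\min\{(nr)^2,1\}$, splitting at $r=1/n$;
\item the lower bound comes from isolating the window $r\in(\frac{\pi}{2n},\frac{\pi}{n})$, where the integrand is $\gtrsim r^{p-4}$, and treating the middle region $[\delta,\pi-\delta]$ (where $p\cos^2 r-1$ may be negative) as an $O_p(1)$ error;
\item the finitely many $n<N(p)$ are then covered by the positivity in \Cref{la:sign-stable-range}.
\end{itemize}

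Your route instead feeds the exact identities of \Cref{la:coefficient-computation} and \Cref{la:partial-sum-formula} into Gamma-function asymptotics. Your Gamma rewriting of $\alpha_j(p)$ is correct: it reproduces $\alpha_1(p)=\frac{8(p-1)}{p(p+2)}$, and at $p=2$ it gives $\alpha_j\equiv 1$, hence $M_n(2)=\pi(n-1)$, which matches a direct computation.

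What your approach buys:
\begin{itemize}
\item positivity for every $n\ge 2$ is automatic, since all $\alpha_j>0$, so no patching of small $n$ is needed;
\item you obtain the sharp asymptotics $M_n(p)\sim\kappa_p\, n^{3-p}$ with an explicit $\kappa_p$;
\item the dependence on $p$ is fully transparent.
\end{itemize}
What the paper's approach buys is robustness. It only uses the size and sign structure of $K_p$ near the diagonal, not the special product formulas, so it would survive perturbations of the kernel.

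One small correction to your side remark. The factor $p-1$ in the prefactor of $\alpha_j$ does not make the constants degenerate as $p\to1^+$. It is compensated by $S_p\sim\frac{2}{p-1}$ in $2^{p-2}S_p$. Indeed $S_p(p-1)\Gamma(\frac p2)=2\sqrt{\pi}\,\Gamma(\frac{p+1}{2})$. So your method actually shows that only the upper constant degenerates, like $\frac{1}{3-p}$ as $p\to3^-$. This does not affect the lemma as stated.
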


\begin{proof}
We first prove the upper bound. Let
\[
\rho(t)\coloneqq \min\{t,\pi-t\}
\qquad
\text{for }t\in[0,2\pi].
\]
Since $1-\cos(2nr) \geq 0$ and $\abs{\sin(t)} \ageq \rho(t)$ on $[0,\pi]$ we have whenever $p<4$ by \eqref{eq:Mndef}
\[
\begin{split}
M_n(p)
& = 2^{p-3} \int_0^{\pi}  \sin^{p-4}\brac{r} \brac{
p  \cos^2(r) -  1} (1-\cos(2nr))\dd r \\
& \aleq \int_0^\pi \rho(r)^{p-4} (1-\cos(2nr)) \dd r\\
& \aleq \int_0^{\frac{\pi}{2}} r^{p-4} (1-\cos(2nr)) \dd r.
\end{split}
\]
Observe that $1-\cos(2nr) \equiv \frac{\sin^2(2nr)}{1+\cos(2nr)} \aleq \min\{(nr)^2,1\}$. Thus, since $p \in (1,3)$, we obtain
\[
\begin{split}
M_n(p)
\aleq& \int_0^{\frac{1}{n}} r^{p-4} (nr)^2 \dd r + \int_{\frac{1}{n}}^{\infty} r^{p-4} \dd r\\
=& \frac{1}{p-1} n^2 \brac{\frac{1}{n}}^{p-1}  + \frac{1}{3-p} \brac{\frac{1}{n}}^{p-3}\\
\aleq& \max\left\{\frac{1}{p-1},\frac{1}{p-3}\right\}\, n^{3-p}.
\end{split}
\]
This proves the upper bound in \eqref{eq:growthp13}.

We now prove the lower bound using again \eqref{eq:Mndef}. We first observe that there exists some $N \in \N$ (depending on $p$) so that 
$\brac{p  \cos^2(r) -  1} \geq \frac{1}{2}$ and $\sin(r) \geq \frac{1}{2}r$ holds for all $r \in (\frac{\pi}{n},\frac{\pi}{2n})$ for any $n \geq N$. Thus we have
\[
\begin{split}
 \int_{\frac{\pi}{2n}}^{\frac{\pi}{n}}  \sin^{p-4}\brac{r} \brac{
p  \cos^2(r) -  1} (1-\cos(2nr))\dd r
\ageq & \int_{\frac{\pi}{2n}}^{\frac{\pi}{n}}  r^{p-4} (1-\cos(2nr))\dd r \\
\ageq & \int_{\frac{\pi}{2n}}^{\frac{2}{3}\frac{\pi}{n}}  r^{p-4} (1-\cos(2nr))\dd r \\
\ageq & \int_{\frac{\pi}{2n}}^{\frac{2}{3}\frac{\pi}{n}}  r^{p-4} \dd r.
\end{split}
\]
We obtain
\[
\int_{\frac{\pi}{2n}}^{\frac{\pi}{n}}  \sin^{p-4}\brac{r} \brac{
p  \cos^2(r) -  1} (1-\cos(2nr))\dd r
\ageq  \frac{1}{p-3} n^{p-3} \quad \forall n \geq N.
\]
Secondly, we observe that since $p >1$ there exists some $\delta = \delta(p) > 0$ such that $p \cos^2(r) - 1 \geq 0$ for all $r \in [0,\delta] \cup [\pi-\delta,\pi]$. We can assume that $N$ above is so large that $(\frac{\pi}{2n},\frac{\pi}{n}) \subset (0,\delta)$. Thus, for any $n \geq N$ we have
\[
\begin{split}
M_n(p)
& \geq 2^{p-3} \int_{\frac{\pi}{2n}}^{\frac{\pi}{n}}  \sin^{p-4}\brac{r} \brac{
p  \cos^2(r) -  1} (1-\cos(2nr))\dd r \\
&\qquad +2^{p-3} \int_{\delta}^{\pi-\delta}  \sin^{p-4}\brac{r} \brac{
p  \cos^2(r) -  1} (1-\cos(2nr))\dd r \\
& \ageq \frac{1}{p-3} n^{p-3} -c\int_{\delta}^{\pi-\delta}  \abs{\sin^{p-4}\brac{r}} dr \\
&\aeq \frac{1}{p-3} n^{p-3} -c \delta^{-(4-p)} \\
\end{split}
\]
In conclusion, we have shown
\[
M_n(p) \geq  \frac{C_{1;p}}{p-3} n^{p-3} -C_{2;p},  \qquad \forall n \geq N(p).
\]
On the other hand, from \Cref{la:sign-stable-range} we have in particular
\[
M_n(p)>0, \qquad \forall n \in \{2,\ldots,N(p)\}.
\]
Since these are finitely many $n$, we can find a new constant $a_p >0$ so that 
\[
M_n(p)\geq a_p\, n^{3-p}, \qquad \forall n \in \{2,\ldots,N(p)\}
\]
This concludes the proof of \Cref{la:growth-1p3}.
\end{proof}

\begin{proof}[Proof of \Cref{th:stable-picture-multiplier}]
The formula
\[
\mathcal{Q}_p(\eta)
=
4\pi
\sum_{n\in\mathbb Z}
M_{|n|}(p)|c_n|^2,
\]
follows from \Cref{la:QvsMn}, and
\[
M_0(p)=M_1(p)=0
\]
follows from \Cref{la:m0-m1}. The fact that for $p \in (1,8)$ $M_{n}(p) \geq 0$ for all $n \in \N$ is proven in \Cref{la:sign-stable-range} .  

For $p > 3$ the bound 
\[
a_p n^{3-p}
\le
M_n(p)
\le
b_p n^{3-p}
\qquad
\text{for every }n\ge2
\]
is shown in \Cref{la:growth-1p3}
\end{proof}

We will also need to following result.
\begin{corollary}
\label{co:la:truncated-coercivity-one-kernel}
Fix $1<p_- < p_+ <3$ and $A > 0$.

Then there exists $\rho_0 > 0$ and $c>0$ depending on $A$ and $p_+$ such that any $a\colon \S^1 \to \R$ with the property
\begin{equation}\label{hyp:a}
    \int_0^{2\pi}a(e^{\i\theta})\dd\theta=0,
        \qquad
        \int_0^{2\pi}a(e^{\i\theta})e^{\i\theta}\dd\theta=0,
\end{equation}
satisfies for any $p \in [p_-,p_+]$ and any $\rho \in (0,\rho_0)$ the following inequality
\[
\begin{split}
&A
\iint_{\{|x-y|\leq\rho\}}
\frac{|a(x)-a(y)|^2}{|x-y|^{4-p}}\dx\dy
\\
&
+
\frac12
\iint_{\{|x-y|\geq\rho\}}
\frac{
|x-y|^{p-2}
\left[
p-1-\frac p4|x-y|^2
\right]
}{|x-y|^2}
|a(x)-a(y)|^2\dx\dy
\geq
c
[a]_{H^{\frac{3-p}{2}}(\S^1)}^2 .
\end{split}
\]
\end{corollary}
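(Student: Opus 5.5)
The plan is to compare the truncated functional with the full quadratic form $\mathcal{Q}_p(a)$ from \Cref{la:Qprep}. By \eqref{eq:identity2}, the kernel in the second integral is exactly the kernel $\kappa_p(x,y)\coloneqq |x-y|^{p-4}[p-1-\frac p4|x-y|^2]$ appearing in \eqref{eq:Qpeta}. Hence
\[
\mathcal{Q}_p(a)=\iint_{\{|x-y|\le\rho\}}\kappa_p|a(x)-a(y)|^2\dx\dy+\iint_{\{|x-y|\ge\rho\}}\kappa_p|a(x)-a(y)|^2\dx\dy .
\]
Since $|x-y|\le 2$, on the near region $\kappa_p\le (p-1)|x-y|^{p-4}\le 2|x-y|^{p-4}$ for $p\le 3$. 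Also, for $|x-y|\le\rho$ small, $\kappa_p\ge \frac{p-1}{2}|x-y|^{p-4}$ is positive. So the first integral is comparable to the $A$-term up to constants depending only on $p_\pm$.

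\textbf{Step 1 (lower bound on the far part).} From the identity above, the far integral equals $\mathcal{Q}_p(a)$ minus the near integral. Therefore
\[
\tfrac12\iint_{\{|x-y|\ge\rho\}}\kappa_p|a(x)-a(y)|^2 \ge \tfrac12\mathcal{Q}_p(a)-\iint_{\{|x-y|\le\rho\}}|x-y|^{p-4}|a(x)-a(y)|^2 .
\]
Adding the $A$-term gives a left-hand side of at least $\frac12\mathcal{Q}_p(a)+(A-1)\cdot(\text{near integral})$. If $A\ge 1$ we are done by \Cref{th:stable-picture-multiplier}, since \eqref{hyp:a} kills the modes $0,\pm1$ (as $a$ is real, $c_{-1}=\overline{c_1}$). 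That theorem gives $\mathcal{Q}_p(a)\ge a_p\sum_{|n|\ge2}|n|^{3-p}|c_n|^2\gtrsim [a]^2_{H^{(3-p)/2}}$, uniformly for $p\in[p_-,p_+]$.

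\textbf{Step 2 (general $A$).} This is the real content, and I expect it to be the main obstacle. Instead of $\frac12$, split with a parameter: write the far term as $\lambda\mathcal{Q}_p(a)-\lambda\cdot(\text{near})+(\frac12-\lambda)\cdot(\text{far})$. Choose $\lambda=\min\{\frac12,\frac A2\}$, so the near term enters with coefficient $A-\lambda\cdot 2\ge0$ after using $\kappa_p\le 2|x-y|^{p-4}$. The leftover $(\frac12-\lambda)\cdot(\text{far})$ may have a negative kernel where $p-1-\frac p4|x-y|^2<0$, i.e.\ near antipodal points. Its negative part is bounded by
\[
C\iint_{\{|x-y|\ge\rho\}}|x-y|^{p-4}|a(x)-a(y)|^2\le C\rho^{p-4}\|a\|_{L^2}^2\lesssim C\rho^{p-4}\sum_{|n|\ge2}|c_n|^2 ,
\]
which is not small. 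So I would rather apply Step 1's identity directly mode by mode. Via \Cref{la:QvsMn}, the whole left-hand side equals $4\pi\sum_n \widetilde M_n(p,\rho)|c_n|^2$ with
\[
\widetilde M_n=\int_0^{2\pi}\widetilde K(t)(1-\cos nt)\,\dd t,\qquad \widetilde K=A|x-y|^{p-4}\chi_{\text{near}}+\tfrac12\kappa_p\chi_{\text{far}} .
\]
It then suffices to show $\widetilde M_n\ge c\,n^{3-p}$ for all $n\ge2$.

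\textbf{Step 3 (multiplier estimate).} For the bound on $\widetilde M_n$, write $\widetilde M_n=\min\{A,\frac12\}M_n+(\text{nonnegative near remainder})+(\frac12-\min\{A,\frac12\})\cdot(\text{far part of } M_n)$. Only the last term can be negative, and it is bounded below by $-C\rho^{p-4}$ uniformly in $n$. For $n\ge N(\rho)$, the near remainder alone dominates: as in the lower bound of \Cref{la:growth-1p3}, integrating over $t\in(\frac{\pi}{2n},\frac{\pi}{n})\subset\{|x-y|\le\rho\}$ gives a contribution of at least $c\,n^{3-p}$. This beats $C\rho^{p-4}$ once $n^{3-p}\gg\rho^{p-4}$, i.e.\ $n\ge N(\rho)$.

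For the finitely many $2\le n<N(\rho)$, the negative far contribution must be controlled by $\min\{A,\frac12\}M_n\ge \min\{A,\frac12\}a_p n^{3-p}$. This requires choosing $\rho_0$ small so that $\kappa_p\ge0$ on the near region, and arguing that the far part of $M_n$ is itself not too negative relative to $M_n$. If this fails, I would instead use the cruder route: for $A\ge1$ the proof of Step 1 applies, and the dependence "$c$ depending on $A$" suggests that this case (after possibly rescaling) is the intended one. In either case the constants are uniform in $p\in[p_-,p_+]$ by the uniformity statement in \Cref{th:stable-picture-multiplier}.
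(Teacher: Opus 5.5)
\textbf{There is a genuine gap: the small-$A$ case is not proved.}

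Your reduction to a mode-by-mode multiplier $\widetilde M_n$ is the right framework; it is what the paper does. Step 1 correctly settles $A\ge 1$. But the small-$A$ case is the whole content of the corollary, and your proposal does not prove it.

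Your fallback is not available. The inequality is not homogeneous in $A$, because the far term carries the fixed weight $\frac12$, so no rescaling reduces small $A$ to $A\ge1$. In \Cref{th:localminisidv2} the corollary is applied with $A=\frac12A_\Lambda$, which can be very small.

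The two estimates in Step 3 also do not close:
\begin{itemize}
\item Bounding the negative far contribution by $-C\rho^{p-4}$ is too crude. It forces $N(\rho)$ to blow up like a negative power of $\rho$ (roughly $n^{3-p}\gtrsim\rho^{p-4}/A$). For $2\le n<N(\rho)$, a term of size $\min\{A,\frac12\}M_n\approx A n^{3-p}$ cannot absorb something of size $\rho^{p-4}$. These $n$ are also not ``finitely many'' uniformly in $\rho\in(0,\rho_0)$.
\item Your requirement that the far part of $M_n$ not be too negative relative to $M_n$ is stated but never established.
\end{itemize}
A minor point: with $\mu=\min\{A,\frac12\}$, the near remainder $A|x-y|^{p-4}-\mu\kappa_p$ is negative near the diagonal when $A\le\frac12$ and $p>2$, since $\kappa_p\approx(p-1)|x-y|^{p-4}$ there. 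Your Step 2 choice $\min\{\frac A2,\frac12\}$ avoids this.

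\textbf{What is missing.} You need two elementary bounds and a frequency split at $|n|\rho\approx R$, with $R$ fixed before $\rho_0$.
\begin{itemize}
\item First, $\kappa_p<0$ only where $|x-y|^2> 4\frac{p-1}{p}$, which is uniformly away from the diagonal. So for every $n$ the far term is at least $-C(p_-,p_+)$, independently of $\rho$.
\item Second, $|x^n-y^n|\le|n||x-y|$ gives
\[
\Bigl|\iint_{\{|x-y|\le\rho\}}\kappa_p\,|x^n-y^n|^2\dx\dy\Bigr|\le Cn^2\rho^{p-1}=C(|n|\rho)^{p-1}|n|^{3-p}.
\]
\end{itemize}
If $|n|\rho\le R$ with $CR^{p-1}\le\frac{c_*}{2}$ (where $M_n\ge c_*|n|^{3-p}$), the far part alone is at least $\frac{c_*}{2}|n|^{3-p}$, and $A$ plays no role. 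This is the precise form of your ``far part not too negative'' claim.

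If $|n|\rho>R$, substitute $\tau=\frac{nt}{2}$ in the $A$-term. This gives at least $A\,c_R|n|^{3-p}$, with $c_R>0$ independent of $\rho$, $n$ and $p\in[p_-,p_+]$. Meanwhile $-C\ge -C(\rho/R)^{3-p}|n|^{3-p}$. Choosing $\rho_0$ with $C(\rho_0/R)^{3-p}\le\frac12Ac_R$ closes the argument. This is exactly the paper's proof.
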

\begin{proof}
Let us assume that $\frac{p}{4}(\rho_0)^2  < \frac{1}{2} (p-1)$ then whenever $|x-y| \leq \rho\leq\rho_0$, we have
\[
\frac{p-1}{2} \leq p-1-\frac{p}{4} |x-y|^2 \leq p-1.
\]
Thus, it holds
\[
\begin{split}
&A \iint_{\{|x-y|\leq\rho\}} \frac{|a(x)-a(y)|^2}{|x-y|^{4-p}}\dx\dy\\
& \geq A\frac{2}{p-1} \iint_{\{|x-y|\leq\rho\}} \frac{|a(x)-a(y)|^2\brac{p-1-\frac{p}{4} |x-y|^2}}{|x-y|^{4-p}}\dx\dy.
\end{split}
\]
If $A \frac{2}{p-1} \geq \frac{1}{2}$, then we obtain 
\[
\begin{split}
	&A
	\iint_{\{|x-y|\leq\rho\}}
	\frac{|a(x)-a(y)|^2}{|x-y|^{4-p}}\dx\dy
	\\
	&
	+
	\frac12
	\iint_{\{|x-y|\geq\rho\}}
	\frac{
		|x-y|^{p-2}
		\left[
		p-1-\frac p4|x-y|^2
		\right]
	}{|x-y|^2}
	|a(x)-a(y)|^2\dx\dy \\
	& \geq \iint_{\S^1\times\S^1}
	\frac{
		|x-y|^{p-2}
		\left[
		p-1-\frac p4|x-y|^2
		\right]
	}{|x-y|^2}
	|a(x)-a(y)|^2\dx\dy \\
	& \geq \frac{p-1}{2}\, [a]_{H^{\frac{3-p}{2}}(\S^1)}^2 .
\end{split}
\]
If $A \frac{2}{p-1} < \frac{1}{2}$, then we go in to frequencies. Fix from \Cref{th:stable-picture-multiplier} $c_* > 0$ such that (using \eqref{eq:identity2}, \eqref{eq:Kp} and \eqref{eq:scalar_products})
\[
\begin{split}
\iint_{\S^1\times\S^1}
\frac{
|x-y|^{p-2}
\left[
p-1-\frac p4|x-y|^2
\right]
}{|x-y|^2}
|x^n-y^n|^2\dx\dy
&=2 \pi \int_0^{2\pi} K_p(t)\, |e^{\i n t}-1|\dd t\\
&\geq c_* |n|^{3-p}.
\end{split}
\]
for $n\in \Z \setminus \{0,1,-1\}$.
We write
\[
        a(e^{\i\theta})=\sum_{n\in\Z}c_ne^{\i n\theta}.
\]
By \eqref{hyp:a}, we have $c_0=c_1=c_{-1}=0$.
Below we first choose $R > 0$ and then $\rho_0 > 0$ very small, and assume $\rho \in (0,\rho_0)$. Fix $|n|\geq2$.

\underline{Case 1: $|n| \rho \leq R$.}

For $x,y \in \S^n$ and $n \in \Z \setminus \{0\}$, it holds $|x^n-y^n| \leq n |x-y|$. Hence, we have
\[
\begin{split}
\left|
\iint_{\{|x-y|\leq\rho\}}
\frac{
|x-y|^{p-2}
\left[
p-1-\frac p4|x-y|^2
\right]
}{|x-y|^2}
|x^n-y^n|^2\dx\dy
\right|
& \leq 
\iint_{\{|x-y|\leq\rho\}}
|x-y|^{p-2}
n^2 \dx\dy.
\end{split}
\]
Since $p>p_->1$, it holds
\[
\begin{split}
	\left|
	\iint_{\{|x-y|\leq\rho\}}
	\frac{
		|x-y|^{p-2}
		\left[
		p-1-\frac p4|x-y|^2
		\right]
	}{|x-y|^2}
	|x^n-y^n|^2\dx\dy
	\right|
	& \leq C(p_-)\, n^2\, \rho^{p-1}\\
& =
C(p_-)\, (|n|\rho)^{p-1}\, |n|^{3-p}.
\end{split}
\]
The constant $C$ depends only on $p_-$, hence we can choose $R>0$ small enough so that
\[
    C R^{p-1}\leq \frac{c_*}{2}
        \qquad
        \forall p\in [p_-,p_+].
\]
For this choice of $R$, it holds
\[
\begin{split}
&\iint_{\{|x-y|\geq\rho\}}
\frac{
|x-y|^{p-2}
\left[
p-1-\frac p4|x-y|^2
\right]
}{|x-y|^2}
|x^n-y^n|^2\dx\dy
\\
&
=
\iint_{\S^1\times\S^1}
\frac{
|x-y|^{p-2}
\left[
p-1-\frac p4|x-y|^2
\right]
}{|x-y|^2}
|x^n-y^n|^2\dx\dy
\\
&\qquad
-
\iint_{\{|x-y|\leq\rho\}}
\frac{
|x-y|^{p-2}
\left[
p-1-\frac p4|x-y|^2
\right]
}{|x-y|^2}
|x^n-y^n|^2\dx\dy
\\
&
\geq
\frac{c_*}{2}|n|^{3-p}.
\end{split}
\]
Therefore in the case $|n| \rho \leq R$ we have
\[
\begin{split}
&A
\iint_{\{|x-y|\leq\rho\}}
\frac{|x^n-y^n|^2}{|x-y|^{4-p}}\dx\dy
+
\frac12
\iint_{\{|x-y|\geq\rho\}}
\frac{
|x-y|^{p-2}
\left[
p-1-\frac p4|x-y|^2
\right]
}{|x-y|^2}
|x^n-y^n|^2\dx\dy
\\
&\qquad\geq
\frac{c_*}{4}|n|^{3-p}.
\end{split}
\]
\underline{Case 2: $|n| \rho > R$.}

First we observe that for $x = e^{\i \theta}$ then $\dx = \dd\theta$ (arclength measure) and we obtain
\begin{equation*}
\begin{split}
A\iint_{\S^1 \times \S^1} \chi_{\{|x-y|\leq\rho\}}
\frac{|x^n-y^n|^2}{|x-y|^{4-p}}\dx\dy
& =A
\int_{0}^{2\pi}\int_{0}^{2\pi}\chi_{\{|1-e^{\i (\theta-\sigma)}|\leq\rho \} }\,
\frac{|1-e^{\i (\theta -\sigma)n}|^2}{|1-e^{\i (\theta-\sigma)}|^{4-p}}\dd\theta\dd\sigma
\\
& = A
\int_{0}^{2\pi}\int_{-\sigma}^{2\pi-\sigma}\chi_{\{ |1-e^{\i t}|\leq\rho \} }\,
\frac{|1-e^{\i tn}|^2}{|1-e^{\i t}|^{4-p}}\dd t\dd\sigma
\\
& = 2\pi A
\int_{0}^{2\pi}\chi_{\{|1-e^{\i t}|\leq\rho\} }\, \frac{|1-e^{\i tn}|^2}{|1-e^{\i t}|^{4-p}}\, \dd t
\\
& =c_p\pi A
\int_{0}^{2\pi}\chi_{ \left\{|4 \sin\brac{\frac{t}{2}}|\leq\rho\right\} } \frac{\sin^2\brac{\frac{tn}{2}}}{\abs{\sin\brac{\frac{t}{2}}}^{4-p}}\dd t.
\end{split} 
\end{equation*} 
Therefore, we have
\begin{equation}\label{eq:weirdfreq:1}
\begin{split} 
A\iint_{\S^1 \times \S^1} \chi_{\{|x-y|\leq\rho\}}
\frac{|x^n-y^n|^2}{|x-y|^{4-p}}\dx\dy & \geq \tilde{c}\,_p\,\pi\, A
\int_{0}^{c\rho} \frac{\sin^2\brac{\frac{tn}{2}}}{t^{4-p}}\dd t \\
& = \hat{c}_p\, \pi\, A\, |n|^{3-p}
\int_{0}^{\frac{c|n|}{2}\rho} \frac{\sin^2\brac{\tau}}{\tau^{4-p}}\dd\tau \\
& \geq \hat{c}_p\, \pi\, A\, |n|^{3-p}
\int_{0}^{\frac{c}{2}R} \frac{\sin^2\brac{\tau}}{\tau^{4-p}}\dd\tau \\
& \eqqcolon A\, c_{R,p}\, |n|^{3-p}.
\end{split}
\end{equation}
where we observe that since $1<p_- < p < p_+<3$, $c_{R,p}>0$ is a constant independent of $\rho$, $n$, and the particular $p$. Next observe that
\[
 p-1-\frac p4|x-y|^2 \leq 0 \quad \Leftrightarrow \quad |x-y|^2 \geq 4\frac{p-1}{p}.
\]
Thus, we have
\[
\begin{split}
&\frac{1}{2}\iint_{\{|x-y|\geq\rho\}}
\frac{
|x-y|^{p-2}
\left[
p-1-\frac p4|x-y|^2
\right]
}{|x-y|^2}
|a(x)-a(y)|^2\dx\dy \\
& \geq - \frac{1}{2}\iint_{\left\{|x-y|\geq \sqrt{4\frac{p-1}{p}}\right\}}
\frac{|a(x)-a(y)|^2}{|x-y|^{4-p}}\dx\dy\\
& \geq - \frac{1}{2}\, \left(4\, \frac{p-1}{p}\right)^{\frac{p-4}{2}}\iint_{\left\{|x-y|\geq \sqrt{4\frac{p-1}{p}}\right\}}|a(x)-a(y)|^2\dx\dy.
\end{split}
\]
In particular for $a(x)=x^n$, we obtain
\[
\begin{split}
\frac{1}{2}\iint_{\{|x-y|\geq\rho\}}
\frac{
|x-y|^{p-2}
\left[
p-1-\frac p4|x-y|^2
\right]
}{|x-y|^2}
|x^n-y^n|^2\dx\dy
\geq- C(p_-,p_+).
\end{split}
\]
Using $|n| \rho \geq R$ and $p < p_+ < 3$, we obtain
\begin{equation}\label{eq:weirdfreq:2}
\begin{split}
&\frac{1}{2}\iint_{\{|x-y|\geq\rho\}}
\frac{
|x-y|^{p-2}
\left[
p-1-\frac p4|x-y|^2
\right]
}{|x-y|^2}
|x^n-y^n|^2\dx\dy  \geq - C\, |n|^{3-p}\, \brac{\frac{R}{\rho}}^{p-3}.
\end{split}
\end{equation}
Thus combining \eqref{eq:weirdfreq:1} and \eqref{eq:weirdfreq:2},
\[
\begin{split}
 &A
\iint_{\{|x-y|\leq\rho\}}
\frac{|x^n-y^n|^2}{|x-y|^{4-p}}\dx\dy
\\
&\qquad
+
\frac12
\iint_{\{|x-y|\geq\rho\}}
\frac{
|x-y|^{p-2}
\left[
p-1-\frac p4|x-y|^2
\right]
}{|x-y|^2}
|x^n-y^n|^2\dx\dy\\
\geq&\brac{A\, c_{R,p} - C \rho^{3-p} R^{p-3}}|n|^{3-p}.
\end{split}
\]
We choose $\rho < \rho_0$ and $\rho_0$ small enough such that
\[
 C \rho_0^{3-p} R^{p-3} \leq \frac{1}{2} A c_{R,p}.
\]
We conclude that
\[
\begin{split}
 &A
\iint_{\{|x-y|\leq\rho\}}
\frac{|x^n-y^n|^2}{|x-y|^{4-p}}\dx\dy
\\
&\qquad
+
\frac12
\iint_{\{|x-y|\geq\rho\}}
\frac{
|x-y|^{p-2}
\left[
p-1-\frac p4|x-y|^2
\right]
}{|x-y|^2}
|x^n-y^n|^2\dx\dy\\
& \geq \frac{A c_{R,p}}{2} |n|^{3-p}.
\end{split}
\]
in the case $|n| \rho \geq R$.
\end{proof}

\section{Regularity of minimizing harmonic maps}\label{s:lipschitzreg}
To apply our argument in the next section, we need Lipschitz regularity of minimizing maps. 
\begin{theorem}\label{th:Lipregularity}
For any $\alpha > 0$, $s_0>\frac{1}{2}$, $\lambda > 0$ there exists $\delta > 0$ and $\Lambda > 0$ such that the following holds.
If for $s \in (\frac{1}{2},\frac{1}{2}+\delta)$, i.e.\ $p=\frac{1}{s} <2$ if $u \in C^\alpha \cap W^{s,\frac{1}{s}} \cap W^{s_0,2}(\S^1,\S^1)$ with
\[
 [u]_{C^\alpha} + [u]_{W^{s,\frac{1}{s}}} + [u]_{W^{s_0,2}} \leq \lambda 
\]
is a $E_s$ minimizer within maps $\S^1 \to \S^1$ of the same degree then $u \in C^{1,\sigma}(\S^1)$ for some $\sigma = \sigma_{p-2} > 0$ and we have the estimate
\begin{equation}\label{eq:H3pbetaest}
 [u]_{\lip} \leq  \frac{\Lambda}{\sqrt{\abs{p-2}}}
\end{equation}
\end{theorem}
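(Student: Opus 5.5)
The plan is to localize and lift. Then I would treat the Euler--Lagrange equation of $E_s$ for the phase as a perturbation of the linear fractional Laplacian $(-\Delta)^{1/2}$ (the $p=2$ case), and let the degeneracy factor $|p-2|$ enter only through the perturbation.

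\emph{Step 1: lifting and the equation for the phase.} The bound $[u]_{C^\alpha}\le\lambda$ lets me apply \Cref{lem:uniform-small-arc-lifting}. On every arc $I$ of length $r_{\mathrm{lift}}$ I write $u=e^{\i\varphi}$ with $[\varphi]_{C^\alpha(I)}\le C(1+\lambda)$. Testing minimality with variations $u e^{\i\eps\psi}$, $\psi\in C_c^\infty$, gives the weak equation
\[
\iint \frac{|u(x)-u(y)|^{p-2}\,\sin(\varphi(x)-\varphi(y))\,(\psi(x)-\psi(y))}{|x-y|^2}\dx\dy=0 ,
\]
where I use $\langle u(x)-u(y),\i u(x)\psi(x)-\i u(y)\psi(y)\rangle=\sin(\varphi(x)-\varphi(y))(\psi(x)-\psi(y))$. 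The part of the integral where $x$ or $y$ lies outside a neighbourhood of the support is a smooth tail controlled by $\lambda$. Setting $w(x,y)\coloneqq|u(x)-u(y)|^{p-2}=(2|\sin\frac{\varphi(x)-\varphi(y)}{2}|)^{p-2}$, the equation becomes $L_w\varphi=\text{(cubic remainder from } \sin t-t)$, where $L_w$ has the nonlocal kernel $w(x,y)|x-y|^{-2}$. For $p<2$ the weight satisfies $w\ge c$ but is singular on the diagonal; this is where \eqref{eq:H3pbetaest} must come from.

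\emph{Step 2: write the operator as $(-\Delta)^{1/2}$ plus a small term.} I write $w=1+(w-1)$ and $w-1=(p-2)\log|u(x)-u(y)|+O(|p-2|^2\log^2|u(x)-u(y)|)$. This gives
\[
(-\Delta)^{1/2}\varphi=(2-p)\,\mathcal{T}\varphi+\text{l.o.t.},\qquad
\mathcal{T}\varphi(x)=\mathrm{p.v.}\!\int\frac{\log|u(x)-u(y)|\,(\varphi(x)-\varphi(y))}{|x-y|^2}\dy .
\]
I would then run a bootstrap in Hölder/Sobolev scales. The $W^{s_0,2}$ bound with $s_0>\frac12$ lets $\varphi$ start in $C^{s_0-1/2}$ and in $H^{s_0}$. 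If $\varphi\in C^{1,\sigma}$ with $\varphi'$ bounded below by $0$, then the log behaves like $\log|x-y|+\log|\varphi'(x)|$, and $\mathcal T$ is essentially $(-\Delta)^{1/2}\log(-\Delta)^{1/2}$ up to smoothing errors. Schauder estimates for $(-\Delta)^{1/2}$ then give $\varphi\in C^{1,\sigma}$ provided $|p-2|$ times the log loss is absorbed. Each step costs $\eps$ derivatives with a constant $|p-2|\eps^{-1}$, so choosing $\delta$ small closes the iteration; this proves the qualitative statement $u\in C^{1,\sigma}$.

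\emph{Step 3: the quantitative constant $|p-2|^{-1/2}$, which I expect to be the main obstacle.} A naive bootstrap gives constants blowing up like $\exp(C/|p-2|)$ or at best $1/|p-2|$. To get the square root I would instead use an energy argument at the level $\dot H^1$. I test the equation with $\psi=-\varphi''$, in difference-quotient form, using nonlocal Caccioppoli. The leading term yields $\|\varphi'\|_{\dot H^{1/2}}^2$. The perturbation contributes $(2-p)\iint w'(\ldots)$, which by the symmetric structure produces a term with a definite sign of size $|p-2|\iint \frac{|\varphi'(x)-\varphi'(y)|^2}{|x-y|^2}\log\frac1{|x-y|}$. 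It also produces an error bounded by $C\lambda^2$, coming from the $\sin t-t$ remainder together with the tails, using $[u]_{W^{s,1/s}}\le\lambda$. Interpolating, $\|\varphi'\|_{L^\infty}^2\lesssim\|\varphi'\|_{\dot H^{1/2}}\|\varphi'\|_{\dot H^{1/2}\log}$ would then give $\|\varphi'\|_\infty^2\lesssim\Lambda^2/|p-2|$, which is \eqref{eq:H3pbetaest}. The delicate point is controlling the cross term from the log-weight without losing a full power of $|p-2|$. If the sign structure fails, my fallback would be to accept a cruder $|p-2|^{-1}$-type bound and check whether the later absorption argument tolerates it.
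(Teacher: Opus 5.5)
There is a genuine gap, and it appears already in Step~2, not only in Step~3.

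\emph{Step 2.} The expansion $|u(x)-u(y)|^{p-2}-1=(p-2)\log|u(x)-u(y)|+O(|p-2|^2\log^2|u(x)-u(y)|)$ is not uniform. Near the diagonal $\log|u(x)-u(y)|\to-\infty$. For $p<2$ the exact weight satisfies $|u(x)-u(y)|^{p-2}\ge(\lambda|x-y|^\alpha)^{p-2}$, which is a singular power, not a small perturbation of $1$. Accordingly, $(2-p)\mathcal T\varphi$ is of higher order than $(-\Delta)^{1/2}\varphi$: by a logarithm in your linearization, and by a power for the exact kernel $|u(x)-u(y)|^{p-2}|x-y|^{-2}$. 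So it cannot be absorbed perturbatively. A bound costing $\eps$ derivatives with constant $|p-2|\eps^{-1}$ puts more derivatives on the right than Schauder gains on the left, and the iteration does not close. The step also presupposes $\varphi\in C^{1,\sigma}$ with $|\varphi'|$ bounded away from zero. The first is what you want to prove, and the second is simply not available.

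\emph{Step 3.} The interpolation $\|\varphi'\|_{L^\infty}^2\lesssim\|\varphi'\|_{\dot H^{1/2}}\|\varphi'\|_{\dot H^{1/2}\log}$ is false. The log-weighted norm has Fourier weight $|n|\log|n|$, and $\dot H^{1/2}$ with one logarithm does not embed into $L^\infty$ on $\S^1$. For example, truncations of $\sum_{n}\frac{\cos n\theta}{n\log n\,\log\log n}$ are bounded in both norms but unbounded at $\theta=0$. No power of $|p-2|$ in front of that norm rescues an $L^\infty$ bound, so your $|p-2|^{-1}$ fallback fails for the same reason. Linearizing the weight in $p-2$ throws away exactly the gain you need.

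\emph{Missing idea.} Keep the exact weight. For $p<2$ and small $\xi,\zeta$ one has
\[
|e^{\i\xi}-1|^p+|e^{\i\zeta}-1|^p-2|e^{\i(\xi+\zeta)/2}-1|^p\ge c(|\xi|+|\zeta|)^{p-2}|\xi-\zeta|^2 .
\]
Since $|\xi|+|\zeta|\lesssim|x-y|^\alpha$ for phase differences, this gives a genuine power gain $|x-y|^{-\beta}$ with $\beta=\alpha(2-p)$.

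The paper uses this without any Euler--Lagrange equation. It compares $u$ with the localized inner variation $v_h=u\circ\psi_h$, whose energy excess is $O(h^2\rho^{-2})$ by minimality and a second-order expansion of the change-of-variables Jacobian. It also compares $u$ with the midpoint competitor $m_h=e^{\i(\varphi+\varphi\circ\psi_h)/2}$. Then $E_s(u)+E_s(v_h)-2E_s(m_h)\lesssim h^2\rho^{-2}$, combined with the convexity above, yields
\[
\iint\frac{|\theta_h(x)-\theta_h(y)|^2}{|x-y|^{2+\beta}}\dx\dy\lesssim h^2\rho^{-2},\qquad \theta_h=\varphi\circ\psi_h-\varphi .
\]
The $W^{s_0,2}$ bound, with $1+\beta<2s_0$, is what makes absorbing the far-field term legitimate. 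Hence $\varphi'\in H^{(1+\beta)/2}$ with a uniform bound. The borderline Morrey embedding $H^{(1+\beta)/2}\hookrightarrow L^\infty$ on $\S^1$ has constant of order $\beta^{-1/2}$, which produces exactly $|p-2|^{-1/2}$. The $C^{1,\sigma}$ statement, with $\sigma\sim\beta/2$, comes from the same estimate.
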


\begin{remark}[Comparison to standard literature]
	The fractional $p$-Laplacian not being very well understood, especially in the vectorial case, we did not manage to get a uniform higher regularity bound. However, for our purposes a non-exponential blowup as $p \to 2^{-}$ is acceptable.
	
Our argument for \eqref{eq:H3pbetaest} essentially follows ideas such as the one used \cite{BL17,GL24}. At first it may look like we get a slightly better exponent than \cite{GL24}, namely we move beyond $C^1$, but this is because we get a Sobolev space estimate that (in one dimension) implies the desired estimate. The methods below should work in any domain and target dimension but it seems unlikely they give a better result than the state of the art with these methods, see also \cite[Corollary 1.9]{DKLN25}.

Also let us remark that the assumption $s_0>\frac{1}{2}$ is likely not necessary, but avoids a boot-strapping argument for the (a priori) absorption argument in \Cref{lem:localized-difference-quotient-beta}.

We state \Cref{th:Lipregularity} for minimizers, which makes a comparison argument \eqref{eq:energyineq} possible. It is likely that the same ideas work also for critical points, using the $C^\alpha$-condition to pertubatively compare the solution to the one of unconstrained fractional $s$-$p$-Laplace (which is minimizing for the unconstrained $E_s$ energy)
\end{remark}

\begin{lemma}[Inner variations]
\label{lem:uniform-inner-variations}
There exist constants $r_g>0$, $c_g>0$ and $C_g>0$ such that the following holds.

Let $I\subset\S^1$ be an arc of length $0<r\le r_g$
Let $X\in C^\infty(\S^1,\mathbb R)$ such that $\operatorname{spt}X\Subset I$ and, in arclength coordinates,
\begin{equation}\label{eq:Xestimates}
        |X|\le C_g,
        \qquad
        \left|\frac{dX}{d\ell}\right|\le \frac{C_g}{r},
        \qquad
        \left|\frac{d^2X}{d\ell^2}\right|\le \frac{C_g}{r^2} \qquad
        \left|\frac{d^3X}{d\ell^3}\right|\le \frac{C_g}{r^3}.
\end{equation}
For $|h|\le c_g r$, the map $\psi_h\colon \S^1\to \S^1$ defined by $\psi_h(e^{\i t})\coloneqq e^{\i(t+hX(e^{it}))}$ is an orientation-preserving diffeomorphism. Consequently, for every continuous map
$u\colon \S^1\to\S^1$ one has $\deg(u\circ\psi_h)=\deg u$.

Furthermore, let $p \in (1,\infty)$ and $s = \frac{1}{p} \in (0,1)$.
If $u\in W^{s,p}(\S^1,\S^1)$ is a minimizer of $E_{s}$ among maps $v\colon \S^1 \to \S^1$ with the same degree as $u$, then
\begin{equation}\label{eq:energyestimateucircpsih}
        0\le
        E_s(u\circ\psi_h)-E_s(u)
        \le
        C_g\, r^{-2}E_s(u)h^2.
\end{equation}
\end{lemma}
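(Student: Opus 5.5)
The plan has two parts: first the diffeomorphism and degree claims, then the energy estimate via a change of variables and a Taylor expansion in $h$.

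\textbf{Diffeomorphism and degree.} In the angular coordinate $t$ the map $\psi_h$ lifts to $\Psi_h(t)=t+hX(e^{\i t})$. Its derivative is $\Psi_h'(t)=1+hX'(e^{\i t})$, and since $|hX'|\le c_g r\cdot C_g/r=c_gC_g\le \frac12$ for $c_g\le (2C_g)^{-1}$, we get $\Psi_h'\ge \frac12$. Moreover $\Psi_h(t+2\pi)=\Psi_h(t)+2\pi$. Hence $\Psi_h$ is a strictly increasing smooth bijection of $\R$ commuting with $2\pi$-translations. So $\psi_h$ is an orientation-preserving diffeomorphism of degree one, isotopic to the identity through the family $\psi_{\tau h}$, $\tau\in[0,1]$. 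Therefore $\deg(u\circ\psi_h)=\deg u$ for continuous $u$; for $W^{s,1/s}$ maps the same holds by VMO-degree stability under composition with bi-Lipschitz homeomorphisms. The support condition $\operatorname{spt}X\Subset I$ ensures $\psi_h$ is the identity outside $I$ and maps $I$ to itself.

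\textbf{Lower bound.} The inequality $E_s(u\circ\psi_h)\ge E_s(u)$ is immediate from minimality, since $u\circ\psi_h$ is an admissible competitor of the same degree.

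\textbf{Upper bound.} I change variables $x=\psi_h^{-1}(x')$, $y=\psi_h^{-1}(y')$, writing $\phi_h=\psi_h^{-1}$. This gives
\[
E_s(u\circ\psi_h)=\iint \frac{|u(x')-u(y')|^{p}}{|\phi_h(x')-\phi_h(y')|^2}\,\phi_h'(x')\phi_h'(y')\,\dd x'\dd y' .
\]
So it suffices to show that the weight
\[
W_h(x,y)\coloneqq \frac{|x-y|^2\,\phi_h'(x)\phi_h'(y)}{|\phi_h(x)-\phi_h(y)|^2}
\]
satisfies $W_h(x,y)=1+hA(x,y)+O(h^2r^{-2})$ uniformly in $x,y$, and that the first-order term does not matter. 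For the first-order term, the map $h\mapsto E_s(u\circ\psi_h)$ is minimized at $h=0$. If it is differentiable there, its derivative $\iint |u(x)-u(y)|^p|x-y|^{-2}A\,\dd x\dd y$ vanishes. Then Taylor's formula with the uniform second-order remainder bound gives
\[
E_s(u\circ\psi_h)-E_s(u)\le \sup_{x,y}|\partial_h^2W_h|\,h^2\,E_s(u)\aleq r^{-2}E_s(u)h^2 .
\]
To avoid differentiability issues, I will instead use the symmetric combination. Since $E_s(u\circ\psi_{-h})\ge E_s(u)$, we have
\[
E_s(u\circ\psi_h)-E_s(u)\le E_s(u\circ\psi_h)+E_s(u\circ\psi_{-h})-2E_s(u)=\iint \frac{|u(x)-u(y)|^p}{|x-y|^2}\bigl(W_h+W_{-h}-2\bigr)\,\dd x\dd y,
\]
and it then suffices to prove $|W_h+W_{-h}-2|\le C h^2r^{-2}$ pointwise.

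\textbf{Main obstacle: the pointwise bound on $W_h$.} I need $\partial_h^2 W_h$ bounded by $C r^{-2}$ uniformly, including on the diagonal, where $|\phi_h(x)-\phi_h(y)|/|x-y|$ must be expanded as a difference quotient. I write
\[
\frac{\phi_h(x)-\phi_h(y)}{x-y}=\int_0^1 \phi_h'\bigl(y+\theta(x-y)\bigr)\,\dd\theta
\]
in angular coordinates, and convert chordal to angular distance via the factor $\frac{\sin(\Delta/2)}{\Delta/2}$, which is smooth and bounded below. The function $\phi_h'$ and its $h$-derivatives up to order two are controlled by $X,X',X''$, via the implicit function theorem applied to $\Psi_h(\Phi_h(t))=t$. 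This yields $|\partial_h^k\phi_h'|\aleq r^{-k}$ for $k\le 2$, and the third-derivative bound in \eqref{eq:Xestimates} gives the needed margin for the remainder. The denominator terms are bounded below by $\frac12$, so the quotient $W_h$ is smooth in $h$ with $|\partial_h^2W_h|\aleq r^{-2}$. Points where one of $x,y$ lies outside $I$ are handled by the same formula, since $\phi_h$ is globally defined. The constant $C_g$ can then be enlarged to absorb all of these bounds.
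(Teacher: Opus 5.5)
Your proposal is correct and follows essentially the paper's route. You use the same change of variables to the weight $W_h$ (the paper's $A_h$), the same implicit-function control of the lift of $\psi_h^{-1}$, and the same difference-quotient device ($\int_0^1\phi_h'(y+\theta(x-y))\,\dd\theta$, the paper's $M_h$) to handle the diagonal, which gives $|\partial_h^2 W_h|\aleq r^{-2}$. Your symmetric second difference $E_s(u\circ\psi_h)+E_s(u\circ\psi_{-h})-2E_s(u)$ disposes of the first-order term slightly more cleanly than the paper's appeal to twice-differentiability at the minimum; note also that the bound $|\partial_h^2\phi_h'|\aleq r^{-2}$ genuinely needs $X'''$ (through the term $hX'''$), not only $X,X',X''$.
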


\begin{proof}
We have $\psi_h(x) = x e^{\i hX(x)}$.

Computing the derivative we confirm that for any $|h|\le c_g\, r$, the map $\psi_h\colon  \S^1 \to \S^1$ is an orientation-preserving diffeomorphism of $\S^1$ -- choosing $c_g$ sufficiently small, depending only on $C_g$. Also, the map $h \mapsto u\circ\psi_h$ is a homotopy, so $\deg(u\circ\psi_h)=\deg u$.

By change of variables,
\[
\begin{aligned}
        E_s(u\circ\psi_h)
        &=
        \iint_{\S^1\times \S^1}
        \frac{|u(a)-u(b)|^p}{|a-b|^2}
        A_h(a,b)
        \dd a\dd b,
\end{aligned}
\]
where for $a,b\in\S^1$
\begin{equation}\label{eq:Ah}
\begin{aligned}
        A_h(a,b)
        &\coloneqq 
        \left|\frac{d}{d\ell}\psi_h^{-1}(a)\right|
        \left|\frac{d}{d\ell}\psi_h^{-1}(b)\right|
        \frac{|a-b|^2}{|\psi_h^{-1}(a)-\psi_h^{-1}(b)|^2}.
\end{aligned}
\end{equation}
Since $\psi_h$ is a diffeomorphism, we can write its inverse in arclength coordinates as \[ \psi_h^{-1}(e^{\i t})=e^{\i\chi_h(t)} \]
where
\[ \chi_h(t)+hX(e^{\i\chi_h(t)})=t. \]
Differentiating in $t$, we get 
\[ \partial_t \chi_h(t) = \frac{1}{ 1+\i h\, e^{\i \chi_h(t)}\, \frac{dX}{d\ell}(e^{\i\chi_h(t)}) }. \] 
Differentiating in $h$, we get 
\[
 \partial_h \chi_h(t) + X(e^{\i \chi_h(t)}) +\i h\, e^{\i \chi_h(t)} \partial_{\ell} X(e^{\i \chi_h(t)}) \partial_h \chi_h(t) = 0.
\]
Using \eqref{eq:Xestimates} and up to reducing the choice of $c_g$ in the definition of $h$, we successively find 
\[
 \partial_t \chi_h(t) \aeq 1,\ |\partial_h \chi_h(t)| \aeq 1,\ |\partial_{hh} \chi_h(t)| \aleq r^{-1},\ |\partial_h \partial_t \chi_h(t)| \aleq r^{-1}, |\partial_{hh} \partial_t \chi_h(t)| \aeq r^{-2}
\]
Now write $a=e^{\i t}$ and $b=e^{\i\tau}$ in \eqref{eq:Ah}. Then we have
\[ \begin{split} 
A_h(e^{\i t},e^{\i\tau}) =& \partial_t\chi_h(t)\, \partial_t\chi_h(\tau)\, \frac{|e^{\i t}-e^{\i\tau}|^2} {|e^{\i\chi_h(t)}-e^{\i\chi_h(\tau)}|^2} 
=\partial_t\chi_h(t)\, \partial_t\chi_h(\tau)\, \frac{\left|\sin\brac{\frac{t-\tau}{2}}\right|^2} {\left|\sin\brac{\frac{\chi_h(t)-\chi_h(\tau)}{2}}\right|^2}.
\end{split} 
\]
We want to estimate $\partial_{hh} A_h(e^{\i t},e^{\i\tau})$. By rotation we may assume that the interval $I$ is centred around $1$. The estimates where $|e^{\i t} - e^{\i \tau}| \geq c$ are then simple: if $x$ is away from $I$, then it holds $\psi_h(x) = x$, and so if $a$ and $b$ are close to each other and not in $I$, $A_h(a,b) = 1$ and thus the second derivative in $h$ simply vanishes. Similarly if $a \in I$ but $b$ is not close to $I$, then also $\psi^{-1}(a)$ is not close to $b=\psi^{-1}(b)$ and the expression can be estimated by the above estimates.
In the remaining case we can assume that $|t-\tau| + |\chi_h(t)-\chi_h(\tau)| < \frac{\pi}{4}$.
Set \[ M_h(t,\tau) \coloneqq  \int_0^1 \partial_t\chi_h\bigl(\tau+\theta(t-\tau)\bigr) \,\dd \theta. \] Then \[ \chi_h(t)-\chi_h(\tau) = (t-\tau)M_h(t,\tau). \]

From above we see that 
\[
 M_h(t,\tau) \aeq 1, \quad |\partial_h M_h(t,\tau)| \aleq r^{-1}, \quad |\partial_{hh} M_h(t,\tau)| \aleq r^{-2}
\]

and since 
\[
\left|\sin\brac{\frac{t-\tau}{2}}\right| \aeq |t-\tau| \aeq \left|\sin\brac{\frac{\chi_h(t)-\chi_h(\tau)}{2}}\right| 
\]
we can conclude that 
\[
 |\partial_{hh} A_h|\aleq r^{-2}.
\]
with a constant depending only on $C_g$, $c_g$ and $r_g$. This implies for all $h \ll r$,
\[
\begin{aligned}
        | \partial_{hh} E_s(u\circ\psi_h)|\aleq r^{-2} E_s(u) 
\end{aligned}
\]
On the other hand, since $h \mapsto E_s(u \circ \psi_h)$ is twice differentiable, and has a minimum at $h=0$ we get the claim.
\end{proof}

\begin{lemma}[Convexity]
\label{lem:chordal-midpoint-convexity}
\label{lem:semiconvexity-chordal}
Fix $1<p_-\leq 2$. There exist constants $\eta_{\mathrm{cvx}}\in\left(0,\frac12\right)$, $c_{{cvx}}>0$ and $C_{{cvx}}>0$ depending only on $p_-$, such that the following hold.
For every $p\in[p_-,2]$ and every $\xi,\zeta\in [-\eta_{{\rm cvx}}, \eta_{{\rm cvx}}]$, one has
\begin{equation}\label{eq:closebyconvexity}
\begin{aligned}
        &|e^{\i\xi}-1|^p
        +
        |e^{\i\zeta}-1|^p
        -
        2\left|e^{\i\frac{\xi+\zeta}{2}}-1\right|^p \geq
        c_{{cvx}}
        (|\xi|+|\zeta|)^{p-2}
        |\xi-\zeta|^2 .
\end{aligned}
\end{equation}
In particular, it holds
\[
\begin{aligned}
        &|e^{\i\xi}-1|^p
        +
        |e^{\i\zeta}-1|^p
        -
        2\left|e^{\i\frac{\xi+\zeta}{2}}-1\right|^p \geq
        c_{{cvx}}|\xi-\zeta|^2 .
\end{aligned}
\]
Moreover, for every $p\in[p_-,2]$, every $\omega\in\mathbb S^1$, and every
$\xi,\zeta\in\mathbb R$, one has
\begin{equation}\label{eq:faryconvexity}
\begin{aligned}
        &|e^{\i\xi}-\omega|^p
        +
        |e^{\i\zeta}-\omega|^p
        -
        2\left|e^{\i\frac{\xi+\zeta}{2}}-\omega\right|^p
        \geq
        -C_{{cvx}}|\xi-\zeta|^2 .
\end{aligned}
\end{equation}
\end{lemma}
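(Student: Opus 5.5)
The plan is to reduce everything to one-variable calculus for the $2\pi$-periodic function
\[
f_p(\theta)\coloneqq |e^{\i\theta}-1|^p = 2^p\left|\sin\brac{\tfrac{\theta}{2}}\right|^p .
\]
Both \eqref{eq:closebyconvexity} and \eqref{eq:faryconvexity} are statements about the second difference of $f_p$. For \eqref{eq:faryconvexity}, write $\omega=e^{\i\varphi}$; then $|e^{\i\theta}-\omega|^p=f_p(\theta-\varphi)$, so it is again a second difference of $f_p$, at shifted points.

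For the second differences I will use the standard Taylor formula. With $m=\frac{\xi+\zeta}{2}$ and $d=\xi-\zeta$,
\[
f_p(\xi)+f_p(\zeta)-2f_p(m)=\frac{d^2}{4}\int_{-1}^{1}(1-|\tau|)\,f_p''\brac{m+\tfrac{\tau d}{2}}\dd\tau .
\]
This is legitimate even though $f_p''$ is singular at $\theta\in 2\pi\Z$ when $p<2$. Indeed, $f_p\in C^1$ and $f_p'$ is absolutely continuous, since near $\theta=0$ we have $f_p''\sim|\theta|^{p-2}$ and $p-2>-1$. Differentiating $|\sin(\theta/2)|^p$ twice gives, for $\theta\notin 2\pi\Z$,
\[
f_p''(\theta)=p\,2^{p-2}\left|\sin\brac{\tfrac\theta2}\right|^{p-2}\left[(p-1)\cos^2\brac{\tfrac\theta2}-\sin^2\brac{\tfrac\theta2}\right].
\]

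For \eqref{eq:closebyconvexity}, choose $\eta_{\rm cvx}$ small, depending only on $p_-$, so that $(p_- -1)\cos^2(\theta/2)-\sin^2(\theta/2)\geq \frac{p_--1}{2}$ whenever $|\theta|\le \eta_{\rm cvx}$. Together with $|\sin(\theta/2)|\aeq|\theta|$ this gives $f_p''(\theta)\ge c(p_-)\,|\theta|^{p-2}$ on $[-\eta_{\rm cvx},\eta_{\rm cvx}]\setminus\{0\}$, uniformly for $p\in[p_-,2]$. Every point $\theta=m+\tau d/2$ on the segment between $\zeta$ and $\xi$ satisfies $|\theta|\le\max(|\xi|,|\zeta|)\le |\xi|+|\zeta|$. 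Since $p-2\le 0$, this gives $|\theta|^{p-2}\ge(|\xi|+|\zeta|)^{p-2}$. Integrating the weight $(1-|\tau|)$ then yields \eqref{eq:closebyconvexity} with $c_{\rm cvx}=c(p_-)/4$. The corollary follows because $|\xi|+|\zeta|\le 2\eta_{\rm cvx}<1$ and $p-2\le0$, so $(|\xi|+|\zeta|)^{p-2}\ge 1$.

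For \eqref{eq:faryconvexity} I only need a uniform lower bound $f_p''\ge -C(p_-)$ on $\R\setminus2\pi\Z$. The bracket in $f_p''$ can be negative only where $\sin^2(\theta/2)>(p-1)\cos^2(\theta/2)$. Since $p-1\ge p_--1>0$, this forces $|\sin(\theta/2)|\ge c(p_-)>0$. There the factor $|\sin(\theta/2)|^{p-2}\le c(p_-)^{p-2}$ is bounded, and the bracket is at least $-1$. Near $2\pi\Z$ the bracket is positive, so $f_p''\ge0$ there, even where it blows up. Hence $f_p''\ge -C$ everywhere off $2\pi\Z$. Inserting this into the Taylor formula gives \eqref{eq:faryconvexity} with $C_{\rm cvx}=C/4$, since $\int_{-1}^{1}(1-|\tau|)\dd\tau=1$.

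The only delicate point is justifying the integral Taylor formula across the singularity of $f_p''$ at $2\pi\Z$. I will handle it by noting that $f_p'$ is absolutely continuous, because $f_p''\in L^1_{loc}$ when $p>1$. Alternatively, one can apply the formula on subintervals avoiding the singular point and pass to the limit using continuity of $f_p'$. The case $p=2$ is smooth and needs no special treatment.
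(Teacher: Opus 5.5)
Your proof is correct and follows the same route as the paper. Both write the second difference as an integral of $f_p''$, use $f_p''\geq c(p_-)|\theta|^{p-2}$ near $0$ together with $|\theta|\leq|\xi|+|\zeta|$ and $p\le 2$ for \eqref{eq:closebyconvexity}, and use a global lower bound on $f_p''$ for \eqref{eq:faryconvexity}; your formula for $f_p''$ is in fact the correct one, without the stray $\mathrm{sign}(t)$ factor that appears in the paper. For that global bound, the paper simply drops the nonnegative term $(p-1)\cos^2(\theta/2)$ to get $f_p''\geq -p2^{p-2}|\sin(\theta/2)|^{p}\geq -2$, which avoids your case distinction and gives a constant independent of $p_-$.
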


\begin{proof}
Let
\[
        f(t)\coloneqq |e^{\i t}-1|^p
        =
        \left(2\left|\sin\frac t2\right|\right)^p .
\]
For $p>1$, this function is $C^1(-\pi,\pi)$ and $C^2((-\pi,\pi) \setminus \{0\})$. For $0<|t|<\pi$, it holds
\begin{equation*}
	f'(t) = {\rm sign}(t)\, 2^{p-1}\, p\, \left| \sin\frac{t}{2}\right|^{p-1}\, \cos\left(\frac{t}{2}\right).
\end{equation*}
We differentiate once more
\[
        f''(t)
        =
        p\left(2\left|\sin\frac t2\right|\right)^{p-2}
        \left[
        (p-1)\cos^2\frac t2
        - {\rm sign}(t)\, 
        \sin^2\frac t2
        \right].
\]
Since $p>1$ we have that $f \in W^{2,1}(-\pi,\pi)$.

We first prove \eqref{eq:closebyconvexity}. Choose
$\eta_{\mathrm{cvx}}\in(0,\frac12)$ so small that
\[
        \sin^2\frac{\eta_{\mathrm{cvx}}}{2}
        \leq
        \frac{p_- -1}{2p_-}.
\]
Then, for every $p\in[p_-,2]$ and every $0<|t|\leq\eta_{\mathrm{cvx}}$,
\[
\begin{aligned}
        (p-1)\cos^2\frac t2-\sin^2\frac t2
        &\geq
        (p_- -1)\cos^2\frac t2-\sin^2\frac t2        \\
        &=
        (p_- -1)-p_-\sin^2\frac t2                  \\
        &\geq
        \frac{p_- -1}{2}.
\end{aligned}
\]
Hence
\[
        f''(t)
        \geq
        \frac{p_-(p_- -1)}{2}
        \left(2\left|\sin\frac t2\right|\right)^{p-2}.
\]
Since $2|\sin(t/2)|\leq |t|$ and $p-2\leq0$, we get for $0<|t|\leq\eta_{\mathrm{cvx}}$,
\[
        f''(t)
        \geq
        \frac{p_-(p_- -1)}{2}|t|^{p-2}.
\]
Consider
\[
        s\in [-1,1]\mapsto
        \Phi(s) \coloneqq  f\left(
        \frac{\xi+\zeta}{2}
        +
        s\frac{\xi-\zeta}{2}
        \right).
\]
By the fundamental theorem
\[
\begin{split}
 \Phi(1) + \Phi(-1)-2\Phi(0) &= \int_0^1 \int_{-\tau}^\tau \Phi''(\sigma) \dd\sigma \dd\tau\\
 &= \int_0^1 \int_{-\tau}^\tau f''\left(
        \frac{\xi+\zeta}{2}
        +
        \sigma \frac{\xi-\zeta}{2}
        \right) \brac{\frac{\xi-\zeta}{2}}^2 \dd\sigma \dd\tau\\
 & \ageq_{p_-}  \int_0^1 \int_{-\tau}^\tau \left| \frac{\xi+\zeta}{2}
        + \sigma \frac{\xi-\zeta}{2} \right|^{p-2}  \dd\sigma \dd\tau \brac{\frac{\xi-\zeta}{2}}^2.
\end{split} 
\]
By restricting the integral for $\tau\in [\frac{1}{2},1]$, we obtain 
\[
\begin{split} 
	\Phi(1) + \Phi(-1)-2\Phi(0)  & \ageq_{p_-}  \int_{\frac{1}{2}}^1 \int_{-\frac{1}{2}}^\frac{1}{2} \abs{\frac{\xi+\zeta}{2}
        + \sigma \frac{\xi-\zeta}{2}}^{p-2}  \dd\sigma \dd\tau \brac{\frac{\xi-\zeta}{2}}^2\\
        & \ageq_{p_-}  \int_{-\frac{1}{2}}^\frac{1}{2} \abs{\frac{\xi+\zeta}{2}
        + \sigma \frac{\xi-\zeta}{2}}^{p-2}  \dd\sigma\, \brac{\frac{\xi-\zeta}{2}}^2\\
        & \overset{p<2}{\ageq} \int_{-\frac{1}{2}}^\frac{1}{2} \max\left \{\abs{\frac{\xi+\zeta}{2}},\abs{\frac{\xi-\zeta}{2}}\right \}^{p-2}  \dd\sigma\, \brac{\frac{\xi-\zeta}{2}}^2.
 \end{split}
\]
This proves \eqref{eq:closebyconvexity}. 

As for \eqref{eq:faryconvexity}, we have
\[
 f''(t) \geq -p 2^{p-2} \abs{\sin\brac{\frac{t}{2}}}^p \geq -p 2^{p-2}.
\]
Therefore, we obtain
\[
\begin{split}
 \Phi(1) + \Phi(-1)-2\Phi(0) =& \int_0^1 \int_{-\tau}^\tau \Phi''(\sigma) d\sigma d\tau\\
 =& \int_0^1 \int_{-\tau}^\tau f''\left(
        \frac{\xi+\zeta}{2}
        +
        \sigma \frac{\xi-\zeta}{2}
        \right) \brac{\frac{\xi-\zeta}{2}}^2 \dd\sigma \dd\tau\\
 \geq & -p 2^{p-2} \int_0^1 \int_{-\tau}^\tau \brac{\frac{\xi-\zeta}{2}}^2 \dd\sigma \dd\tau.
 \end{split}
\]
This readily implies \eqref{eq:faryconvexity}.
\end{proof}

The following Lemma is the backbone of the argument.
\begin{lemma}
\label{lem:localized-difference-quotient-beta}
Fix $\alpha>0$, $\lambda>0$ and $0<\delta_*\le \frac16$.
There exist constants $R_{\mathrm{outer}}>0$, $\varepsilon_{\mathrm{sep}}>0$ and $C_{{dq}}>0$
depending only on $\alpha$, $\lambda$, and $\delta_*$, such that the following holds.
Let
\[
        \frac12\le s<\frac12+\delta_*,
        \qquad
        p=\frac1s \in \left[ \frac{3}{2},2\right].
\]
Let $u\in C^\alpha(\S^1,\S^1)\cap W^{s,p}(\S^1,\S^1)$ satisfy
\begin{equation}\label{eq:ubound}
        [u]_{C^\alpha(\S^1)}
        +
        [u]_{W^{s,p}(\S^1)} + E_s(u)
        \le
        \lambda.
\end{equation}
Assume that $u$ minimizes $E_s$ in its degree class.

Fix $x_0 \in \S^1$, and let $0<R < R_{\rm outer}<\frac{r_{\rm lift}}{100}$ where $r_{\rm lift}=r_{\rm lift}(\lambda,\alpha)$ is defined in  \Cref{lem:uniform-small-arc-lifting}. We write $u=e^{\i\varphi}$ on $B(x_0,R)$.
Fix $0< \rho \leq \eps_{\mathrm{sep}} R$
and $0<r \leq \frac{\rho}{4}$.
Let $X\in C^\infty(\S^1,\mathbb R)$
such that $X=1$ on $B(x_0,r)$ with  $\operatorname{spt}X\Subset B(x_0,\rho)$,
and, in arclength coordinates,
\[
        |X|\le C_g,
        \qquad
        \left|\frac{dX}{d\ell}\right|
        \le
        \frac{C_g}{\rho},
        \qquad
        \left|\frac{d^2X}{d\ell^2}\right|
        \le
        \frac{C_g}{\rho^2}.
\]
For $|h|\le c_g\rho$, we define
\[
        \psi_h(e^{\i t})
        =
        e^{\i(t+hX(e^{\i t}))}.
\]
Here $c_g>0$ is chosen small enough, so that $\psi_h$ is an orientation-preserving diffeomorphism of $\S^1$ and
\[
        \psi_h\bigl(B(x_0,R)\bigr)\subset B(x_0,R).
\]
We define
\begin{equation}\label{eq:thetah}
        \theta_h(x)
        \coloneqq 
        \varphi(\psi_h(x))-\varphi(x)
        \qquad\forall x\in I.
\end{equation}
Then, for every $0\le \beta\le \min\{\alpha(2-p),2s_0-1\}$, one has
\[
\begin{aligned}
        &\iint_{\S^1\times \S^1}
        \frac{|\theta_h(x)-\theta_h(y)|^2}
        {|x-y|^{2+\beta}}
        \dd\ell(x)\dd\ell(y) \le
        C_{{dq}}
        \bigl(C_{{lift}}(1+\lambda)\bigr)^{2-p}
        (1+\lambda)^2
        \rho^{-2}
        h^2.
\end{aligned}
\]
\end{lemma}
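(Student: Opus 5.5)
The plan is a discrete Caccioppoli/difference-quotient argument that compares $E_s$ at $u$, $u\circ\psi_h$ and $u\circ\psi_{-h}$. Everything hinges on the midpoint convexity of \Cref{lem:chordal-midpoint-convexity}.

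\emph{Step 1: the comparison map.} On the ball $B(x_0,R)$, where $u=e^{\i\varphi}$ with the lift of \Cref{lem:uniform-small-arc-lifting}, I define
\[
 v(x)\coloneqq e^{\i\frac{\varphi(\psi_h(x))+\varphi(\psi_{-h}(x))}{2}} .
\]
Outside $B(x_0,\rho)$ I set $v=u$; this is consistent because $\psi_{\pm h}=\id$ there. Then $v$ is a competitor of the same degree, so $E_s(v)\ge E_s(u)$. \Cref{lem:uniform-inner-variations} gives $E_s(u\circ\psi_{\pm h})\le E_s(u)+C\rho^{-2}\lambda h^2$. Adding the two bounds yields
\[
 E_s(u\circ\psi_h)+E_s(u\circ\psi_{-h})-2E_s(v)\le C\lambda\rho^{-2}h^2 .
\]

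\emph{Step 2: pointwise convexity.} For a pair $(x,y)$, I rotate the target by $e^{-\i\varphi(y)\cdot}$-type phases. This puts the integrand in the form $|e^{\i\xi}-1|^p$, with
\[
 \xi=\varphi(\psi_h x)-\varphi(\psi_h y),\qquad \zeta=\varphi(\psi_{-h}x)-\varphi(\psi_{-h}y).
\]
Their difference $\xi-\zeta$ is a second-order difference of $\theta$-type quantities: up to the symmetric $\pm h$ bookkeeping, it equals $\theta_h(x)-\theta_h(y)$ composed with $\psi_{-h}$, which is bi-Lipschitz. I split the domain into near pairs and far pairs.
\begin{itemize}
\item \emph{Near pairs}, $|x-y|\le \varepsilon_{\rm sep}R$ and both points in $B(x_0,R)$. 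Hölder continuity of the lift gives $|\xi|,|\zeta|\le C_{\rm lift}(1+\lambda)|x-y|^\alpha\le\eta_{\rm cvx}$. So \eqref{eq:closebyconvexity} applies and gives the lower bound
\[
 c\bigl(C_{\rm lift}(1+\lambda)\bigr)^{p-2}|x-y|^{\alpha(p-2)}|\xi-\zeta|^2 .
\]
Dividing by $|x-y|^2$ produces the kernel $|x-y|^{-2-\alpha(2-p)}$. This bound is at least the desired $|x-y|^{-2-\beta}$ version whenever $\beta\le\alpha(2-p)$, since $|x-y|\lesssim1$.
\item \emph{Far pairs.} Here I use \eqref{eq:faryconvexity}, which only costs $-C|\xi-\zeta|^2/|x-y|^2$. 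At least one of $x,y$ lies in $B(x_0,\rho)$, because otherwise the integrand vanishes. So either $|x-y|\gtrsim \varepsilon_{\rm sep}R$, or the other point lies outside $B(x_0,R)$, in which case $|x-y|\gtrsim R$. In both cases the weight is bounded, and the error is at most $C R^{-2}\iint|\xi-\zeta|^2$. Because $\theta_h$ is supported near $B(x_0,\rho)$ and $|\theta_h|\le [\varphi]_{\rm Lip\text{-}ish}|h|$ is not available, I estimate the $L^2$ norm through $W^{s_0,2}$. Since $\psi_h$ moves points by $\le C|h|$, we have $\|\varphi\circ\psi_h-\varphi\|_{L^2}\lesssim |h|^{2s_0-1}\ldots$, and I expect this to be exactly where the restriction $\beta\le 2s_0-1$ enters.
\end{itemize}
Pairs with exactly one point in $B(x_0,R)$ also need the lift to be defined consistently. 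For these I work with chordal differences instead, i.e. the same convexity estimate with $\omega=u(y)$.

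\emph{Step 3: absorption.} Combining Steps 1 and 2 gives a main term $c\,(C_{\rm lift}(1+\lambda))^{p-2}[\theta_h]^2_{\dot H^{\beta/2}}$-type on the left. On the right are $C\lambda\rho^{-2}h^2$ plus the far-field error. I then bound the far-field error by a small multiple of the left side plus $\rho^{-2}h^2$. This uses the fractional Poincaré inequality (\Cref{la:poincareonS1}-style) on the compactly supported $\theta_h$. Because the support of $\theta_h$ is a ball of radius $\sim\rho\ll R$, the far contribution gains a factor $(\rho/R)^{\beta}\le\varepsilon_{\rm sep}^\beta$, and choosing $\varepsilon_{\rm sep}$ small allows the absorption.

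\emph{Main obstacle.} The hard step is this absorption and the far-field control, uniformly as $p\to2$. The near-field convexity constant degenerates like $|x-y|^{\alpha(p-2)}$. I must also check that the term $|\theta_h(x)-\theta_h(y)|^2$ away from the diagonal is controlled only by the a priori $W^{s_0,2}$ bound with gain in $h$. I would first try to handle it by writing $\theta_h=\int_0^h \varphi'(\psi_\tau)X\,d\tau$ formally, which gives $\|\theta_h\|_{L^2}\lesssim |h|\,\|\varphi\|_{H^1}$. I would then replace $\|\varphi\|_{H^1}$ by an interpolation using the left-hand side itself, via a standard iteration/absorption in $h$.
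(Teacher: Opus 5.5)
Your architecture (midpoint competitor, the inner-variation bound of \Cref{lem:uniform-inner-variations}, \eqref{eq:closebyconvexity} near the diagonal, \eqref{eq:faryconvexity} with $\omega=u(y)$ across $\partial B(x_0,R)$, absorption via $\eps_{\rm sep}$) is the paper's. However, the step you flag as the main obstacle is not resolved, and as you set it up it fails.

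Write $[\theta_h]_\beta^2$ for the double integral in the statement. You declare only pairs with $|x-y|\le \eps_{\rm sep}R$ ``near''. You therefore pay \eqref{eq:faryconvexity} on pairs with $x\in B(x_0,\rho)$, $y\in B(x_0,R)$ and $|x-y|>\eps_{\rm sep}R$. Integrating the kernel over that region gives an error $\aleq (\eps_{\rm sep}R)^{-1}\int_{B(x_0,\rho)}|\theta_h|^2\aleq (\eps_{\rm sep}R)^{-1}\rho^{1+\beta}[\theta_h]_\beta^2$. For $\rho=\eps_{\rm sep}R$ this is $(\eps_{\rm sep}R)^{\beta}[\theta_h]_\beta^2$. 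That is not small at $\beta=0$, which the statement allows, and it is not uniformly small as $\beta\to 0$.

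Your claimed gain $(\rho/R)^\beta\le\eps_{\rm sep}^\beta$ has the same defect. It would make $\eps_{\rm sep}$ depend on $\beta$, hence on $p$. Uniformity as $p\to2$ is exactly what \Cref{th:Lipregularity} needs, since there $\beta=\min\{\alpha(2-p),2s_0-1\}\to0$.

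The missing point is that close-by convexity holds on all of $B(x_0,R)\times B(x_0,R)$, because $|\xi|+|\zeta|\aleq C_{\rm lift}(1+\lambda)R^\alpha\le\eta_{\rm cvx}$ once $R<R_{\rm outer}$. With that split:
\begin{itemize}
\item Negative terms arise only for $x\in B(x_0,\rho)$, $y\notin B(x_0,R)$, and they cost $\aleq R^{-1}\int_{B(x_0,\rho)}|\theta_h|^2$.
\item $\theta_h$ vanishes on a ball $B\subset B(x_0,R)\setminus B(x_0,\rho)$ of radius $\sim\rho$.
\item Poincar\'e therefore gives $\int_{B(x_0,\rho)}|\theta_h-(\theta_h)_B|^2\aleq\rho^{1+\beta}[\theta_h]_\beta^2$, with the same seminorm as on the left.
\end{itemize}
The gain is then $\rho^{1+\beta}/R\le\eps_{\rm sep}^{1+\beta}R^\beta$, which is small uniformly in $\beta\in[0,1]$.

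Your fallback of estimating $\|\theta_h\|_{L^2}$ directly is not needed and does not work. A $W^{s_0,2}$ translation estimate gives only $\|\theta_h\|_{L^2}^2\aleq |h|^{2s_0}$, so you would not end with $\rho^{-2}h^2$ on the right. An $H^1$ bound on $\varphi$ is not available a priori; it is essentially what the lemma is used to prove. So ``interpolating with the left-hand side'' is circular. The restriction $\beta\le 2s_0-1$ enters only qualitatively: it guarantees $[\theta_h]_{W^{\frac{1+\beta}{2},2}}<\infty$, so the absorption is legitimate.

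Finally, the symmetric $\pm h$ competitors do not produce $\theta_h$. You get $\xi-\zeta=g(x)-g(y)$ with $g=\varphi\circ\psi_h-\varphi\circ\psi_{-h}$. This $g$ is not $\theta_h\circ\psi_{-h}$, because $\psi_h\circ\psi_{-h}\neq\psi_h$ (the family $h\mapsto\psi_h$ is not a group). You would be estimating $\varphi\circ\Psi-\varphi$ with $\Psi=\psi_h\circ\psi_{-h}^{-1}$, not the quantity in the statement. The paper avoids this by comparing $u$, $u\circ\psi_h$ and $m_h=e^{\i(\varphi+\varphi\circ\psi_h)/2}$. Minimality gives $E_s(u)\le E_s(m_h)$, and then $\xi-\zeta$ is exactly $\theta_h(y)-\theta_h(x)$.
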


\begin{proof}
Choose $R_{\mathrm{outer}} \in \big(0,\min(1,r_{\mathrm{lift}},r_g)\big)$, where $r_{\mathrm{lift}}$ is the lifting radius from \Cref{lem:uniform-small-arc-lifting}, and $r_g$ is defined in \Cref{lem:uniform-inner-variations}. For $R<R_{\mathrm{outer}}\le r_{\mathrm{lift}}$, \Cref{lem:uniform-small-arc-lifting} gives a continuous lift
\[
        u=e^{\i\varphi}
        \qquad\text{on }B(x_0,R),
\]
with
\begin{equation}\label{eq:varphicalphas324}
        [\varphi]_{C^\alpha(B(x_0,R))}
        \le
        C_{{lift}}(1+\lambda).
\end{equation}
Let $\eps\in(0,1)$ and choose $\rho\leq \eps R_{\rm outer}$. Define $X$ and $\psi_h$ as in \Cref{lem:uniform-inner-variations} with $I=B(x_0,\rho)$. Because $\operatorname{spt}X\Subset B(x_0,\rho)$, one has
\[
        \psi_h(x)=x
        \qquad\forall x\in B(x_0,R)\setminus B(x_0,\rho).
\]
Consequently, it holds
\[
        \theta_h(x)=0
        \qquad\forall x\in B(x_0,R)\setminus B(x_0,\rho).
\]

Define $v_h\coloneqq u\circ\psi_h$. On $B(x_0,R)$ one has
\[
        v_h(x)=e^{\i\varphi(\psi_h(x))}.
\]
We define the map 
\begin{equation*}
	m_h(x) = \begin{cases}
		e^{\i\frac{\varphi(x)+\varphi(\psi_h(x))}{2}} & \text{ if }x\in B(x_0,R),\\
		u(x) & \text{ if }x\in\S^1\setminus B(x_0,R).
	\end{cases}
\end{equation*}
Since $\theta_h=0$ near $\partial B(x_0,R)$, the map $m_h$ is globally well-defined and continuous.
Both, $v_h$ and $m_h$ have the same regularity as $u$ and also the same degree, as $h \mapsto v_h$ and $h\mapsto m_h$ are homotopies. Since $u$ is assumed to be an $E_s$ minimizer, we have 
\[
        E_s(u)\le E_s(v_h),
        \qquad \text{ and }\qquad 
        E_s(u)\le E_s(m_h).
\]
By \Cref{lem:uniform-inner-variations}, using also \eqref{eq:ubound},
\[
        E_s(v_h)-E_s(u)
        \le
        C(1+\lambda)^2\rho^{-2}h^2.
\]
Hence we obtain 
\begin{equation}\label{eq:energyineq}
        E_s(u)+E_s(v_h)-2E_s(m_h) = E_s(v_h)-E_s(u)+2\brac{E_s(u)-E_s(m_h)}
        \le
        C(1+\lambda)^2\rho^{-2}h^2.
\end{equation}
For $x,y\in B(x_0,R)$, one has
\begin{align*}
	|u(x)-u(y)|^p & = \left|e^{\i(\varphi(x)-\varphi(y))}-1\right|^p, \\
	|v_h(x)-v_h(y)|^p & = \left|e^{\i(\varphi(\psi_h(x))-\varphi(\psi_h(y)))}-1\right|^p, \\
	|m_h(x)-m_h(y)|^p & =	\left| e^{\i\frac{
			\varphi(x)-\varphi(y)
			+
			\varphi(\psi_h(x))-\varphi(\psi_h(y))
		}{2}}
	-1
	\right|^p.
\end{align*}
By \eqref{eq:varphicalphas324}, we have
\begin{equation}\label{eq:varphixvarphipsihx}
\begin{aligned}
        |\varphi(x)-\varphi(y)|
        +
        |\varphi(\psi_h(x))-\varphi(\psi_h(y))|
        & \le
        C_{{lift}}(1+\lambda)|x-y|^\alpha \frac{|h|}{\rho}\\
        &\le
        C_{{lift}}(1+\lambda)|x-y|^\alpha
\end{aligned}
\end{equation}
We set 
\[
\xi \coloneqq \varphi(x)-\varphi(y), \qquad \text{ and }\qquad 
 \zeta\coloneqq  (\varphi(\psi_h(x))-\varphi(\psi_h(y))).
\]
Then we have if $R_{\rm{outer}}$ is chosen small enough,
\[
 |\xi| + |\zeta| \leq C_{\rm lift} (1+\lambda)  R^\alpha \leq \eta_{\rm{cvx}}, 
\]
where $\eta_{\rm{cvx}}$ is defined in \Cref{lem:chordal-midpoint-convexity}. Hence by \eqref{eq:closebyconvexity} 
\[
\begin{aligned}
        &|u(x)-u(y)|^p
        +
        |v_h(x)-v_h(y)|^p
        -
        2|m_h(x)-m_h(y)|^p
        \\
        \ge &
        c_{\rm cvx}
        \big(
        |\varphi(x)-\varphi(y)|
        +
        |\varphi(\psi_h(x))-\varphi(\psi_h(y))|
        \big)^{p-2}
        \left|
        \varphi(\psi_h(x))-\varphi(\psi_h(y))
        -
        \varphi(x)+\varphi(y)
        \right|^2.
\end{aligned}
\]
Since $p-2\leq 0$ we can use \eqref{eq:varphixvarphipsihx} to estimate further, and find for any $x,y \in B(x_0,R)$.
\[
\begin{aligned}
        &|u(x)-u(y)|^p
        +
        |v_h(x)-v_h(y)|^p
        -
        2|m_h(x)-m_h(y)|^p
        \\
        \ge&
        c_{\rm cvx}
        \bigl(C_{{lift}}(1+\lambda)\bigr)^{-(2-p)}
        \max\{1,|x-y|^{-\alpha(2-p)}\} \,
        \left|
        \varphi(\psi_h(x))-\varphi(\psi_h(y))
        -
        \varphi(x)+\varphi(y)
        \right|^2.
\end{aligned}
\]
By definition of $\theta_h$ in \eqref{eq:thetah}, we have 
\[
\begin{aligned}
        &\varphi(\psi_h(x))-\varphi(\psi_h(y))
        -
        \varphi(x)+\varphi(y) =
        \theta_h(x)-\theta_h(y).
\end{aligned}
\]
We thus have shown
\begin{equation}\label{eq:lowerbound}
\begin{aligned}
        &|u(x)-u(y)|^p
        +
        |v_h(x)-v_h(y)|^p
        -
        2|m_h(x)-m_h(y)|^p
        \\
        &\qquad\ge
        c_{\rm cvx}
        \bigl(C_{{lift}}(1+\lambda)\bigr)^{-(2-p)}
        \max\{1,|x-y|^{-\alpha(2-p)}\}
        \cdot
        \left|
         \theta_h(x)-\theta_h(y)
        \right|^2.
\end{aligned}
\end{equation}
whenever  $x,y\in B(x_0,R)$. The same inequality holds trivially whenever $x,y \in \S^1 \setminus B(x_0,R)$ since $u=v_h=m_h$ and $\theta_h=0$ on $\S^1 \setminus B(x_0,R)$.

By symmetry in $x$ and $y$, the only case left to consider is $y \in \S^1 \setminus B(x_0,R)$ and $x \in B(x_0,R)$.
In this case, it holds $v_h(y)=m_h(y)=u(y)$. If $x \in B(x_0,R) \setminus B(x_0,\rho)$ we have $\psi_h(x) = x$ and thus $v_h(x)=m_h(x)=u(x)$, which means $|v_h(x)-v_h(y)| = |u_h(x)-u_h(y)|  = |u(x)-u(y)|$ 
and $|\theta_h(x)-\theta_h(y)| =0$, so \eqref{eq:lowerbound} holds also in this case.

We now consider the case $y \in \S^1 \setminus B(x_0,R)$ and $x \in B(x_0,\rho)$. In particular, it holds for some small uniform $c>0$
\[
|x-y| \geq |y| -\rho \geq c( |y| + R).
\] 
We apply \Cref{lem:semiconvexity-chordal}, \eqref{eq:faryconvexity}, and now have (recall that $\theta_h(y) = 0$)
\begin{equation}\label{eq:lowerboundv2}
\begin{aligned}
        &|u(x)-u(y)|^p
        +
        |v_h(x)-u(y)|^p
        -
        2|m_h(x)-u(y)|^p
        \\
        \ge& 
        -C_{{sc}}|\theta_h(x)|^2 \\
	    \ge &
        -C_{{sc}}|\theta_h(x)-(\theta_h)_{B}|^2.
\end{aligned}
\end{equation}
Here $B$ is a ball of radius $\frac{1}{100} \rho$ inside the set $B(x_0,R) \setminus \supp \theta_h$ -- such a ball exists since $\rho \leq \eps R$ and we can choose $\eps$ as small as needed. We then have from \eqref{eq:energyineq} combined with \eqref{eq:lowerbound} applied for 
\[
(x,y) \in \Sigma \coloneqq \S^1 \times \S^1 \setminus \Bigg( \Big( \big(\S^1 \setminus B(x_0,R)\big) \times B(x_0,\rho) \Big) \cup \Big( B(x_0,\rho) \times \big(\S^1 \setminus B(x_0,R) \big) \Big) \Bigg)
\] 
and \eqref{eq:lowerboundv2} in the other points for any $\beta \in [0,\min\{\alpha(2-p),2s_0-1\}]$
\begin{equation*}
	\begin{split} 
	C(1+\lambda)^2\rho^{-2}h^2 & \geq \bigl(C_{{lift}}(1+\lambda)\bigr)^{-(2-p)} \iint_{\Sigma}
	\frac{|\theta_h(x)-\theta_h(y)|^2}{|x-y|^{2+\beta}} \dx \dy \\
	 & \quad -2 \int_{\S^1 \setminus B(x_0,R)}  \frac{1}{c^2 (|y|+R)^2} \left(\int_{B(x_0,\rho)} |\theta_h(x)-(\theta_h)_{B}|^2 \dx \right) \dy \\
	 \geq& \bigl(C_{{lift}}(1+\lambda)\bigr)^{-(2-p)} \iint_{\S^1 \times \S^1}
	 \frac{|\theta_h(x)-\theta_h(y)|^2}{|x-y|^{2+\beta}} \dx \dy\\
	 &-\brac{\bigl(C_{{lift}}(1+\lambda)\bigr)^{-(2-p)} +2} \int_{\S^1 \setminus B(x_0,R)}  \frac{1}{c^2 (|y|+R)^2} \left( \int_{B(x_0,\rho)} |\theta_h(x)-(\theta_h)_{B}|^2 \dx\right) \dy.
	 \end{split} 
\end{equation*}
Therefore, we have
\begin{equation}\label{eq:Sobolev_theta}
\begin{split}
 &C(1+\lambda)^2\rho^{-2}h^2\\
 \geq& \bigl(C_{{lift}}(1+\lambda)\bigr)^{-(2-p)} \iint_{\S^1 \times \S^1}
         \frac{|\theta_h(x)-\theta_h(y)|^2}{|x-y|^{2+\beta}} \dx \dy\\
         &-\brac{\bigl(C_{{lift}}(1+\lambda)\bigr)^{-(2-p)} +2} \frac{1}{R} \int_{B(x_0,\rho)} |\theta_h(x)-(\theta_h)_{B}|^2 \dx\\
         \geq& \bigl(C_{{lift}}(1+\lambda)\bigr)^{-(2-p)} \iint_{\S^1 \times \S^1}
         \frac{|\theta_h(x)-\theta_h(y)|^2}{|x-y|^{2+\beta}} \dx \dy\\
         &-C\brac{\bigl(C_{{lift}}(1+\lambda)\bigr)^{-(2-p)} +2} \frac{\rho^{1+\beta}}{R} \iint_{\S^1\times \S^1} \frac{|\theta_h(x)-\theta(z)|^2}{|x-y|^{2+\beta}} \dx \dz.
\end{split}
\end{equation}
We observe that since $1+\beta < 2s_0$ by definition of $\beta$, the integrals on the right-hand side are finite.

Now if we choose $\eps_{\rm sep}>0$ small enough to verify
\[
 C\brac{\bigl(C_{{lift}}(1+\lambda)\bigr)^{-(2-p)} +2} \eps_{\rm sep}^{1+\beta} R^\beta \leq \frac{1}{2} \bigl(C_{{lift}}(1+\lambda)\bigr)^{-(2-p)}.
\]
Plugging this inequality in \eqref{eq:Sobolev_theta}, we obtain
\[
\begin{split}
 &C(1+\lambda)^2\rho^{-2}h^2 \geq \frac{1}{2} \bigl(C_{{lift}}(1+\lambda)\bigr)^{-(2-p)} \iint_{\S^1\times \S^1}
         \frac{|\theta_h(x)-\theta_h(y)|^2}{|x-y|^{2+\beta}} \dx \dy.
\end{split}
\]
We can conclude.
\end{proof}

Next, we recall the Morrey embedding, which is of course standard. The point is the dependency on $\beta$. 
\begin{lemma}[Sobolev-Morrey embedding]
\label{thm:morrey-beta}
Let $0<\beta\le \frac12$. There exists a universal constant $C>0$, independent of $\beta$, such that for $f \in L^1(B_R)$ we have for a.e.\ $x \in B_R$
\[
        |f(x)-(f)_{B_R}|
        \le
        C\beta^{-\frac12}R^{\frac\beta2}
        \left(
        \int_{B_{4R}}\int_{B_{4R}}
        \frac{|f(a)-f(b)|^2}{|a-b|^{2+\beta}}
        \dd a\dd b
        \right)^{\frac12},
\]
whenever the right-hand side is finite.
\end{lemma}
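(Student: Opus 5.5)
We prove the estimate first in dimension one on an interval $B_R\subset\S^1$ (arclength coordinates, so effectively an interval of length $2R$), using the classical Campanato/telescoping argument. We keep track of $\beta$ explicitly; the factor $\beta^{-1/2}$ comes from summing a geometric series with ratio $2^{-\beta/2}$.

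\emph{Step 1: oscillation on a single ball.} For any ball $B_r(z)$ with $B_{2r}(z)\subset B_{4R}$, Jensen's inequality gives
\[
 \mvint_{B_r(z)} |f-(f)_{B_r(z)}|\le \Big(\mvint_{B_r(z)}\mvint_{B_r(z)}|f(a)-f(b)|^2\dd a\dd b\Big)^{\frac12}.
\]
Since $|a-b|\le 2r$ on $B_r(z)$, we may insert $|a-b|^{2+\beta}/(2r)^{2+\beta}\le 1$. This yields
\[
 \mvint_{B_r(z)}|f-(f)_{B_r(z)}|\le C r^{-1}(2r)^{1+\frac\beta2}\,[f]^{\frac12}\le C r^{\frac\beta2}[f]^{\frac12},
\]
where $[f]$ denotes the double integral over $B_{4R}\times B_{4R}$ and $C$ is universal (the dimension is one, and $2^{\beta/2}\le 2$).

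\emph{Step 2: telescoping.} Fix a Lebesgue point $x\in B_R$ and set $r_k=2^{-k}R$, with balls $B_k=B_{r_k}(x)\subset B_{2R}(x)\subset B_{4R}$. Since $B_{k+1}\subset B_k$ with $|B_k|=2|B_{k+1}|$, we have
\[
 |(f)_{B_{k+1}}-(f)_{B_k}|\le 2\mvint_{B_k}|f-(f)_{B_k}|\le C r_k^{\frac\beta2}[f]^{\frac12}.
\]
Summing over $k\ge0$ gives
\[
 |f(x)-(f)_{B_0}|\le C R^{\frac\beta2}[f]^{\frac12}\sum_k 2^{-k\beta/2}=\frac{C R^{\frac\beta2}[f]^{\frac12}}{1-2^{-\beta/2}}.
\]
Here $1-2^{-\beta/2}\ge c\beta$ for $\beta\le\frac12$, which gives a factor $\beta^{-1}$ rather than $\beta^{-1/2}$.

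\emph{Step 3: recovering the sharper factor $\beta^{-1/2}$.} This is the main obstacle. Instead of applying the triangle inequality term by term, we apply Cauchy--Schwarz to the telescoping sum. Write $d_k$ for the local energy on $B_k\times B_k$. The dyadic shells $(B_k\times B_k)\setminus(B_{k+1}\times B_{k+1})$ are disjoint, but the $d_k$ themselves are nested, so we should not bound the sum directly by $\sum_k d_k$. Instead we use the weighted decomposition
\[
 \sum_k r_k^{\frac\beta2}\sqrt{e_k}\le\Big(\sum_k r_k^{\beta}\Big)^{\frac12}\Big(\sum_k e_k\Big)^{\frac12},
\]
where $e_k$ is the energy on the annular region $\{(a,b)\in B_k\times B_k:\ |a-b|\ge r_{k+2}\text{ or } a\text{ or }b\notin B_{k+1}\}$. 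To bound the mean differences by these $e_k$, we compare $(f)_{B_{k+1}}$ with $(f)_{B_k\setminus B_{k+1}}$ via pairs at distance comparable to $r_k$. The regions are then boundedly overlapping, so $\sum_k e_k\le C[f]$, while $\sum_k r_k^\beta\le CR^\beta\beta^{-1}$. Together these give $C\beta^{-1/2}R^{\beta/2}[f]^{1/2}$.

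\emph{Step 4: conclusion.} Finally we replace $(f)_{B_0}=(f)_{B_R(x)}$ by $(f)_{B_R}$. By the same single-ball estimate from Step 1 on $B_{2R}$, this costs at most $CR^{\beta/2}[f]^{1/2}$. Since a.e.\ $x$ is a Lebesgue point, the estimate holds for a.e.\ $x\in B_R$.
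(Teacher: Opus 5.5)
Your proposal is correct and, from Step 3 on, is essentially the paper's proof: the paper telescopes directly with averages over the dyadic annuli $B_{2^kR}(x)\setminus B_{2^{k-1}R}(x)$, uses that the energies localized to these disjoint annuli sum to at most the total energy, and applies Cauchy--Schwarz with $\sum_{k\le 0}2^{k\beta}\le C\beta^{-1}$, which is exactly what you do after rewriting $(f)_{B_{k+1}}-(f)_{B_k}=\frac12\bigl((f)_{B_{k+1}}-(f)_{B_k\setminus B_{k+1}}\bigr)$. One remark: you do not need pairs ``at distance comparable to $r_k$'', and indeed pairs in $B_{k+1}\times(B_k\setminus B_{k+1})$ can be arbitrarily close; the upper bound $|a-b|\le 2r_k$ together with the disjointness in $k$ of these product regions is all the argument uses.
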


\begin{proof}
We denote\footnote{Below one could also work with balls instead of annuli in the proof, but the annuli have the advantage that we can find $\sqrt{\beta}$ instead of $\beta$ as the dependency.}
\[
A_{2^k R}(x) \coloneqq B_{2^k R}(x)\setminus B_{2^{k-1} R}(x).
\]
If $x$ is a Lebesgue point 
\[
 |f(x)-(f)_{B_R}| \leq \sum_{k=-\infty}^0 |(f)_{A_{2^k R}(x)}-(f)_{A_{2^{k-1} R}(x)}|+|(f)_{A_{R}(x)} - (f)_{B_R}|
\]
For any $k \leq 0$, it holds by Jensen inequality
\[
\begin{split}
|(f)_{A_{2^k R}(x)}-(f)_{A_{2^{k-1} R}(x)}|^2 \aleq & (2^k R)^{-2} \int_{A_{2^{k} R}(x)}\int_{A_{2^{k-1} R}(x)} |f(a)-f(b)|^2 \dd a \dd b\\
\aleq&(2^k R)^{\beta} \int_{A_{2^{k} R}(x)}\int_{A_{2^{k-1} R}(x)} \frac{|f(a)-f(b)|^2}{|a-b|^{2+\beta}} \dd a \dd b\\
\aleq&(2^k R)^{\beta} \int_{A_{2^{k} R}}\int_{B_{4R}} \frac{|f(a)-f(b)|^2}{|a-b|^{2+\beta}} \dd a \dd b.
\end{split}
\]
This implies 
\[
\begin{split}
|(f)_{A_{2^k R}(x)}-(f)_{A_{2^{k-1} R}(x)}|
\aleq&(2^k R)^{\frac{\beta}{2}} \brac{\int_{A_{2^{k} R}}\int_{B_{4R}} \frac{|f(a)-f(b)|^2}{|a-b|^{2+\beta}} \dd a \dd b}^{\frac{1}{2}}.
\end{split}
\]
Thus,
\[
\begin{split}
\sum_{k =-\infty}^0 |(f)_{A_{2^k R}(x)}-(f)_{A_{2^{k-1} R}(x)}|
& \aleq R^{\frac{\beta}{2}} \sum_{k=-\infty}^0  \brac{2^{k \frac{\beta}{2}}\, \int_{A_{2^{k} R}}\int_{B_{4R}} \frac{|f(a)-f(b)|^2}{|a-b|^{2+\beta}} \dd a \dd b}^{\frac{1}{2}} \\
& \aleq R^{\frac{\beta}{2}}  \brac{\sum_{k=-\infty}^0 2^{k \beta}}^{\frac{1}{2}}\, \brac{\sum_{k=-\infty}^0 \int_{A_{2^{k} R}}\int_{B_{4R}} \frac{|f(a)-f(b)|^2}{|a-b|^{2+\beta}} \dd a \dd b}^{\frac{1}{2}} \\
& \aleq R^{\frac{\beta}{2}}  \brac{\sum_{k=-\infty}^0 2^{k \beta}}^{\frac{1}{2}}\, \brac{\int_{B_{4R}}\int_{B_{4R}} \frac{|f(a)-f(b)|^2}{|x-y|^{2+\beta}} \dd a \dd b}^{\frac{1}{2}}.
\end{split}
\]
We estimate the first sum using that the inequality $e^u\geq 1+u$,
\[
 \sum_{k=-\infty}^0 2^{k \beta} = \frac{1}{1-2^{-\beta}} \leq \frac{1}{\beta\, \log(2)}.
\]
Hence, we conclude that
\[
\begin{split}
\sum_{k =-\infty}^\infty |(f)_{A_{2^k R}(x)}-(f)_{A_{2^{k-1} R}(x)}|
\aleq R^{\frac{\beta}{2}}  \beta^{-\frac{1}{2}} \brac{\int_{B_{4R}}\int_{B_{4R}} \frac{|f(a)-f(b)|^2}{|a-b|^{2+\beta}} \dd a \dd b}^{\frac{1}{2}}.
\end{split}
\]
With a similar argument we find 
\[
 |(f)_{B_{R}(x)} - (f)_{B_R}| \aleq R^{\frac{\beta}{2}} \brac{\int_{B_{4R}}\int_{B_{4R}} \frac{|f(a)-f(b)|^2}{|a-b|^{2+\beta}} \dd a \dd b}^{\frac{1}{2}}.
\]
\end{proof}

\begin{proof}[Proof of \Cref{th:Lipregularity}]
From \Cref{lem:localized-difference-quotient-beta} we find for a fixed number $r$ (which depends on $\lambda$ and $\alpha$) for any $x_0 \in \S^1$ that $u=e^{\i \varphi}$ in $B(x_0,r)$ and $\varphi$ satisfies by \Cref{lem:localized-difference-quotient-beta}, with $\beta = \min\{\alpha(2-p),2s_0-1\}$,
\[
 \sup_{|h| \leq 1} \int_{B(x_0,r)}\int_{B(x_0,r)} \frac{1}{|h|^2}\frac{|\delta_h \varphi(x)-\delta_h \varphi(y)|^2}{|x-y|^{2+\beta}} \dx \dy \leq C.
\]
By the standard difference quotient argument,see, e.g.\ \cite[Proposition 4.8]{GiqauintaMartinazzi},
\[
 \int_{B(x_0,r)}\int_{B(x_0,r)} \frac{|\varphi'(x)-\varphi'(y)|^2}{|x-y|^{2+\beta}} \dx \dy \leq C.
\]
This implies that $\varphi' \in H^{\frac{1+\beta}{2}}_{\rm loc}(B(x_0,r))$. By \Cref{thm:morrey-beta} we also conclude 
\[
 |\varphi'(x)-(\varphi')_{B(x_0,r)}| \aleq_{r,C} \beta^{-\frac{1}{2}}.
\]
Since $(\varphi')_{B(x_0,r)}$ is bounded (by the fundamental theorem and using \Cref{lem:uniform-small-arc-lifting}, $\varphi$ is unique)
\[
|\varphi'(x)| \aleq_{r,C} \beta^{-\frac{1}{2}} \quad \text{in $B(x_0,r)$}.
\]
Since $u'(x) = e^{\i \varphi} \varphi'$, we find that
\[
|u'(x)| \aleq_{r,C} \beta^{-\frac{1}{2}} \quad \text{in $B(x_0,r)$}.
\]
Since $\varphi' \in H^{\frac{1+\beta}{2}}_{\rm loc}(B(x_0,r))$ we also find the same for $u'$, and conclude.
\end{proof}

\section{Local and global uniqueness: Proof of Theorem~\ref{th:localminisid} and Theorem~\ref{th:minisId}}\label{s:idmini}

If $[u-\id]_{W^{s,\frac{1}{s}}(\S^1,\S^1)} \ll 1$ then we get a uniform $\eps$-regularity result, and any merely critical point is local $C^\alpha$-regular, cf. \cite{S15,MS18} and for local minimizers \cite{Sucks23}, and we have for some $\alpha > 0$ (independent of $(s_-,s_+)$
\[
 [u]_{C^\alpha} \leq \Lambda.
\]
By Gagliardo--Nirenberg, again using $[u-\id]_{W^{s,\frac{1}{s}}(\S^1,\S^1)} \ll 1$, using also \Cref{la:poincareonS1}, we then can assume
\[
\|u-\id\|_{L^\infty} \ll 1.
\]
If $p<2$, \Cref{th:Lipregularity}, for local minimizers\footnote{This is likely extendable to critical points without too much work following the same ideas as in the proof of minimizers, but to keep the length of this work withing reasonable ranges, we will not discuss this here.} we also find
\[
 \|u\|_{\lip} \leq \frac{\Lambda}{|p-2|}.
\]
Thus, \Cref{th:localminisid} is a consequence of

\begin{theorem}[Identity is a local minimizer for $s \in (\frac{1}{3},1)$]\label{th:localminisidv2}
	Let $\frac{1}{3} < s_-  < s_+ < 1$ and $\Lambda \geq 1$. Then there exists $\eps= \eps(s_-,s_+, \Lambda)>0$ such that any degree one critical point $u \in W^{s,\frac{1}{s}}(\S^1,\S^1)$ with $s \in [s_-,s_+]$ that satisfies $\|u-\id\|_{L^\infty} \leq \eps$ and if $p=\frac{1}{s}<2$ then additionally assume $[u]_{\lip} \leq \frac{\Lambda}{|p-2|}$, then $u \in \mathcal{M}$.
\end{theorem}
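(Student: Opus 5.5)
The plan is to normalize $u$ by a Möbius transform so that its phase has no Fourier modes $0,\pm1$. I then write the Euler--Lagrange equations at $\id$ and at $u$ as the endpoint values of the derivative of the energy along the straight path in the phase variable. The difference of these endpoint values is an integral of second variations, and I will bound it from below by the coercive quantity of \Cref{co:la:truncated-coercivity-one-kernel}. Since that quantity must vanish, the phase is zero.

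\emph{Step 1 (lifting and gauge fixing).} Since $\|u-\id\|_{L^\infty}\le\eps$, \Cref{la:lipschitz-phase} gives a phase $a$ with $|a|\aleq\eps$ and $u=\cos(a)x+\sin(a)x^\perp$. The tangent space of $\mathcal M$ at $\id$ consists of the fields $x^\perp\eta$ with $\eta\in\operatorname{span}\{1,\cos\theta,\sin\theta\}$: the rotations $e^{\i\lambda}$ give $\eta=1$, and $m_b$ for small $b$ gives the modes $e^{\pm\i\theta}$. I consider
\[
\Phi(\lambda,b)\coloneqq \Big(\int_0^{2\pi}\widetilde a\,\dd\theta,\ \int_0^{2\pi}\widetilde a\, e^{\i\theta}\,\dd\theta\Big),
\]
where $\widetilde a$ is the phase of $e^{-\i\lambda}u\circ m_b$. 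For $u=\id$ the differential of $\Phi$ at $(0,0)$ is invertible. A quantitative implicit function (or degree) argument, which only needs $u$ to be $L^\infty$-close to $\id$, then yields $|\lambda|+|b|\aleq\eps$ such that $\widetilde u\coloneqq e^{-\i\lambda}u\circ m_b$ has a phase $\widetilde a$ satisfying \eqref{hyp:a}. By \Cref{la:mobius-invariance}, $\widetilde u$ is still a critical point. Because $|b|$ is small, $m_b$ is bi-Lipschitz with constant close to $1$, so $\|\widetilde u-\id\|_{L^\infty}\aleq\eps$ and $[\widetilde u]_{\lip}\le 2\Lambda/|p-2|$ when $p<2$. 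From now on I write $u,a$ for $\widetilde u,\widetilde a$.

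\emph{Step 2 (interpolation identity).} Let $u_t\coloneqq\cos(ta)x+\sin(ta)x^\perp$ for $t\in[0,1]$ and $F(t)\coloneqq\frac1pE_s(u_t)$. Then $F'(0)=0$ by \Cref{la:firstsecondvariation}. Also $F'(1)=0$, because $\frac{\dd}{\dd t}u_t=a\,u_t^\perp$ is a tangent variation at the critical point $u_1=u$. Hence
\[
0=\int_0^1F''(t)\,\dd t .
\]
The pointwise second derivative is given by the algebra of \eqref{eq:identity1}, with $U=u_t(x)$, $V=u_t(y)$ and $A=B'=a(x)$, $B=D=a(y)$ (the radial terms are absent). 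This gives
\[
F''(t)=\iint\frac{(p-2)\langle U,V^\perp\rangle^2|U-V|^{p-4}+|U-V|^{p-2}\langle U,V\rangle}{|x-y|^2}\,|a(x)-a(y)|^2\,\dx\,\dy .
\]
By \eqref{eq:identity2} the numerator equals $|U-V|^{p-2}\big[p-1-\tfrac p4|U-V|^2\big]$. So $F''(t)$ is exactly the quadratic form of \Cref{co:la:truncated-coercivity-one-kernel}, except that $|x-y|$ is replaced by $|u_t(x)-u_t(y)|$ in the weight.

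\emph{Step 3 (splitting near and far from the diagonal).} Fix $\rho>0$ small.

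On $\{|x-y|\ge\rho\}$, the chord $|u_t(x)-u_t(y)|$ differs from $|x-y|$ by at most $C\eps$, and $|x-y|\ge\rho$. So the weight differs from the $\id$-weight by a factor $1+O(\eps\rho^{-1})$ times a bounded function. Choosing $\eps\ll\rho$, this region contributes at least
\[
\tfrac12\iint_{\{|x-y|\ge\rho\}}\frac{|x-y|^{p-2}\big[p-1-\tfrac p4|x-y|^2\big]}{|x-y|^2}|a(x)-a(y)|^2\,\dx\,\dy
\]
minus an error $C\eps\rho^{-k}\|a\|_{L^2}^2$. This error is absorbed by $[a]^2_{H^{(3-p)/2}}$ via the Poincaré inequality, using that $a$ has zero mean.

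On $\{|x-y|\le\rho\}$, I need $p-1-\frac p4|U-V|^2\ge\frac{p-1}{2}$ and $|U-V|^{p-2}\ge A\,|x-y|^{p-2}$ with $A>0$ independent of $\eps$.
\begin{itemize}
\item If $p\ge2$: $|U-V|\le C|x-y|$ follows from $a$ being Lipschitz. Indeed $u$ is $C^{1,\sigma}$ after bootstrapping, or one can reduce to this case by approximation.
\item If $p<2$: we need $|U-V|\le C|x-y|$, which is exactly what the assumed bound $[u]_{\lip}\le\Lambda/|p-2|$ gives, through \eqref{eq:axmayest} for $a$. Then $A$ depends on $\Lambda$, $p_\pm$.
\end{itemize}
Then \Cref{co:la:truncated-coercivity-one-kernel}, applied with this $A$ and with $\rho<\rho_0(A)$ and then $\eps$ small, gives
\[
0=\int_0^1F''(t)\,\dd t\ge c\,[a]^2_{H^{\frac{3-p}{2}}}.
\]
Together with \eqref{hyp:a} this forces $a\equiv0$, so $\widetilde u=\id$ and $u\in\mathcal M$.

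\emph{Main obstacle.} The hard part is the near-diagonal comparison when $p<2$. There $|U-V|^{p-2}$ blows up where $|U-V|$ is small, so the weight can only be compared uniformly with $|x-y|^{p-2}$ through the Lipschitz hypothesis. I also need to justify differentiating $F$ twice along $u_t$ with only this regularity; I would first try a truncation $|x-y|>\tau$ and let $\tau\to0$ by dominated convergence using the Lipschitz bound. A secondary technical point is making the implicit-function step in Step 1 quantitative using only $L^\infty$ closeness.
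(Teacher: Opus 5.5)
\textbf{The overall plan is the paper's, but the near-diagonal estimate has a genuine gap for $p\in(2,3)$.} The shared steps are: gauge-fix the modes $0,\pm1$; use criticality at both ends of the phase homotopy to get $0=\int_0^1F''(t)\,\dd t$; rewrite $F''$ via \eqref{eq:identity1}--\eqref{eq:identity2}; split at $|x-y|=\rho$; close with \Cref{co:la:truncated-coercivity-one-kernel}.

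Several of your steps are fine. Your far-diagonal estimate is correct. So is your near-diagonal estimate for $p<2$: from $|u_t(x)-u_t(y)|\le(1+[a]_{\lip})|x-y|$ you get $|u_t(x)-u_t(y)|^{p-2}\ge(1+C\Lambda/(2-p))^{-(2-p)}|x-y|^{p-2}$, and this factor stays bounded below as $p\to2^-$. One small remark on Step 1: with only continuity of the phase, $b\mapsto\int_0^{2\pi}a_0(m_b(e^{\i\theta}))e^{\i\theta}\,\dd\theta$ need not be differentiable. So the implicit function theorem is not available, and you should use the degree/Brouwer variant, as the paper does.

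\textbf{The wrong-direction bound.} For $p>2$ the exponent $p-2$ is positive, so $|U-V|^{p-2}\ge A|x-y|^{p-2}$ requires a \emph{lower} bound $|u_t(x)-u_t(y)|\ageq|x-y|$. The upper bound $|U-V|\le C|x-y|$ that you derive from Lipschitz regularity goes in the wrong direction. In addition, for $p\ge2$ the hypotheses give no Lipschitz bound at all.

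\textbf{Why no pointwise fix exists.} The needed lower bound is false pointwise in $t$. Write $y=\cos(\alpha)x+\sin(\alpha)x^\perp$; then $|u_t(x)-u_t(y)|=2|\sin(\frac{\alpha-t(a(x)-a(y))}{2})|$. With only $\|a\|_{L^\infty}\aleq\eps$, and no bound $[a]_{\lip}<1$, the affine function $t\mapsto\alpha-t(a(x)-a(y))$ can vanish inside $[0,1]$, for instance if $u$ folds back locally. No regularity bootstrap removes this without quantitative $C^1$-closeness to $\id$, which is not assumed.

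\textbf{The missing idea: average in $t$ first.} Keep the $t$-integral and average \emph{before} comparing weights. For $|x-y|\le\rho$ and $\|a\|_{L^\infty}\le\rho$ one has $|u_t(x)-u_t(y)|\ge(1-C\rho^2)|\alpha-t(a(x)-a(y))|$ with $|\alpha|\ge|x-y|$. The elementary inequality $\int_0^1|A+tB|^\beta\,\dd t\ge\frac{2^{-\beta}}{1+\beta}|A|^\beta$ holds for all $A,B\in\R$ and $\beta\in[0,1]$ (\Cref{la:one-dimensional-average-beta}). Applied with $\beta=p-2$, it gives
\[
\int_0^1|u_t(x)-u_t(y)|^{p-2}\,\dd t\ge c\,|x-y|^{p-2}
\]
uniformly for $p\in[2,3]$ (\Cref{la:eq:averaged-chord-power}). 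With this replacement, i.e.\ bounding $\int_0^1F''(t)\,\dd t$ near the diagonal by first integrating the weight in $t$, your argument closes.
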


We begin with several elementary results.
\begin{lemma}\label{la:acontrol}
Let $0<s<1$ and $1\le q<\infty$. There exists $C=C(s,q)>0$ satisfying the following property. Let $a\in W^{s,q}(\mathbb S^1)$ be real-valued, and define
\[
u(x)=\cos(a(x))x + \sin(a(x)) x^\perp \colon \S^1 \to \S^1.
\]
Then it holds
\[
\inf_{\lambda\in\mathbb R}
\bigl[e^{\i\lambda }u-\id\bigr]_{W^{s,q}(\mathbb S^1)}
\le
C\, [a]_{W^{s,q}(\mathbb S^1)}.
\]
\end{lemma}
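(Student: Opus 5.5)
The plan is to use the complex-number picture and choose the rotation $\lambda$ to kill the mean of the phase, after which everything reduces to a product-rule estimate and the fractional Poincar\'e inequality. Identifying $\R^2\simeq\C$ with $x^\perp=\i x$, we have $u(x)=x\,e^{\i a(x)}$. Set $\bar a\coloneqq \frac{1}{2\pi}\int_{\S^1}a$ and $b\coloneqq a-\bar a$. We take $\lambda\coloneqq-\bar a$ (the infimum is then bounded by this particular choice), so that
\[
e^{\i\lambda}u(x)-x = x\,f(x),\qquad f(x)\coloneqq e^{\i b(x)}-1 .
\]
The function $f$ satisfies two elementary bounds: $|f(x)|\le |b(x)|$, and $|f(x)-f(y)|\le |b(x)-b(y)|=|a(x)-a(y)|$, since $t\mapsto e^{\i t}$ is $1$-Lipschitz.

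Next we split the difference by the discrete product rule:
\[
x f(x)-y f(y)=(x-y)f(x)+y\bigl(f(x)-f(y)\bigr).
\]
Inserting this into the Gagliardo seminorm, whose kernel on the one-dimensional circle is $|x-y|^{-1-sq}$, and using $|y|=1$, we obtain
\[
[e^{\i\lambda}u-\id]_{W^{s,q}}\aleq [a]_{W^{s,q}}+\Bigl(\int_{\S^1}|f(x)|^q\int_{\S^1}|x-y|^{q-1-sq}\dy\,\dx\Bigr)^{1/q}.
\]
Since $s<1$, the exponent satisfies $q-1-sq>-1$, so the inner integral is bounded by a constant $C(s,q)$, uniformly in $x$. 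The second term is therefore $\aleq \|f\|_{L^q}\le\|b\|_{L^q}$.

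Finally, $b=a-\bar a$ has zero mean, so the standard fractional Poincar\'e inequality on $\S^1$ gives $\|b\|_{L^q}\le C(s,q)[a]_{W^{s,q}}$. Combining the three steps yields the claim.

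The only point needing care, and the one I would check first, is that this argument does not require $a$ to be small or bounded. That is fine here, because both bounds on $f$ are global. The constant depends only on $s$ and $q$ through the kernel integral and the Poincar\'e constant.
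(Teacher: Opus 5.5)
Your proof is correct and follows essentially the same route as the paper. Both rotate by $e^{-\i\bar a}$ and split the difference into a term bounded by $|a(x)-a(y)|$ plus a term $|a-\bar a|\,|x-y|$. Both then integrate the latter against the integrable kernel $|x-y|^{q-1-sq}$ and conclude with the fractional Poincar\'e inequality.
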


\begin{proof}
We have $u(e^{\i t}) = e^{\i t+\i a(e^{\i t})}$. Let
\[
\bar a=\frac1{2\pi}\int_0^{2\pi} a(e^{\i t})\dd t \in \R
\]
We rotate $u$ by $e^{-\i\bar a}$. Observe,
\[
e^{-\i\bar a}u(e^{\i t})-e^{\i t}
=
e^{\i t}\bigl(e^{\i(a(e^{\i t})-\bar a)}-1\bigr).
\]
For two points $e^{\i t},e^{\i r}\in \mathbb S^1$, we estimate
\[
\begin{aligned}
&\left|
\bigl(e^{-\i\bar a}u(e^{\i t})-e^{\i t}\bigr) - \bigl(e^{-\i\bar a}u(e^{\i r})-e^{\i r}\bigr) \right|
\\
&\le
\left|e^{\i(a(e^{\i t})-\bar a)}-e^{\i(a(e^{\i r})-\bar a)}\right|
+
\left|e^{\i(a(e^{\i r})-\bar a)}-1\right|\,|e^{\i t}-e^{\i r}|.
\end{aligned}
\]
Since $|e^{\i\alpha}-e^{\i\beta}|\le |\alpha-\beta|$ and $|e^{\i\alpha}-1|\le |\alpha|$,
we get
\[
\left| \bigl(e^{-\i\bar a}u-\id\bigr)(e^{\i t}) - \bigl(e^{-\i\bar a}u-\id\bigr)(e^{\i r}) \right|
\le
|a(e^{\i t})-a(e^{\i r})|
+
|a(e^{\i r})-\bar a|\,|e^{\i t}-e^{\i r}|.
\]
Therefore, taking the $W^{s,q}$ seminorm,
\[\begin{split}
&\bigl[e^{-\i\bar a}u-\id\bigr]_{W^{s,q}}^q\\
\le&
C\, [a]_{W^{s,q}}^q
+
C\, \int_0^{2\pi}\int_0^{2\pi}
\frac{|a(e^{\i r})-\bar a|^q |e^{\i t}-e^{\i r}|^q}
{|e^{\i t}-e^{\i r}|^{1+sq}}
\dd t\dd r\\
=&
C\, [a]_{W^{s,q}}^q
+C_{s,q}\, \int_0^{2\pi} |a(e^{\i r})-\bar{a}|^q\dd r.
\end{split}
\]
We conclude by Poincar\'e inequality.
\end{proof}

The following is to take care of the first Fourier modes, i.e.\ it helps us ensure \eqref{eq:Fouriermodesvanishv1}.
\begin{lemma}\label{la:local-normalization-phase}
There exist universal constants $\varepsilon_0>0$ and $C>0$ with the
following property. Let $u\colon \mathbb S^1\to\mathbb S^1$ be continuous and assume
\[
        \|u-\id\|_{L^\infty(\mathbb S^1)}\le \varepsilon_0 .
\]
Then there exist $\lambda\in\mathbb R$ and $b\in\mathbb C$ such that, with
\[
        m_b(z)\coloneqq \frac{z+b}{1+\overline b z},
        \qquad
        \widetilde u(z)\coloneqq e^{-\i\lambda}u(m_b(z)),
\]
the map $\widetilde u$ can be written uniquely as
\[
        \widetilde u(x)=\cos(a(x))x+\sin(a(x))x^\perp,
        \qquad
        a\colon \mathbb S^1\to\left[-\frac\pi4,\frac\pi4\right],
\]
and
\[
        \int_0^{2\pi}a(e^{\i\theta})\dd\theta=0,
        \qquad
        \int_0^{2\pi}a(e^{\i\theta})e^{\i\theta}\dd\theta=0.
\]
Moreover, it holds
\begin{equation}\label{eq:lambdabestmodes}
        |\lambda|+|b|
        \le
        C\|u-\id\|_{L^\infty(\mathbb S^1)},
\end{equation}
Thus, we have
\[
        \|\widetilde u-\id\|_{L^\infty(\mathbb S^1)}
        \le
        C\|u-\id\|_{L^\infty(\mathbb S^1)},
\]
If additionally $u-\id\in C^\gamma(\mathbb S^1)$ for some $0<\gamma\leq 1$, then
\[
        \|\widetilde u-\id\|_{C^\gamma(\mathbb S^1)}
        \le
        C
        \|u-\id\|_{C^\gamma(\mathbb S^1)}.
\]
and we have whenever the right-hand side is finite
\[
        [\widetilde u-\id]_{W^{s,q}(\S^1)}
        \le
        C_{s,q}
        [u-\id]_{W^{s,q}(\S^1)} + \|u-\id\|_{L^\infty}
\]
\end{lemma}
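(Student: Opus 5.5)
The plan is to use the implicit function theorem on the three-dimensional group $\mathcal{M}$, together with a quantitative lifting argument. Write any map close to $\id$ through its phase relative to the identity. For $(\lambda,b)$ in a small neighbourhood of $(0,0)\in\R\times\C$, define $u_{\lambda,b}\coloneqq e^{-\i\lambda}u\circ m_b$. As long as $\|u_{\lambda,b}-\id\|_{L^\infty}$ is small, \Cref{la:lipschitz-phase} gives a unique phase $a_{\lambda,b}\colon\S^1\to[-\pi/4,\pi/4]$. Since $|m_b(z)-z|\le 2|b|$, we have
\[
\|u_{\lambda,b}-\id\|_{L^\infty}\le\|u-\id\|_{L^\infty}+|\lambda|+2|b|,
\]
so the phase exists once $\eps_0$ and the neighbourhood are small. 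Consider the map
\[
F(\lambda,b)\coloneqq\Big(\int_0^{2\pi}a_{\lambda,b}\,\dd\theta,\ \int_0^{2\pi}a_{\lambda,b}e^{\i\theta}\,\dd\theta\Big)\in\R\times\C\simeq\R^3 .
\]
We look for a zero of $F$ near the origin.

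The key step is to compare $F$ with its value at $u=\id$. There $a_{\lambda,b}(z)=\arg\big(\bar z\,e^{-\i\lambda}m_b(z)\big)$, which is an explicit smooth function of $(\lambda,b,z)$, and we call the resulting map $F_0$. A direct expansion gives
\[
m_b(z)=z+b-\bar b z^2+O(|b|^2),\qquad a_{\lambda,b}(e^{\i\theta})=-\lambda+\Im\big(be^{-\i\theta}-\bar be^{\i\theta}\big)+O(|\lambda|^2+|b|^2).
\]
Hence $DF_0(0)$ sends $(\lambda,b)$ to $(-2\pi\lambda,\,c\,b)$ with $c\neq0$, up to conjugation conventions. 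This linear map is invertible, so $F_0$ is a diffeomorphism of a fixed ball $B_r$ onto a neighbourhood of $0$, with inverse bounds uniform in $r$.

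For general $u$, write $u=e^{\i\alpha}\id$ with $|\alpha|\le C\|u-\id\|_{L^\infty}$. Then $a_{\lambda,b}=a^0_{\lambda,b}+\alpha\circ m_b$, because phases add for small angles. Consequently $F=F_0+G$ with $|G|\le C\|u-\id\|_{L^\infty}$ uniformly on $B_r$. The map $G$ is continuous in $(\lambda,b)$, since $\alpha\circ m_b$ depends continuously on $b$ in $L^1$. I would then avoid needing differentiability of $G$, since $u$ is only continuous, and use a degree or Brouwer argument instead. The map $F_0^{-1}\circ(F_0-F)=-F_0^{-1}\circ G$ sends $B_r$ into $B_{C\|u-\id\|_\infty}\subset B_r$. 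Brouwer's fixed point theorem applied to $(\lambda,b)\mapsto F_0^{-1}(-G(\lambda,b))$ produces a zero of $F$ with $|\lambda|+|b|\le C\|u-\id\|_{L^\infty}$, which is \eqref{eq:lambdabestmodes}. Uniqueness of the representation of $\widetilde u$ by $a$ is just the uniqueness in \Cref{la:lipschitz-phase}. I expect this Brouwer step, with the uniform smallness of $G$ on a ball of fixed size, to be the main point requiring care.

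Finally come the norm estimates. For $L^\infty$, we write $\widetilde u-\id=e^{-\i\lambda}(u-\id)\circ m_b+(e^{-\i\lambda}m_b-\id)$, and the second term is $O(|\lambda|+|b|)$ together with all its derivatives. For $C^\gamma$, the same splitting works: composing with $m_b$, which is bi-Lipschitz with constants close to $1$ for $|b|$ small, preserves $[\cdot]_{C^\gamma}$ up to a constant, and the smooth term is bounded by $C\|u-\id\|_{L^\infty}\le C\|u-\id\|_{C^\gamma}$. For $W^{s,q}$, we use the same decomposition together with the fact that composition with a bi-Lipschitz diffeomorphism changes the Gagliardo seminorm only by a bounded factor, through a change of variables with Jacobian comparable to $1$. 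The smooth term contributes $C(|\lambda|+|b|)\le C\|u-\id\|_{L^\infty}$. Together these give the stated bound.
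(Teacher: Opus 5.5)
Your proposal is correct and follows essentially the paper's route. Both arguments use the same phase decomposition $a_{\lambda,b}=-\lambda+\phi_b+a_0\circ m_b$ (your $a^0_{\lambda,b}+\alpha\circ m_b$), a Brouwer fixed-point argument instead of the implicit function theorem because $u$ is only continuous, and the same splitting $\widetilde u-\id=e^{-\i\lambda}(u-\id)\circ m_b+(e^{-\i\lambda}m_b-\id)$ for the $L^\infty$, $C^\gamma$ and $W^{s,q}$ bounds. The one organizational difference is that the paper observes $\lambda$ does not enter the first-mode equation, solves that equation for $b$ alone by a two-dimensional Brouwer argument with the quadratic remainder $R_b$ kept inside the fixed-point map, and then reads off $\lambda$ explicitly, whereas you invert the explicit map $F_0$ and apply Brouwer on $\R\times\C$.
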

\begin{proof}
Since $\|u-\id\|_{L^\infty} \leq \eps_0$, there is a unique phase
$a_0\colon \mathbb S^1\to[-\pi/4,\pi/4]$ such that
\[
        u(x)=\cos(a_0(x))x+\sin(a_0(x))x^\perp .
\]
Equivalently, writing $x=e^{\i\theta}$, we have
\[
        u(e^{\i\theta})=e^{\i(\theta+a_0(e^{\i\theta}))}.
\]

For $|b|<1$, write
\[
        m_b(e^{\i\theta}) = \frac{e^{\i \theta} +b}{1+\overline b e^{\i \theta}} =
        e^{\i \theta} \frac{1 +b e^{-\i \theta}}{1+\overline {b e^{-\i \theta}} }
        \eqqcolon e^{\i(\theta+\phi_b(\theta))}.
\]
We have, for $b = b_1 + \i b_2$
\[
        \phi_b(\theta)
        \coloneqq 
        2\arctan
        \left(
        \frac{b_2 \cos (\theta) - b_1 \sin(\theta)}
        {1+b_1\cos(\theta) + b_2 \sin(\theta)}
        \right)=
        2\arctan
        \left(
        \frac{\operatorname{Im}(be^{-\i\theta})}
        {1+\operatorname{Re}(be^{-\i\theta})}
        \right) = 2\arctan
        \left(
        \frac{\operatorname{Im}(1+be^{-\i\theta})}
        {\operatorname{Re}(1+be^{-\i\theta})}
        \right)
\]
Indeed, $e^{\i \arctan{\frac{\operatorname{Im} g}{{\operatorname{Re} g}}}}=\frac{g}{|g|}$, as long as $\operatorname{Re}(g) > 0$ (Euler's formula), which is ensured since $|b| <1$. Thus it holds $e^{\i 2\arctan{\frac{\operatorname{Im} g}{{\operatorname{Re} g}}}}=\frac{g^2}{|g|^2} = \frac{g}{\bar{g} |g|^2}$.
A direct expansion gives uniformly in $\theta \in [0,2\pi]$, as $|b|\to 0$,
\[
        \phi_b(\theta)
        =
        2\operatorname{Im}(b)\cos\theta
        -
        2\operatorname{Re}(b)\sin\theta
        +
        O(|b|^2)
\]
We denote
\[
 R_b(\theta) \coloneqq  \phi_b(\theta) - 2\operatorname{Im}(b)\cos\theta+
        2\operatorname{Re}(b)\sin\theta.
\]
Now let $\lambda\in\mathbb R$ and $|b|<1/2$. Using the formula above,
\[
\begin{split}
e^{-\i\lambda}u(m_b(e^{\i\theta}))
&=
e^{-\i\lambda}
e^{\i(\theta+\phi_b(\theta))}
e^{\i a_0(m_b(e^{\i\theta}))} =
e^{\i\theta}
e^{\i\left[
-\lambda+\phi_b(\theta)+a_0(m_b(e^{\i\theta}))
\right]}.
\end{split}
\]
Therefore define
\begin{equation}\label{eq:alambdab}
        a_{\lambda,b}(e^{\i\theta})
        \coloneqq 
        -\lambda+\phi_b(\theta)+a_0(m_b(e^{\i\theta})).
\end{equation}
Then explicitly
\[
        e^{-\i\lambda}u(m_b(e^{\i\theta}))
        =
        e^{\i\theta}e^{\i a_{\lambda,b}(e^{\i\theta})}.
\]
Equivalently, it holds
\[
        e^{-\i\lambda}u(m_b(x))
        =
        \cos(a_{\lambda,b}(x))x
        +
        \sin(a_{\lambda,b}(x))x^\perp,
        \qquad x=e^{\i\theta}.
\]
We now choose $\lambda$ and $b$ so that the zero and first Fourier modes of
$a_{\lambda,b}$ vanish. This means solving the two equations
\begin{equation}\label{eq:zeromodewhatwewant}
        \int_0^{2\pi} a_{\lambda,b}(e^{\i\theta})\dd\theta=0, \quad \text{and} \quad
        \int_0^{2\pi} a_{\lambda,b}(e^{\i\theta})e^{\i\theta}\dd\theta=0.
\end{equation}
Observe that
\[
        \int_0^{2\pi}
        (2b_2\cos\theta-2b_1\sin\theta)
        \dd\theta
        =
        0.
\]
This implies that
\begin{equation}\label{eq:phibrbmode0}
 \int_{0}^{2\pi} \phi_b(\theta) \dd\theta = \int_{0}^{2\pi} R_b(\theta)  \dd\theta.
\end{equation}
We have
\[
\begin{split}
\int_0^{2\pi}
(2b_2\cos\theta-2b_1\sin\theta)e^{\i\theta}
\dd\theta
&=
2b_2\int_0^{2\pi}\cos\theta\,e^{\i\theta}\dd\theta
-
2b_1\int_0^{2\pi}\sin\theta\,e^{\i\theta}\dd\theta
\\
&=
2\pi b_2-2\pi \i b_1
\\
&=
-2\pi \i b.
\end{split}
\]
This implies
\begin{equation}\label{eq:phibrbmode1}
 \int_{0}^{2\pi} \phi_b(\theta) e^{\i \theta} \dd\theta = \int_{0}^{2\pi} R_b(\theta) \dd\theta - 2\pi \i b.
\end{equation}
Thus \eqref{eq:zeromodewhatwewant} combined with  \eqref{eq:phibrbmode0}  and \eqref{eq:phibrbmode1}. Using the expansion \eqref{eq:alambdab}, these equations become
\begin{equation}\label{eq:phibrbmode0v2}
        -2\pi\lambda
        +
        \int_0^{2\pi}a_0(m_b(e^{\i\theta}))\dd\theta
        +
        \int_0^{2\pi}\phi_b(\theta)\dd\theta
        =
        0,
\end{equation}
and
\begin{equation}\label{eq:phibrbmode1v2}
        -2\pi \i b
        +
        \int_0^{2\pi}a_0(m_b(e^{\i\theta}))e^{\i\theta}\dd\theta
        +
        \int_0^{2\pi}R_b(\theta)e^{\i\theta}\dd\theta
        =
        0.
\end{equation}
We solve the second equation first. Define, for $|b|\le 4\varepsilon$,
\[
        T(b)
        \coloneqq 
        -\frac{i}{2\pi}
        \left[
        \int_0^{2\pi}a_0(m_b(e^{\i\theta}))e^{\i\theta}\dd\theta
        +
        \int_0^{2\pi}R_b(\theta)e^{\i\theta}\dd\theta
        \right].
\]
Since $u$ is continuous, so is $a_0$. Thus $b \mapsto T(b)$ is continuous.
Moreover, we have
\[
        |T(b)|
        \le
        \|a_0\|_{L^\infty}
        +
        C|b|^2
        \le
        \|u-\id\|_{L^\infty(\mathbb S^1)}+C|b|^2.
\]
Hence, if $|b|\le \|u-\id\|_{L^\infty(\mathbb S^1)}$ and $\varepsilon_0$ is sufficiently small,
\[
        |T(b)|
        \le
        4\|u-\id\|_{L^\infty(\mathbb S^1)}.
\]
Thus $T$ maps the closed disk $\{|b|\le4 \|u-\id\|_{L^\infty(\mathbb S^1)}\}$ into itself. By
Brouwer's fixed point theorem, there exists $b$ with
\[
        |b|\le4\|u-\id\|_{L^\infty(\mathbb S^1)}
\]
such that
\[
        b=T(b).
\]
For this choice of $b$, \eqref{eq:phibrbmode1v2} holds --- and observe that this holds for any $\lambda \in \R$. Now, with $b$ fixed, we can choose $\lambda$ so that \eqref{eq:phibrbmode0v2} holds, simply define
\[
        \lambda
        \coloneqq 
        \frac1{2\pi}
        \left[
        \int_0^{2\pi}a_0(m_b(e^{\i\theta}))\dd\theta
        +
        \int_0^{2\pi}R_b(\theta)\dd\theta
        \right].
\]
Also,
\[
        |\lambda|
        \le
        \|u-\id\|_{L^\infty}+C|b|^2
        \aleq
        \|u-\id\|_{L^\infty}.
\]
Thus we have \eqref{eq:lambdabestmodes}. We set
\[
        a\coloneqq a_{\lambda,b},
        \qquad
        \widetilde u\coloneqq e^{-\i\lambda}u\circ m_b.
\]
Then it holds
\[
        \int_0^{2\pi} a(e^{\i\theta})\dd\theta=0,
        \qquad
        \int_0^{2\pi} a(e^{\i\theta})e^{\i\theta}\dd\theta=0.
\]
Furthermore,
\[
\begin{split}
        \|a\|_{L^\infty}
        &\le
        |\lambda|+\|\phi_b\|_{L^\infty}+\|a_0\|_{L^\infty} \le
        C\varepsilon.
\end{split}
\]
In particular, if $\varepsilon_0$ sufficiently small (uniform), we have
\[
        a\colon \mathbb S^1\to[-\pi/4,\pi/4].
\]
Thus
\[
        \widetilde u(x)\coloneqq \cos(a(x))x+\sin(a(x))x^\perp
\]
and $a\colon \S^1 \to [-\pi/4,\pi/4]$ is the unique map that satisfies this. Finally,
\[
        \|\widetilde u-\id\|_{L^\infty}
        \aleq \|a\|_{L^\infty}\aleq \|u-\id\|_{L^\infty}.
\]
If in addition $u-\id\in C^\gamma(\mathbb S^1)$, then
\[
        \|\widetilde u-\id\|_{C^\gamma}
        \aleq \|u-\id\|_{C^\gamma}.
\]
Indeed, it holds
\begin{equation}\label{eq:decompo_utilde_id}
\begin{split}
\widetilde u-\id
&=
e^{-\i\lambda}u\circ m_b-\id
\\
&=
e^{-\i\lambda}(u-\id)\circ m_b
+
(e^{-\i\lambda}m_b-\id).
\end{split}
\end{equation}
Since $|b|\le C\|u-\id\|_{L^\infty}$ is small, $m_b$ is uniformly bi-Lipschitz,
and hence
\[
        \|(u-\id)\circ m_b\|_{C^\gamma}
        \le
        C_\gamma\|u-\id\|_{C^\gamma}.
\]
Moreover,
\[
        \|e^{-\i\lambda}m_b-\id\|_{C^\gamma}
        \le
        C_\gamma(|\lambda|+|b|)
        \le
        C_\gamma\|u-\id\|_{L^\infty}
        \le
        C_\gamma\|u-\id\|_{C^\gamma}.
\]
Therefore
\[
        \|\widetilde u-\id\|_{C^\gamma}
        \le
        C_\gamma\|u-\id\|_{C^\gamma}.
\]
By \Cref{la:mobius-invariance}, $\widetilde{u}$ is still a critical point and we can conclude. Concerning the estimate in $W^{s,q}$, we proceed similarly, using \eqref{eq:decompo_utilde_id} and the fact that $m_b$ is uniformly bi-Lipschitz for $\eps$ small enough by \eqref{eq:lambdabestmodes}.
\end{proof}

We will also make use the following elementary inequality:
\begin{lemma}\label{la:one-dimensional-average-beta}
For all $A,B\in\mathbb R$ and all $\beta\in[0,1]$
\begin{equation}
\label{eq:one-dimensional-average-beta}
\int_0^1 |A+\tau B|^\beta\dd\tau
\ge
\frac{2^{-\beta}}{1+\beta}|A|^\beta .
\end{equation}
and for a uniform constant $c > 0$
\begin{equation}
\label{eq:one-dimensional-average-betav2}
\int_0^1 (1-\tau) |A+\tau B|^\beta\dd\tau
\ge c|A|^\beta .
\end{equation}
\end{lemma}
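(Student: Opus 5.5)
The plan is to reduce both inequalities to the case $A\ge 0$ by symmetry ($|A+\tau B| = |-A-\tau B|$), and then to exploit the fact that the affine function $\tau\mapsto A+\tau B$ cannot stay small on a large portion of $[0,1]$ unless $|A|$ is small. If $A=0$ there is nothing to prove, so assume $A>0$. The first inequality has the lower bound $\frac{2^{-\beta}}{1+\beta}|A|^\beta$. It is attained (up to equality) by the configuration $B=-2A$, where the function runs from $A$ to $-A$. This suggests an argument by rearrangement.

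\textbf{First inequality.} For fixed $A>0$, the distribution function of $g(\tau)\coloneqq|A+\tau B|$ satisfies
\[
|\{\tau\in[0,1]: g(\tau)\le t\}|\le \frac{2t}{|B|}.
\]
The worst case should be the one where the sublevel set measure is as large as possible. I would instead argue more directly with an explicit case split.

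If $B\ge 0$, then $g(\tau)\ge A$ on $[0,1]$, so the integral is at least $A^\beta\ge \frac{2^{-\beta}}{1+\beta}A^\beta$.

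If $B<0$, the zero $\tau_0=A/|B|$ is the only point where $g$ vanishes. On $[0,\min\{\tau_0,1\}]$ we have $g(\tau)=A(1-\tau/\tau_0)$.
\begin{itemize}
\item If $\tau_0\ge \tfrac12$, restrict to $[0,\tfrac12]$. There $g(\tau)\ge A(1-2\tau)$, hence
\[
\int_0^{1/2}A^\beta(1-2\tau)^\beta\,\dd\tau=\frac{A^\beta}{2(1+\beta)}\ge\frac{2^{-\beta}A^\beta}{1+\beta},
\]
using $\beta\le 1$.
\item If $\tau_0<\tfrac12$, the function is symmetric about $\tau_0$ and has slope $|B|=A/\tau_0$. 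So on $[\tau_0,2\tau_0]$ it mirrors $[0,\tau_0]$, and on $[2\tau_0,1]$ it satisfies $g\ge A$. This part has length $1-2\tau_0$, so it contributes at least $(1-2\tau_0)A^\beta$. The part on $[0,2\tau_0]$ contributes $2\tau_0A^\beta/(1+\beta)$. The total is therefore at least $A^\beta/(1+\beta)\ge \frac{2^{-\beta}A^\beta}{1+\beta}$.
\end{itemize}
In every case the claim follows.

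\textbf{Second inequality.} Since $1-\tau\ge \tfrac12$ on $[0,\tfrac12]$, we get
\[
\int_0^1(1-\tau)|A+\tau B|^\beta\,\dd\tau\ge \frac12\int_0^{1/2}|A+\tau B|^\beta\,\dd\tau=\frac14\int_0^1|A+\sigma B'|^\beta\,\dd\sigma,
\]
with $B'=B/2$, after the substitution $\tau=\sigma/2$. Applying the first inequality then gives the bound with $c=\frac14\cdot\frac{1}{2\cdot 2}=\frac1{16}$, uniformly in $\beta\in[0,1]$.

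\textbf{Main obstacle.} The only delicate point is the bookkeeping in the case $B<0$ with $\tau_0<\tfrac12$, i.e. handling the zero crossing. There the symmetric reflection around $\tau_0$ gives a clean bound. Everything else is elementary.
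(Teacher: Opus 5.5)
There is a genuine error in your case $B<0$, $\tau_0\ge\frac12$. Restricting to $[0,\frac12]$ gives $\frac{A^\beta}{2(1+\beta)}$. The inequality $\frac{1}{2(1+\beta)}\ge\frac{2^{-\beta}}{1+\beta}$ that you then invoke is equivalent to $2^\beta\ge 2$, i.e.\ $\beta\ge1$. It therefore fails for every $\beta\in[0,1)$; at $\beta=0$ you obtain $\frac12$ where $1$ is required. So ``using $\beta\le1$'' points the wrong way, and the contribution of $[\frac12,1]$ cannot be discarded. Your heuristic that $B=-2A$ is extremal is also off, since $\int_0^1|1-2\tau|^\beta\,\dd\tau=\frac1{1+\beta}$.

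The repair is to keep the whole interval. If $\tau_0\ge1$, then $g(\tau)\ge A(1-\tau)$ on $[0,1]$, and the integral is at least $\frac{A^\beta}{1+\beta}$. If $\tau_0<1$, integrate exactly on both sides of the zero. With $x\coloneqq\frac1{\tau_0}-1>0$ this gives
\[
\int_0^1|A+\tau B|^\beta\,\dd\tau=\frac{A^\beta}{1+\beta}\cdot\frac{1+x^{1+\beta}}{1+x}.
\]
Convexity of $t\mapsto t^{1+\beta}$, i.e.\ $\frac{1+x^{1+\beta}}{2}\ge\bigl(\frac{1+x}{2}\bigr)^{1+\beta}$, then yields $\frac{1+x^{1+\beta}}{1+x}\ge 2^{-\beta}(1+x)^\beta\ge 2^{-\beta}$. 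This convexity step is exactly where the factor $2^{-\beta}$ comes from. It is also the paper's proof: normalize $A=1$, write $B=-c$, and split into $c\le0$, $0<c\le1$, $c>1$. The same computation covers your reflection case $\tau_0<\frac12$, which is correct but then unnecessary.

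Your argument for the second inequality is correct. Note that the paper's proof never actually addresses \eqref{eq:one-dimensional-average-betav2}. Your reduction supplies the missing step with $c=\frac1{16}$: use $1-\tau\ge\frac12$ on $[0,\frac12]$, substitute $\tau=\sigma/2$, then apply the first inequality to $(A,B/2)$. This part even survives the gap. It only needs $\int_0^1|A+\tau B|^\beta\,\dd\tau\ge\frac{|A|^\beta}{2(1+\beta)}$, which your case analysis does prove.

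What your argument does not deliver is the constant in \eqref{eq:one-dimensional-average-beta} itself, and that matters. The proof of \eqref{eq:averaged-chord-power} uses $\frac{2^{-\beta}}{1+\beta}\ge 1-\Upsilon\beta$, i.e.\ that the constant tends to $1$ as $\beta\to0$. Yours tends to $\frac12$.
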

\begin{proof}
The case $\beta=0$ is immediate, so assume $0<\beta\le1$.

If $A=0$, then the right-hand side is zero, so there is nothing to prove, so assume $A\neq0$, and by homogeneity we may assume $A =1$.

That is, it suffices to prove
\[
 \int_0^1 |1-\tau c|^\beta\dd\tau
 \ge
 \frac{2^{-\beta}}{1+\beta}
 \qquad
 \forall c\in\mathbb R.
\]

If $c\leq 0$, then $|1-\tau c|\ge1$, hence
\[
 \int_0^1 |1-\tau c|^\beta\dd\tau\ge1
 \ge
 \frac{2^{-\beta}}{1+\beta}.
\]
If $c\in(0,1]$, then
\[
 \int_0^1 |1-\tau c|^\beta\dd\tau = \frac{1}{c} \int_{1-c}^1 \tilde{\tau}^\beta\dd\tau
 =
 \frac{1}{1+\beta} \frac{1-(1-c)^{1+\beta}}{c}.
\]
Since $1+\beta \geq 1$ and $1-c \in (0,1]$ we have
\[
 1-(1-c)^{1+\beta} \ge c
\]
Therefore, for any $c \in (0,1]$ we have shown
\[
 \int_0^1 |1-\tau c|^\beta\dd\tau
 \ge
 \frac1{1+\beta}
 \ge
 \frac{2^{-\beta}}{1+\beta}.
\]
Finally, suppose $c>1$. In this case
\[
\begin{split}
\int_0^1 |1-\tau c|^\beta\dd\tau
&=\frac{1}{c} \int_{1-c}^1 |\tilde{\tau}|^\beta\dd\tilde{\tau}\\
&=\frac{1}{c} \brac{\int_{0}^1 \tilde{\tau}^\beta\dd\tilde{\tau}+ \int_{0}^{c-1} \tilde{\tau}^\beta\dd\tilde{\tau}}\\
&=\frac{1}{\beta+1} \frac{1+ \brac{c-1}^{1+\beta}}{c}.
\end{split}
\]
By convexity, we have
\[
 \frac{1+ \brac{c-1}^{1+\beta}}{2} \geq \brac{\frac{1+ \brac{c-1}}{2}}^{1+\beta}= 2^{-\beta} c^{1+\beta}.
\]
Thus, since $c >1$, we obtain 
\[
\begin{split}
\int_0^1 |1-\tau c|^\beta\,d\tau
&\geq \frac{2^{-\beta}}{\beta+1} \frac{\brac{c}^{1+\beta}}{c} =\frac{2^{-\beta}}{\beta+1} \brac{c}^{\beta} \geq \frac{2^{-\beta}}{\beta+1}.
\end{split}
\]
This proves \eqref{eq:one-dimensional-average-beta}.
\end{proof}

\begin{lemma}\label{la:homotopy}
Set
$$
H_\tau(x)
\coloneqq 
\cos(\tau a(x))x+\sin(\tau a(x))x^\perp .
$$
Then it holds
$$
H_\tau\colon  \S^1 \to \S^1, \quad H_0(x)=x,
$$
We also have
\begin{equation}\label{eq:partialtauHtau}
\partial_\tau H_\tau(x)=a(x)H_\tau^\perp(x),
\end{equation}
and for any $\sigma \in [0,1]$ if $\|a\|_{L^\infty} \aleq 1$ then
\begin{equation}\label{eq:xmHtau}
 [x-H_\tau]_{C^\sigma} \aleq [a]_{C^\sigma} +  \|a\|_{L^\infty}.
\end{equation}
We also have
\[
\left.
\frac{\dd}{\dd\varepsilon}
\right|_{\varepsilon=0}
\frac{H_\tau+\varepsilon H_\tau^\perp\eta}
{|H_\tau+\varepsilon H_\tau^\perp\eta|}
=
\eta H_\tau^\perp,
\]
and
\[
\left.
\frac{\dd^2}{\dd\varepsilon^2}
\right|_{\varepsilon=0}
\frac{H_\tau+\varepsilon H_\tau^\perp\eta}
{|H_\tau+\varepsilon H_\tau^\perp\eta|}
=
-|\eta|^2H_\tau.
\]
\end{lemma}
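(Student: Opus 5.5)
The plan is to verify each claim by direct computation, working with complex notation $x\in\S^1\subset\C$, $x^\perp=\i x$. Writing $x=e^{\i\theta}$, we have $H_\tau(x)=e^{\i(\theta+\tau a(x))}=x\,e^{\i\tau a(x)}$. Since $|x|=1$ and $|e^{\i\tau a}|=1$, it follows that $H_\tau\colon\S^1\to\S^1$, and $H_0=x$ is immediate. Differentiating in $\tau$ gives
\[
\partial_\tau H_\tau=\i a\,x e^{\i\tau a}=a\,\i H_\tau=a H_\tau^\perp,
\]
which is \eqref{eq:partialtauHtau}.

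For \eqref{eq:xmHtau} we write $x-H_\tau(x)=x\bigl(1-e^{\i\tau a(x)}\bigr)$ and estimate the difference at two points $x,y$ exactly as in the proof of \Cref{la:acontrol}. We have
\[
|(x-H_\tau(x))-(y-H_\tau(y))|\le |e^{\i\tau a(x)}-e^{\i\tau a(y)}|+|1-e^{\i\tau a(y)}|\,|x-y| .
\]
The first term is at most $|a(x)-a(y)|$. The second is at most $\|a\|_{L^\infty}|x-y|$, and $|x-y|\le 2^{1-\sigma}|x-y|^\sigma$ on $\S^1$. Dividing by $|x-y|^\sigma$ yields $[x-H_\tau]_{C^\sigma}\aleq[a]_{C^\sigma}+\|a\|_{L^\infty}$, uniformly in $\tau\in[0,1]$. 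The assumption $\|a\|_{L^\infty}\aleq1$ is not really needed here.

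For the variations, the computation in the proof of \Cref{la:firstsecondvariation} applies pointwise with $x$ replaced by the unit vector $H_\tau(x)$ and with $\varphi=\eta H_\tau^\perp$. In that setting the radial part is $\rho=\langle\varphi,H_\tau\rangle=0$ and the tangential part is $\langle\varphi,H_\tau^\perp\rangle=\eta$. Alternatively, one can expand directly:
\[
|H_\tau+\eps\eta H_\tau^\perp|=\sqrt{1+\eps^2\eta^2}=1+\tfrac12\eps^2\eta^2+O(\eps^4).
\]
Hence
\[
\frac{H_\tau+\eps\eta H_\tau^\perp}{|H_\tau+\eps\eta H_\tau^\perp|}=H_\tau+\eps\eta H_\tau^\perp-\tfrac12\eps^2\eta^2H_\tau+O(\eps^3).
\]
Reading off the coefficients gives first derivative $\eta H_\tau^\perp$ and second derivative $-\eta^2H_\tau$, as claimed; this matches \eqref{eq:secondder} with $\rho=0$.

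There is no real obstacle. The only point needing care is to keep the Hölder estimate uniform in $\tau$, which holds because $|\tau|\le1$ enters only as a multiplicative factor bounded by one.
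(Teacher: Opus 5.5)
Your proof is correct. The $\tau$-derivative and the two $\varepsilon$-derivatives are handled as in the paper, which also uses $|H_\tau+\varepsilon H_\tau^\perp\eta|^2=1+\varepsilon^2\eta^2$ and expands the normalized map.

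The only real difference is the H\"older bound \eqref{eq:xmHtau}.
\begin{itemize}
\item The paper integrates the flow equation $\partial_t H_t=aH_t^\perp$ from $0$ to $\tau$. It bounds $|a(x)H_t^\perp(x)-a(y)H_t^\perp(y)|\le|a(x)-a(y)|+\|a\|_{L^\infty}|H_t(x)-H_t(y)|$ and then uses $|H_t(x)-H_t(y)|\aleq|x-y|+|a(x)-a(y)|$. This creates an extra term $\|a\|_{L^\infty}|a(x)-a(y)|$, which is absorbed only because of the hypothesis $\|a\|_{L^\infty}\aleq 1$.
\item Your exact factorization $x-H_\tau(x)=x\bigl(1-e^{\i\tau a(x)}\bigr)$ avoids that term. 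It gives the explicit bound $[a]_{C^\sigma}+2^{1-\sigma}\|a\|_{L^\infty}$, uniformly in $\tau\in[0,1]$, and so correctly shows the smallness hypothesis is unnecessary.
\end{itemize}

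What the paper's route buys is that it uses only $\partial_\tau H_\tau=aH_\tau^\perp$ and a Lipschitz-type bound on $H_t$. It would therefore carry over to settings without an explicit product formula, which yours relies on.
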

\begin{proof}
We observe the following which is \eqref{eq:partialtauHtau}
\[
\partial_\tau H_\tau(x)
=
-a(x)\sin(\tau a(x))x+a(x)\cos(\tau a(x))x^\perp
=
a(x)H_\tau^\perp(x),
\]
In particular, from the fundamental theorem,
\[
\begin{split}
 |x-y-H_\tau(x)-H_\tau(y)| \leq& \int_0^\tau |a(x) H_t^\perp(x)-a(y) H_t^\perp(y)| \dd t\\
 \leq& |a(x)-a(y)| + \|a\|_{L^\infty} \max_{t \in [0,1]} |H_t(x)- H_t(y)|\\
 \aleq& |a(x)-a(y)| + \|a\|_{L^\infty} |x-y| +\|a\|_{L^\infty} |a(x)-a(y)|.
 \end{split}
\]
Consequently, for any $\sigma \in [0,1]$ we have \eqref{eq:xmHtau}. Let $\eta\colon \S^1 \to \R$. Observe that
\begin{equation}\label{eq:Htauvar}
\begin{split}
\frac{H_\tau+\varepsilon H_\tau^\perp\eta}{|H_\tau+\varepsilon H_\tau^\perp\eta|}
& =\frac{1}{\sqrt{1+\eps^2 |\eta|^2}} H_\tau  + \frac{\eps \eta }{\sqrt{1+\eps^2 |\eta|^2}} H_\tau^\perp\\
& =H_\tau-\frac{\eps^2 |\eta|^2}{\brac{1+\sqrt{1+\eps^2 |\eta|^2}} \sqrt{1+\eps^2 |\eta|^2}} H_\tau  + \frac{\eps \eta }{\sqrt{1+\eps^2 |\eta|^2}} H_\tau^\perp\\
\end{split}
\end{equation}
Indeed, we have
\[
 |H_\tau + \eps H_\tau^\perp \eta|^2 =|H_\tau|^2 + \eps^2 |H_\tau|^2\, |\eta|^2 = 1+\eps^2 |\eta|^2.
\]
Thus it holds
\[
 \frac{H_\tau+\varepsilon H_\tau^\perp\eta}{|H_\tau+\varepsilon H_\tau^\perp\eta|} = \frac{1}{\sqrt{1+\eps^2 |\eta|^2}} H_\tau  + \frac{\eps \eta }{\sqrt{1+\eps^2 |\eta|^2}} H_\tau^\perp,
\]
which is \eqref{eq:Htauvar}.

Thus, we end up with
\[
\left.
\frac{\dd}{\dd\varepsilon}
\right|_{\varepsilon=0}
\frac{H_\tau+\varepsilon H_\tau^\perp\eta}
{|H_\tau+\varepsilon H_\tau^\perp\eta|}
=
\eta H_\tau^\perp,
\]
and
\[
\left.
\frac{\dd^2}{\dd\varepsilon^2}
\right|_{\varepsilon=0}
\frac{H_\tau+\varepsilon H_\tau^\perp\eta}
{|H_\tau+\varepsilon H_\tau^\perp\eta|}
=
-|\eta|^2H_\tau.
\]
\end{proof}

As a consequence of \Cref{la:one-dimensional-average-beta} we can estimate our homotopy term when $\beta = p-2 \geq 0$.

\begin{lemma}\label{la:eq:averaged-chord-power}
There exist universal constants $\bar{\rho}>0$ and $C<\infty$ with the following property.
Let $\beta \in [0,1]$, $\rho \in (0,\bar{\rho}]$ and $a\colon \mathbb S^1\to\mathbb R$ with $\|a\|_{L^\infty(\mathbb S^1)}\le \rho$. For $0\le \tau\le1$ we set
\begin{equation}\label{eq:Htau}
H_\tau(z)
\coloneqq 
\cos(\tau a(z))z+\sin(\tau a(z))z^\perp,
\qquad \forall z\in\mathbb S^1.
\end{equation}
Then for all $x,y\in\mathbb S^1$ with $0<|x-y|\le \rho$, one has
 \begin{equation}
\label{eq:averaged-chord-power}
|x-y|^{\beta}-\int_0^1 |H_\tau(x)-H_\tau(y)|^\beta\dd\tau
\leq
C(\beta+\rho^2)\, |x-y|^{\beta}.
\end{equation}
Moreover we have for any $\beta \in [0,1]$ for a uniform constant $c > 0$
 \begin{equation}
\label{eq:averaged-chord-powerv2}
\int_0^1 (1-\tau) |H_\tau(x)-H_\tau(y)|^\beta\dd\tau \geq c |x-y|^\beta.
\end{equation}

\end{lemma}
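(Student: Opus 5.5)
The idea is to reduce everything to a pointwise comparison of the chord $|H_\tau(x)-H_\tau(y)|$ with the chord $|x-y|$, and then plug in \Cref{la:one-dimensional-average-beta}. Write $x=e^{\i\theta}$, $y=e^{\i\sigma}$. By \eqref{eq:Htau} we have $H_\tau(e^{\i\theta})=e^{\i(\theta+\tau a(e^{\i\theta}))}$. Hence
\[
|H_\tau(x)-H_\tau(y)| = 2\left|\sin\frac{\theta-\sigma+\tau(a(x)-a(y))}{2}\right|.
\]
Set $t\coloneqq\theta-\sigma$, with representative $|t|\le\pi$, so that $|x-y|=2|\sin(t/2)|\le\rho$ forces $|t|\lesssim\rho$. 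Set also $d\coloneqq a(x)-a(y)$, so $|d|\le 2\rho$. Then the whole argument $t+\tau d$ stays in a region $|t+\tau d|\lesssim \rho\le\bar\rho$, where $2|\sin(\cdot/2)|$ is comparable to $|\cdot|$ with a multiplicative error $1+O(\rho^2)$. Concretely,
\[
2|\sin(w/2)| = |w|\bigl(1+O(w^2)\bigr),
\]
so
\[
|H_\tau(x)-H_\tau(y)|^\beta = |t+\tau d|^\beta\bigl(1+O(\rho^2)\bigr)
\qquad\text{and}\qquad
|x-y|^\beta=|t|^\beta(1+O(\rho^2)).
\]
The $O(\rho^2)$ error is uniform in $\beta\in[0,1]$, since $(1+O(\rho^2))^\beta$ is again $1+O(\rho^2)$.

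For \eqref{eq:averaged-chord-powerv2}, apply \eqref{eq:one-dimensional-average-betav2} with $A=t$ and $B=d$. This gives
\[
\int_0^1(1-\tau)|t+\tau d|^\beta\dd\tau\ge c|t|^\beta\ge c'|x-y|^\beta,
\]
after absorbing the $1+O(\rho^2)$ factors, choosing $\bar\rho$ small. Nothing more is needed there.

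For \eqref{eq:averaged-chord-power}, start from \eqref{eq:one-dimensional-average-beta} with the same $A=t$ and $B=d$:
\[
\int_0^1|t+\tau d|^\beta\dd\tau\ge\frac{2^{-\beta}}{1+\beta}|t|^\beta.
\]
Since $1-\frac{2^{-\beta}}{1+\beta}\le C\beta$ for $\beta\in[0,1]$ (the function is smooth and vanishes at $\beta=0$), we get
\[
|t|^\beta-\int_0^1|t+\tau d|^\beta\dd\tau\le C\beta|t|^\beta.
\]
Converting back to chords costs a relative error $O(\rho^2)$, which produces the additional $C\rho^2|x-y|^\beta$ term. Together this gives $C(\beta+\rho^2)|x-y|^\beta$. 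If $t+\tau d=0$ for some $\tau$, nothing breaks: all quantities are nonnegative and the expansion of $\sin$ remains valid.

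The main obstacle is bookkeeping the phase representative. The quantity $\theta-\sigma+\tau(a(x)-a(y))$ must be the small lift, not one shifted by $2\pi$. This is ensured by $|x-y|\le\rho$ and $\|a\|_\infty\le\rho$, with $\bar\rho$ small enough that all angles stay below $\pi/2$. Beyond that, one only has to keep constants uniform in $\beta$, which the explicit bounds in \Cref{la:one-dimensional-average-beta} already provide.
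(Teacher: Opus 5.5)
Your proposal is correct and follows essentially the same route as the paper. Both arguments write $|H_\tau(x)-H_\tau(y)|=2\bigl|\sin\frac{w}{2}\bigr|$ with $w$ the small angle (yours is $t+\tau d$, the paper's is $\alpha-\tau(a(x)-a(y))$ with $\alpha=-t$). Both then bound this chord below by $(1-\Gamma\rho^2)|w|$ and apply \Cref{la:one-dimensional-average-beta}, with $A$ the angle between $x$ and $y$ and $B$ the difference of the $a$-values.

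The only difference is cosmetic. For $|x-y|^\beta$ the paper uses the one-sided bound $|\alpha|\ge|x-y|$ (arc $\ge$ chord) rather than your two-sided $1+O(\rho^2)$ comparison, and this one-sided bound is all that is needed.
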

\begin{proof}
The case $\beta=0$ is immediate, so assume $0<\beta\le 1$. Fix $\bar{\rho} \in (0,1)$ so that the following holds for a uniform constant $\Gamma > 0$: for $s \in [0,100\bar{\rho}]$,
\begin{equation}\label{eq:sintaylor}
\begin{split}
 \sin(s) & = s - s^3\brac{\frac{1}{3!} - s\sum_{i=3}^\infty (-1)^i \frac{s^{2i-5}}{(2i+1)!}}\\
 & \geq s - s^3\brac{\frac{1}{3!} + se^s}\\
 & \geq s(1- \Gamma s^2).
 \end{split}
\end{equation}
Up to reducing $\bar{\rho}$, we may assume that $1-\Gamma \bar{\rho}^2 > \frac{1}{2}$.

Let now $\rho \in [0,\bar{\rho}]$, $0<|x-y|\le \rho\le \bar\rho$. Then there is a unique
$\alpha=\alpha(x,y)\in(-\pi/2,\pi/2)$ such that
\[
 y=\cos(\alpha)\, x+\sin(\alpha)\, x^\perp .
\]
Then it holds
\[
 y^\perp=-\sin(\alpha)\,x+\cos(\alpha)\,x^\perp
\]
and we have
\[
 |x-y|
 =
 2\left|\sin\frac{\alpha}{2}\right|.
\]
In particular,
\begin{equation}\label{eq:distance_xy}
 |x-y|\le |\alpha|=
 2\arcsin\frac{|x-y|}{2}
 \le
 2\arcsin\frac{\rho}{2} \leq 2\rho
\end{equation}
We compute $H_\tau(y)$ in the basis $\{x,x^\perp\}$. By \eqref{eq:Htau}, we get
\[
\begin{split}
H_\tau(y)
&=
\cos(\tau a(y))
\left(\cos(\alpha)\,x+\sin(\alpha)\,x^\perp\right)
+
\sin(\tau a(y))
\left(-\sin\alpha\,x+\cos\alpha\,x^\perp\right)
\\
&=
\cos(\alpha+\tau a(y))\, x
+
\sin(\alpha+\tau a(y))\, x^\perp .
\end{split}
\]
Similarly, it holds
\[
H_\tau(x)
=
\cos(\tau a(x))x+\sin(\tau a(x))x^\perp.
\]
Therefore, we obtain 
\[
\begin{split}
 |H_\tau(x)-H_\tau(y)|^2 =& \abs{\cos(\alpha + \tau a(y))-\cos(\tau a(x))}^2 + \abs{\sin(\alpha + \tau a(y))-\sin(\tau a(x))}^2\\
 =& 2 - 2\cos(\alpha + \tau a(y)) \cos(\tau a(x))- 2\sin(\alpha + \tau a(y)) \sin(\tau a(x))\\
 =& 2 - 2\cos(\alpha - \tau \brac{a(x)-a(y)})\\
 =& 4 \sin^2\brac{\frac{\alpha - \tau \brac{a(x)-a(y)}}{2}}.
 \end{split}
\]
Thus, we end up with
\[
 |H_\tau(x)-H_\tau(y)|
 =
 2\left|
 \sin\left(
 \frac{\alpha-\tau(a(x)-a(y))}{2}
 \right)
 \right|.
\]
Moreover, we have
\[
 |a(y)-a(x)|
 \le
 |a(y)|+|a(x)|
 \le
 2\|a\|_{L^\infty}
 \le 2\rho.
\]
Thus, it holds, using \eqref{eq:distance_xy}
\[
 \left|\alpha-\tau(a(x)-a(y))\right|
 \le
 4\rho
 \qquad
 \forall \tau\in[0,1].
\]
From \eqref{eq:sintaylor} we find that for all $t \in [-10\rho,10\rho]$,
\[
 2\left|\sin\frac t2\right|
 \ge
 (1-\Gamma \rho^2)|t|.
\]
That is, we have, for $\bar{\rho}$ suitably small and $\rho < \bar{\rho}$, $|x-y| \leq \rho$
\begin{equation}\label{eq:distance_Htau}
 |H_\tau(x)-H_\tau(y)|
 \ge
 (1-\Gamma \rho^2)
 \left|\alpha-\tau(a(x)-a(y))\right|.
\end{equation}
Raising this inequality to the power $\beta\in(0,1]$, (observe that $(1-\Gamma \rho^2) \in (0,1)$ so that $(1-\Gamma \rho^2)^\beta \geq (1-\Gamma \rho^2)$),
\[
 |H_\tau(x)-H_\tau(y)|^\beta
 \ge
 (1-\Gamma \rho^2)^\beta
 \left|\alpha-\tau(a(x)-a(y))\right|^\beta \geq (1-\Gamma \rho^2)
 \left|\alpha-\tau(a(x)-a(y))\right|^\beta.
\]
Thus, we must have
\begin{equation}
\label{eq:chord-reduced-to-line}
\int_0^1 |H_\tau(x)-H_\tau(y)|^\beta\dd\tau
\ge
(1-\Gamma \rho^2)
\int_0^1
\left|\alpha-\tau(a(x)-a(y))\right|^\beta
\dd\tau .
\end{equation}
and
\begin{equation}
\label{eq:chord-reduced-to-linev2}
\int_0^1 (1-\tau) |H_\tau(x)-H_\tau(y)|^\beta\dd\tau
\ge
(1-\Gamma \rho^2)
\int_0^1
(1-\tau) \left|\alpha-\tau(a(x)-a(y))\right|^\beta
\dd\tau .
\end{equation}
Combining \eqref{eq:chord-reduced-to-line}
with  \Cref{la:one-dimensional-average-beta}, \eqref{eq:one-dimensional-average-beta}, we find with the choices $A=\alpha$, and $B=a(y)-a(x)$, that
\[
\int_0^1 |H_\tau(x)-H_\tau(y)|^\beta\dd\tau
\ge
(1-\Gamma \rho^2)
\frac{2^{-\beta}}{1+\beta}
|\alpha|^\beta .
\]
There exists a constant $\Upsilon > 0$ such that
\[
 \frac{2^{-\beta}}{1+\beta}
 \ge
 1-\Upsilon \beta
 \qquad
 \text{for }0\le\beta\le1,
\]
and we have
\[
\brac{1-\Gamma \rho^2} \brac{1-\Upsilon \beta} \geq 1-\Gamma \rho^2 -\Upsilon \beta.
\]
Since moreover $|\alpha|\ge |x-y|$, we obtain for $C \coloneqq \Gamma + \Upsilon$,
\[
\int_0^1 |H_\tau(x)-H_\tau(y)|^\beta\dd\tau
\ge
\left(1-C(\beta+\rho^2)\right)|x-y|^\beta .
\]
This proves \eqref{eq:averaged-chord-power}. Combining now \eqref{eq:one-dimensional-average-betav2}, \eqref{eq:distance_xy} and \eqref{eq:chord-reduced-to-linev2}, we obtain \eqref{eq:averaged-chord-powerv2}.
\end{proof}

For the case $p < 2$ we need an alternative to \Cref{la:eq:averaged-chord-power} of $\beta \coloneqq 2-p>0$. 
\begin{lemma}
\label{la:p-less-2-lip-chord-term}
There exists $C>0$ such that the following holds. Let $\beta \geq 0$ and $a\colon \mathbb S^1\to\mathbb R$ satisfy
\begin{equation}\label{hyp:a_Lip}
        \frac{a(x)-a(y)}{|x-y|} \le \Lambda.
\end{equation}
For $0\le\tau\le1$, set
\[
        H_\tau(x)
        \coloneqq 
        \cos(\tau a(x))x+\sin(\tau a(x))x^\perp .
\]
Then it holds
\[
|x-y|^{-\beta}-\int_0^1 |H_\tau(x)-H_\tau(y)|^{-\beta}\dd\tau
\leq C\, \beta\log(2+\Lambda)\, |x-y|^{-\beta}.
\]
Moreover we have
\[
\int_0^1 (1-\tau) |H_\tau(x)-H_\tau(y)|^{-\beta}\dd\tau
\geq \brac{\frac{1}{2} - C|\beta| \log(2+\Lambda) } |x-y|^{-\beta}.
\]
which is a good estimate for $\beta \approx 0$. Otherwise, it holds
\[
\int_0^1 (1-\tau) |H_\tau(x)-H_\tau(y)|^{-\beta}\dd\tau \geq \frac{1}{4} (1+ C \Lambda)^{-\beta} |x-y|^{-\beta}.
\]
\end{lemma}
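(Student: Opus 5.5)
We follow the proof of \Cref{la:eq:averaged-chord-power}, but now the exponent is negative. We express $|H_\tau(x)-H_\tau(y)|$ through the angle between the two points and reduce everything to a one-dimensional integral in $\tau$. Write $y=\cos(\alpha)x+\sin(\alpha)x^\perp$ with $\alpha\in(-\pi,\pi]$. The computation there, which is valid for any $x,y$, gives
\[
|H_\tau(x)-H_\tau(y)| = 2\left|\sin\frac{\alpha-\tau(a(x)-a(y))}{2}\right|.
\]
Since $|\sin|\le 1$ and $|\sin(t/2)|\le |t|/2$, we have the \emph{upper} bound
\[
|H_\tau(x)-H_\tau(y)|\le \min\{2,\ |\alpha-\tau(a(x)-a(y))|\}.
\]
For $\beta\ge0$ this is exactly the direction we need, because $t\mapsto t^{-\beta}$ is decreasing. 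So $|H_\tau(x)-H_\tau(y)|^{-\beta}$ is bounded below by $|\alpha-\tau B|^{-\beta}$, where $B\coloneqq a(x)-a(y)$ (with the obvious modification by the cap $2$). Next we use the Lipschitz hypothesis \eqref{hyp:a_Lip}, which we read as applying to $|a(x)-a(y)|$. Together with $|x-y|\aeq|\alpha|$ it gives $|B|\le \Lambda|x-y|\le \Lambda|\alpha|$. Hence $|\alpha-\tau B|\le (1+\Lambda)|\alpha|$ for all $\tau\in[0,1]$. When $|x-y|$ is not small, we instead use $|H_\tau(x)-H_\tau(y)|\le 2\le C|x-y|\cdot$const. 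Either way,
\[
|H_\tau(x)-H_\tau(y)|^{-\beta}\ge \bigl(C(1+\Lambda)\bigr)^{-\beta}|x-y|^{-\beta}\cdot(\text{factor from }|\alpha|\text{ vs }|x-y|).
\]
The angle factor needs care. We have $|x-y|=2|\sin(\alpha/2)|\le|\alpha|$, so $|\alpha|^{-\beta}\le|x-y|^{-\beta}$, which is the wrong direction. However, $|\alpha|\le \frac{\pi}{2}|x-y|$ for $|\alpha|\le\pi$, so this factor only costs a constant to the power $\beta$, and we absorb it into $C$.

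The pointwise bound already gives the third inequality. Integrating against $(1-\tau)$, whose integral is $\frac12$, yields $\frac12(C(1+\Lambda))^{-\beta}|x-y|^{-\beta}$, which is at least $\frac14(1+C\Lambda)^{-\beta}|x-y|^{-\beta}$ after adjusting constants. For the first two inequalities, which are sharp as $\beta\to0$, we use the elementary bound $e^{-\beta L}\ge 1-\beta L$ with $L=\log(C(1+\Lambda))\aleq\log(2+\Lambda)$. This gives
\[
\int_0^1|H_\tau(x)-H_\tau(y)|^{-\beta}\,\dd\tau\ \ge\ (1-C\beta\log(2+\Lambda))\,|x-y|^{-\beta},
\]
which is the first claim. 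The same inequality weighted by $(1-\tau)$ gives $\bigl(\frac12-C\beta\log(2+\Lambda)\bigr)|x-y|^{-\beta}$.

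We expect the main obstacle to be the bookkeeping in the comparison between $|\alpha-\tau B|$ and $|x-y|$ when $|x-y|$ is of order one. There the angle $\alpha$ can approach $\pi$, the quantity $\alpha-\tau B$ can leave $(-\pi,\pi)$, and the chord no longer behaves like $\alpha$. This is handled by the crude cap $|H_\tau(x)-H_\tau(y)|\le2$ together with $|x-y|\ge c$ in that regime. Since only an upper bound on the chord is ever needed, no lower control (and no smallness of $\|a\|_{L^\infty}$) is required. That is precisely why the logarithmic loss appears instead of the $\rho^2$ of \Cref{la:eq:averaged-chord-power}.
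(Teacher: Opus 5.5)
Your first two inequalities are fine. The constant you lose from $|\alpha|\le\frac{\pi}{2}|x-y|$ is harmless there, because $1-\bigl(\tfrac{\pi}{2}(1+\Lambda)\bigr)^{-\beta}\le\beta\log\bigl(\tfrac{\pi}{2}(1+\Lambda)\bigr)\le C\beta\log(2+\Lambda)$.

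The gap is in the third inequality, at the step ``after adjusting constants''. Your pointwise bound is $|H_\tau(x)-H_\tau(y)|\le \frac{\pi}{2}(1+\Lambda)|x-y|$, which yields only $\frac12(\frac{\pi}{2})^{-\beta}(1+\Lambda)^{-\beta}|x-y|^{-\beta}$. A factor $c_0^{-\beta}$ with $c_0>1$ cannot be absorbed into $\frac14(1+C\Lambda)^{-\beta}$ uniformly in $\beta\ge0$. At $\Lambda=0$ (and by continuity for small $\Lambda$) the statement demands the prefactor $\frac14$. You only have $\frac12(\frac{\pi}{2})^{-\beta}$, which drops below $\frac14$ once $\beta>\log 2/\log(\pi/2)\approx 1.53$. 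So as written you prove the third inequality only for $\beta$ in a bounded range, not for all $\beta\ge0$ as stated. That range does cover the only case used later, $\beta=2-p\in(0,1)$, since then $\frac12(\frac{\pi}{2})^{-\beta}\ge\frac1\pi>\frac14$.

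The loss comes entirely from comparing with the angle $|\alpha|$ instead of the chord. You can avoid it using your own formula. Since $|\sin(u-v)|\le|\sin u|+|\sin v|$, you get
\[
|H_\tau(x)-H_\tau(y)|=2\Bigl|\sin\tfrac{\alpha-\tau(a(x)-a(y))}{2}\Bigr|\le |x-y|+\tau|a(x)-a(y)|\le(1+\tau\Lambda)|x-y|
\]
for all $x,y$, with prefactor exactly $1$. This also makes your cap by $2$ and the separate treatment of $|x-y|\sim 1$ unnecessary. In fact that regime was never an obstacle, since $2|\sin(t/2)|\le|t|$ for all real $t$.

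This is essentially the paper's route. It writes $H_\tau(x)-H_\tau(y)$ as $\cos(\tau a(x))(x-y)+\sin(\tau a(x))(x^\perp-y^\perp)$, which is a rotated copy of $x-y$, plus $(\cos(\tau a(x))-\cos(\tau a(y)))y+(\sin(\tau a(x))-\sin(\tau a(y)))y^\perp$. This gives $|H_\tau(x)-H_\tau(y)|\le(1+C\tau\Lambda)|x-y|$. Then $(1+C\tau\Lambda)^{-\beta}\ge(1+C\Lambda)^{-\beta}$ gives the third inequality for every $\beta\ge0$, and $1-(1+C\tau\Lambda)^{-\beta}\le\beta\log(1+C\Lambda)$ gives the first two.
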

\begin{proof}
For every $x,y\in\mathbb S^1$ and every
$\tau\in[0,1]$, we have
\[
\begin{split}
H_\tau(x)-H_\tau(y)
&=
\cos(\tau a(x))(x-y)
+
\sin(\tau a(x))(x^\perp-y^\perp)
\\
&\quad
+
\left(\cos(\tau a(x))-\cos(\tau a(y))\right)y
\\
&\quad
+
\left(\sin(\tau a(x))-\sin(\tau a(y))\right)y^\perp .
\end{split}
\]
The first line has length exactly $|x-y|$. Therefore
\[
\begin{split}
|H_\tau(x)-H_\tau(y)|
&\le
|x-y|
+
\left|\cos(\tau a(x))-\cos(\tau a(y))\right|
+
\left|\sin(\tau a(x))-\sin(\tau a(y))\right|
\\
&\le
|x-y|+C\tau |a(x)-a(y)|.
\end{split}
\]
By \eqref{hyp:a_Lip}, we obtain 
\[
        |H_\tau(x)-H_\tau(y)|
        \le
        |x-y|(1+C\tau\Lambda).
\]
Because $\beta>0$, this implies
\[
        |H_\tau(x)-H_\tau(y)|^{-\beta}
        \ge
        |x-y|^{-\beta}(1+C\tau\Lambda)^{-\beta}.
\]
Therefore, we have
\[
\begin{split}
&|x-y|^{-\beta}
-
\int_0^1 |H_\tau(x)-H_\tau(y)|^{-\beta}\dd\tau
 \le
|x-y|^{-\beta}
\left[
1-
\int_0^1 (1+C\tau\Lambda)^{-\beta}\dd\tau
\right].
\end{split}
\]
Observe that for $B>0$
\[
        1-(1+\tau B)^{-\beta}
        =
        1-e^{-\beta\log(1+\tau B)}
        \le
        \beta\, \log(1+\tau B)
        \le
        \beta\, \log(1+B).
\]
Integrating in $\tau$ proves
\[
        1-
        \int_0^1 (1+C\tau\Lambda)^{-\beta}\dd\tau
        \le
        C\, \beta\, \log(2+\Lambda).
\]
Similarly, we have
\[
\begin{split}
&\int_0^1 (1-\tau) |H_\tau(x)-H_\tau(y)|^{-\beta}\dd\tau\\
\geq& \left( \int_0^1 (1-\tau) (1+C\tau \Lambda)^{-\beta} \dd\tau\right) \, |x-y|^{-\beta}\\
\geq& \left( \int_0^1 (1-\tau) \brac{1+((1+C\tau \Lambda)^{-\beta} -1)}\dd\tau\right) |x-y|^{-\beta}\\
\geq& \brac{\frac{1}{2} - C|\beta| \log(2+\Lambda) } |x-y|^{-\beta}.
\end{split}
\]
This is a good estimate for $\beta \approx 0$. Otherwise, we have 
\[
\begin{split}
&\int_0^1 (1-\tau) |H_\tau(x)-H_\tau(y)|^{-\beta}\dd\tau\\
&\geq \left(\int_0^1 (1-\tau) (1+C\tau \Lambda)^{-\beta} \dd\tau\right) |x-y|^{-\beta}\\
&\geq \left( \int_0^{\frac{1}{2}} (1-\tau) (1+C\tau \Lambda)^{-\beta} \dd\tau\right) |x-y|^{-\beta}\\
& \geq \frac{1}{4} (1+ C \Lambda)^{-\beta} |x-y|^{-\beta}.
\end{split}
\]
We can conclude.
\end{proof}

 \begin{proof}[Proof of \Cref{th:localminisidv2}]
Set
\[
        p\coloneqq \frac1s,
        \qquad
        p_-\coloneqq \frac1{s_+},
        \qquad
        p_+\coloneqq \frac1{s_-}.
\]
Then we have
\[
        1<p_-\leq p\leq p_+<3.
\]

By \Cref{la:local-normalization-phase}, we can replace
\[
        \widetilde u:=e^{-i\lambda}u\circ m_b
\]
for some $\lambda\in\R$ and $|b|<1$, and by \Cref{la:lipschitz-phase} we have a unique phase $a\colon \S^1\to\left[-\frac\pi4,\frac\pi4\right]$ so that
\[
        \widetilde u(x)
        =
        \cos(a(x))x+\sin(a(x))x^\perp,
\]
with
\begin{equation}\label{eq:aftmodeszero}
        \int_0^{2\pi}a(e^{\i\theta})\dd\theta=0,
        \qquad
        \int_0^{2\pi}a(e^{\i\theta})e^{\i\theta}\dd\theta=0.
\end{equation}
Moreover, it holds
\[
        \|a\|_{L^\infty}
        \aleq
        \|\widetilde u-\id\|_{L^\infty}
        \aleq
        \|u-\id\|_{L^\infty}
        \leq \eps .
\]
If $p<2$, then, since $m_b$ is uniformly bi-Lipschitz for $\eps$ small,
\[
        [a]_{\lip}
        \aleq
        1+[\widetilde u]_{\lip}
        \aleq
        1+[u]_{\lip}
        \aleq
        1+\frac{\Lambda}{|p-2|}.
\]
By \Cref{la:mobius-invariance}, $\widetilde u$ is still a critical point. Set as in \Cref{la:homotopy}
\[
        H_\tau(x)
        \coloneqq 
        \cos(\tau a(x))x+\sin(\tau a(x))x^\perp,
        \qquad
        0\leq \tau\leq 1.
\]
Then we have $H_0=\id$, $H_1=\widetilde u$ and
\[
        \partial_\tau H_\tau(x)=a(x)H_\tau^\perp(x),
        \qquad
        \partial_{\tau\tau}H_\tau(x)=-a(x)^2H_\tau(x).
\]
Since both $\id$ and $\widetilde u$ are critical, we have
\[
        \left.
        \frac{\dd}{\dd\tau}
        \right|_{\tau=0}
        \frac1pE_s(H_\tau)
        =
        \left.
        \frac{\dd}{\dd\tau}
        \right|_{\tau=1}
        \frac1pE_s(H_\tau)
        =
        0.
\]
Thus, by the fundamental theorem of calculus,
\begin{equation}\label{eq:unweightedendpointcriticality}
\begin{split}
0
&=
\int_0^1
        \frac{\dd^2}{\dd\tau^2}
        \frac1pE_s(H_\tau)\dd\tau .
\end{split}
\end{equation}
By \eqref{eq:identity1} and \eqref{eq:identity2},
\[
\begin{split}
        \frac{\dd^2}{\dd\tau^2}
        \frac1pE_s(H_\tau)
        &=
        \iint_{\S^1\times\S^1}
        \frac{
        |H_\tau(x)-H_\tau(y)|^{p-2}
        \left[
        p-1-\frac p4|H_\tau(x)-H_\tau(y)|^2
        \right]
        }{|x-y|^2}
        |a(x)-a(y)|^2\dx\dy .
\end{split}
\]
Consequently, we have
\begin{equation}\label{eq:criticalidentitylocalmin}
\begin{split}
0
&=
\int_0^1
        \iint_{\S^1\times\S^1}
        \frac{
        |H_\tau(x)-H_\tau(y)|^{p-2}
        \left[
        p-1-\frac p4|H_\tau(x)-H_\tau(y)|^2
        \right]
        }{|x-y|^2}
        |a(x)-a(y)|^2\dx\dy\dd\tau .
\end{split}
\end{equation}

We now use the positivity of the second variation at $\id$, in the form of
\Cref{co:la:truncated-coercivity-one-kernel}. We first choose the constant
$A_\Lambda>0$ below. Then we apply \Cref{co:la:truncated-coercivity-one-kernel}
with $A=\frac12A_\Lambda$, and fix $\rho_0>0$ and $\bar c>0$ such that for any
$\rho<\rho_0$,
\begin{equation}\label{eq:Aweridcoercivity}
\begin{split}
&\frac12A_\Lambda
\iint_{\{|x-y|\leq\rho\}}
\frac{|a(x)-a(y)|^2}{|x-y|^{4-p}}\dx\dy
\\
&
+
\frac12
\iint_{\{|x-y|\geq\rho\}}
\frac{
|x-y|^{p-2}
\left[
p-1-\frac p4|x-y|^2
\right]
}{|x-y|^2}
|a(x)-a(y)|^2\dx\dy 
\geq
\bar c
[a]_{H^{\frac{3-p}{2}}(\S^1)}^2 .
\end{split}
\end{equation}

We split the integral in \eqref{eq:criticalidentitylocalmin} and first consider
\[
\begin{split}
        \int_0^1
        \iint_{\S^1\times\S^1} \chi_{\{|x-y| \leq \rho\} }
        \frac{
        |H_\tau(x)-H_\tau(y)|^{p-2}
        \left[
        p-1-\frac p4|H_\tau(x)-H_\tau(y)|^2
        \right]
        }{|x-y|^2} |a(x)-a(y)|^2\dx\dy\dd\tau .
\end{split}
\]
Observe that if $|x-y| \leq \rho$, then
\[
        |H_\tau(x)-H_\tau(y)|
        \aleq |x-y|+\|a\|_{L^\infty}
        \leq \rho+C\eps .
\]
Choosing $\rho>0$ small enough, and then $\eps\ll\rho$, we may assume, uniformly for
$p\in[p_-,p_+]$, that if $|x-y|\leq\rho$, then
\[
        p-1-\frac p4|H_\tau(x)-H_\tau(y)|^2
        \geq
        \frac{p-1}{2}.
\]
We obtain 
\begin{equation}\label{eq:second_var_a}
\begin{split}
        &\int_0^1
        \iint_{\S^1\times\S^1} \chi_{\{|x-y| \leq \rho\}}
        \frac{
        |H_\tau(x)-H_\tau(y)|^{p-2}
        \left[
        p-1-\frac p4|H_\tau(x)-H_\tau(y)|^2
        \right]
        }{|x-y|^2} |a(x)-a(y)|^2\dx\dy\dd\tau
        \\
        & \geq
        \frac{p-1}{2}
        \iint_{\S^1\times\S^1} \chi_{\{|x-y| \leq \rho\}}
        \frac{
        \int_0^1 |H_\tau(x)-H_\tau(y)|^{p-2}\dd\tau
        }{|x-y|^2}
        |a(x)-a(y)|^2\dx\dy .
\end{split}
\end{equation}
Fix $\gamma>0$ small enough so that
\begin{equation}\label{eq:gammachoiceasdad}
        \frac12-C\gamma\log\brac{C\brac{2+\frac{\Lambda}{\gamma}}}
        \geq
        \frac14 .
\end{equation}
We claim that
\begin{equation}\label{eq:localstableqrhoterm:goal}
\begin{split}
   &\frac{p-1}{2}
   \iint_{\S^1\times\S^1} \chi_{\{|x-y| \leq \rho\}}
   \frac{
   	\int_0^1 |H_\tau(x)-H_\tau(y)|^{p-2}\dd\tau
   }{|x-y|^2}
   |a(x)-a(y)|^2\dx\dy 
        \\
        & \geq
        A_\Lambda
        \iint_{\S^1\times\S^1} \chi_{ \{|x-y| \leq \rho\} }
        \frac{|a(x)-a(y)|^2}{|x-y|^{4-p}}\dx\dy .
\end{split}
\end{equation}
Here $A_\Lambda>0$ depends only on $p_-,p_+,\Lambda$, and indeed
\[
 A_\Lambda:= \begin{cases}
    \frac{p-1}{2} c,
    &\text{if $p>2$},\\
    \frac{p-1}{2},
    &\text{if $p=2$},\\
    \frac{p-1}{2}
    \brac{
        \frac12
        -
        C\gamma\log\brac{C\brac{2+\frac{\Lambda}{\gamma}}}
    },
    &\text{if $p\in [2-\gamma,2)$},\\
    \frac{p-1}{2}\frac14
    \brac{1+C\frac{\Lambda}{\gamma}}^{-1},
    &\text{if $p\in (p_-,2-\gamma)$}.
    \end{cases}
\]
where $c$ and $C$ denotes constants depending only on $s_-$ and $s_+$.

Indeed, \underline{if $p=2$}, there is nothing to show, and we get
\[
\begin{split}
   &\frac{p-1}{2}
   \iint_{\S^1\times\S^1} \chi_{\{|x-y| \leq \rho\}}
   \frac{
   	\int_0^1 |H_\tau(x)-H_\tau(y)|^{p-2}\dd\tau
   }{|x-y|^2}
   |a(x)-a(y)|^2\dx\dy 
   \\
        &=
        \frac{p-1}{2}
        \iint_{\S^1\times\S^1} \chi_{\{|x-y| \leq \rho\}}
        \frac{|a(x)-a(y)|^2}{|x-y|^{4-p}}\dx\dy .
\end{split}
\]

If \underline{$p>2$}, we apply \eqref{eq:averaged-chord-powerv2}. Since
\[
        \int_0^1 |H_\tau(x)-H_\tau(y)|^{p-2}\dd\tau
        \geq
        \int_0^1(1-\tau)|H_\tau(x)-H_\tau(y)|^{p-2}\dd\tau,
\]
we get from \eqref{eq:averaged-chord-powerv2}
\[
\begin{split}
   &\frac{p-1}{2}
   \iint_{\S^1\times\S^1} \chi_{\{|x-y| \leq \rho\}}
   \frac{
   	\int_0^1 |H_\tau(x)-H_\tau(y)|^{p-2}\dd\tau
   }{|x-y|^2}
   |a(x)-a(y)|^2\dx\dy 
   \\
    & \geq \frac{p-1}{2}c
        \iint_{\S^1\times\S^1} \chi_{\{|x-y| \leq \rho\} }
        \frac{|a(x)-a(y)|^2}{|x-y|^{4-p}}\dx\dy .
\end{split}
\]

If \underline{$p\in[2-\gamma,2)$}, we apply \Cref{la:p-less-2-lip-chord-term} with $\beta=2-p$.
Using $[a]_{\lip}\aleq 1+\frac{\Lambda}{2-p}$, and the choice of $\gamma$ in \eqref{eq:gammachoiceasdad}, we obtain
\[
\begin{split}
   &\frac{p-1}{2}
   \iint_{\S^1\times\S^1} \chi_{\{|x-y| \leq \rho\}} \frac{\int_0^1 |H_\tau(x)-H_\tau(y)|^{p-2}\dd\tau}{|x-y|^2} |a(x)-a(y)|^2\dx\dy  \\
    &\geq \frac{p-1}{2}
        \brac{\frac12-C(2-p)\log\brac{C\brac{2+\frac{\Lambda}{2-p}}} }
        \iint_{\S^1\times\S^1} \chi_{\{|x-y| \leq \rho\}}
        \frac{|a(x)-a(y)|^2}{|x-y|^{4-p}}\dx\dy
        \\
        & \geq
        \frac{p-1}{8}
        \iint_{\S^1\times\S^1} \chi_{\{|x-y| \leq \rho\}}
        \frac{|a(x)-a(y)|^2}{|x-y|^{4-p}}\dx\dy .
\end{split}
\]
Finally, if \underline{$p\in(p_-,2-\gamma)$}, then again by \Cref{la:p-less-2-lip-chord-term},
\[
\begin{split}
   &\frac{p-1}{2}
   \iint_{\S^1\times\S^1} \chi_{\{|x-y| \leq \rho\}} \frac{\int_0^1 |H_\tau(x)-H_\tau(y)|^{p-2}\dd\tau}{|x-y|^2} |a(x)-a(y)|^2\dx\dy  \\
    & \geq \frac{p-1}{8}
        \brac{1+C\frac{\Lambda}{\gamma}}^{-1}
        \iint_{\S^1\times\S^1} \chi_{\{|x-y| \leq \rho\}}
        \frac{|a(x)-a(y)|^2}{|x-y|^{4-p}}\dx\dy .
\end{split}
\]
Thus, \eqref{eq:localstableqrhoterm:goal} holds for all
$p\in[p_-,p_+]$.

Combining \eqref{eq:second_var_a} and \eqref{eq:localstableqrhoterm:goal}, we obtain
\begin{equation}\label{eq:plarger2:stablealsdvcjxopv1}
\begin{split}
        &\int_0^1
        \iint_{\S^1\times\S^1} \chi_{\{|x-y| \leq \rho\}}
        \frac{
        |H_\tau(x)-H_\tau(y)|^{p-2}
        \left[
        p-1-\frac p4|H_\tau(x)-H_\tau(y)|^2
        \right]
        }{|x-y|^2}
        |a(x)-a(y)|^2\dx\dy\dd\tau
        \\
        &\qquad\geq
        A_\Lambda
        \iint_{\S^1\times\S^1} \chi_{\{|x-y| \leq \rho\}}
        \frac{|a(x)-a(y)|^2}{|x-y|^{4-p}}\dx\dy .
\end{split}
\end{equation}
For the remaining integral we write
\[
\begin{split}
        &\int_0^1
        \iint_{\S^1\times\S^1} \chi_{\{|x-y| \geq \rho\}}
        \frac{
        |H_\tau(x)-H_\tau(y)|^{p-2}
        \left[
        p-1-\frac p4|H_\tau(x)-H_\tau(y)|^2
        \right]
        }{|x-y|^2}
        |a(x)-a(y)|^2\dx\dy\dd\tau
        \\
        =&
        \int_0^1
        \iint_{\S^1\times\S^1} \chi_{\{|x-y| \geq \rho\}}
        \frac{
        |x-y|^{p-2}
        \left[
        p-1-\frac p4|x-y|^2
        \right]
        }{|x-y|^2}
        |a(x)-a(y)|^2\dx\dy\dd\tau
        \\
        &+
        \int_0^1
        \iint_{\S^1\times\S^1} \chi_{\{|x-y| \geq \rho\}}
        \frac{
        (p-1)\brac{|H_\tau(x)-H_\tau(y)|^{p-2}-|x-y|^{p-2}}
        }{|x-y|^2}
        |a(x)-a(y)|^2\dx\dy\dd\tau
        \\
        &+
        \int_0^1
        \iint_{\S^1\times\S^1} \chi_{\{|x-y| \geq \rho\}}
        \frac{
        -\frac p4\brac{|H_\tau(x)-H_\tau(y)|^p-|x-y|^p}
        }{|x-y|^2}
        |a(x)-a(y)|^2\dx\dy\dd\tau .
\end{split}
\]
The first term is exactly
\[
        \iint_{\S^1\times\S^1} \chi_{\{|x-y| \geq \rho\}}
        \frac{
        |x-y|^{p-2}
        \left[
        p-1-\frac p4|x-y|^2
        \right]
        }{|x-y|^2}
        |a(x)-a(y)|^2\dx\dy .
\]
Moreover, for $\eps\ll\rho$, if $|x-y|\geq\rho$, then by \eqref{eq:distance_Htau}
\[
        |H_\tau(x)-H_\tau(y)|\ageq \rho .
\]
Hence, uniformly for $p\in[p_-,p_+]$,
\[
        \abs{|H_\tau(x)-H_\tau(y)|^{p-2}-|x-y|^{p-2}}
        \aleq
        \rho^{p-3}\|a\|_{L^\infty}
\]
and
\[
        \abs{|H_\tau(x)-H_\tau(y)|^p-|x-y|^p}
        \aleq
        \|a\|_{L^\infty}.
\]
Therefore, we obtain
\begin{equation}\label{eq:plarger2:stablealsdvcjxopv2}
\begin{split}
        &\int_0^1
        \iint_{\S^1\times\S^1} \chi_{\{|x-y| \geq \rho\}}
        \frac{
        |H_\tau(x)-H_\tau(y)|^{p-2}
        \left[
        p-1-\frac p4|H_\tau(x)-H_\tau(y)|^2
        \right]
        }{|x-y|^2}
        |a(x)-a(y)|^2\dx\dy\dd\tau
        \\
        \geq&
        \iint_{\S^1\times\S^1} \chi_{\{|x-y| \geq \rho\}}
        \frac{
        |x-y|^{p-2}
        \left[
        p-1-\frac p4|x-y|^2
        \right]
        }{|x-y|^2}
        |a(x)-a(y)|^2\dx\dy
        \\
        &-
        \tilde C
        \brac{\rho^{p_- -5}+\rho^{-2}}
        \|a\|_{L^\infty}
        [a]_{H^{\frac{3-p}{2}}}^2 .
\end{split}
\end{equation}

Combining \eqref{eq:criticalidentitylocalmin},
\eqref{eq:plarger2:stablealsdvcjxopv1}, and
\eqref{eq:plarger2:stablealsdvcjxopv2}, we find
\[
\begin{split}
0
\geq&
A_\Lambda
\iint_{\S^1\times\S^1} \chi_{\{|x-y| \leq \rho\}}
        \frac{|a(x)-a(y)|^2}{|x-y|^{4-p}}\dx\dy
\\
&+
\iint_{\S^1\times\S^1} \chi_{\{|x-y| \geq \rho\}}
        \frac{
        |x-y|^{p-2}
        \left[
        p-1-\frac p4|x-y|^2
        \right]
        }{|x-y|^2}
        |a(x)-a(y)|^2\dx\dy
\\
&-
\tilde C
        \brac{\rho^{p_- -5}+\rho^{-2}}
        \|a\|_{L^\infty}
        [a]_{H^{\frac{3-p}{2}}}^2 .
\end{split}
\]
Applying \eqref{eq:Aweridcoercivity}, multiplied by $2$, gives
\[
\begin{split}
0
\geq
\brac{
2\bar c
-
\tilde C
        \brac{\rho^{p_- -5}+\rho^{-2}}
        \|a\|_{L^\infty}
}
[a]_{H^{\frac{3-p}{2}}}^2 .
\end{split}
\]
Now choose $\eps=\eps(s_-,s_+,\Lambda)>0$ small enough so that
\[
        \|a\|_{L^\infty}
        \leq
        C\|u-\id\|_{L^\infty}
        \leq
        C\eps
\]
and
\[
        \tilde C
        \brac{\rho^{p_- -5}+\rho^{-2}}
        \|a\|_{L^\infty}
        \leq
        \bar c .
\]
Then
\[
        0\geq
        \bar c
        [a]_{H^{\frac{3-p}{2}}}^2 .
\]
Hence $[a]_{H^{\frac{3-p}{2}}}=0$. Thus $a$ is constant. Since the zeroth Fourier mode of $a$ vanishes by
\eqref{eq:aftmodeszero}, we have $a\equiv0$.
Therefore $\widetilde u=\id$.
Since $\widetilde u=e^{-i\lambda}u\circ m_b$, this implies that $u$ differs from
$\id$ by a rotation and a Möbius transform. Hence $u\in\mathcal M$.
\end{proof}

\begin{proof}[Proof of \Cref{th:minisId}]
By \Cref{th:stable} for any $\eps >0$ there exists a $\delta > 0$ such
any degree minimizer $u_s$ of $E_s$, $s \in [\frac{1}{2}-\delta,\frac{1}{2}+\delta]$ satisfies (up to rotation and conformal transforms) for some
\[
		\|u_s - \id\|_{C^{\gamma_0}} + \|u_s-\id\|_{W^{s_0,p_0}(\S^1,\R^2)} \leq \eps
		\]
If $p<2$, we apply \Cref{th:Lipregularity} to get $[u_s]_{\lip} \leq C(2-p)^{-\frac{1}{2}}$ for a uniform constant $C$.

Thus \Cref{th:localminisidv2} is applicable and we conclude $u_s \in \mathcal{M}$.
\end{proof}

\section{Consequences for lateral stability}\label{s:localstable}
We prove the following.
\begin{theorem}[Local stability]\label{th:rollingstablev2}
Let $\frac{1}{3}<s_{-} < s_+ <1$ and $\Lambda > 0$, $\alpha > 0$

There exists a $\delta_0 = \delta_0(s_-,s_+,\Lambda,\alpha)$ and a constant $c = c(s_-,s_+,\Lambda,\alpha)>0$ such that the following holds for any $s \in [s_-,s_+]$. Set $p = \frac{1}{s}$.
For any $\eps > 0$ and $t \in (0,s]$ there exists $\delta > 0$ such that the following holds.
Let $u\in W^{\frac{3-p}{2},2}\cap W^{s,\frac{1}{s}}\cap C^\alpha(\S^1,\S^1)$ be such that
\[
 [u]_{W^{s,\frac{1}{s}}}+ [u]_{C^\alpha} \leq \Lambda.
\]
If $p < 2$ we assume in addition that
\[
 [u]_{\lip} \leq \frac{\Lambda}{|p-2|}.
\]
If for some $t < s$, we have $[u-\id]_{W^{t,\frac{1}{t}}} \leq \delta$, then there exists $\phi =e^{\i \lambda} m_b\in \mathcal{M}$ with $|b| \leq \eps$, $\lambda \in \R$ and
\[
 E_s(u\circ\phi)-E_s(\id) \geq c\, \inf_{\psi\in \mathcal{M}}[u\circ \psi-\id]_{H^{\frac{3-p}{2}}}^2.
\]
\end{theorem}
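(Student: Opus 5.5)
We follow the proof of \Cref{th:localminisidv2}, but replace the criticality of $\widetilde u$ by a Taylor expansion with integral remainder weighted by $(1-\tau)$. The quantity $E_s(u\circ\phi)-E_s(\id)$ then becomes a weighted average of second variations along the homotopy $H_\tau$.

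\emph{Step 1: reduce to a phase function with vanishing low modes.} By Gagliardo--Nirenberg interpolation between $[u-\id]_{W^{t,1/t}}\le\delta$ and $[u]_{C^\alpha}\le\Lambda$, together with \Cref{la:poincareonS1}, we obtain $\|u-\id\|_{L^\infty}\le\eps'(\delta)$, where $\eps'(\delta)\to0$ as $\delta\to0$. \Cref{la:local-normalization-phase} then gives $\lambda$ and $b$ with $|\lambda|+|b|\aleq\eps'$; we choose $\delta$ so small that $|b|\le\eps$, and set $\phi\coloneqq e^{\i\lambda}m_b$. We write $u\circ\phi$ for the normalized map $\widetilde u=e^{-\i\lambda}u\circ m_b$; target rotations do not change the energy, so this is harmless. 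We obtain
\[
\widetilde u=\cos(a)x+\sin(a)x^\perp,
\]
where $a$ has vanishing Fourier modes of order $0,\pm1$ and $\|a\|_{L^\infty}\aleq\eps'$. If $p<2$, we also have $[a]_{\lip}\aleq 1+\Lambda/|p-2|$, since $m_b$ is uniformly bi-Lipschitz. Since $\inf_{\psi\in\mathcal M}[u\circ\psi-\id]_{H^{\frac{3-p}{2}}}\le[\widetilde u-\id]_{H^{\frac{3-p}{2}}}\aleq[a]_{H^{\frac{3-p}{2}}}$ by \Cref{la:acontrol}, it suffices to prove
\[
E_s(\widetilde u)-E_s(\id)\ge c\,[a]_{H^{\frac{3-p}{2}}}^2 .
\]

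\emph{Step 2: Taylor formula along the homotopy.} Let $H_\tau$ be as in \Cref{la:homotopy}, so that $H_0=\id$ and $H_1=\widetilde u$. The identity is critical by \Cref{la:firstsecondvariation}, so the first-order term vanishes and
\[
\tfrac1p\big(E_s(\widetilde u)-E_s(\id)\big)=\int_0^1(1-\tau)\,\frac{\dd^2}{\dd\tau^2}\tfrac1pE_s(H_\tau)\dd\tau .
\]
By \eqref{eq:identity1} and \eqref{eq:identity2}, the integrand equals the same double integral that appears in \eqref{eq:criticalidentitylocalmin}, now carrying the weight $(1-\tau)$. Differentiating twice under the integral sign is legitimate because $a\in H^{\frac{3-p}{2}}$ by the standing assumption $u\in W^{\frac{3-p}{2},2}$.

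\emph{Step 3: lower bound, copying the proof of \Cref{th:localminisidv2}.} We split the double integral into $\{|x-y|\le\rho\}$ and $\{|x-y|\ge\rho\}$.

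On the near-diagonal part, the bracket is at least $\frac{p-1}{2}$ once $\rho$ is small and $\eps'\ll\rho$. The weighted chord averages are then controlled by estimates stated precisely for the weight $(1-\tau)$: \eqref{eq:averaged-chord-powerv2} when $p\ge2$, and the two $(1-\tau)$-estimates of \Cref{la:p-less-2-lip-chord-term} when $p<2$, split into the cases $p\in[2-\gamma,2)$ and $p<2-\gamma$ as before. This gives a term $A_\Lambda\iint_{|x-y|\le\rho}|a(x)-a(y)|^2|x-y|^{p-4}$.

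On the far part, we replace $H_\tau(x)-H_\tau(y)$ by $x-y$. The error is at most $C(\rho)\|a\|_{L^\infty}[a]^2_{H^{\frac{3-p}{2}}}$, and since $\int_0^1(1-\tau)\dd\tau=\frac12$, what remains is exactly half of the far part of the second variation at $\id$.

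This is precisely the combination appearing in \Cref{co:la:truncated-coercivity-one-kernel}, applied with $A=A_\Lambda$. It yields the lower bound $\bar c\,[a]^2_{H^{\frac{3-p}{2}}}-C(\rho)\eps'[a]^2$. Choosing $\delta$ so small that $C(\rho)\eps'\le\bar c/2$ gives the claim with $c=p\bar c/2$.

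\emph{Main obstacle.} The hardest step is making all constants uniform in $s\in[s_-,s_+]$, which requires $A_\Lambda$ and $\rho_0$ to depend only on $(s_-,s_+,\Lambda)$. A second delicate point is that $[a]_{\lip}$ blows up like $|p-2|^{-1}$; this is handled exactly as in the $\gamma$-dichotomy in the proof of \Cref{th:localminisidv2}.
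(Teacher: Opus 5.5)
Your proposal is correct and is essentially the paper's proof. It uses the same steps: normalization via \Cref{la:local-normalization-phase}, the $(1-\tau)$-weighted Taylor formula along $H_\tau$, the near/far splitting via \eqref{eq:averaged-chord-powerv2} resp.\ \Cref{la:p-less-2-lip-chord-term} with the same $\gamma$-dichotomy, and \Cref{co:la:truncated-coercivity-one-kernel} with $A=A_\Lambda$. You assume only $L^\infty$-smallness where the paper records $C^\alpha$-smallness, which is all the argument uses; and the step you leave implicit, $\inf_{\psi\in\mathcal M}[u\circ\psi-\id]_{H^{\frac{3-p}{2}}}\le[\widetilde u-\id]_{H^{\frac{3-p}{2}}}$, follows by taking $\psi(z)=m_b(e^{-\i\lambda}z)=e^{-\i\lambda}m_{be^{\i\lambda}}(z)$, for which $u\circ\psi-\id=e^{\i\lambda}(\widetilde u-\id)(e^{-\i\lambda}\,\cdot)$.
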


The philosophy of our stability result is simple: If $u_1$ and $\id$ are close to each other, then we can find a homotopy $H_\tau\colon \S^1 \to \S^1$, $H_0 = \id$, $H_1 = u_1$, and we can hope for $\partial_\tau H_\tau \approx u_1-\id$, in a reasonable sense.

Formally by Taylor, $E_s(u)-E_s(\id) = \delta E_s(\id)[\partial_\tau H_\tau] +\frac{1}{2} \delta^2 E_s(H_\tau)[\partial_\tau H_\tau,\partial_\tau H_\tau] + o(|\partial_{\tau} H_{\tau}|^2)$. Since $\delta E_s(\id)[\partial_\tau H_\tau] = 0$ (since the $\id$ is critical) and $\delta^2 E_s(H_\tau)$ is (almost) positive definite, we expect
\[
 E_s(u)-E_s(\id) \geq c |\partial_\tau H_\tau|^2.
\]
The difficulty lies in obtaining all of this in the right function space, which happens to be $H^{\frac{3-p}{2}}$ and which makes the comparison of $\delta^2 E_s(H_\tau)$ and $\delta^2 E_s(\id)$ challenging. Also, ``positivity'' of $\delta^2 E_s(\id)$ holds in all but three Fourier modes (corresponding to the directions of conformal invariance).

\begin{proof}[Proof of \Cref{th:rollingstablev2}]
By Gagliardo--Nirenberg, using also \Cref{la:poincareonS1}, for some $\sigma_1 > 0$
\[
 \|u-\id\|_{C^{\frac{\alpha}{2}} (\S^1)} \leq C\brac{\|u\|_{C^\alpha}} [u-\id]_{W^{s,\frac{1}{s}}}^{\sigma_1}.
\]
Thus, there is no harm in assuming w.l.o.g. (otherwise replace $\alpha$ by $\frac{\alpha}{2}$)
\[
 \|u-\id\|_{C^{\alpha}} \leq \delta_0.
\]
By \Cref{la:local-normalization-phase}, we can replace
\[
        \widetilde u\coloneqq e^{-i\lambda}u\circ m_b,
\]
for some $\lambda\in\R$ and $|b|<1$, and by \Cref{la:lipschitz-phase} we have a unique phase $a\colon \S^1 \to [-\frac{\pi}{4},\frac{\pi}{4}]$, so that
\[
        \widetilde u(x)
        =
        \cos(a(x))x+\sin(a(x))x^\perp,
\]
with zero Fourier modes as in \eqref{eq:aftmodeszero}.
Moreover, after decreasing $\delta_0$ if necessary,
\[
        \|a\|_{C^\alpha}\aleq \|u-\id\|_{C^\alpha}\leq \delta_0.
\]
If $p<2$, then also
\[
        [a]_{\lip}\aleq 1+[u]_{\lip}\aleq 1+\frac{\Lambda}{|p-2|}.
\]
By \Cref{la:mobius-invariance}, we have $E_s(\widetilde u)=E_s(u)$. Set as in \Cref{la:homotopy}
\[
        H_\tau(x)
        \coloneqq 
        \cos(\tau a(x))x+\sin(\tau a(x))x^\perp,
        \qquad
        0\leq \tau\leq 1.
\]
Then it holds $H_0=\id$, $H_1=\widetilde u$ and
\[
        \partial_\tau H_\tau(x)=a(x)H_\tau^\perp(x),
        \qquad
        \partial_{\tau\tau}H_\tau(x)=-a(x)^2H_\tau(x).
\]
Since $\id$ is critical,
\begin{equation}\label{eq:ddtauEshtaueq0}
        \left.
        \frac{\dd}{\dd\tau}
        \right|_{\tau=0}
        \frac1p E_s(H_\tau)=0.
\end{equation}
Thus, by applying the fundamental theorem of calculus twice,
\[
\begin{split}
        \frac1p\left(E_s(\widetilde u)-E_s(\id)\right)
        &=
        \int_0^1
        \frac{\dd}{\dd t}
        \frac1pE_s(H_t)\dd t \overset{\eqref{eq:ddtauEshtaueq0}}{=}
        \int_0^1\int_0^t
        \frac{\dd^2}{\dd\tau^2}
        \frac1pE_s(H_\tau)\dd\tau\dd t .
\end{split}
\]
By \eqref{eq:identity1}, and
\eqref{eq:identity2},
\[
\begin{split}
        \frac{\dd^2}{\dd\tau^2}
        \frac1pE_s(H_\tau)
        &=
        \iint_{\S^1\times\S^1}
        \frac{
        |H_\tau(x)-H_\tau(y)|^{p-2}
        \left[
        p-1-\frac p4|H_\tau(x)-H_\tau(y)|^2
        \right]
        }{|x-y|^2}
        |a(x)-a(y)|^2\dx\dy .
\end{split}
\]
Hence, since $E_s(\widetilde{u}) = E_s(u)$,
\[
\begin{split}
        &\frac1p\left(E_s(u)-E_s(\id)\right)\\
        &=
        \int_0^1(1-\tau)
        \iint_{\S^1\times\S^1}
        \frac{
        |H_\tau(x)-H_\tau(y)|^{p-2}
        \left[
        p-1-\frac p4|H_\tau(x)-H_\tau(y)|^2
        \right]
        }{|x-y|^2} |a(x)-a(y)|^2\dx\dy\dd\tau\\
        &=
        \int_0^1 (1-\tau)
        \iint_{\S^1\times\S^1}
        \frac{
        |H_\tau(x)-H_\tau(y)|^{p-2}
        \left[
        p-1-\frac p4|H_\tau(x)-H_\tau(y)|^2
        \right]
        }{|x-y|^2} |a(x)-a(y)|^2\dx\dy\dd\tau.
\end{split}
\]
We now use the positivity of the second variation at $\id$, in the form of \Cref{co:la:truncated-coercivity-one-kernel}.
For an $A$ yet to choose, we fix $\rho_0$ from \Cref{co:la:truncated-coercivity-one-kernel} and obtain a constant $c>0$ such that for any $\rho < \rho_0$
\begin{equation}\label{eq:Aweridcoercivityv2}
\begin{split}
&A
\iint_{\{|x-y|\leq\rho\}}
\frac{|a(x)-a(y)|^2}{|x-y|^{4-p}}\dx\dy
\\
&+
\frac12
\iint_{\{|x-y|\geq\rho\}}
\frac{
|x-y|^{p-2}
\left[
p-1-\frac p4|x-y|^2
\right]
}{|x-y|^2}
|a(x)-a(y)|^2\dx\dy
 \geq
c
[a]_{H^{\frac{3-p}{2}}(\S^1)}^2 .
\end{split}
\end{equation}

We split the integral and first consider
\[
\begin{split}
        \int_0^1(1-\tau)
        \iint_{\S^1\times\S^1} \chi_{\{|x-y| \leq \rho\}}
        \frac{
        |H_\tau(x)-H_\tau(y)|^{p-2}
        \left[
        p-1-\frac p4|H_\tau(x)-H_\tau(y)|^2
        \right]
        }{|x-y|^2} |a(x)-a(y)|^2\dx\dy\dd\tau.
\end{split}
\]
Observe that if $|x-y| \leq \rho$
\[
 |H_\tau(x)-H_\tau(y)| \aleq |x-y| + \|a\|_{L^\infty} \leq \rho + \delta.
\]
In particular, we may assume that ($\rho$ and $\delta$ only depend on $p$ in a uniform way since $p>p_-$) if $|x-y| \leq \rho$, then
\[
 p-1-\frac p4|H_\tau(x)-H_\tau(y)|^2 \geq \frac{p-1}{2}.
\]
We obtain 
\[
\begin{split}
        &\int_0^1(1-\tau)
        \iint_{\S^1\times\S^1} \chi_{\{|x-y| \leq \rho\}}
        \frac{
        |H_\tau(x)-H_\tau(y)|^{p-2}
        \left[
        p-1-\frac p4|H_\tau(x)-H_\tau(y)|^2
        \right]
        }{|x-y|^2} |a(x)-a(y)|^2\dx\dy\dd\tau\dd t\\
        & \geq \frac{p-1}{2} \iint_{\S^1\times\S^1} \chi_{\{|x-y| \leq \rho\}}
        \frac{
        \int_0^1(1-\tau)|H_\tau(x)-H_\tau(y)|^{p-2}\dd\tau
        }{|x-y|^2} |a(x)-a(y)|^2\dx\dy.
\end{split}
\]
Fix $\gamma > 0$. We are going to show the following:
\begin{equation}\label{eq:localstableqrhoterm:goalv2}
\begin{split}
   &\frac{p-1}{2} \iint_{\S^1\times\S^1} \chi_{\{|x-y| \leq \rho\}}
        \frac{
        \int_0^1(1-\tau)|H_\tau(x)-H_\tau(y)|^{p-2}\dd\tau
        }{|x-y|^2} |a(x)-a(y)|^2\dx\dy \\
        & \geq A_\Lambda \iint_{\S^1\times\S^1} \chi_{\{|x-y| \leq \rho\}}
        \frac{|a(x)-a(y)|^2}{|x-y|^{4-p}}\dx\dy,
        \end{split}
\end{equation}
where (assuming $\rho$ is small enough, but still uniform)
\[
 A_\Lambda\coloneqq  \begin{cases}
    \frac{p-1}{2} c \quad &\text{if $p>2$}\\
     \frac{p-1}{2} \quad &\text{if $p=2$}\\
     \frac{p-1}{2} \brac{\frac{1}{2}-C\gamma \log(2+\frac{1}{\gamma}\Lambda)} \quad &\text{if $p\in [2-\gamma,2)$}\\
     \frac{p-1}{2} \frac{1}{4} (1+C\frac{1}{\gamma} \Lambda)^{-1} &\text{if $p\in (p_-,2-\gamma)$},
    \end{cases}
\]
where $c,C$ is a uniform constant. We see that depending on $\Lambda$ there exists a $\gamma$ that makes $A_\Lambda > 0$.

Indeed, \underline{if $p=2$} there is nothing to show.

If \underline{$p > 2$} we can apply \eqref{eq:averaged-chord-powerv2} (taking $\rho$ even smaller, but still uniform) and have

\[\begin{split}
        &\frac{p-1}{2} \iint_{\S^1\times\S^1} \chi_{\{|x-y| \leq \rho\}}
        \frac{
        \int_0^1(1-\tau)|H_\tau(x)-H_\tau(y)|^{p-2}\dd\tau
        }{|x-y|^2} |a(x)-a(y)|^2\dx\dy\\
        & \geq \frac{p-1}{2} c \iint_{\S^1\times\S^1} \chi_{\{|x-y| \leq \rho\}}
        \frac{|a(x)-a(y)|^2}{|x-y|^{4-p}}\dx\dy\dd\tau.
\end{split}
\]
So we can choose $A \coloneqq \frac{p-1}{2} c$ in \eqref{eq:localstableqrhoterm:goalv2}.

\underline{If $p \in [2-\gamma,2)$}, we apply \Cref{la:p-less-2-lip-chord-term} to obtain 
\[\begin{split}
        &\frac{p-1}{2} \iint_{\S^1\times\S^1} \chi_{\{|x-y| \leq \rho\}}
        \frac{
        \int_0^1(1-\tau)|H_\tau(x)-H_\tau(y)|^{p-2}\dd\tau
        }{|x-y|^2} |a(x)-a(y)|^2\dx\dy\\
        & \geq \frac{p-1}{2} \brac{\frac{1}{2}-C{\gamma} \log(2+\frac{1}{|p-2|}\Lambda)} \iint_{\S^1\times\S^1} \chi_{\{|x-y| \leq \rho\}}
        \frac{|a(x)-a(y)|^2}{|x-y|^{4-p}}\dx\dy\dd\tau.
\end{split}
\]
If \underline{if $p \in [p_-,p-\gamma]$}, we apply \Cref{la:p-less-2-lip-chord-term}
\[\begin{split}
        &\frac{p-1}{2} \iint_{\S^1\times\S^1} \chi_{\{|x-y| \leq \rho\}}
        \frac{
        \int_0^1(1-\tau)|H_\tau(x)-H_\tau(y)|^{p-2}\dd\tau
        }{|x-y|^2} |a(x)-a(y)|^2\dx\dy\\
        & \geq \frac{p-1}{8} (1+C\frac{1}{|p-2|}\Lambda)^{-|p-2|} \iint_{\S^1\times\S^1} \chi_{\{|x-y| \leq \rho\}} \frac{|a(x)-a(y)|^2}{|x-y|^{4-p}}\dx\dy.
\end{split}
\]
This establishes \eqref{eq:localstableqrhoterm:goalv2} for all $p \in (p_-,p_+)$, and thus we have obtained
\begin{equation}\label{eq:plarger2:stablealsdvcjxopv1v2}
\begin{split}
        & \int_0^1(1-\tau)
        \iint_{\S^1\times\S^1} \chi_{\{|x-y| \leq \rho\}}
        \frac{
        |H_\tau(x)-H_\tau(y)|^{p-2}
        \left[
        p-1-\frac p4|H_\tau(x)-H_\tau(y)|^2
        \right]
        }{|x-y|^2} |a(x)-a(y)|^2\dx\dy\dd\tau\\
	& \geq A_\Lambda \iint_{\S^1\times\S^1} \chi_{\{|x-y| \leq \rho\}}
        \frac{|a(x)-a(y)|^2}{|x-y|^{4-p}}\dx\dy.
        \end{split}
\end{equation}

For the remaining integral we write
\[
\begin{split}
        &\int_0^1(1-\tau)
        \iint_{\S^1\times\S^1} \chi_{\{|x-y| \geq \rho\}}
        \frac{|H_\tau(x)-H_\tau(y)|^{p-2}
        \left[ p-1-\frac p4|H_\tau(x)-H_\tau(y)|^2\right]
        }{|x-y|^2} |a(x)-a(y)|^2\dx\dy\dd\tau\\
        =&\int_0^1(1-\tau)
        \iint_{\S^1\times\S^1} \chi_{\{|x-y| \geq \rho\}}
        \frac{
        |x-y|^{p-2}
        \left[
        p-1-\frac p4|x-y|^2
        \right]
        }{|x-y|^2} |a(x)-a(y)|^2\dx\dy\dd\tau\\
        &+\int_0^1(1-\tau)
        \iint_{\S^1\times\S^1} \chi_{\{|x-y| \geq \rho\}}
        \frac{
        (p-1)\brac{|H_\tau(x)-H_\tau(y)|^{p-2} - |x-y|^{p-2}}
        }{|x-y|^2} |a(x)-a(y)|^2\dx\dy\dd\tau\\
        &+\int_0^1(1-\tau)
        \iint_{\S^1\times\S^1} \chi_{\{|x-y| \geq \rho\}}
        \frac{
        -\frac p4 \brac{|H_\tau(x)-H_\tau(y)|^p - |x-y|^p}
        }{|x-y|^2} |a(x)-a(y)|^2\dx\dy\dd\tau\\
\geq&\frac{1}{2}
        \iint_{\S^1\times\S^1} \chi_{\{|x-y| \geq \rho\}}
        \frac{
        |x-y|^{p-2}
        \left[
        p-1-\frac p4|x-y|^2
        \right]
        }{|x-y|^2} |a(x)-a(y)|^2\dx\dy\\
        &- C\rho^{-2} \max_{\tau \in [0,1]} \iint_{\S^1\times\S^1} \chi_{\{|x-y| \geq \rho\}}
        \abs{|H_\tau(x)-H_\tau(y)|^{p-2}  - |x-y|^{p-2}} |a(x)-a(y)|^2\dx\dy\\
        &- C\rho^{-2} \max_{\tau \in [0,1]} \iint_{\S^1\times\S^1} \chi_{\{|x-y| \geq \rho\}}
        \abs{|H_\tau(x)-H_\tau(y)|^{p}  - |x-y|^{p}} |a(x)-a(y)|^2\dx\dy.
\end{split}
\]
Since $1 \leq p <p_+ <3$, and for $\|a\|_{L^\infty} \leq \delta \ll \rho$ we have $|x-y| \geq \rho$ implies $|H_\tau(x)-H_\tau(y)| \ageq \rho$, so
\[
 \abs{|H_\tau(x)-H_\tau(y)|^{p-2}  - |x-y|^{p-2}} \aleq \rho^{p-3} \|a\|_{L^\infty}
\]
and
\[
 \abs{|H_\tau(x)-H_\tau(y)|^{p}  - |x-y|^{p}} \aleq \|a\|_{L^\infty}
\]
Thus, we obtain 
\begin{equation}\label{eq:plarger2:stablealsdvcjxopv2v2}
\begin{split}
        &\int_0^1(1-\tau)
        \iint_{\S^1\times\S^1} \chi_{\{|x-y| \geq \rho\}}
        \frac{
        |H_\tau(x)-H_\tau(y)|^{p-2}
        \left[
        p-1-\frac p4|H_\tau(x)-H_\tau(y)|^2
        \right]
        }{|x-y|^2} |a(x)-a(y)|^2\dx\dy\dd\tau\\
& \geq \frac{1}{2}
        \iint_{\S^1\times\S^1} \chi_{\{|x-y| \geq \rho\}}
        \frac{
        |x-y|^{p-2}
        \left[
        p-1-\frac p4|x-y|^2
        \right]
        }{|x-y|^2} |a(x)-a(y)|^2\dx\dy\\
        &\qquad - C\rho^{-2} \|a\|_{L^\infty} \int_{\S^1} \int_{\S^1} |a(x)-a(y)|^2\dx\dy\\
& \geq \frac{1}{2}
        \iint_{\S^1\times\S^1} \chi_{\{|x-y| \geq \rho\}}
        \frac{
        |x-y|^{p-2}
        \left[
        p-1-\frac p4|x-y|^2
        \right]
        }{|x-y|^2} |a(x)-a(y)|^2\dx\dy\\
        &\qquad - \tilde{C}\rho^{-2} \|a\|_{L^\infty} [a]_{H^{\frac{3-p}{2}}}^2.
\end{split}
\end{equation}

Combining \eqref{eq:plarger2:stablealsdvcjxopv1v2} and \eqref{eq:plarger2:stablealsdvcjxopv2v2} we find

\[
\begin{split}
&\iint_{\S^1\times\S^1} \chi_{\{|x-y| \geq \rho\}}
        \frac{
        (p-1)\brac{|H_\tau(x)-H_\tau(y)|^{p-2}  - |x-y|^{p-2}}
        }{|x-y|^2} |a(x)-a(y)|^2\dx\dy\dd\tau\\
    & \geq A_\Lambda \iint_{\S^1\times\S^1} \chi_{\{|x-y| \leq \rho\}}
        \frac{|a(x)-a(y)|^2}{|x-y|^{4-p}}\dx\dy\dd\tau\\
        &\qquad +\frac{1}{2}
        \iint_{\S^1\times\S^1} \chi_{\{|x-y| \geq \rho\}}
        \frac{
        |x-y|^{p-2}
        \left[
        p-1-\frac p4|x-y|^2
        \right]
        }{|x-y|^2} |a(x)-a(y)|^2\dx\dy\\
        &\qquad - \tilde{C}\rho^{-2} \|a\|_{L^\infty} [a]_{H^{\frac{3-p}{2}}}^2.
\end{split}
        \]
Applying \eqref{eq:Aweridcoercivityv2} we get
\[
\begin{split}
&\iint_{\S^1\times\S^1} \chi_{\{|x-y| \geq \rho\}}
        \frac{
        (p-1)\brac{|H_\tau(x)-H_\tau(y)|^{p-2}  - |x-y|^{p-2}}
        }{|x-y|^2} |a(x)-a(y)|^2\dx\dy\dd\tau\\
    & \geq \brac{\bar{c}- \tilde{C}\rho^{-2} \|a\|_{L^\infty}} [a]_{H^{\frac{3-p}{2}}}^2.
\end{split}
        \]
So if $\|a\|_{L^\infty}$ is even smaller, we can conclude
\[
\begin{split}
&\int_0^1 (1-\tau)
        \iint_{\S^1\times\S^1}
        \frac{
        |H_\tau(x)-H_\tau(y)|^{p-2}
        \left[
        p-1-\frac p4|H_\tau(x)-H_\tau(y)|^2
        \right]
        }{|x-y|^2} |a(x)-a(y)|^2\dx\dy\dd\tau  \\
        & \geq \frac{\bar{c}}{2} [a]_{H^{\frac{3-p}{2}}}^2.
\end{split}
        \]
That is, for any $p\in(p_-,p_+)$ we have shown with a uniform constant
\[
        E_s(u)-E_s(\id)\geq c [a]_{H^{\frac{3-p}{2}}}^2
\]
We conclude with \Cref{la:acontrol}: for some $\lambda \in \R$
\[
        E_s(u)-E_s(\id)\geq c [e^{\i \lambda}\widetilde{u}-\id]_{H^{\frac{3-p}{2}}}^2
\]
\end{proof}

	\bibliographystyle{abbrv}%
	\bibliography{bib}%
\end{document}